\documentclass{amsart}

\usepackage{amsthm, amssymb,amsmath}
\usepackage{mathscinet}
\usepackage[utf8]{inputenc}
\usepackage{hyperref}
\usepackage{graphicx}
\usepackage{subcaption}
\usepackage{dsfont}
\usepackage{rotating}
\newtheorem{theorem}{Theorem}[section]
\newtheorem{lemma}[theorem]{Lemma}
\newtheorem{corollary}[theorem]{Corollary}
\newtheorem{proposition}[theorem]{Proposition}
\newtheorem{claim}[theorem]{Claim}
\newtheorem{remark}[theorem]{Remark}
\newtheorem{definition}[theorem]{Definition}

\numberwithin{equation}{section}

\DeclareMathOperator{\SL}{SL}

\DeclareMathOperator{\GL}{GL}

\DeclareMathOperator{\Dim}{Dim}
\DeclareMathOperator{\Hess}{Hess}

\newcommand{\R}{\mathbb{R}}
\newcommand{\C}{\mathbb{C}}
\newcommand{\Z}{\mathbb{Z}}
\newcommand{\N}{\mathbb{N}}

\newcommand{\E}{\mathbb{E}}
\newcommand{\SSSS}{\mathbb{S}}
\newcommand{\PP}{\mathbb{P}}
\newcommand{\F}{\mathcal{F}}

\newcommand{\A}{\mathcal{A}}
\newcommand{\B}{\mathcal{B}}
\newcommand{\CC}{\mathcal{C}}
\newcommand{\DD}{\mathcal {D}}
\newcommand{\RR}{\mathcal{R}}

\newcommand{\W}{\mathcal{W}}
\newcommand{\EE}{\mathcal{E}}

\newcommand{\ZZ}{\mathcal{Z}}

\newcommand{\MMM}{\mathfrak{m}}

\newcommand{\om}{\omega}
\newcommand{\s}{\sigma}
\newcommand{\g}{\gamma}
\newcommand{\G}{\Gamma}
\newcommand{\de}{\delta}
\newcommand{\De}{\Delta}

\newcommand{\la}{\lambda}
\newcommand{\La}{\Lambda}
\newcommand{\vf}{\varphi}

\newcommand{\e}{\varepsilon}

\newcommand{\Si}{\Sigma}
\newcommand{\Om}{\Omega}

\newcommand{\wt}{\widetilde}
\newcommand{\wh}{\widehat}

\newcommand{\x}{\times}

\newcommand{\ov}{\overline}

\begin{document}

\title{Invariance Principle and Dynamics}
\author{Fran\c cois Ledrappier}
\address{Fran\c cois Ledrappier, Universit\'e de Paris et Sorbonne Universit\'e, CNRS, LPSM, Bo\^{i}te Courrier 158, 4, Place Jussieu, 75252 PARIS cedex
05, France,} \email{fledrapp@nd.edu}

\begin{abstract} This is a survey of some invariances that arise in dynamical systems theory in the presence of zero Lyapunov exponents. \end{abstract}

\maketitle 

\section{Introduction} 

An active aspect of the theory of dynamical systems is the study of asymptotic properties of successive iterations of a transformation. This is performed through the description of global 
(often statistical) properties and objects.

For instance, the Lyapunov exponents were introduced by A.M. Lyapunov to study the stability of dynamical properties under perturbations. In the late sixties, in foundational papers about smooth diffeomorphisms and flows on compact manifolds, D.V. Anosov  and S. Smale showed  that uniform absence of vanishing exponents also induces some form of stability (see \cite{M88} for the converse).  On the other hand, at about the same time, D.V. Oseledets  showed that Lyapunov exponents always exist in a probabilistic sense, almost everywhere for any invariant probability measure. The systems with no vanishing Lyapunov exponents in this sense  have a rich theory as well, see \cite{Y13},  \cite{W17}. The question so arises of understanding vanishing Lyapunov exponents of invariant probability measures. It turns out that, often, vanishing Lyapunov exponents implies a non-trivial invariance of... something. These results are nicknamed  as  the action of an Invariance Principle\footnote{Our IP is not related  with Donsker IP in Large Deviations theory. Ours came later, but it is more of  a principle that has something to do with some  invariance.}  and the goal of this paper is to survey the ergodic theory behind some examples. In this text, we write IP for Invariance Principle.

The common feature of these examples is the introduction of some entropy, typically an average of Kullback-Liebler informations, that quantifies  how some measures are distorted by the dynamics. Then the vanishing Lyapunov exponent forces this entropy to vanish as well (see the basic inequalities propositions \ref{basicinequality1}, \ref{basicinequality2}, \ref{basicinequality3}, \ref{basicinequality4}). The vanishing of this entropy forces the desired invariance by an application of the equality case in Jensen inequality. 

In the  section \ref{examples}, we recall more or less classical results that led to the formulation of the IP. The story starts with H. Furstenberg criterion for a  random walks on $\SL(d,\R)$ to have non-zero exponents (\cite{F63}). For dependent product of matrices, the case of  Markovian dependence was considered in \cite{V79}, \cite{G79} and \cite{R80}  and much weaker dependence was considered by S. Kotani and B. Simon (\cite{S83}) for the study  of Schr\"odinger operators. The form of dependence in theorem \ref{IP} for linear cocycles was formulated in \cite{L86}. The non-linear version pertains to independent products of diffeomorphisms of compact manifolds and was proven by H. Crauel (\cite{C90}).  Further recent examples are a result of A. Tahzibi and J. Yang (\cite{TY19}) about dynamically coherent partially hyperbolic systems with compact center leaves and hyperbolic quotient (see section \ref{PHcompact}) and a result of S. Crovisier and M. Poletti about systems with quasi-isometric center leaves (see section \ref{QIcenter}).

The IP is the realization that all these results are manifestations of the same phenomenon, that we explain in sections 3 and \ref{IPbis}. The complete nonlinear version, the catchy name and many applications are due to A. Avila and M. Viana and we follow  \cite {AV10}. We give a few examples in sections \ref{examples2} and \ref{examples3}.   Section 7 is an incomplete laundry list of other related results.  In general, the proofs are only sketched, and we invite the interested reader to go back to the original papers. We hope that such an overview can help them.
There is a considerable overlap with another survey  on the same topic (and with almost the same title) by Karina Marin and Mauricio Poletti (\cite{MP25}), which presents a synthetic view of the current research.  \cite{MP25} is more precise and complete on the examples of applications. We present the ergodic theoretic background of the IP in the appendix. We also refer to Marcelo Viana's book (\cite{V14}) about Lyapunov exponents and Romain Dujardin's \cite{D26} about ergodic theory for more background and more details about some of the results mentioned here.

This survey is the  extended version of  a mini-course I gave in the September 2025  Meeting of the AnR DynAtrois in Dijon. I thank the organizers for the invitation and the audience for 
their attention and remarks. I also thank Y. Coud\`ene, S.~Crovisier, R. Dujardin, M. Poletti,  A. Wilkinson and the anonymous referees for many relevant  suggestions. 

\

\section {Examples of applications of an invariance principle}\label{examples}

\subsection {Product of independent $2\x2$ matrices. Exponents. Furstenberg criterion}

\

Let \( \mu\) be a probability  measure on  \( \SL (2, \R) \), with \( \int \log \|A\| \, d\mu (A) < +\infty .\)

We form the {\it {independent product }} of matrices with distribution \(\mu .\) Formally, set 
 \( (\ov X, \ov \A, \ov m) \) for  the product space \(\ov X :=  ( \SL(2,\R))^{\otimes \N },\) with the $\s$-algebra \( \ov \A\) generated by the coordinates \( \{x_n \}_{n\ge 0},\) and the product measure \( \ov m := \mu ^{\otimes \N} .\) For \( \{x_n \}_{n\ge 0} \in \ov X, \, n\ge 1 \) set \[ Z_n := x_{n-1} \ldots x_0.\] 
\begin{theorem}[\cite{FK60}]\label{FK} There is a number \( \g \ge 0 \) such that, for \( \ov m \)-a.e. \(x \in \ov X,\) 
\[ \g  \,=\, \lim\limits _{n \to \infty } \frac{1}{n} \log \| Z_n \|  \quad =\, \lim\limits _{n \to \infty } \frac{1}{n} \log \| Z_n^{-1} \| .\] \end{theorem}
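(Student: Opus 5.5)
We prove the statement with Kingman's subadditive ergodic theorem applied to the shift on \((\ov X,\ov \A,\ov m)\), and then use the $2\times 2$ unimodular structure to pass from \(Z_n\) to \(Z_n^{-1}\).

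\emph{Setting up the subadditive sequence.} Let \(\s\) be the left shift on \(\ov X\), \((\s x)_n = x_{n+1}\). It preserves \(\ov m=\mu^{\otimes\N}\) and is ergodic by Kolmogorov's zero--one law, or simply because it is a Bernoulli shift. Writing \(Z_n(x)\) to show the dependence on \(x\), we have the cocycle identity
\[ Z_{n+m}(x) = Z_m(\s^n x)\, Z_n(x). \]
Set \(f_n(x) := \log\|Z_n(x)\|\). Submultiplicativity of the operator norm gives \(f_{n+m} \le f_n + f_m\circ\s^n\), so \((f_n)\) is a subadditive sequence over \((\s,\ov m)\). Since \(\det A = 1\), we have \(\|A\|\ge 1\), hence \(f_1 = \log\|x_0\| \ge 0\). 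By hypothesis \(f_1\in L^1(\ov m)\), and each \(f_n\) is bounded below by \(0\).

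\emph{Applying Kingman's theorem.} Kingman's theorem gives the a.e. convergence
\[ \frac1n f_n \;\longrightarrow\; \g := \inf_n \frac1n\int f_n\,d\ov m \in [0,\infty). \]
The limit is a.e. constant by ergodicity, and \(\g\ge 0\) because \(f_n\ge 0\). This proves the first equality.

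\emph{The inverse.} For \(A\in\SL(2,\R)\), the singular values are \(\|A\|\) and \(\|A\|^{-1}\). Hence \(\|A^{-1}\| = \|A\|\). Concretely, \(A^{-1} = J A^{T} J^{-1}\) with \(J=\begin{pmatrix}0&1\\-1&0\end{pmatrix}\) orthogonal, and \(\|A^T\|=\|A\|\). Applying this to \(A=Z_n\) gives \(\log\|Z_n^{-1}\| = \log\|Z_n\|\) for every \(n\), so the second limit equals the first.

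\emph{Where the care is needed.} The only delicate point is checking the integrability hypotheses of Kingman's theorem. Here they are immediate because \(f_n \ge 0\) and \(f_1\in L^1\). Without the determinant-one normalization, one would instead control \(f_n^-\) using \(\log\|A^{-1}\|\). The identity \(\|Z_n^{-1}\|=\|Z_n\|\) is special to dimension $2$ with determinant $1$. In higher dimension the analogous statement would relate \(\|Z_n^{-1}\|\) to the smallest exponent rather than to \(\g\).
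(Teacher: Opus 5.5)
Your proof is correct, and it follows the paper's approach. The paper gives no proof beyond noting that the theorem is an instance of the subadditive ergodic theorem and that \(\g \ge 0\) because \(Z_n Z_n^{-1} = \mathrm{Id}\). Your argument makes this explicit: Kingman's theorem applied to \(\log\|Z_n\|\) on the Bernoulli shift gives the first limit, and the identity \(\|Z_n^{-1}\| = \|Z_n\| \ge 1\) on \(\SL(2,\R)\) gives both the second limit and \(\g\ge 0\).
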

Theorem \ref{FK} anticipates the subadditive ergodic theorem. We have \( \g \ge 0 \) because \( Z_n Z_n ^{-1} = \mathbb Id .\)
The original IP is 

\begin{theorem}[Furstenberg \cite{F63}] \label{F}  If \( \g = 0\) , then there exists a probability \( \nu \) on \( \PP ^1(\R) \) which is {\it {invariant}} under \( \mu \)-a.e. matrix $A$, where \( A \in \SL (2,\R)\) acts naturally on \( \PP^1(\R):\)  \(A_\ast \nu = \nu \) for \(\mu \)-a.e. $A.$ \end{theorem}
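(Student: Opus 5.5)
The plan is the entropy argument announced in the introduction: build a stationary measure, attach to it a Kullback--Leibler type entropy, bound that entropy by a multiple of $\g$, and then read off invariance from the equality case of Jensen's inequality. We may assume $\mu$ is not supported on a compact subgroup, since otherwise an invariant measure is easy to produce (e.g.\ the image of Lebesgue measure under a suitable conjugation); in general we work directly. First choose a \emph{$\mu$-stationary} probability $\nu$ on $\PP^1(\R)$, that is, $\nu = \int A_\ast \nu \, d\mu(A)$. It exists by compactness of $\PP^1(\R)$ and Markov--Kakutani: take a weak-$\ast$ limit of Ces\`aro averages of $\mu^{\ast k}\ast \nu_0$. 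Set
\[ h(\nu) := \int\!\!\int \log \frac{dA^{-1}_\ast\nu}{d\nu}(\xi)\, d\nu(\xi)\, d\mu(A) \;=\; -\int\!\!\int \log \frac{dA_\ast\nu}{d\nu}\, dA_\ast\nu\, d\mu(A),\]
which is an average of relative entropies, with the convention $h=+\infty$ when absolute continuity fails. By Jensen, $h(\nu)\le 0$ in the sign convention where the Furstenberg entropy is $-h\ge 0$, and $h(\nu)=0$ forces $A_\ast\nu=\nu$ for $\mu$-a.e.\ $A$. The theorem therefore reduces to the basic inequality
\[ 0 \le h_F(\nu) \le 2\g, \qquad h_F(\nu):=\int\!\!\int -\log\frac{dA^{-1}_\ast\nu}{d\nu}\,d\nu\, d\mu .\]

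To prove the inequality, first note that if $\nu$ has an atom or is singular, the density quotients still make sense after a standard regularization. The cleaner route, however, is the following. Each $A$ acts on $\PP^1$ with derivative $|A'(\xi)| = \|A v\|^{-2}$ for a unit vector $v\in\xi$, so $|\log |A'(\xi)|| \le 2\log\|A\|$. Compare $\nu$ with the smooth measure $\nu_\e:=\nu\ast\rho_\e$, where $\rho_\e$ is a smooth kernel on the circle. Alternatively, and this is what I would actually do, follow the standard approach: iterate $n$ times and use cocycle additivity. Stationarity makes the entropy additive along the walk, so $h_F(\nu, \mu^{\ast n}) = n\,h_F(\nu,\mu)$. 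It then suffices to bound $h_F(\nu,\mu^{\ast n})$ by $2\log\|Z_n\|$ on average plus $o(n)$. Heuristically, $Z_n^{-1}$ distorts any measure by at most the Lipschitz constant $\|Z_n\|^2$, so the KL-information between $\nu$ and $(Z_n^{-1})_\ast\nu$ should be bounded by $\E\log\|Z_n\|^2 + O(1)$. Dividing by $n$ and using Theorem \ref{FK} (with $L^1$ convergence, from the integrability assumption) gives $h_F(\nu)\le 2\g$.

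The main obstacle is exactly this last bound. KL divergence between a measure and its push-forward is not controlled by the Lipschitz constant alone when $\nu$ is singular, because a singular $\nu$ and $(Z_n^{-1})_\ast\nu$ may be mutually singular. I would handle it by working with finite partitions: define the entropy as a supremum over partitions $\mathcal P$ of $\PP^1$ into $N$ intervals of $\sum_{I\in\mathcal P}\nu(I)\log\frac{\nu(I)}{A_\ast\nu(I)}$ integrated in $A$, and use the fact that $A$ maps an interval of length $\ell$ to one of length at most $\|A\|^2\ell$. A counting argument then gives an entropy contribution of at most $\log\|A\|^2+\log N$ per step, and the $\log N$ term is washed out by the division by $n$. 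Given this, $\g=0$ yields $h_F(\nu)=0$, and the Jensen equality case (strict concavity of $\log$) gives $\frac{dA_\ast\nu}{d\nu}\equiv 1$, that is, $A_\ast\nu=\nu$ for $\mu$-a.e.\ $A$.
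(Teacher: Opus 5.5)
Your reduction matches the paper's: take a stationary $\nu$, use the equality case of Jensen, and pass from a one-step bound $h_F(\nu)\le 2\int\log\|A\|\,d\mu(A)+C$ to $h_F\le 2\gamma$ via $\kappa(\mu^{\ast n},\nu)=n\,\kappa(\mu,\nu)$. The gap is in the one-step bound itself, which carries the whole content of the theorem. Your partition sketch does not prove it as written, for three reasons.

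\emph{Wrong quantity.} $\sum_I\nu(I)\log\frac{\nu(I)}{A_\ast\nu(I)}$ is the partition version of $\mathrm{KL}(\nu\,\|\,A_\ast\nu)$, not of your $h_F=\int\mathrm{KL}(A_\ast\nu\,\|\,\nu)\,d\mu(A)$. Its average can be $+\infty$ while $h_F$ is finite: take $\mu=\frac12(\delta_A+\delta_B)$ with $A,B$ in ping-pong position, so that $A_\ast\nu$ and $B_\ast\nu$ are carried by disjoint arcs and $h_F=\log 2$.

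\emph{No role for stationarity.} The sketch uses only the Lipschitz bound on an individual $A$ and never says where stationarity enters. Without stationarity the one-step bound is false: take a rotation and normalized Lebesgue measure on a small arc. This is exactly the obstacle you identified.

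\emph{The $\log N$ overhead cannot be washed out.} Additivity under convolution holds only for the full entropy, i.e.\ after the supremum over partitions, so $N\to\infty$ must be taken at fixed $n$. Moreover $\log N$ is the trivial bound: after averaging in $A$, the $N$-cell quantity is the mutual information between $A$ and the cell of $A\xi$, hence at most $H_\nu(\mathcal P)\le\log N$.

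The missing step is to use stationarity inside the one-step estimate, turning the averaged divergence into an entropy difference. For a partition $\mathcal P$ of $\mathbb{P}^1$ into $N$ arcs of equal length, the identity $\int A_\ast\nu(I)\,d\mu(A)=\nu(I)$ gives
\[ \int\sum_{I\in\mathcal P}A_\ast\nu(I)\log\frac{A_\ast\nu(I)}{\nu(I)}\,d\mu(A)\;=\;H_\nu(\mathcal P)-\int H_\nu(A^{-1}\mathcal P)\,d\mu(A). \]
Each $A^{-1}I$ is an arc of length at most $\|A\|^2|I|$, so it meets at most $\|A\|^2+2$ cells of $\mathcal P$. Hence $H_\nu(\mathcal P)\le H_\nu(A^{-1}\mathcal P)+H_\nu(\mathcal P\,|\,A^{-1}\mathcal P)\le H_\nu(A^{-1}\mathcal P)+\log(3\|A\|^2)$, uniformly in $N$. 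Now refine dyadically: the partition sums increase to $\mathrm{KL}(A_\ast\nu\,\|\,\nu)$, and monotone convergence in $A$ applies. This yields $h_F(\nu)\le 2\int\log\|A\|\,d\mu(A)+\log 3$, in particular $h_F<\infty$, and only then does your $\mu^{\ast n}$ step give $h_F\le 2\gamma$.

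Repaired this way, your route genuinely differs from the paper's. The paper obtains the one-step bound with no additive constant, using the skew product $\overline{F}$, ergodic decomposition, Lebesgue differentiation (proposition \ref{densities}), balls of radii $\delta/\prod_i\|x_i\|^2$ telescoped along orbits, and the lower density bound of corollary \ref{density}. The counting argument is shorter and needs no differentiation theory. However, it relies on the global Lipschitz constant $\|A\|^2$ and on absorbing a constant through exact additivity under convolution. The paper's orbitwise telescoping is what carries over to the non-uniform fibered setting of proposition \ref{basicinequality4}.
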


By discussing how different matrices may have the same invariant probability  measure for their actions on \(\PP^1(\R),\) we can see from theorem \ref{F} that if \( \g =0,\) then either 
 there is a finite set of directions invariant under \( \mu \)-a.e. matrix $A$, or  the closed subgroup generated by the support of \( \mu \) is compact. We proof theorem \ref{F} in this section.

\begin{definition}
The probability  measure $\nu $ on  \( \PP ^1(\R) \) is called {\bf{stationary }} if \[ \nu \; =\; \int A_\ast \nu \, d\mu (A).\] \end{definition}
Observe that there  always exist stationary probability measures \( \nu \) on \( \PP ^1(\R) . \) The proof of theorem \ref{F} consists in showing that if \( \g =0,\) then stationary measures have to be invariant.
The tool is Furstenberg entropy.

\begin{definition}\label{Fentropy}
Let \( \mu\) be a probability measure on  \( \SL (2, \R) \), \( \nu \) a stationary probability  measure on \(\PP ^1 (\R).\) The {\bf {Furstenberg entropy}} \( \kappa (\mu, \nu)\) is defined by  \[ \kappa (\mu, \nu ) \;:= \; \int \log \frac {dA_\ast \nu }{d\nu } (A\xi ) \, d\nu (\xi) \, d\mu (A).\] \end{definition}
We set \(  \kappa (\mu, \nu )  = + \infty \) if, for a set of matrices $A$ of positive \(\mu\)-measure, the measure \( A_\ast \nu \) is not absolutely continuous with respect to \(\nu.\) We also have, a priori, \(  \kappa (\mu, \nu )  = + \infty \) if the function \( \log \frac {dA_\ast \nu }{d\nu } (A\xi ) \) is not integrable (by Jensen inequality, the integral, finite or infinite, makes sense). Again by Jensen
inequality, we have:

\begin{proposition}\label{invariance1} We have  \( \kappa (\mu, \nu) \ge 0 \)  and \( \kappa (\mu, \nu ) =0 \)  if, and only if, \(\nu \)  is invariant  under \( \mu \)-a.e. matrix $A$. \end{proposition}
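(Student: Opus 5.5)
The plan is to rewrite $\kappa(\mu,\nu)$, for each fixed matrix $A$, as a Kullback--Leibler divergence. Then both claims follow from Jensen's inequality applied to the strictly convex function $-\log$, together with its equality case.

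First we handle the infinite case. If $\kappa(\mu,\nu)=+\infty$, then $\kappa\ge 0$ trivially, and $\nu$ cannot be invariant under $\mu$-a.e.\ $A$. Indeed, invariance gives $A_\ast\nu=\nu$, so $\frac{dA_\ast\nu}{d\nu}\equiv 1$, the integrand vanishes, and $\kappa=0$. So we may assume $A_\ast\nu\ll\nu$ for $\mu$-a.e.\ $A$. Write $f_A:=\frac{dA_\ast\nu}{d\nu}$.

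The key step is the change of variables $\eta=A\xi$. Since $\xi$ is distributed according to $\nu$, the point $\eta$ is distributed according to $A_\ast\nu$. This gives
\[ \int \log f_A(A\xi)\,d\nu(\xi) \;=\; \int \log f_A(\eta)\, dA_\ast\nu(\eta) \;=\; \int f_A\log f_A \,d\nu \;=\; -\int \log\frac{1}{f_A}\, dA_\ast\nu .\]
The measure $A_\ast\nu$ is a probability measure. Jensen's inequality for the concave function $\log$ gives
\[ \int \log\frac{1}{f_A}\,dA_\ast\nu \;\le\; \log\int \frac{1}{f_A}\,dA_\ast\nu \;=\; \log \nu(\{f_A>0\}) \;\le\; 0 .\]
The inner integral is $\nu(\{f_A>0\})$, which follows directly from $dA_\ast\nu=f_A\,d\nu$. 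Hence the inner integral $\kappa_A:=\int \log f_A(A\xi)\,d\nu(\xi)$ lies in $[0,+\infty]$, and the Jensen statement also shows this integral is well defined. Integrating over $A$ with respect to $\mu$ gives $\kappa(\mu,\nu)\ge 0$.

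Now suppose $\kappa(\mu,\nu)=0$. Since each $\kappa_A\ge 0$, we get $\kappa_A=0$ for $\mu$-a.e.\ $A$. For such $A$, both inequalities above must be equalities, and this is the step needing care.

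Equality in the second inequality gives $\nu(\{f_A>0\})=1$. Equality in Jensen for the strictly concave $\log$ forces $1/f_A$ to be constant $A_\ast\nu$-a.e. Because $A_\ast\nu\ll\nu$ and $f_A>0$ $\nu$-a.e., the measures $\nu$ and $A_\ast\nu$ are equivalent, so $f_A$ is constant $\nu$-a.e. Since $\int f_A\,d\nu=1$, that constant is $1$. Therefore $A_\ast\nu=\nu$.

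The converse was already observed in the first step: invariance makes the integrand identically zero. The only real obstacle is bookkeeping with integrability. For this one notes that $\log^- f_A(A\xi)$ is integrable, because $\int \log\frac{1}{f_A}\,dA_\ast\nu\le 0$ bounds the negative part. Hence the integral always makes sense in $(-\infty,+\infty]$.
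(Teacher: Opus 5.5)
Your proof is correct and takes the same route as the paper, which also rewrites $\kappa(\mu,\nu)$ as an average of Kullback--Leibler divergences and uses the equality case of Jensen's inequality; the only difference is that the paper applies Jensen once to the joint measure $d(A_\ast\nu)(\xi)\,d\mu(A)$, whereas you apply it separately for each fixed $A$. Your fiberwise version, with its explicit control of the negative part of the integrand, is a more carefully written form of the paper's one-line argument.
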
 
\begin{proof} Indeed, there is equality in Jensen inequality if, and only if,  \(\displaystyle \frac {dA_\ast \nu }{d\nu } (\xi ) = 1 \; (d(A_\ast \nu )(\xi)  d\mu (A) )\)-a.e.. \end{proof}
The main step in the proof is the following 
\begin{proposition}[{\bf{Basic inequality}}]\label{basicinequality1} Let \( \mu\) be a probability  measure on  \( \SL (2, \R) \), with \( \int \log \|A\| \, d\mu (A) < +\infty , \) \( \nu \) a stationary probability  measure on \(\PP ^1 (\R).\)  Then,  \[ \kappa (\mu, \nu )  \; \le \; 2 \g .\]\end{proposition}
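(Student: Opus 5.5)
The plan is to pass to $n$ steps, where the factor $\g$ becomes visible through the norm growth of $Z_n$, and to read the Furstenberg entropy as a mutual information. Write $\kappa_n := \int \log \frac{d(Z_n)_\ast\nu}{d\nu}(Z_n\xi)\, d\nu(\xi)\, d\ov m$. The first step is the additivity $\kappa_n = n\kappa(\mu,\nu)$. This should follow from the chain rule for Radon--Nikodym derivatives,
\[ \frac{d(BA)_\ast\nu}{d\nu}(BA\xi) = \frac{dB_\ast\nu}{d\nu}(BA\xi)\,\frac{dA_\ast\nu}{d\nu}(A\xi), \]
together with stationarity of $\nu$: after integrating over $A$, the point $A\xi$ is again distributed as $\nu$ and is independent of $B$. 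If some $A_\ast\nu$ fails to be absolutely continuous with respect to $\nu$, both sides are $+\infty$ and the same bookkeeping applies.

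The second step is the probabilistic reinterpretation. Let $\xi \sim \nu$ be independent of the sequence $(x_k)$. Stationarity says that $Z_n\xi$ has law $\nu$, and the conditional law of $Z_n\xi$ given $Z_n = g$ is $g_\ast\nu$. Hence $\kappa_n = \int D(g_\ast\nu \,\|\, \nu)\, dZ_n(\ov m)(g)$ is exactly the mutual information $I(Z_n ; Z_n\xi)$. So it suffices to show $I(Z_n;Z_n\xi) \le 2n(\g+\e) + o(n)$ for every $\e>0$. By the supremum-over-partitions formula for mutual information, it is enough to bound $I(Z_n ; \mathcal Q(Z_n\xi))$ uniformly over finite partitions $\mathcal Q$ of $\PP^1(\R)$ into intervals.

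The third step is the geometric input. By Theorem \ref{FK}, for most $x$ we have $\|Z_n\| \le e^{n(\g+\e)}$. For $g \in \SL(2,\R)$, the map $g$ sends the complement of an arc of length $\eta$ around its most contracted direction into an arc of length $\lesssim \|g\|^{-2}\eta^{-1}$ around its most expanded direction $u(g)$. Its derivative on $\PP^1$ is $\|gv\|^{-2}$ for unit $v$, so it lies between $\|g\|^{-2}$ and $\|g\|^{2}$; consequently $g$ distorts lengths by at most $e^{2n(\g+\e)}$. The intended use is a scale argument. Fix $\mathcal Q$, possibly very fine. Conditionally on $Z_n = g$, the pair $(g,\xi) \mapsto \mathcal Q(g\xi)$ is compared with $\mathcal Q'(\xi)$, where $\mathcal Q' = g^{-1}\mathcal Q$. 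The point is that $g$ can only refine or coarsen intervals by the factor $e^{2n(\g+\e)}$, so each atom of $\mathcal Q$ meets the $g$-image of boundedly many atoms at scale $e^{-2n(\g+\e)}|\cdot|$. Writing $I(Z_n;\mathcal Q(Z_n\xi)) = H(\mathcal Q(Z_n\xi)) - H(\mathcal Q(Z_n\xi)\mid Z_n)$ and using $H(\mathcal Q(g\xi)) = H_\nu(g^{-1}\mathcal Q)$, I want to show that
\[ H_\nu(\mathcal Q) - \int H_\nu(g^{-1}\mathcal Q)\, dZ_n(\ov m)(g) \le 2n(\g+\e) + C. \]
The contribution of the exceptional set where $\|Z_n\|$ is large is controlled using $\int\log\|A\|\,d\mu<\infty$. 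Dividing by $n$ and letting $\e\to 0$ then gives $\kappa(\mu,\nu) \le 2\g$.

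The main obstacle is the last displayed estimate: making ``a map with derivative bounded by $e^{2n(\g+\e)}$ can lower the $\nu$-entropy of a partition by at most $2n(\g+\e)+C$'' rigorous for an arbitrary, possibly singular, $\nu$ and uniformly in $\mathcal Q$. My first attempt would be a comparison of partitions. Let $\mathcal Q_g$ be the common refinement of $g^{-1}\mathcal Q$ with the partition of each atom of $\mathcal Q$ into $e^{2n(\g+\e)}$ subintervals of equal length. Each atom of $g^{-1}\mathcal Q$ is an interval of length at least $e^{-2n(\g+\e)}$ times the length of an atom of $\mathcal Q$ it meets, so it meets only boundedly many cells at that scale. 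This gives $H_\nu(\mathcal Q) \le H_\nu(\mathcal Q_g) \le H_\nu(g^{-1}\mathcal Q) + 2n(\g+\e) + C$. If the interval bookkeeping breaks down, I would instead first replace $\nu$ by a smoothed measure, use $D(g_\ast\mathrm{Leb}\,\|\,\mathrm{Leb}) \le 2\log\|g\|$, and recover the bound for $\nu$ by lower semicontinuity of relative entropy.
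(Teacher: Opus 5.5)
Your framework is correct and differs from the paper's. The additivity $\kappa_n=n\kappa(\mu,\nu)$ is fine; the paper uses it too, through $\mu^{\ast L}$. So is the identity $\kappa_n=I(Z_n;Z_n\xi)$, and so is the reduction to finite partitions of the target. For the case $\kappa=+\infty$, replace ``the same bookkeeping'' by the chain rule for mutual information, valid in $[0,+\infty]$: $I(Z_n;Z_n\xi)=I(x_0,\dots,x_{n-1};Z_n\xi)=\kappa+\kappa_{n-1}$.

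The gap is the step you flag, and the partition comparison you propose fails for three reasons. First, the counting runs the wrong way. A \emph{lower} bound on the lengths of the atoms of $g^{-1}\mathcal{Q}$ controls how many of them a small cell meets, i.e.\ $H_\nu(g^{-1}\mathcal{Q}\mid\mathcal{Q}^{(n)})$, where $\mathcal{Q}^{(n)}$ is your subdivision of $\mathcal{Q}$. But $H_\nu(\mathcal{Q}_g)\le H_\nu(g^{-1}\mathcal{Q})+2n(\gamma+\varepsilon)+C$ needs $H_\nu(\mathcal{Q}_g\mid g^{-1}\mathcal{Q})$, i.e.\ an \emph{upper} bound on how many cells an atom of $g^{-1}\mathcal{Q}$ meets. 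Second, for arbitrary interval partitions no argument made separately for each $g$ can be uniform in $\mathcal{Q}$. Suppose $\nu$ is non-atomic and $g_\ast\nu\neq\nu$. Pick an arc $K$ with $\nu(K)>\nu(g^{-1}K)$, cut $K$ into $N$ arcs of equal $\nu$-mass, and keep the complement as one atom. Then $H_\nu(\mathcal{Q})-H_\nu(g^{-1}\mathcal{Q})\ge(\nu(K)-\nu(g^{-1}K))\log N-\log 2\to+\infty$. Third, even for arcs of equal length $r$, where the correct bound $|g^{-1}Q'|\le\|g\|^2 r$ holds, subdividing into $e^{2n(\gamma+\varepsilon)}$ pieces costs a second such factor and yields $4\gamma$. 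The fallback does not help either: $D(g_\ast\mathrm{Leb}\,\|\,\mathrm{Leb})\le 2\log\|g\|$ gives no control of $D(g_\ast\nu_\delta\,\|\,\nu_\delta)$ for a smoothing $\nu_\delta$, which is not stationary.

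The repair stays inside your framework. By Dobrushin's theorem for each $g$ and then monotone convergence, $I(Z_n;\mathcal{Q}_k(Z_n\xi))\uparrow I(Z_n;Z_n\xi)$ along any refining generating sequence. So it suffices to take $\mathcal{Q}$ to be the partition of $\PP^1(\R)$ into arcs of equal (dyadic) length $r$. The derivative of $g^{-1}$ on $\PP^1(\R)$ at a unit vector $v$ is $\|g^{-1}v\|^{-2}\le\|g\|^2$. Hence each atom of $g^{-1}\mathcal{Q}$ has length at most $\|g\|^2 r$ and meets at most $\|g\|^2+2\le 3\|g\|^2$ atoms of $\mathcal{Q}$. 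Therefore, for \emph{every} $g\in\SL(2,\R)$ and with no subdivision,
\[ H_\nu(\mathcal{Q})\le H_\nu(\mathcal{Q}\vee g^{-1}\mathcal{Q})=H_\nu(g^{-1}\mathcal{Q})+H_\nu(\mathcal{Q}\mid g^{-1}\mathcal{Q})\le H_\nu(g^{-1}\mathcal{Q})+2\log\|g\|+\log 3 . \]
Integrating against the law of $Z_n$ gives $\kappa_n\le 2\int\log\|Z_n\|\,d\overline m+\log 3$, and dividing by $n$ gives $\kappa(\mu,\nu)\le 2\gamma$. No typical set for $\|Z_n\|$ and no exceptional-set estimate are needed.

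Repaired, your proof is shorter than the paper's. It needs no reduction to extremal $\nu$, no ergodic theorem, and neither Proposition \ref{densities} nor a lower density bound. It does rely on the global Lipschitz bound $\|g^{-1}\|=\|g\|$, which is special to the projective action. The paper instead telescopes ratios of $\nu$-measures of balls of radii $\delta\prod_{i<j}\|x_i\|^{-2}$ along orbits of $\overline F$. That uses only derivatives along typical orbits, which is what survives in Proposition \ref{basicinequality4}. There the global Lipschitz constant of $(F^{(n)}_x)^{-1}$ may grow faster than $e^{-n\gamma^-}$.
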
 

\begin{proof}  
Consider \( (\ov \EE,  \ov M)  :=  (\ov X \times \PP^1(\R), \ov m \times \nu ).\)
We have \[  \kappa (\mu, \nu ) \;:= \; \int \log \frac {d(x_0)_\ast \nu }{d\nu } (x_0\xi ) \, d\ov M (x,\xi).\]
 Consider the transformation   \( \ov F: \ov \EE \to \ov \EE ,\) defined by  \(  \ov F (x, \xi ) \;:= \; (\ov f(x), x_0 \xi)\), where $\ov f$ is the shift on the product space \( \ov X.\) We have  \( \ov F ^j (x,\xi ) = (\ov f^j x, Z_j (x) \xi).\)

Observe that the measure \( \ov M\) is invariant under the transformation \( \ov F:\)  for all \( \vf \in C(\ov \EE),\) we can write, using the stationarity relation,
\[ \int \vf \circ \ov F\, d\ov M \;= \;\int \vf (x_1, \ldots, x_0 \xi ) d\nu (\xi ) d\mu (x_0) d\mu (x_1) \dots \; =\;  \int \vf \, d\ov M.\]

By a general result of Breiman about Markov chains (cf. e.g. \cite[ Theorem I.2.1)]{K86} the ergodic decomposition of \(\ov M\) is given by the extremal decomposition of \(\nu\) into stationary measures. So we first  assume that the measure \(\ov M\) is ergodic for \(\ov F \) and, equivalently, that the measure \(
\nu\) is extremal among stationary measures. 

  Write, for $ x\in \ov X, \xi \in \SSSS^1,$ 
\[ g( x, \xi) \;:= \; \frac {d(x_0^{-1})_\ast \nu} {d\nu}(\xi)\]
and, for  $N$ large, \( g^N ( x,\xi) := \sup \{g(x, \xi), e^{-N}\} .\) The function $-\log g^N $ is $\ov M$-integrable, \( -\log g \ge - \log g^N \)  and, by monotone limit,\footnote{Here, and in all similar formulas in the paper, both terms might be  a priori infinite. The proof  of  proposition \ref{basicinequality1} will in particular show that, under our hypotheses, $\kappa (\mu, \nu ) < + \infty .$ (see corollary \ref{abs.cont.})} \begin{equation}\label{infinite0}   \kappa (\mu, \nu ) \; = \; \lim\limits _{N \to \infty } - \int \log g^N (x,\xi) \, d\ov M (x,\xi).\end{equation}

Fix $\e >0$ and choose  $\de >0, \, \de = \de (N, \e),$ such that, setting 
\begin{eqnarray*}  g_\de ( x,\xi) &:= & \frac {\int _{B(\xi, \de)} g( x,\eta) \, d\nu}{\nu (B(\xi,\de))} =  \frac {\nu ( x_0(B(\xi, \de)))}{\nu (B(\xi,\de))},\\
 g_\de^N ( x,\xi) &:= & \frac {\int _{B(\xi, \de)} g^N( x,\eta) \, d\nu  (\eta)}{\nu(B(\xi,\de))} , \quad 
 g_\de^\ast := \sup _{\de ' \le \de } g_{\de '}^N,\end{eqnarray*}
we have $\displaystyle \int \log \frac {g_\de^\ast}{g^N} \, d\ov M  \le \e$ (see proposition \ref{densities}). Observe that, for $q\le \de ,$ \[-\log g^N \le -\log g_q +\log g_q -  \log g_q^N +\log g^\ast_\de - \log g^N  \; \le \; -\log g_q +\log g^\ast_\de - \log g^N .  \]
Set, for $j\ge 0,$   $\displaystyle q_j := \frac{ \de }{\| x_0\|^2 \ldots  \| x_{j-1}\|^2} .$ In particular, $q_j \le \de $ for all $j \ge 0.$

By the ergodic theorem, we may  write, at $\ov M$-a.e.$(\ov x,\xi)$,   \begin{eqnarray*} -\int \log g^N \, d \ov M &=& \lim_{n\to \infty } \frac{1}{n}\sum _{j=0}^{n-1} -\log g^N(\ov F^j (x,\xi)) \\
 &\le & \liminf_{n\to \infty } \frac{1}{n}\sum _{j=0}^{n-1} -\log g_{q_j}(\ov F^j (x,\xi)) \; +\; \int \log \frac {g_\de^\ast}{g^N} \, d\ov M  \\
  &\le & \liminf_{n \to \infty } \frac{1}{n}\sum _{j=0}^{n-1} -\log \frac{\nu (x_j (B(Z_j(x)\xi, q_j)))}{ \nu (B(Z_j(x)\xi, q_j))}\; +\; \e . \end{eqnarray*}
Observe that \(  x_j (B(Z_j(x)\xi, q_j)) \supset B(Z_{j+1}(x)\xi, \frac{q_{j}}{\|x_j\|^2}) =  B(Z_{j+1}(x)\xi, q_{j+1}).\) The sum telescopes and we get, at $\ov M$-a.e.$( x,\xi)$,
\[-\int \log g^N \, d \ov M  \; \le \;  \liminf_{ n \to \infty }- \frac{1}{n}\log \frac{\nu (B(Z_n(x)\xi, q_n))}{ \nu (B(\xi, \de))} + \e, \]

By proposition \ref{densities}, there are  constants $C$ and $\g $  such that
 the  set $D $ of points in $\SSSS^1$ such that $\nu ( B(\xi, \g')) \ge C  \g' $  for all $0< \g' \le \g $ has $\nu$-measure at least $1-\e$ Therefore, for $\ov M$-a.e.$( x,\xi),$ there are infinitely many indices $n$ such that $q_n<  \g  $ and \(  \ov F^{(n )} ( x,\xi)  =( \ov f^n x, Z_n(x)\xi ) \in \ov X \x D .\) 
 It follows that, for all $\e >0,$ for $\ov M$-a.e.$( x,\xi),$
 \[ \liminf_{n \to \infty }- \frac{1}{n}\log \frac{\nu ((B(Z_n(x)\xi)), q_n))}{ \nu (B(\xi, \de))} \;\le\; \lim\limits _{n\to \infty} - \frac{1}{n} \log q_n   = 2\int \log \| A\| \, d\mu (A) .\]
 Letting $\e \to 0$ and then  using  (\ref{infinite0}), we obtain \begin{equation}\label{presquebasic} \kappa (\mu , \nu ) \le 2 \int \log \| A\| \, d\mu (A).\end{equation}
 
If the measure $\nu $ is not extremal, it admits a decomposition $\nu = \int \nu _t \, dn(t) $ in extremal measures and then $\kappa (\mu , \nu ) \le \int \kappa (\mu, \nu _t) \, dn (t).$ The inequality (\ref{presquebasic}) still holds for a general stationary $\nu $. 
Let $\mu ^{\ast L} $ be the distribution of $Z_L$. The measure $\nu$ is stationary for $\mu ^{\ast L} $ as well and $\kappa (\mu ^{\ast L} , \nu ) = L \kappa (\mu, \nu ) .$
It follows that  \[ \kappa (\mu , \nu ) \le 2 \lim\limits _{L \to \infty} \frac{1}{L} \int \log \| Z_L\| \, d\mu ^{\ast L}  = 2\g .\]  \end{proof}

 \begin{remark} The original proof of proposition \ref{basicinequality1}  consists in approximating $\mu $ by measures $\mu _n$ with an absolutely continuous stationary measure $\nu _n$. There is equality when the stationary measure  is absolutely continuous and the inequality goes to the limit (see \cite{L84}). The proof we present uses the ideas of the proof of proposition \ref{basicinequality4} in a simpler setting. If one considers the action of $\PP\SL(2,\R) $ on the hyperbolic space by isometries, then the exponent $\g $ is also given by $\g = \lim\limits _{n\to \infty } \frac{1}{2n} d(o, Z_no) $ and then the basic inequality goes back to Guivarc'h (\cite{guivarch}) if $\mu$ is supported on a discrete group. Another, classical, instance of the basic inequality is the Margulis-Ruelle inequality in $C^1$ dynamics (see \cite{R79}, or, more precisely for us, \cite{K86} and \cite{BB95} in the random dynamics case). \end{remark}

\begin{proof}({\it {Proof of theorem \ref{F}}}) Theorem \ref{F} clearly follows from propositions \ref{invariance1} and \ref{basicinequality1}. \end{proof}

Proposition \ref{basicinequality1} and Theorem \ref{F} readily extend to \(\SL (d,\R)\), for all $d\ge 2: $ we have \( \kappa (\mu ,\nu ) \le (d-1) (\g ^+ -\g ^- ), \) where, for \(\ov m\)-a.e. \( x \in (\SL(d,\R)^\N, \) 
 \[ \g ^+ \,:=\, \lim\limits _{n \to \infty } \frac{1}{n} \log \| Z_n (x)\|,  \quad  \g ^- :=\, -\lim\limits _{n \to \infty } \frac{1}{n} \log \| Z_n^{-1}(x) \| .\]
 \begin{corollary}\label{abs.cont.} Let $\mu $ be a probability measure on $\SL(2,\R)$, such that \( \int \log \|A\| \, d\mu (A) < +\infty ,\) $\nu $ a stationary probability measure on $\SSSS ^1.$ Then, for $\mu $-a.e. $A$, the measure $(A^{-1})_\ast \nu $ is absolutely continuous with respect to $\nu ,$ and $\int \log \frac{d (A^{-1})_\ast \nu}{d\nu} (\xi ) \, d\nu (\xi) > -\infty .$ \end{corollary}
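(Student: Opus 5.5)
The corollary will be read off from the proof of Proposition \ref{basicinequality1}. That proof never assumed absolute continuity; it only used the truncated densities $g^N=\sup\{g,e^{-N}\}$, where $g(x,\xi)=\frac{d(x_0^{-1})_\ast\nu}{d\nu}(\xi)$ is the density of the absolutely continuous part of $(x_0^{-1})_\ast\nu$ with respect to $\nu$. It gave the bound
\[ -\int \log g^N\, d\ov M \;\le\; 2\int \log\|A\|\,d\mu(A)+\e \]
uniformly in $N$. By the integrability hypothesis the right-hand side is finite.

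The plan has three steps.

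\textbf{Step 1: finiteness of the integral.} First assume $\nu$ is extremal, so that $\ov M$ is ergodic, as in the proof. Letting $N\to\infty$ with monotone convergence gives $-\int \log g\, d\ov M<+\infty$. Write this integral as $\int\bigl(-\int \log g(A,\xi)\,d\nu(\xi)\bigr)d\mu(A)$. The inner integral is bounded below: by Jensen it is at least $-\log\int g\,d\nu\ge 0$. Hence for $\mu$-a.e.\ $A$ we get $\int \log \frac{d(A^{-1})_\ast\nu}{d\nu}\,d\nu>-\infty$, with the density understood as the density of the absolutely continuous part.

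\textbf{Step 2: absolute continuity.} Here I use the change of variables $\eta=A\xi$. Let $\nu=\nu^{ac}+\nu^{s}$ be the Lebesgue decomposition of $A_\ast\nu$ with respect to $\nu$, and transport it back by $A^{-1}$. One gets $\int g(A,\xi)\,d\nu(\xi)=1$ exactly when $(A^{-1})_\ast\nu\ll\nu$. The argument I would try is Jensen with the singular mass $s_A:=1-\int g(A,\cdot)\,d\nu$, together with the estimate through $\ov M$. The identity $\int\log g\,d\nu = \int\log\frac{d\nu}{dA_\ast\nu}\,d(A_\ast\nu)$ holds only on the part where the measures are equivalent. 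That part alone does not force the singular mass to vanish.

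So a cleaner route is needed. Re-run the telescoping argument of Proposition \ref{basicinequality1} with $g_\de=\nu(x_0B(\xi,\de))/\nu(B(\xi,\de))$. These ratios see the whole measure, singular part included, and by the differentiation of measures (Proposition \ref{densities}) they converge to $\frac{d(x_0^{-1})_\ast\nu}{d\nu}$ in the Radon--Nikodym sense. The quantity $\frac{d(A_\ast\nu)}{d\nu}(A\xi)$, integrated against $\nu$, is what appears in $\kappa$ (Definition \ref{Fentropy}). The convention there is $\kappa=+\infty$ when $A_\ast\nu\not\ll\nu$ on a set of positive $\mu$-measure. The proof of Proposition \ref{basicinequality1} establishes $\kappa\le 2\g<\infty$, and the footnote after (\ref{infinite0}) says exactly that this proof shows finiteness. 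It follows that $A_\ast\nu\ll\nu$ for $\mu$-a.e.\ $A$.

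To turn this into the statement for $(A^{-1})_\ast\nu$, I would use the symmetric version. Apply the basic inequality to the reflected measure $\check\mu$, the law of $A^{-1}$. Then $\int\log\|A^{-1}\|\,d\check\mu=\int\log\|A\|\,d\mu<\infty$, because $\|A^{-1}\|=\|A\|$ in $\SL(2,\R)$. The difficulty is that $\nu$ need not be $\check\mu$-stationary. The alternative is to rely directly on the proof's use of $g=\frac{d(x_0^{-1})_\ast\nu}{d\nu}$, which is precisely the orientation in the corollary, and to note that the singular part contributes $-\log$ of a vanishing ball ratio. I expect this to be the main obstacle: showing that along typical orbits the singular part makes $-\frac1n\log$ of the ball measures blow up, contradicting the finite bound. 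If the singular part carried positive mass, the ratio $g_q$ would tend to $0$ on a set of positive $\ov M$-measure. The ergodic averages of $-\log g_{q_j}$ would then diverge, contradicting the telescoped bound $2\int\log\|A\|\,d\mu$.

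\textbf{Step 3: the non-extremal case.} For a general stationary $\nu$, use the ergodic decomposition $\nu=\int\nu_t\,dn(t)$ and convexity as at the end of the proof of Proposition \ref{basicinequality1}. Absolute continuity passes to the mixture: if each $(A^{-1})_\ast\nu_t\ll\nu_t\ll\nu$, then $(A^{-1})_\ast\nu\ll\nu$. The bound on the log-integral passes to the mixture by joint convexity of relative entropy.
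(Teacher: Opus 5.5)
Your Step 2 reaches exactly what the paper's argument supports. The paper's only justification is the footnote after (\ref{infinite0}): the proof of Proposition \ref{basicinequality1} gives $\kappa(\mu,\nu)\le 2\g<\infty$, and by the convention in Definition \ref{Fentropy} this forces $A_\ast\nu\ll\nu$ for $\mu$-a.e.\ $A$. Equivalently, $\nu\ll(A^{-1})_\ast\nu$, and $\int\log g(A,\cdot)\,d\nu>-\infty$, where $g$ is the density of the absolutely continuous part of $(A^{-1})_\ast\nu$.

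The step that fails is your upgrade to $(A^{-1})_\ast\nu\ll\nu$. The singular part of $(A^{-1})_\ast\nu$ relative to $\nu$ is carried by a $\nu$-null set. So for $\nu$-a.e.\ $\xi$ the ratios $\nu(AB(\xi,q))/\nu(B(\xi,q))$ still converge to $g(A,\xi)$. A singular part can only enlarge these ratios; it can never drive them to $0$ on a set of positive $\ov M$-measure. Ratios tending to $0$ on a positive set means $g=0$ there, i.e.\ $\nu\not\ll(A^{-1})_\ast\nu$, which is the direction already excluded.

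In fact no argument can close this step, because the assertion is false as literally stated. Take $A_1=\mathrm{diag}(1/2,2)$ and $A_2=\begin{pmatrix}1/2&3/2\\0&2\end{pmatrix}$, acting in the chart $t=x/y$ by $t\mapsto t/4$ and $t\mapsto t/4+3/4$. Let $\mu=\frac12(\delta_{A_1}+\delta_{A_2})$, and let $\nu$ be the self-similar Cantor measure on $K=A_1K\sqcup A_2K\subset[0,1]$, which is $\mu$-stationary. Then $(A_1^{-1})_\ast\nu([3,4])=\nu([3/4,1])=\frac12$, while $\nu([3,4])=0$. Yet $(A_1)_\ast\nu$ has density $2$ on $A_1K$ and $0$ off it, and $\kappa(\mu,\nu)=\log 2$.

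So the provable content is $A_\ast\nu\ll\nu$ together with the integrability of $\log g$, and that is all the proof of Proposition \ref{basicinequality1} needs. In Step 3, the convexity bound $\kappa(\mu,\nu)\le\int\kappa(\mu,\nu_t)\,dn(t)$ gives this directly. The claim $\nu_t\ll\nu$ you invoke there is false in general and is not needed.

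Separately, Step 1 rests on the claim that the proof of Proposition \ref{basicinequality1} never used absolute continuity. It does. The identity $g_\de=\nu(x_0B(\xi,\de))/\nu(B(\xi,\de))$ equates the $\nu$-average of $g$ over the ball with the full ball ratio. Without absolute continuity you only have $\int_B g\,d\nu\le\nu(x_0B)$. That is the wrong direction for bounding $-\log g^N$ from above before telescoping.

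To repair this, let $s$ be the singular part of $(x_0^{-1})_\ast\nu$ and set $s^\ast_\de(x,\xi):=\sup_{q\le\de}s(B(\xi,q))/\nu(B(\xi,q))$. Since $g^\ast_\de\ge e^{-N}$, for $q\le\de$ you get $-\log g^N\le-\log\frac{\nu(x_0B(\xi,q))}{\nu(B(\xi,q))}+\log\frac{g^\ast_\de}{g^N}+\log(1+e^Ns^\ast_\de)$. The maximal function $s^\ast_\de$ tends to $0$ $\nu$-a.e.\ as $\de\to0$, because a singular measure has zero density. Also $\log(1+e^Ns^\ast_\de)\le\log(1+e^Ns^\ast_{\de_0})$, which is integrable uniformly in $x_0$ by the weak type $(1,1)$ bound, since $s$ has mass at most $1$. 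Hence the extra error integral tends to $0$, and the telescoping then goes through as written.
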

 \subsection{ Application to discrete 1-d Schr\"odinger equation.}
 
 One of the first applications of Furstenberg theorem \ref{F} is to one-dimensional Anderson model in theoretical physics.
 
  Let \( \{V_n\}_{n\in \Z} \) be a sequence of bounded independent real variables with distribution \( \tau\), \( (X,m) := (\R^\Z, \tau ^{\otimes \Z}).\) The one-dimensional Schr\"odinger operator at \( x= \{V_n\}_{n\in \Z} \in X\) is the self-adjoint operator \(H_x\) of \( \ell ^2 (\Z) \) given for \( u = \{u_n \}_{n \in \Z} \in \ell ^2 (\Z), \)  for all \( n \in \Z,\) by 
\[\left( H_x (u)\right)_n \;:= \; u_{n+1} + u_{n-1} + V_n u_n.\]
The operator \( H_x \) admits a spectral measure $\s_x$ on $\R$, given by its moments  \(\int E^n \, d\s_x(E) = \frac{1}{2}\left( (H_x(\de_0))_n +(H_x(\de _1))_n \right) .\) By ergodicity of the shift transformation $f$ on $X$, the type\footnote{The {\bf {type}} of a positive measure on $\R$ is whether the measure is discrete, singular continuous or absolutely continuous with respect to the Lebesgue measure or a convex combination of these.} and the support of \( \s_x \) are $m$-a.e. constant.

In the one-dimensional  Anderson  model, \( \exp (it H_x)u \) describes the quantum evolution of the quantum object \(u\). The usual understanding  is that eigenfunctions (i.e. the discrete part of \(\s_x\)) correspond to {\bf {localization}}, the absolutely continuous part of \(\s_x\)  to {\bf {diffusion}}.
  
  \begin{theorem}  [\cite{I73}]
 For \(m\)-a.e. \(x\in X,\) the spectral measure \( \s_x \) is singular with respect to the Lebesgue measure on $\R$. \end{theorem}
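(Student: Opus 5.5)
The plan is to combine three ingredients: Furstenberg's theorem \ref{F}, which gives positivity of the Lyapunov exponent of the transfer matrices for every energy; Fubini; and the standard spectral-theoretic fact (Kotani / Simon--Spencer style, or Ishii--Pastur) that absolutely continuous spectrum can only live where the Lyapunov exponent vanishes.

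\emph{Transfer matrices.} For $E\in\R$, the eigenvalue equation $H_xu=Eu$ is rewritten as $(u_{n+1},u_n)^T = A_E(V_n)(u_n,u_{n-1})^T$, where
\[ A_E(V) = \begin{pmatrix} E-V & -1\\ 1 & 0\end{pmatrix}\in \SL(2,\R). \]
Since the $V_n$ are independent with law $\tau$, the matrices $A_E(V_n)$ are independent with law $\mu_E$, the image of $\tau$. Boundedness of the $V_n$ gives $\int\log\|A\|\,d\mu_E<\infty$, so theorem \ref{FK} provides an exponent $\g(E)$.

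\emph{Positivity of $\g(E)$ for every $E$.} I assume $\tau$ is not a Dirac mass; otherwise the operator is periodic and the statement fails. I argue by contradiction using theorem \ref{F}. Suppose $\g(E)=0$. Then some probability $\nu$ on $\PP^1(\R)$ is invariant under $A_E(V)$ for $\tau$-a.e.\ $V$.
\begin{itemize}
\item Take two distinct values $V\neq V'$ in the support. Then $A_E(V)A_E(V')^{-1}$ is the unipotent $\begin{pmatrix}1 & V'-V\\ 0 & 1\end{pmatrix}$, and it preserves $\nu$.
\item A nontrivial parabolic preserves only the Dirac mass at its fixed direction $e_1$. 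Hence $\nu=\de_{[e_1]}$.
\item But $A_E(V)e_1=(E-V,1)^T$ is never proportional to $e_1$, so $\de_{[e_1]}$ is not invariant. This is a contradiction.
\end{itemize}
Therefore $\g(E)>0$ for all $E$.

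\emph{Passing to the spectral measure.} By Fubini on $X\times\R$ with $m\times \mathrm{Leb}$, for $m$-a.e.\ $x$ the limit $\lim\frac1n\log\|Z_n(x,E)\|=\g(E)>0$ holds for Lebesgue-a.e.\ $E$. Two facts then finish the proof.
\begin{itemize}
\item By Oseledets (or Ruelle's theorem), for such $(x,E)$ every solution of $Hu=Eu$ except one direction grows exponentially at $+\infty$. The same holds at $-\infty$, applying the argument to the reversed sequence.
\item I invoke the standard subordinacy / Weyl $m$-function criterion: the absolutely continuous part of $\s_x$ is carried by the set of $E$ where no solution is subordinate at $+\infty$. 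Equivalently, one can use that the a.c.\ part is supported where the Weyl functions have positive imaginary boundary values. That set is disjoint from the full-Lebesgue-measure set of energies with exponential dichotomy. Here we use that the decaying Oseledets solution is subordinate.
\end{itemize}
Since the a.c.\ part is then carried by a Lebesgue-null set, it vanishes, and $\s_x$ is singular. A cleaner alternative is Ishii's original bound: the function $\int \|Z_n(x,E)(1,0)^T\|^{-2}\,d\s_x$-type estimates force $\s_x^{ac}$ to vanish on $\{\g>0\}$.

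\emph{Main obstacle.} The delicate step is the last one: converting Lebesgue-a.e.\ positivity of $\g$ at fixed $x$ into singularity of $\s_x$. The difficulty is that the set of energies where Oseledets fails depends on $x$, and $\s_x$ may charge it. This is precisely why only singularity, not pure point spectrum, follows. I would rely on the Kotani-type statement "$\s^{ac}_x$ is supported on the essential closure of $\{\g=0\}$", citing \cite{S83}, rather than reprove the subordinacy theory.
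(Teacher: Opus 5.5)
Your argument is correct, but the spectral step takes a different route from the paper. The paper, following Pastur, never fixes $x$. It considers $\Si(dx,dE)=\s_x(dE)\,m(dx)$ and shows $\Si\perp m\times\mathrm{Leb}$ by exhibiting disjoint full-measure sets:
\begin{itemize}
\item Furstenberg, Oseledets and Fubini give that for $m\times\mathrm{Leb}$-a.e.\ $(x,E)$ every nonzero solution grows exponentially at $+\infty$ or at $-\infty$.
\item The spectral theorem alone gives the complementary set. Writing $U\de_n=u_n(E)a(E)+v_n(E)b(E)$, one has $\int(|u_n|^2+|v_n|^2)\,d\s_x=1$ for all $n$. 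This yields, for $\Si$-a.e.\ $(x,E)$, a generalized eigenfunction with $\sum_n|u_n|^2/n^2<\infty$, hence of polynomial growth.
\end{itemize}
Disintegrating in $x$ then gives singularity of $\s_x$ for $m$-a.e.\ $x$.

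You instead fix $x$ and use subordinacy theory (or Weyl function boundary values) as a black box. This gives a deterministic, operator-by-operator criterion, but it is a much heavier tool than the paper's polynomial bound, which comes directly from the spectral theorem.

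Two points need tightening.
\begin{itemize}
\item As written, with subordinacy only at $+\infty$, your criterion is the half-line one. For the whole-line $H_x$, the a.c.\ part is carried by energies with no subordinate solution at $+\infty$ \emph{or} at $-\infty$. Equivalently, decouple into two half-line operators by a rank-two perturbation and apply Kato--Rosenblum. Since you have dichotomy at both ends, your conclusion survives.
\item Your fallback of citing ``$\s^{ac}_x$ is carried by the essential closure of $\{\g=0\}$'' is the Ishii--Pastur theorem itself. That is essentially the statement to be proved, so it cannot replace the subordinacy (or Pastur) step.
\end{itemize}

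Your positivity step usefully fills in what the paper only asserts (``as soon as $\tau$ is not supported on a point''). The computation $A_E(V)A_E(V')^{-1}=\begin{pmatrix}1&V'-V\\0&1\end{pmatrix}$ is right. Take $V\neq V'$ in the $\tau$-full-measure set where $\nu$ is invariant, or use that the stabilizer of $\nu$ is closed. You are also right that nondegeneracy of $\tau$ is a necessary hypothesis: with a constant potential the spectrum is purely absolutely continuous, so the statement fails.
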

 \begin{proof} (See \cite{P80}) Consider the measure \( \Si \) on \(X \x \R \) given by \( \Si (dx, dE) : = \s_x(dE) m(dx) .\)
We want to show that the measures \( m\x Leb.\) and \(\Si\) are mutually singular. We exhibit two disjoint sets of full measure for respectively  \( m\x Leb.\) and \(\Si.\)

Fix $x \in X$. A sequence \(\{u_n\}_{n\in \Z} \in \C^\Z\) satisfies \( (H_x u)_n = Eu_n \) iff for all \(n\in \Z,\)
\begin{equation} \label{eigenvalue} \begin{pmatrix}  u_{n+1} \\   u_n \end{pmatrix} \;=\;  \begin{pmatrix}  E-V_n & -1 \\ 1 & 0 \end{pmatrix} \; \begin{pmatrix}  u_{n} \\  u_{n-1} \end{pmatrix}. \end{equation}

On the one hand, by Furstenberg theorem \ref{F}, as soon as $\tau $ is not supported on a point, the exponent \( \g (E)\) is positive. Then, by Oseledets theorem \ref{oseledets2}, for \(m\)-a.e. \(x \in X,\) a solution of (\ref{eigenvalue}) satisfies 
\[ \lim\limits_{n \to +\infty} \frac{1}{n} \log \max \{| u_n |, | u_{-n} |\} \; =\; \g (E) .\] 

On the other hand, the spectral theorem yields an unitary  isomorphism $U$ of \( \ell ^2(\Z) \) with a Hilbert integral \( \int _\R^\oplus V_E \, d\s _x (E) \) that conjugates \(H_x\) with the multiplication by $E$. Since \( H_x^n \de _0, n \in \Z \) and  \( H_x^m \de _1 , m\in \Z ,\) generate $\ell ^2 (\Z),$ the dimension of the Hilbert spaces $V_E$ is either   $1$ or $2$.  Assume \( \Dim V_E = 2 \; \s_x \)-a.e.(the case \( \Dim V_E = 1 \; \s_x \)-a.e is similar). Choose \( a(E), b(E) \in \ell ^2 (\Z) \) such that, for \(\s_x\)-a.e.$E, \; a(E),b(E) $ form an orthonormal basis of $V_E $ and define \( u_n (E), v_n (E) \) by  \[ U(\de _n) \;=\; u_n(E) a(E) + v_n (E) b(E) .\]
We have, for all $n\in \Z,$  \( \int (|u_n(E)|^2 + |v_n(E) |^2) \, d\s_x (E) = 1.\) Moreover, by applying \( UH_xU^{-1} \), we see that the sequences \( \{u_n(E) \}_{n \in \Z} , \{ v_n(E)\}_{n \in \Z} \) are solutions of (\ref{eigenvalue}) for $\s _x $-a.e.$E.$ Therefore, for \( \Si\)-a.e.\((x,E)\), there exists a  solution of (\ref{eigenvalue}) with  \( \sum _{n\in \Z} \frac{| u_n|^2 }{n^2}  < + \infty .\) \end{proof}

It was shown  later  that \( \s_x \) is a discrete measure for $m$-a.e. $x$ (see \cite{CKM87}).

A wide extension of Furstenberg theorem \ref{F} was suggested by the more general one-dimensional Anderson model.
Consider a topological dynamical system \((X,\A,m, f),\) where $X$ is a compact metric   space,
$\A$ the Borel $\s$-algebra, $f$ a homeomorphism and $m$ an ergodic $f$-invariant measure. . 

Let \(V: X \to \R \) be a continuous  function on $X$. The  Schr\"odinger operator at \( x \in X\) is the self-adjoint operator \(H_x\) of \( \ell ^2 (\Z) \) given for \( u = \{u_n \}_{n \in \Z} \in \ell ^2 (\Z), \)  for all \( n \in \Z,\) by 
\[\left( H_x (u)\right)_n \;:= \; u_{n+1} + u_{n-1} + V(f^n x)  u_n.\]
Fix \(E \in \R.\) Consider the field of matrices \( B_E (x) \;:= \)  \(\begin{pmatrix}  E-V(x) & -1 \\ 1 & 0 \end{pmatrix}\) and the exponent 
\[ \g (x,E ) \; := \; \lim\limits _{n \to \infty } \frac{1}{n} \log \| B_E(f^{n-1}x) \ldots B_E(x) \| .\]
By ergodicity, there is a constant \(\g(E)\) such that  \( \g(x,E) = \g(E) \)  for $m$-a.e. $x.$

The independent case corresponds to \( X = ({\textrm {supp.}} \tau)^{\otimes \Z}, m = \tau ^{\otimes \Z}, \; f \)  the shift transformation on $X$ and \( V(x) = x_0.\) 
\begin{definition} Let $(X,\A,m;f)$ be as above. A measurable function $f:X \to \R$ is called deterministic if  $V$ is measurable with respect to the $\s$-algebra generated by \( V(f^n x), n >0 .\)\end{definition}

\begin{theorem} [\cite{S83}] \label{kotani} Assume that the Lebesgue measure of \( \{ E \in \R: \g(E) =0 \} \) is positive. Then \( V \) is deterministic.\end{theorem}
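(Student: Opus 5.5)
The plan is to follow Kotani's original strategy and read it as an instance of the invariance principle. We complexify the energy and use the Weyl--Titchmarsh functions as the invariant objects. The vanishing of $\g$ on a set of positive Lebesgue measure should then force an \emph{a priori} random section to be measurable with respect to two independent-looking $\s$-algebras at once.

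\textbf{Step 1 (Weyl functions and the cocycle).} For $z=E+i\e$ with $\e>0$, the cocycle $B_z(x)$ acting on $\PP^1(\C)$ preserves the upper half plane $\mathbb H$, viewed as a subset of $\PP^1(\C)$ by projectivizing the vector $(\eta,1)$. It contracts the hyperbolic metric there. Hence there are two equivariant sections $m_\pm(x,z)$, coming from the $\ell^2$ solutions at $\pm\infty$ of $H_x u = z u$. The function $m_+(x,\cdot)$ is measurable with respect to $\F^+:=\s\{V(f^n x), n\ge 0\}$, and $m_-(x,\cdot)$ is measurable with respect to $\F^-:=\s\{V(f^n x),n<0\}$. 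Both are Herglotz functions of $z$.

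\textbf{Step 2 (basic inequality / Kotani identity).} This is the analogue of proposition \ref{basicinequality1}. Compare the growth of $\|B_z(f^{n-1}x)\cdots B_z(x)\|$ with the hyperbolic contraction along the orbit of the section. The key computation is the $\log$ of the derivative of the Möbius action at $m_+$, which telescopes exactly as in the proof of proposition \ref{basicinequality1}. It should give
\[ \g(E+i\e) \;=\; \e \int \frac{1}{\mathrm{Im}\, m_+(x,z)}\,\Big(\text{a positive weight}\Big)\, dm(x), \]
and, more precisely, the inequality
\[ \int \frac{|m_+(x,z)+\ov{m_-(x,z)}|^2}{\mathrm{Im}\, m_+\,\mathrm{Im}\, m_-}\,dm(x) \;\le\; C\,\frac{\g(E+i\e)}{\e}. \]
Here the left side plays the role of the entropy $\kappa$, and $\g$ plays the role of the exponent. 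By subharmonicity and the Thouless formula, $\g(E)=\lim_{\e\to 0}\g(E+i\e)$. Moreover $\partial_\e \g(E+i\e)$ stays bounded in the limit for Lebesgue-a.e.\ $E$ in the set where $\g$ vanishes. Therefore the right side stays bounded as $\e\to 0$. After a Fatou argument, the left side tends to $0$.

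\textbf{Step 3 (the invariance / equality case).} This is the analogue of proposition \ref{invariance1}. On $\{E:\g(E)=0\}$, for Lebesgue-a.e.\ such $E$ and $m$-a.e.\ $x$, the boundary values satisfy
\[ m_+(x,E+i0) \;=\; -\ov{m_-(x,E+i0)}. \]
This is the reflectionless property. It says that $m_+(x,E+i0)$ is $\F^-$-measurable for $E$ in a set $S$ of positive measure.

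\textbf{Step 4 (conclusion).} A Herglotz function is determined by its boundary values on a set of positive Lebesgue measure, by the Privalov/Lusin uniqueness theorem. Applying it to $m_+ + \ov{\cdot}$ of the reflected function, we get that $m_+(x,\cdot)$ is $\F^-$-measurable. The large-$z$ asymptotics of $m_+(x,z)$ recover $V(x)$ and then all $V(f^nx)$, $n\ge 0$. Hence $V$ is measurable with respect to the past. By the symmetric argument, or by applying the result to $f^{-1}$, $V$ is also measurable with respect to the future $\s$-algebra, which is the required determinism.

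The main obstacle is Step 2. Both the exact identity and the control of $\g(E+i\e)/\e$ as $\e\to0$ on the zero set require care, and one must justify exchanging the limit with the integral. I would first establish the bound for fixed $\e>0$, then use the harmonicity of $\g$ in $z$ and the fact that $\g\ge 0$ vanishes on $S$, which gives boundedness of the normal derivative a.e.\ on $S$. Fatou's lemma then passes to boundary values.
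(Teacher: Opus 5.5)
There is a genuine gap in Step~2, which is the heart of the argument. Even if you grant your inequality, its right-hand side $C\,\g(E+i\e)/\e$ does not tend to $0$. For a.e.\ $E$ with $\g(E)=0$ one has $\g(E+i\e)/\e=\e^{-1}\int_0^\e\partial_s\g(E+is)\,ds\to\pi\frac{dk}{dE}(E)$, the density of states, which is in general nonzero there (for $V\equiv0$ it equals $(4-E^2)^{-1/2}$ on $(-2,2)$). So boundedness plus Fatou gives at best $\E[1/\mathrm{Im}\,m_\pm(x,E+i0)]<\infty$, i.e.\ $\mathrm{Im}\,m_\pm(x,E+i0)>0$ a.s. That is the half of Kotani theory that produces absolutely continuous spectrum, not the reflectionless identity you need in Step~3.

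In your analogy with Proposition~\ref{basicinequality1}, the quantity that must play the role of the vanishing exponent is not $\g(E+i\e)/\e$ but $\g(E+i\e)/\e-\partial_\e\g(E+i\e)$. Normalize the Weyl functions so that $G(0,0;z)=-1/(m_++m_-)$; in the discrete case, split the term $z-V(x)$ evenly between $m_+$ and $m_-$, which is what makes $\E[1/\mathrm{Im}\,m_\pm]\le 2\g(z)/\e$ hold for both. The Thouless formula gives $\partial_\e\g(E+i\e)=\E\,\mathrm{Im}\,G(0,0;E+i\e)$. The elementary inequalities $\mathrm{Im}\frac{-1}{m_++m_-}\le\frac{1}{\mathrm{Im}\,m_++\mathrm{Im}\,m_-}\le\frac14\big(\frac{1}{\mathrm{Im}\,m_+}+\frac{1}{\mathrm{Im}\,m_-}\big)$ hold, with equality in both iff $m_+=-\ov{m_-}$. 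Together these give
\[ \E\Big[\mathrm{Im}\,G(0,0;z)\,\frac{|m_++\ov{m_-}|^2}{4\,\mathrm{Im}\,m_+\,\mathrm{Im}\,m_-}\Big]\;\le\;\frac{\g(E+i\e)}{\e}-\partial_\e\g(E+i\e). \]
The defect vanishes because \emph{both} terms on the right have the same finite limit when $\g(E)=0$. Note also that your integrand lacks the weight $\mathrm{Im}\,G$.

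The paper does not prove Theorem~\ref{kotani}. It cites \cite{S83} and remarks in Section~\ref{comments} that Theorem~\ref{IP} was formulated in \cite{L86} to give a common reason for Theorems~\ref{F} and~\ref{kotani}. That suggests a repair that avoids the reflectionless property entirely.
For a.e.\ $E\in S$ and a.e.\ $x$, the boundary value of the decaying Weyl direction is a point $\zeta_E(x)$ of the closed half-plane with $\zeta_E(fx)=B_E(x)\cdot\zeta_E(x)$. Since $B_E(x)\in\SL(2,\R)$ acts by hyperbolic isometries, the harmonic measures $\nu_x$ of $\zeta_E(x)$ on $\partial\mathbb{H}=\PP^1(\R)$ (Dirac masses if $\zeta_E(x)$ is real) satisfy $B_E(x)_\ast\nu_x=\nu_{fx}$. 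Hence $\int\delta_x\otimes\nu_x\,dm(x)$ is invariant for the projective cocycle. Because $\g(E)=0$, all fiber exponents vanish. Apply Theorem~\ref{IP}, in the non-ergodic form of Section~\ref{IPbis}, to $F^{-1}$ on the process space with the $\s$-algebra $\s\{V(f^nx),n\le-1\}$. It shows that $x\mapsto\nu_x$, hence $x\mapsto m_+(x,E+i0)$, is past-measurable, which is exactly the conclusion of your Step~3.

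Your Step~4 then applies, but it needs more than you give. Lusin--Privalov only yields injectivity of the boundary-value map on $S$. To conclude that $x\mapsto m_+(x,\cdot)$ is past-measurable, you still need Fubini on $S\times X$ (the exceptional null sets depend on $E$) and a measurable-inverse argument of Lusin--Souslin type.
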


S.~Kotani (\cite{K84}) had a similar result for the continuous Schr\"odinger equation. B.~Simon gave a direct proof for the discrete case. Of course, in the independent case, \(V\) is deterministic 
only if the measure $\tau $ is supported on a single point $E_0.$ Then, for all $E \in \R,$ the  matrix  {\scriptsize{\(\begin{pmatrix}  E-E_0 & -1 \\ 1 & 0 \end{pmatrix}\) }} preserves a probability measure on \( \PP^1. \) 

More generally, if the function $V$ is not deterministic, then \( \g (E) >0 \; Leb.\)-a.e. and the above Pastur argument applies:
\begin{corollary}  If the function $V$ is not deterministic, then for \(m\)-a.e. \(x\in X,\) the spectral measure \( \s_x \) is singular with respect to the Lebesgue measure on $\R$. \end{corollary}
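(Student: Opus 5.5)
The corollary follows by combining Theorem \ref{kotani} with the Pastur argument given above for the independent case. First, since $V$ is not deterministic, Theorem \ref{kotani} (in contrapositive form) says that the set $\{E\in\R : \g(E)=0\}$ has Lebesgue measure zero. Hence $\g(E)>0$ for Lebesgue-a.e.\ $E$.

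Next I would reproduce Pastur's construction with two disjoint sets. Let $\Si(dx,dE) := \s_x(dE)\, m(dx)$, and set
\[ G := \Bigl\{ (x,E) : \textrm{every nonzero solution of } H_x u = Eu \textrm{ satisfies } \lim_{n\to\infty}\tfrac1n\log\max\{|u_n|,|u_{-n}|\} = \g(E) >0 \Bigr\}. \]
To show that $G$ has full $m\x Leb.$ measure, I use Fubini. For each fixed $E$ with $\g(E)>0$, the cocycle $B_E$ over the ergodic system $(X,m,f)$ has exponents $\pm\g(E)$. Oseledets theorem \ref{oseledets2}, applied forward and backward, shows that for $m$-a.e.\ $x$ every nonzero solution grows exponentially at rate $\g(E)$ in at least one direction. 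This step needs some care. For a single $E$, Oseledets gives that the forward-contracting and backward-contracting directions at $x$ differ for a.e.\ $x$: the splitting is measurable and invariant, and if the two directions coincided on a positive-measure set, that set would carry a solution decaying in both directions with positive probability. For $m\x Leb.$-a.e.\ $(x,E)$ this cannot happen, because $H_x$ has only countably many eigenvalues. Measurability of $G$ follows from the continuity of $B_E$ in $E$.

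The spectral-theorem part of the earlier proof carries over verbatim to a general potential $V(f^nx)$. It produces, for $\Si$-a.e.\ $(x,E)$, a nonzero generalized eigenfunction with $\sum_n |u_n|^2/n^2<\infty$. Such a solution grows at most polynomially, so it cannot grow exponentially in either direction. Therefore $\Si(G)=0$.

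Since $G$ has full $m\x Leb.$ measure while $\Si(G)=0$, the measures $\Si$ and $m\x Leb.$ are mutually singular. Disintegrating over $x$ then gives $\s_x \perp Leb.$ for $m$-a.e.\ $x$. I expect the main obstacle to be justifying the exponential-growth dichotomy for \emph{every} solution on a set of full $m\x Leb.$ measure. This is where Oseledets theorem and Fubini over $E$ must be combined carefully.
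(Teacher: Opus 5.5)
Your proposal is correct and follows the paper's route: the contrapositive of Theorem \ref{kotani} gives $\g(E)>0$ for Lebesgue-a.e.\ $E$, and Pastur's two-disjoint-sets argument from the i.i.d.\ case, which never used independence, then goes through verbatim. One simplification: the detour through countability of eigenvalues is unnecessary, because the two-sided Oseledets theorem \ref{oseledets2} already gives the splitting $\R^2=F_1(x)\oplus F_2(x)$, and hence transversality of the contracting directions, for each fixed $E$ with $\g(E)>0$ and $m$-a.e.\ $x$.
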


\subsection{Random homeomorphisms  in dimension 1}

We now describe  a non-linear version of section 2.1. Let  $\mu$ be a probability measure on \( {\textrm {Hom}} (\SSSS^1)\).  Consider \( (\ov X, \ov \A, \ov m) \)   the product space \(\ov X :=  ( {\textrm {Hom}}(\SSSS^1))^{\otimes \N },\) with the $\s$-algebra \( \ov \A\) generated by the coordinates \( \{x_n \}_{n\ge 0}\) and the product measure \( \ov m := \mu ^{\otimes \N} .\) For \(x = \{x_n \}_{n\ge 0} \in \ov X, \, n\ge 1 \) set \[ \Phi_n (x):= x_{n-1} \ldots x_0.\] 

For \( \nu \) stationary probability measure on $\SSSS ^1,$ we can define  \( (\ov \EE,  \ov M)  :=  (\ov X \times \SSSS^1, \ov m \times \nu )\)
 and the transformation   \( \ov F: \ov \EE \to \ov \EE ,\) defined by  \(  \ov F (x, \xi ) \;:= \; (\ov f (x), x_0 \xi)\), where $\ov f$ is the shift on the product space \( \ov X.\) 
As before, the measure $\ov M$ is $\ov F$-invariant, $\ov F$-ergodic if, and only if, the measure $\nu$ is extremal among stationary probability measures.
 It appears (see \cite{LJ87}, \cite{K93} for particular cases) that there is a strong  dichotomy: either there is a probability   measure which  is invariant under $\mu$-a.e.$T,$ or, for large $n$, the typical  iterate $\Phi _n$ is a North-South diffeomorphism of $\SSSS ^1$, with a finite number of sinks and sources. It was observed by A. Carverhill (\cite{C87}, P. Baxendale (\cite{B89}) and H. Crauel (\cite {C90}) that this phenomenon is very much related to theorem \ref{F}  in the case of diffeomorphisms. It was extended to homeomorphisms by D. Malicet
 (see \cite{M17}).

We define, for $(x,\xi ) \in \ov \EE,$
\[ \g(x,\xi) \;:=\; \liminf _{\eta \to \xi } \liminf _{n\to +\infty} -\frac{1}{n} \log |\Phi_n (\eta) -\Phi _n (\xi) | .\]
Clearly, we have $\g(x,\xi) \ge 0 $ for all $(x,\xi).$
\begin{theorem}[\cite{M17}]\label{malicet}  Let  $\nu$ be a $\mu$-stationary probability measure on \(\SSSS^1\). If $\g (x,\xi) = 0  \; \ov M $-a.e., then  $\nu$ is   invariant under $\mu$-a.e. $T $. \end{theorem}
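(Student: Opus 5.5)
We follow the scheme of the proof of Theorem \ref{F}: introduce a Furstenberg-type entropy, show that it vanishes when $\g=0$, and then use the equality case of Jensen's inequality as in Proposition \ref{invariance1}. In the homeomorphism setting the Radon--Nikodym derivatives $\frac{dT_\ast\nu}{d\nu}$ may fail to exist, so the entropy has to be built from measures of balls or intervals rather than from densities.

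\emph{Reductions.} By the ergodic decomposition (Breiman), we may assume $\nu$ is extremal, so that $\ov M$ is $\ov F$-ergodic. If $\nu$ has an atom, we look at the finite set of atoms of maximal mass. Stationarity and the maximum principle show that this set is mapped onto itself by $\mu$-a.e.\ $T$. Iterating this argument gives invariance of the atomic part, and extremality then makes $\nu$ purely atomic on that finite set, where invariance is clear. Hence we may assume $\nu$ is non-atomic.

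\emph{Localized entropy.} For $(x,\xi)\in\ov\EE$ and small $\de>0$ set
\[ g_\de(x,\xi):=\frac{\nu\bigl(x_0(B(\xi,\de))\bigr)}{\nu(B(\xi,\de))}, \qquad \kappa_\de:=-\int\log g_\de\,d\ov M .\]
\begin{itemize}
\item \emph{Jensen step.} By stationarity, $\int g_\de(x,\xi)\,d\mu(x_0)$ equals $1$ up to boundary effects. Hence $\kappa_\de\ge 0$ up to an error that tends to $0$ as $\de\to 0$.
\item \emph{Equality case.} If $\limsup_{\de\to 0}\kappa_\de\le 0$, then $g_\de\to1$ in measure. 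This should give $\nu(x_0I)=\nu(I)$ for intervals $I$ with $\nu$-boundary zero, for $\mu$-a.e.\ $x_0$, that is, $(x_0)_\ast\nu=\nu$.
\end{itemize}

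\emph{Telescoping (analogue of Proposition \ref{basicinequality1}).} As in that proof, we replace the fixed radius $\de$ by adapted neighbourhoods and let the sum telescope along the orbit. The natural choice is $I_j:=\Phi_j(x)(B(\xi,\de))$, an interval around $\Phi_j(x)\xi$. Then
\[ \sum_{j<n}-\log\frac{\nu(x_jI_j)}{\nu(I_j)} = -\log\frac{\nu(\Phi_n(x)B(\xi,\de))}{\nu(B(\xi,\de))}. \]
The hypothesis $\g(x,\xi)=0$ says that neighbouring points are not contracted exponentially toward $\Phi_n(x)\xi$. We want to use it to show that $\Phi_n(x)B(\xi,\de)$ still contains a ball of radius $e^{-o(n)}$ around $\Phi_n(x)\xi$. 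A density/Vitali-type lemma (Proposition \ref{densities}) gives $\nu(B(\cdot,r))\ge r^{C}$ on a set of large measure, which makes the right-hand side $o(n)$ along a subsequence. By the ergodic theorem this yields $\limsup_{\de\to 0}\kappa_\de\le 0$, so the equality-case step applies and $\nu$ is invariant under $\mu$-a.e.\ $T$.

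\emph{Main obstacle.} The difficulty is to pass from the telescoped bound back to the fixed-scale averages $g_\de$. Here the radius of $I_j$ is not $\de$ but a random quantity that can shrink (subexponentially), and there is no Lipschitz control as in the linear case. In the linear proof this was handled by $q_j$ and the maximal function $g^\ast_\de$. Here I would first try to replace balls by the $\nu$-quantile scale, i.e.\ use intervals of fixed $\nu$-mass. These are invariant under the reparametrization $\xi\mapsto\nu[0,\xi]$, in which every $T$ becomes comparable to an interval map. I would then apply a maximal inequality in $\nu$-coordinates, where the Hardy--Littlewood theory is scale-free. The remaining point is to translate the hypothesis on $\g$, which concerns distances, into subexponential shrinking in $\nu$-mass. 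This uses that $\nu$ of a small ball is at least polynomial in its radius on a large set.
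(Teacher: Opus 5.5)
There is a genuine gap. It sits in the step you flag yourself, but the obstacle is not the one you name.

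To bound $\kappa$ from above along an orbit, you must dominate each one-step ratio $\nu(x_jI_j)/\nu(I_j)$ by a maximal function at $\Phi_j(x)\xi$ whose logarithmic integral is close to that of the density. That domination is only available when $I_j$ lies in the range of scales over which the maximal function is taken, i.e.\ when $\nu(I_j)$ (or $|I_j|$) is below $\de$. Shrinking of $I_j$ is harmless: that is exactly what the supremum over all scales $\de'\le\de$ in $g^\ast_\de$ handles. The real danger is that $I_j=\Phi_j(x)(B(\xi,\de))$ becomes \emph{large}, and nothing in your argument controls this. The hypothesis $\g=0$ only provides lower bounds on the size of images, and only for one-sided intervals $[\xi,\eta]$, along a subsequence of times depending on $\eta$. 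Passing to $\nu$-quantile coordinates does not by itself help. A maximal function with no restriction of scale has $\int\log(J/g)$ finite but not small, so you would only obtain $\kappa\le C$. A related problem: a set of large measure on which $\nu(B(\cdot,r))\ge r^C$ is not enough in your direction. The times furnished by $\g=0$ may have zero density and so miss the visits to that set.

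The missing idea is to argue in the other direction. Instead of bounding $\kappa$ directly, prove the basic inequality $\kappa\le\int\g\,d\ov M$ (Proposition \ref{basicinequality2}).
\begin{itemize}
\item Assume $\kappa>\e$. Let $J_\de(x,\xi)$ be the supremum of $\nu(x_0I)/\nu(I)$ over intervals $I\ni\xi$ with $\nu(I)<\de$, so that $-\int\log J_\de\,d\ov M\to\kappa$.
\item The ergodic theorem then gives $\prod_{j<n}J_\de(\ov F^j(x,\xi))\le C_\e e^{-n(\kappa-\e)}$.
\item By induction (Claim \ref{induction}), every interval $I\ni\xi$ with $\nu(I)\le\de/C_\e$ satisfies $\nu(\Phi_n(x)I)\le C_\e\,\nu(I)\,e^{-n(\kappa-\e)}$ for all $n$. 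The exponential decay is precisely what keeps the images inside the small-mass regime where the maximal bound applies.
\item Then $|\Phi_n(x)I|\le Q(\Phi_n(x)\xi)\,\nu(\Phi_n(x)I)$, and $\frac1n\log^+Q(\Phi_n(x)\xi)\to0$ by Corollary \ref{density}. This gives $\g\ge\kappa-\e$ for all large $n$ and all nearby $\eta$ on either side, which also removes the subsequence issue.
\end{itemize}
Hence $\g=0$ forces $\kappa=0$, and invariance follows from the equality case of Jensen's inequality.

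Two smaller corrections. First, stationarity gives $\int\nu(T^{-1}B)\,d\mu(T)=\nu(B)$, not $\int\nu(TB)\,d\mu(T)=\nu(B)$. Nonnegativity of $\kappa$ comes instead from Jensen in $\xi$ for each fixed $T$, since $\kappa$ is an average of Kullback--Leibler divergences. Second, you do not need to avoid densities. You can use the density of the absolutely continuous part and allow $\kappa=+\infty$, as the paper does. Your reduction to non-atomic extremal $\nu$ is fine.
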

The proof of theorem \ref{malicet} follows the scheme of the proof of theorem \ref{F}.
For a stationary probability measure $\nu,$ we define the Furstenberg entropy \[ \kappa (\mu, \nu) := \int  \log \frac {dT_\ast \nu }{d\nu } (T\xi ) \, d\nu (\xi) \, d\mu (T)\] (cf. definition \ref{Fentropy}). We still have \( \kappa (\mu, \nu ) \ge 0 ,\) with equality if, and only if, the measure $\nu $  is invariant under $\mu$-a.e. $T .$ We have another basic inequality
\begin{proposition}\label{basicinequality2} Let $\nu $ be a stationary probability measure. Then,
\[\kappa (\mu, \nu ) \; \le \; \int \g (x,\xi) \, d \ov M (x, \xi).\]
 \end{proposition}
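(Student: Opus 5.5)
I would follow the scheme of the proof of proposition \ref{basicinequality1}, replacing the Lipschitz control of the matrices on $\PP^1(\R)$ by the pointwise quantity $\g(x,\xi)$. First I reduce to the ergodic case. By the Breiman argument the ergodic decomposition of $\ov M$ corresponds to the extremal decomposition $\nu=\int \nu_t\,dn(t)$. Convexity gives $\kappa(\mu,\nu)\le\int\kappa(\mu,\nu_t)\,dn(t)$, and the right-hand side $\int \g\, d\ov M$ is affine in $\nu$. So it is enough to treat extremal $\nu$, that is, ergodic $\ov M$.

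Set $g(x,\xi):=\frac{d(x_0^{-1})_\ast\nu}{d\nu}(\xi)$ and truncate to $g^N:=\sup\{g,e^{-N}\}$, so that (\ref{infinite0}) holds. Fix $\e>0$ and choose $\de=\de(N,\e)$ as in the linear case, using the maximal function $g^\ast_\de$ from proposition \ref{densities}, so that $\int\log(g^\ast_\de/g^N)\,d\ov M\le\e$. The difference from the linear case is that we can no longer use metric balls shrinking at a rate $\|x_j\|^{-2}$. Instead I use the dynamical neighbourhoods
\[
I_j(x,\xi):=\Phi_j(x)\bigl(J\bigr),
\]
where $J$ is a small arc around $\xi$. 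Since the $x_j$ are homeomorphisms of $\SSSS^1$, the images of arcs are arcs. We have $x_j(I_j)=I_{j+1}$ exactly, and $\nu(x_j(I_j))/\nu(I_j)$ is the $g$-average over the arc $x_j(I_j)\ni Z_{j+1}\xi$ (up to the pushforward convention). The telescoping is therefore exact:
\[
\frac1n\sum_{j<n}-\log\frac{\nu(I_{j+1})}{\nu(I_j)}=-\frac1n\log\frac{\nu(I_n)}{\nu(J)} .
\]

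To compare these arc averages with $g^N$ via $g^\ast_\de$, I need the arcs $I_j$ to be contained in, or comparable to, balls of radius at most $\de$ around $\Phi_j\xi$. I would require the arcs to be chosen so that $\Phi_j(J)$ has length at most $\de$ for all $j$. The liminf in the definition of $\g$ ensures that the arcs shrink like $e^{-n(\g+\e)}$ along a subsequence, and I choose $J$ depending on $(x,\xi)$ so that the images stay short. This control of all images is the delicate point, since only $\liminf$ information is available. Then, as before, at $\ov M$-a.e.\ point,
\[
-\int\log g^N\,d\ov M\le\liminf_n -\frac1n\log\nu(I_n)+\e .
\]

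To finish, I need $-\frac1n\log\nu(I_n)\lesssim -\frac1n\log|I_n|$ along infinitely many $n$. Proposition \ref{densities} gives a set $D$ with $\nu(D)\ge1-\e$ on which $\nu(B(\eta,r))\ge C r$ for small $r$. I then choose times $n$ with $\Phi_n\xi\in D$ (ergodicity and Poincaré recurrence) that also realise the liminf defining $\g(x,\xi)$. Along these times $|I_n|\ge e^{-n(\g(x,\xi)+\e)}$ and $I_n$ contains a ball of comparable radius around $\Phi_n\xi$. This bounds the liminf by $\g(x,\xi)+\e$. Integrating, then letting $\e\to0$ and $N\to\infty$, gives the inequality.

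\textbf{Main obstacle.} The main obstacle is the simultaneous choice of times. The recurrence times to $\ov X\times D$ and the times where $|\Phi_n(\eta)-\Phi_n(\xi)|$ is not too small must coincide, and $I_n$ must be comparable to a ball centred at $\Phi_n\xi$, which may fail when $\xi$ sits near an endpoint. I would handle this as follows:
\begin{enumerate}
\item use arcs on both sides of $\xi$;
\item use that the liminf over $\eta$ gives, for each fixed nearby $\eta$, a bound on $|I_n|$ from below for all large $n$;
\item take $\g(x,\xi)$ to be essentially an $\ov F$-invariant function, so that it can be treated as a constant on ergodic components.
\end{enumerate}
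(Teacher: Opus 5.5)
Your proposal has a genuine gap, and it sits at the step you yourself call delicate. Nothing in your argument guarantees that the images $\Phi_j(x)(J)$ stay of size at most $\de$ for \emph{all} $j$. Without that, the comparison $-\log g^N\le-\log g_{I_j}+\log(g^\ast_\de/g^N)$ cannot be used along the orbit, and the telescoped bound on $\nu(I_n)$ is not available.

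The definition of $\g$ cannot supply this control. For a fixed $\eta$, $\liminf_n-\frac1n\log|\Phi_n(x)\eta-\Phi_n(x)\xi|\ge c$ only gives the upper bound $|\Phi_n(x)\eta-\Phi_n(x)\xi|\le e^{-n(c-\e)}$ for $n$ large. The lower bound on this distance, and hence on $|I_n|$, holds only along a subsequence, so item (2) of your plan is not correct as stated. Moreover, the upper bound:
\begin{enumerate}
\item is not uniform in $\eta\in J$;
\item does not tell you which of the two arcs joining $\Phi_n(x)\xi$ and $\Phi_n(x)\eta$ is $\Phi_n(x)(J)$;
\item is void when $\g=0$, which is precisely the case needed for theorem \ref{malicet}.
\end{enumerate}
In the linear case this issue never arose, because $q_j\le\de$ follows automatically from $\|A\|\ge1$. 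In the nonlinear case it is the core of the proof, and your items (1)--(3) only address the end-game.

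The missing idea is the bootstrap of claim \ref{induction}: the smallness of the images is a consequence of $\kappa>0$, not of $\g$. The steps are as follows.
\begin{enumerate}
\item Reduce to $\nu$ extremal and continuous. An atomic extremal $\nu$ is uniform on a finite set, so $\kappa=0$.
\item Suppose $\kappa>\e$ and measure arcs by their $\nu$-mass. Put $J_\de(x,\xi):=\sup\{\nu(x_0I)/\nu(I):\ I\ni\xi,\ \nu(I)<\de\}$. This is your $g^\ast_\de$ in the one-dimensional form (\ref{densitiesdim1}), taken over arcs of small $\nu$-mass because the induction below controls mass, not length. Then $-\int\log J_\de\,d\ov M\to\kappa$.
\item The ergodic theorem gives $\prod_{j<n}J_\de(\ov F^j(x,\xi))\le C_\e(x,\xi)e^{-n(\kappa-\e)}$ for all $n\ge0$, with $1\le C_\e<\infty$ a.e.
\item Choose $J$ with $\nu(J)\le\de/C_\e(x,\xi)$, which is possible because $\nu$ has no atoms. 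Induction on $n$ gives $\nu(\Phi_n(x)(J))\le C_\e\,\nu(J)\,e^{-n(\kappa-\e)}\le\de$. The images stay small \emph{because} they shrink, so $J_\de$ applies at every step.
\end{enumerate}

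For the end-game, you do not need the set $D$, recurrence times, or an $\eta$ realising the liminf. Use the maximal function $Q$ of corollary \ref{density}: for every $n$, $|\Phi_n(x)(J)|\le Q(\Phi_n(x)\xi)\,\nu(\Phi_n(x)(J))$, and $\frac1n\log^+Q(\Phi_n(x)\xi)\to0$ a.e. Hence $\liminf_n-\frac1n\log|\Phi_n(x)\eta-\Phi_n(x)\xi|\ge\kappa-\e$ for every $\eta\in J$ simultaneously, so $\g(x,\xi)\ge\kappa-\e$. The case $\kappa=+\infty$ is handled the same way.
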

 \begin{proof} 
We may assume that the measure $\nu $ is extremal. Then, by ergodicity, we may assume that the measure $\nu $ is continuous since, otherwise, it is discrete and uniform on a finite set. Then $\kappa (\mu, \nu ) =0$ and the proposition holds. 
 
Recall the definition of $\g$. We can write, at $\ov M$-a.e. $(x,\xi),$
\begin{eqnarray*}  \g(x,\xi) &= &\liminf_{\eta \to \xi } \liminf _{n\to +\infty} -\frac{1}{n} \log |\Phi_n (\eta) -\Phi _n (\xi) | \\ &\ge& \liminf _{I\ni \xi,|I| \to 0 } \liminf _{n\to +\infty} -\frac{1}{n} \log |\Phi _n (x) (I))| \\&\ge& \liminf _{I\ni \xi,|I| \to 0 } \liminf_{n\to +\infty} -\frac{1}{n} \log \nu (\Phi _n (x) (I)) -  \limsup _{n\to +\infty} \frac{1}{n} \log ^+ Q(\Phi _n (x)\xi) \\&\ge& \liminf _{I\ni \xi,|I| \to 0 } \liminf_{n\to +\infty} -\frac{1}{n} \log \nu (\Phi _n (x) (I)),   \end{eqnarray*}
where $Q: \SSSS ^1 \to \R_+ $ is given by $Q( \eta ):=   \sup_{I \ni \eta} \frac {|I|}{\nu (I)} .$ The function  $\log ^+ Q  $ is $\nu$-integrable by corollary \ref{density} and thus, for $\ov M$-a.e. $(x, \xi), \lim _n  \frac{1}{n} \log ^+ Q(\Phi _n (x)\xi) =0.$

On the other hand, since $\nu$ is continuous, we may define,  for any $\de >0$ and $(x,\xi ) \in \ov \EE,$ 
\[ J_\de (x, \xi) \; := \; \sup \{ \frac{\nu (x_0 I)}{\nu (I)} : I \ni \xi, |I| < \de \} .\]
 By proposition \ref{densities}, $\kappa(\mu, \nu ) = \lim\limits _{\de \to 0}-  \int \log J_\de \, d\ov M .$  Assume $\kappa (\mu, \nu ) < + \infty .$ Then, for every fixed $\e >0,$ there is $\de >0$ and a $\ov M$-a.e. positive and finite function $C_\e (x,\xi) $ such that, for all $n \ge 0,$ 
\begin{equation}\label{controlJ} \Pi _{j=0}^{n-1} J_\de (\ov F^j (x,\xi) ) \;\le  \; C_\e (x, \xi ) e^{-n (\kappa -\e)} .\end{equation}

There is nothing to prove if $\kappa (\mu, \nu ) =0$. Otherwise, for any $\e < \kappa (\mu ,\nu),$ choose $\de >0 $ and, for $M$-a.e. $(x,\xi), \; C_\e(x, \xi) \ge 1$ such that equation (\ref{controlJ}) holds $\ov M$-a.e..
\begin{claim}\label{induction} If $I$ is an interval containing $\xi $ such that $\nu (I) \le \de / C_\e(x,\xi), $ then, for all $n\ge 0 , \, \ov M$-a.e. $(x,\xi) \in \ov \EE,$  \[ \nu (\Phi _n (x) (I))\; \le \;  C_\e (x,\xi) \nu (I) e^{-n(\kappa -\e)}.\] \end{claim}
\begin{proof}We prove the inequality by induction on $n$. It holds for $n= 0$ since $C_\e \ge 1.$ Assume that the inequality holds for $0 \le k < n.$ Then in particular, $\nu (\Phi _k (x) (I)) \le \de $ for $0 \le k < n$ and therefore, since $\Phi _k (x)(I) \supset \Phi _k (x)(\xi),$  \[ \frac {\nu (\phi _{k+1}(x) (I))} {\nu (\phi _{k}(x) (I))}\;  \le \; J_\de ( \ov F^k (x,\xi)).\]
The claim follows from (\ref{controlJ}) since the product telescopes.
\end{proof}
Reporting the estimate of  claim \ref{induction} in the above expression for $\g(x,\xi )$ at $\ov M$-a.e. $(x,\xi),$ we obtain $\g \ge \kappa -\e $  for all $\nu, \e  $ with $\nu $ extremal and  $\kappa (\mu , \nu ) >\e.$ If $\kappa (\mu , \nu )  = +\infty,$ then a similar argument yields $\g = + \infty $ and proposition \ref{basicinequality2} is proven in all cases.
\end{proof}
\begin{proof}({\it{Proof of theorem \ref{malicet})}} We may assume that the measure $\nu$ is extremal. If $\g^- = 0,$  then, by proposition \ref{basicinequality2}, \( \kappa(\mu, \nu ) =  0 .\)  We conclude that  $\nu $ is  invariant by the analog of proposition \ref{invariance1}. \end{proof}

 See also \cite{DKN07}. Random pseudo-groups of homeomorphisms occur naturally when studying the transversal properties of foliated Brownian motion. Analogs of theorem \ref{malicet} in the piecewise smooth case have been proven independently by C. Bonatti and X. Gomez-Mont (\cite{BGM01}) and by B.  Deroin and V. Klepstyn (\cite{DK07}) for codimension 1 foliations with hyperbolic leaves. 
 
 Observe that theorem \ref{F} follows from theorem \ref{malicet} by associating to a matrix $A \in \SL(2,\R)$ the continuous mapping $T \in {\textrm {Hom}} (\PP^1(\R))$ defined by $T[\xi] = [A\xi].$ For $\nu $ an extremal  stationary probability measure on $\PP^1 (\R),$ the associated contraction coefficient $\g$ is twice the Lyapunov exponent of the random walk (see \cite[section 3.1]{M17}).

\subsection{Compact extension of hyperbolic systems}\label{PHcompact}

\begin{definition}\label{PH} Let $Y$ be a compact connected Riemannian manifold. The   \(C^1\) diffeomorphism \( \vf \) of  $Y$ is called {\bf{partially hyperbolic }} if there exist  a continuous decomposition \( y \mapsto E^u_y \oplus E^c_y \oplus E_y^s \) of $T_yY$  and a Riemannian metric  such that 
\begin{itemize}\item  for all \(v^s \in E^s_y, v^c \in E^c_y \) and  \(   v^u \in E^u_y , \|v^\ast \| = 1  \) for \( \ast = s,c,u,\) we have \[ \|D_y \vf (v^s) \| <   \|D_y \vf (v^c) \| <   \| D_y \vf ^{-1}(v^u) \| \]
\item and  \( \|D_y \vf |_{E^s} \| <  1 , \;\;  \| D_y \vf ^{-1}|_{E^u} \| < 1.\)
\end{itemize} \end{definition}

Let $\vf $ be a partially hyperbolic diffeomorphism of the compact manifold $Y$. By compactness,   the following stable manifold theorem applies to $f= \vf, V= E^s$ and to $f =\vf ^{-1}, V= E^u.$

\begin{proposition}[\cite{A67}]\label{stablemanif} Assume $X$ is a compact Riemannian manifold with Riemannian distance $d$, $f : X\to X $ a partially hyperbolic $C^{1+\alpha}$ diffeomorphism and $m$ an ergodic $f$-invariant probability measure. Let \( \|D_y f|_{E^s} \| < \rho  < \|(D_y f |_{E^c} )^{-1}\|^{-1} \le 1.\) Then, 
\begin{itemize} \item for $x \in X$, 
\[ W^\rho_x := \{ y \in X, \limsup _{n \to +\infty }\frac{1}{n} \log d(f^ny, f^n x ) \le  \log \rho \} \] is an embedded space \( \R^d\)  with \( T_x W^\rho_x = V(x) \) for all $x \in X.$ Moreover,
\item the \( W^\rho _x, x \in X\) form a H\"older continuous lamination $\W^\rho.$ In particular, for \( y\in W^\rho _x, \) there exists a locally H\"older continuous one-to-one holonomy map \( h^\rho_{x,y} \) from a neighborhood of   $x$ in any small transversal to $\W^\rho$ to a neighborhood of   $y$ in any small transversal to $\W^\rho$.
\end{itemize} \end{proposition}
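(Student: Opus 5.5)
This is the stable manifold theorem for partially hyperbolic systems, cited from Anosov \cite{A67}; I will follow the classical Hadamard--Perron scheme. The pinching $\|D_y f|_{E^s}\| < \rho < \|(D_y f|_{E^c})^{-1}\|^{-1}$ holds uniformly by compactness and continuity of the splitting. So I can fix $\lambda < \rho < \rho'$ with $\|D f|_{E^s}\| \le \lambda$ and $\|(Df|_{E^c\oplus E^u})^{-1}\|^{-1} \ge \rho'$ everywhere, after passing to an adapted metric.

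\textbf{Local charts.} I work in exponential charts of uniform size $r$. There $f$ reads as a map $\R^{s}\times\R^{cu}\to\R^s\times\R^{cu}$ of the form $(a,b)\mapsto (A_x a + \alpha(a,b), B_x b+\beta(a,b))$. Here $\|A_x\|\le\lambda$, $\|B_x^{-1}\|^{-1}\ge\rho'$, and $\alpha,\beta$ have Lipschitz constant $\le \e$ on the $r$-ball, by uniform $C^1$ continuity.

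\textbf{Graph transform.} I apply the graph transform to graphs of functions $\psi:\R^s\to\R^{cu}$ with Lipschitz constant at most $1$, using $f^{-1}$: the stable leaf at $x$ is the fixed point of the backward graph transform along the orbit $(f^nx)$. The cone estimates coming from the pinching make this transform a contraction in the $C^0$ norm on the sequence space of such graphs. The unique fixed point gives a local leaf $W^\rho_{x,\mathrm{loc}}$, which is a Lipschitz graph. By construction, a point $y$ lies on this leaf exactly when $d(f^ny,f^nx)\le C\rho^n$ for all $n$. This is the characterization in the statement, since any point off the leaf separates at rate at least $\rho'>\rho$ in the $cu$-direction.

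\textbf{Smoothness and globalization.} Differentiability with $T_xW^\rho_x=E^s_x$ follows by running the same contraction on the bundle of tangent planes, i.e. the invariant section argument on the Grassmannian. The global leaf is $W^\rho_x=\bigcup_n f^{-n}(W^\rho_{f^nx,\mathrm{loc}})$. This is an increasing union of embedded disks, hence an injectively immersed copy of $\R^{d_s}$.

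\textbf{Main obstacle: Hölder continuity of the lamination and of the holonomies.} For the lamination, I compare the fixed points of the graph transform at nearby points $x,x'$. Their orbits stay close, within $\mathrm{Lip}(f)^n d(x,x')$, for $n\sim \theta\log(1/d(x,x'))$ steps. After that, the contraction of the transform gives a remaining discrepancy bounded by $\kappa^n$, where $\kappa<1$ is the contraction rate of the graph transform. Optimizing over $n$ gives $d(x,x')^\alpha$ for some $\alpha>0$ depending on $\lambda,\rho',\mathrm{Lip}(f)$ and the $C^{1+\alpha}$ modulus.

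For the holonomies: two transversals through $x$ and $y\in W^\rho_x$ are moved forward by $f^n$ until $d(f^nx,f^ny)$ is tiny. There the holonomy is close to the identity, with error Hölder in the initial displacement. Pulling back by $f^{-n}$ costs at most $\mathrm{Lip}(f^{-1})^n$, and the same balancing of $n$ gives Hölder continuity. Injectivity of $h^\rho_{x,y}$ comes from the leaves being disjoint or equal, which the dynamical characterization guarantees.
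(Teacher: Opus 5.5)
The paper does not prove this proposition; it is quoted as a classical result. Your sketch therefore has to stand on its own, and its central step fails in exactly the setting of the statement.

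The hypothesis $\|(D_yf|_{E^c})^{-1}\|^{-1}\le 1$ forces your $\rho'\le 1$, so $f$ does not expand the $cu$-direction. To first order the backward graph transform is $\psi_{n+1}\mapsto B_n^{-1}\psi_{n+1}(A_n\,\cdot\,)$, and its Lipschitz constant for the plain $C^0$ norm is $\|B_n^{-1}\|\ge 1$. So it is not a contraction, and its bounded fixed points are not unique. In the linear model $(a,b,c)\mapsto(a/4,b/2,2c)$, every sequence of constant graphs $\psi_n\equiv(t\,2^{-n},0)$ over the $a$-axis is fixed. These are the strong stable leaves of the points $(0,t,0)$ on the center axis, which approach the orbit of $0$ only at the center rate $2^{-n}$.

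You have transplanted Hadamard's argument for hyperbolic sets, where $E^{cu}=E^u$ is expanded and $\|B^{-1}\|<1$. For the strong stable manifold the only contraction available is the rate gap $\lambda/\rho'<1$, and the fixed-point setup must use it. One way is to restrict to graphs pinned at the orbit, $\psi_n(0)=0$, with the norm $\sup_{a\neq0}|\psi(a)|/|a|$. Only the part of $\psi_{n+1}$ over a ball of radius about $\lambda|a|$ is used, so the transform contracts this norm at rate about $\lambda/\rho'$. The other way is Perron's method on orbits with the weighted norm $\sup_n\rho^{-n}|z_n|$. You already use this comparison for the characterization of the leaf, but it must also drive existence and uniqueness.

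The same problem breaks your holonomy estimate. Pulling back with the factor $\mathrm{Lip}(f^{-1})^n$ cannot work. Since $f^{-1}$ expands $E^s$ by at least $\lambda^{-1}$, we have $\mathrm{Lip}(f^{-1})\,\lambda\ge1$. The error $O(\lambda^n)$ coming from $d(f^np,f^nh(p))$ therefore becomes $O(1)$ after pulling back, and no choice of $n$ yields a H\"older bound.

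What works is a cone-separation estimate:
\begin{itemize}
\item Let $\psi_x,\psi_{x'}$ be the graphs of the local leaves through nearby points $x,x'$ in a common chart, and set $v=|\psi_x(a)-\psi_{x'}(a)|$.
\item After $n$ steps, the two points over $a$ are within $C\lambda^n+\mathrm{Lip}(f)^n d(x,x')$ of each other.
\item Their difference starts in, and stays in, the invariant $cu$-cone, so their distance is at least about $(\rho')^n v$.
\item Balancing $n$ gives $v\le C\,d(x,x')^{\theta}$ with $\theta=\log(\rho'/\lambda)/\log(\mathrm{Lip}(f)/\lambda)$.
\end{itemize}
This is the correct form of your lamination estimate: $\kappa$ should be $\lambda/\rho'$, not a $C^0$ contraction rate. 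H\"older continuity of $h^\rho_{x,y}$ on any small, uniformly transverse transversal then follows from this $C^0$-H\"older dependence of the local leaves. Alternatively, pull back only along $cu$-transversals. Their forward images stay in the $cu$-cones, where $f^{-n}$ expands by at most about $(\rho')^{-n}$.
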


We denote \( \W^u\) (resp. \( \W^s\)) the unstable (resp. stable) foliations associated to the fields  \( E^u\) (resp. \(E^s\)), $W^u_y, W^s_y, \, y \in Y,$ the individual leaf at $y$, $h^u, h^s $ the corresponding local holonomy maps, defined in local charts of the foliations. 

\begin{definition}\label{suproduct} Assume that $(Y,\vf) $ is partially hyperbolic with  $\dim E^c =0.$ A measure $\nu$ is called a {\bf{$(s,u) $-product }} if, in a common local chart for both foliations,
\begin{itemize} \item the conditional $\nu ^u_y $ of the measure $\nu $ with respect to the partition in local  \( W^u \)-leaves are $h^s$-invariant or, equivalently,
 \item the conditional $\nu ^s_y $ of the measure $\nu $ with respect to the partition in local  \( W^s \)-leaves are $h^u$-invariant.
 \end{itemize}\end{definition}

 The equivalence of the two statements in definition \ref{suproduct} is a direct application of Fubini Theorem. Famously, for $(Y,\vf) $ hyperbolic (i.e. partially hyperbolic with  $\dim E^c =0$), G. Margulis (\cite{M69}) constructs an invariant probability measure which is a $(s,u)$-product and R. Bowen  (\cite{B75}) shows that Margulis measure  is the unique measure of maximal entropy for $\vf.$ Later, R. Bowen and B. Marcus (\cite{BM77}) show that Margulis measure is the  only $(s,u)$-product probability measure. Extending these discussions to general partially hyperbolic systems is natural when the system is {\it{dynamically coherent}}.
 
 \begin{definition}\label{DC}  A partially hyperbolic system $(Y,\vf) $ is called {\bf{dynamically coherent }} if there exist continuous foliations $
 \W^{cs}, \W^{cu} $ such that,  at every $y \in Y$, \( T_y W^{cs}_y = E^s_y \oplus E^c_y, \,  T_y W^{cu}_y = E^c_y \oplus E^u_y .\)
\end{definition}
 
 Partially hyperbolic systems with compact center leaves, perturbations of time-one maps of geodesic flows in negative curvature, non hyperbolic ergodic automorphisms of the torus are dynamically coherent. Not  all partially hyperbolic systems are dynamically coherent, even in dimension 3 (see \cite{RHRHU}, \cite{BGHP}). If $(Y,\vf)$ is partially hyperbolic and dynamically coherent, then the lamination \(\ov  \W^c \) with leaves the connected components of 
 \[ \ov W^c_y \; :=\;  W^{cs}_y\cap W^{cu}_y \]  is H\"older continuous and  integrates the direction $E^c_y.$  The lamination $\ov \W^c$ is globally invariant under $\vf. $
 Let $h^c $ be the local holonomy mappings between transversals to $\ov \W ^c.$
 The mappings $h^c $ preserve the local $
 \W^{cs}, \W^{cu} $ leaves. We extend the definition \ref{suproduct} as follows:
 
 \begin{definition}\label{scuproduct} Assume that $(Y,\vf) $ is partially hyperbolic and dynamically coherent. A measure $\nu$ is called a {\bf {partial $(s,c,u) $-product}}  if
\begin{itemize} \item the conditionals $\nu ^{cs}_y $ of the measure $\nu $ with respect to the partition in local  \( W^{cs} \)-leaves are $(s,c)$-products and
 \item the conditionals $\nu ^{cu}_y $ of the measure $\nu $ with respect to the partition in local  \( W^{cu} \)-leaves are $ (u,c)$-products.
 \end{itemize}\end{definition}
 
 \begin{definition} Let $(Y,\vf) $ be a partially hyperbolic, dynamically coherent system with compact central leaves, $(Z, \psi) $ the quotient  system made of the space of leaves.  The system $(Y, \vf)$ is called a {\bf{compact extension of hyperbolic system}}  if the system $(Z,\psi) $ is a Smale space in the sense of  \cite[chapter 7]{R04}. \end{definition}

 Let $(Y,\vf) $ be a  partially hyperbolic system,   $\nu$ be a $\vf$-invariant ergodic probability measure on $Y.$  We can define the upper and lower  center Lyapunov exponents by the following $\nu$-a.e. limits
 \[ \g^+_c \;:=\; \lim\limits _{n \to + \infty } \frac{1}{n} \log \| D_y \vf ^n |_{E^c}\| , \quad \quad \g^-_c \;:=\; \lim\limits _{n \to + \infty }- \frac{1}{n} \log \| D_y \vf ^{-n}|_{E^c}\|.\]

  A. Tahzibi and J. Yang (see \cite{TY19}) observe the following  IP. 
  \begin{theorem}[\cite{TY19}]\label{TY}  Let $(Y,\vf) $ be a $C^{1+\alpha}$ compact extension of hyperbolic system. Let $\nu$ be an invariant ergodic probability measure and assume that \( \g^+_c = \g^-_c = 0.\) Then, $\nu $ is a partial $(s,c,u)$-product measure. \end{theorem}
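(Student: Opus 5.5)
Regard $(Y,\vf)$ as a fibered system over the Smale space $(Z,\psi)$. Let $\pi: Y\to Z$ be the projection to the space of center leaves and $\ov\nu := \pi_\ast \nu$. Disintegrate $\nu = \int \nu_z \, d\ov\nu(z)$ along the compact center leaves. The fiber maps $\vf_z : \ov W^c_z \to \ov W^c_{\psi z}$ play the role of the random maps of theorem \ref{malicet}, and the local stable and unstable holonomies $h^s, h^u$ of $\W^{cs},\W^{cu}$ restricted to center leaves play the role of the "past" and "future" structure. The partial $(s,c,u)$-product property is then equivalent to the following: there is a full measure set on which $z\mapsto \nu_z$ is $h^s$-invariant along local stable sets of $Z$, and $h^u$-invariant along local unstable sets of $Z$. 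Here we use that $\ov\nu$, as an invariant measure on a Smale space, already has local product structure up to a density (the Bowen--Marcus type discussion); making this reduction precise is a preliminary step.

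To prove $h^s$-invariance, I would follow the Avila--Viana scheme of sections 3 and \ref{IPbis}. Choose a Markov-type partition $\xi^u$ subordinate to local unstable sets of $Z$, pulled back to $Y$. Consider the "$s$-states": conditionals of $\nu$ on the partition into $\pi^{-1}$ of local stable sets, transported by $h^s$ to a reference center leaf. Then define a fibered Furstenberg entropy $\kappa$, namely the Kullback--Leibler information of $(\vf_z)_\ast \nu_z$ relative to $\nu_{\psi z}$ computed on the conditionals of the refined partition $\vf^{-1}\xi \vee \xi$, exactly as in definition \ref{Fentropy}. By the analog of proposition \ref{invariance1}, with equality in Jensen's inequality, $\kappa=0$ iff the conditionals $\nu_z$ are measurable with respect to the $\s$-algebra generated by the past, i.e. $h^s$-invariant. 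Running the same argument for $\vf^{-1}$ gives $h^u$-invariance.

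The main step is a basic inequality analogous to propositions \ref{basicinequality1} and \ref{basicinequality2}:
\[ \kappa \;\le\; \dim E^c \,(\g^+_c - \g^-_c) \quad\text{or at least}\quad \kappa \le C\,\max(|\g_c^+|,|\g_c^-|). \]
I would prove it by copying the proof of proposition \ref{basicinequality2}. Replace intervals by center balls $B^c(y,q)$. Use proposition \ref{densities} to write $\kappa$ as a limit of $-\int \log$ of ratios $\nu_{\psi z}(\vf B^c(y,q))/\nu_z(B^c(y,q))$. By the ergodic theorem, telescope along the orbit with radii $q_j$ shrinking like $\|D\vf^j|_{E^c}\|^{-1}$. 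By $C^{1+\alpha}$ regularity and the vanishing of both center exponents, the radii $q_n$ shrink subexponentially while $\vf^n B^c(y,q_0)\supset B^c(\vf^n y,q_n)$. A density estimate of type corollary \ref{density} (a maximal inequality valid on a set of large measure) then gives $\liminf -\frac1n\log \nu_{\cdot}(B^c(\vf^ny,q_n)) \le \lim -\frac1n\log q_n\cdot\dim E^c = 0$. Hence $\kappa\le 0$, so $\kappa=0$.

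The hardest step I expect is this basic inequality. The center conditionals $\nu_z$ may be singular, and the center distribution is only Hölder along $Z$, so the uniform control of $\vf^n$ on center balls needs the subexponential growth given by $\g^\pm_c=0$ together with Pesin-type tempered distortion estimates (using $C^{1+\alpha}$) rather than uniform bounds. I would first try to get tempered constants $C_\e(y)$ with $\|D\vf^{n}|_{E^c}\|\le C_\e(y)e^{n\e}$ on the center leaf near $y$. Then I would redo the induction of claim \ref{induction} with these constants.
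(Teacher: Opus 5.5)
Your route differs from the paper's, and as written its central step has a gap.

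\textbf{The paper's route.} For this theorem the paper uses no fiberwise Furstenberg entropy. It takes a Markov partition of $(Z,\psi)$ and compares the increasing partitions $\xi^u\prec\xi^{cu}$. Concavity of $-t\log t$ gives $H(\pi^{-1}\CC_k|\xi^u)\le H(\pi^{-1}\CC_k|\xi^{cu})$ (proposition \ref{invariance4}). It then closes with Ledrappier--Young (proposition \ref{basicinequality3}). Since the center exponents vanish, the Pesin unstable lamination is $\W^u$, so $\xi^u$ carries all of $h_\nu(\vf)$. The increasing partition $\xi^{cu}$ carries at most $h_\nu(\vf)$, so equality is forced, and equality in Jensen is exactly the $(u,c)$-product property. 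No regularity of holonomies and no differentiation of center measures is needed.

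\textbf{Your route and the gap.} The Avila--Viana route you propose, in the spirit of \cite{AVW2}, could even give holonomy invariance on a saturated full-measure set. But your $\kappa$ is written with $\nu_z$ and $\vf$, and either reading fails.

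If $\nu_z$ are the true center conditionals, then $(\vf_z)_\ast\nu_z=\nu_{\psi z}$. All ratios $\nu_{\psi z}(\vf B^c(y,q))/\nu_z(B^c(y,q))$ then tend to $1$, and $\kappa=0$ says nothing.

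The entropy must instead be computed for the $\B$-conditional measures $\ov M_{\ov z}$ (your $s$-states). Here $\B$ is the non-increasing $\s$-algebra whose atoms are local stable sets: the future, not the past, and not a partition subordinate to unstable sets. For these measures the quotient fiber map is not $\vf$. It is $h^s_{\psi(p(z)),\,p(\psi z)}\circ\vf|_{\pi^{-1}(p(z))}$, with $p(z)$ the reference point of the stable atom. This map is only H\"older, because $\vf$ is not constant along stable sets.

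Telescoping one step at a time with this map loses the H\"older exponent $\beta$ at every step: $q\mapsto Cq^{1/\beta}$, so $\log(1/q_n)$ grows like $\beta^{-n}$. No tempered control of $D\vf|_{E^c}$ repairs this. The missing idea is the deformation argument of propositions \ref{deformation} and \ref{holonomy} (\cite[corollary 4.3]{AV10}). There, the inclusions of balls are made for the smooth $\vf$ along the whole orbit, and the H\"older distortion of the holonomy is paid only once, at the final time. This gives $\wt\kappa\le-\frac{d}{\beta}\int\min\{\g^-_c,0\}\,d\nu=0$.

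You also need the martingale step (proposition \ref{martingale}). Equality in Jensen only gives $(\ov F_{\ov z})_\ast\ov M_{\ov z}=\ov M_{\ov\psi\,\ov z}$, not $\B$-measurability of $z\mapsto\nu_z$.

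\textbf{Two smaller points.} First, copying proposition \ref{basicinequality2} only works when $\dim E^c=1$. Malicet's argument needs no derivative control because images of intervals are intervals. For $\dim E^c\ge 2$, the one-step inclusions $\vf B^c(\vf^jy,q_j)\supset B^c(\vf^{j+1}y,q_{j+1})$ are governed by co-norms $\|(D\vf|_{E^c})^{-1}\|^{-1}$, not by $\|D\vf^j|_{E^c}\|^{-1}$. One-step co-norms need not average to $0$ even when both exponents vanish. This is why proposition \ref{basicinequality4} works with blocks of length $L$ and the Pliss lemma; genuine Lyapunov charts are the alternative.

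Second, the claim that $\pi_\ast\nu$, as an invariant measure on a Smale space, has local product structure is false in general. Fortunately it is not needed. A partial $(s,c,u)$-product concerns only the conditionals on $W^{cs}$ and $W^{cu}$ plaques. That is, the center conditionals over a local stable (resp.\ unstable) set of $Z$ must be $h^s$- (resp.\ $h^u$-) related almost everywhere for the conditional of $\pi_\ast\nu$ on that set. This follows from holonomy invariance on a saturated full-measure set, with no product structure on $Z$.
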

 \begin{proof}\footnote{We sketch  the proof from \cite{TY19}, underlining the similarities with the proof of theorems \ref{F} and \ref{malicet}. We hope that it makes the later extensions more clear.}
 Let $Z$ be the space of compact leaves, endowed with a quotient metric, $\pi : Y \to Z$  the quotient map. By hypothesis, the action of $\vf$ on $Z$ is a H\"older continuous hyperbolic map $\psi$. It has a pair of  continuous laminations 
 $\ZZ^s, \ZZ^u $ with local leaves $Z_{loc}^s(x), Z_{loc}^u (x)$ such that   for all $y \in Y$ and some $\rho < 1, \e >0$,
 \[ Z_{loc}^s (x) := \{ y \in X,  d(\psi^ny, \psi ^n x ) \le  \e  \rho ^n {\textrm { for all }} n \ge 0\}, \]
 \[ Z_{loc}^u (x) := \{ y \in X,  d(\psi ^{-n}y, \psi^{-n} x ) \le  \e \rho ^n {\textrm { for all }} n \ge 0\}\} .\]
By \cite[ chapter 7.5]{R04}, for any $\e >0$ there exists a Markov partition $\RR = \{R_j, 1\le j \le J\}, $ such that the rectangles  $R_j$ have diameter smaller than $\e$. 
 We can also arrange that the boundaries of the rectangles $R_j$  have  0 $ \pi _\ast \nu $-measure.
 
 We show that for $\nu $-a.e. $y$, the conditionals  of the measure $\nu $ with respect to the partition in local  \( W^{cu} \)-leaves are $ (u,c)$-products. The proof is the same for the conditionals associated to the local central stable leaves. We set, for $z \in Z, y \in Y,$
 \[ \zeta ^u (z) : = \cap _{n \ge 0} \psi ^{-n} \RR (\psi ^n (z) ),  \;  \xi ^{cu} (y) := \pi ^{-1} (\zeta ^u (\pi (y)) \; {\textrm {and} } \; \xi ^{u} (y) := \xi^{cu} (y) \cap W^u_{loc} (y) .\]
 The elements of the  partitions \( \zeta ^u, \xi ^u\) and \( \xi ^{cu}\) have open interior  and negligible boundaries in the local leaves, respectively \( Z_{loc}^u, W_{loc}^u\) and $W_{loc}^{cu}.$ The partitions $ \xi ^{cu}$ and $\xi ^u $ are measurable, let $ \nu _y^u, \nu _y^{cu} $ be the associated conditional measures. For  a measurable partition $\xi $ with conditional measures $\nu _y, y \in Y$ and a finite partition $\CC = \{C_\ell, 1\le \ell \le L \}, $ write the  conditional entropy $H(\CC |\xi ) $ as:
 \begin{equation}\label{conditionalentropy} H(\CC|\xi) \;:=\;  \int -\sum _{1\le \ell \le L}  \nu _y  (C_{\ell})\log  \nu _y  (C_{\ell}) \, d\nu (y) .\end{equation}
 The following proposition is classical,  we state it as the analog of proposition \ref{invariance1}  in our setting
 \begin{proposition}\label{invariance4} Let $\CC_k = C_{k,\ell}, 1\le \ell \le L_k $ be  a  finite partition of $Z$. We have:
\[ H( \pi ^{-1} \CC _k | \xi ^u )\; \le H(\pi ^{-1} \CC _k | \xi ^{cu}) .\]
 We have equality for finer and finer partitions $\CC _k $ such that, for $\nu $-a.e. \(y\in Y, \; \CC_k, k \to \infty, \) generate the Borel $\s$-algebra of $\zeta^u (\pi (y))  $ if, and only if, for $\nu$-a.e. $y\in Y,$  the conditionals $\nu ^{cu}_y $  are $ (u,c)$-products. \end{proposition}
 \begin{proof}We may write, for all $k, \ell, 1\le \ell  \le L_k,$ $\nu$-a.e. $y  \in Y,$ \[ \nu ^{cu}_y (\pi ^{-1}C_{k,\ell} )\;=\; \int \nu ^{u}_w (\pi ^{-1}C_{k,\ell}) \,d \nu ^{cu}_y (w).\]
 Using Jensen inequality with the  function $ t\mapsto -t\log t ,$ we obtain, for all $k, \ell, 1\le \ell  \le L_k,$ $\nu$-a.e. $y \in Y,$
 \[  -\int \nu ^{u}_w (\pi ^{-1}C_{k,\ell}) \log [\nu ^{u}_w (\pi ^{-1}C_{k,\ell}) ] \, d\nu ^{cu}_y (w)  \; \le \; - \nu ^{cu}_y (\pi ^{-1}C_{k,\ell} ) \log [\nu ^{cu}_y (\pi ^{-1}C_{k,\ell}) ] . \]
 For fixed $k$, the announced inequality follows by summing in $\ell , 1\le \ell \le L_k$ and integrating in $y$. In case of equality, we have, for all $ \ell, 1\le \ell  \le L_k,$ $\nu$-a.e. $y \in Y,$ and $\nu ^{cu}_y$-a.e. $w \in \xi ^{cu} (y) ,$  \(  \nu ^{u}_w (\pi ^{-1}C_{k,\ell}) =   \nu ^{cu}_y (\pi ^{-1}C_{k,\ell}) .\)
 
 So, in case of equality,  we have, for $\nu$-a.e. $y \in Y$ and for $\nu ^{cu}_y$-a.e. $w \in \xi ^{cu} (y) , \, \pi _\ast (\nu ^u_w) = \pi _\ast (\nu ^{cu} _y) $ on $\CC _k.$ As $k\to \infty $, we obtain that for  $\nu$-a.e. $y \in Y $ and $ \nu ^{cu}_y$-a.e. $w \in \xi ^{cu} (y) , \, \pi _\ast (\nu ^u_w) = \pi _\ast (\nu ^{cu} _y) $  on $\zeta ^u (\pi (y)).$  The proposition follows since the $\pi _\ast (\nu ^{cu}_y), y \in Y,$ are the conditional measures associated with the partition $\zeta^u$.
\end{proof} 
In order to prove theorem \ref{TY}, we  apply  proposition  \ref{basicinequality3} to the partitions \( \CC _k \) defined by $\CC _k (z)  :=  \cap _{n = 1}^k  \psi ^{n} \RR (\psi ^{-n} (z) ).$ Since \( \zeta ^u (z) : = \cap _{n \ge 0} \psi ^{-n} \RR (\psi ^n (z) ),  \) the $\CC _k $ form an increasing family that generates the Borel $\s$-algebra of $\zeta ^u (z) $ for all $z \in \ZZ.$

Observe that, by construction, the partition $\xi ^u$ (resp. $\xi ^{cu} $) 
is an increasing  partition made, up to measure 0, of  open subsets of local unstable (resp. central unstable) leaves. Furthermore, we have (see proposition \ref{entropyrohlin}) \[H(\pi^{-1} (\CC_k) |\xi ^u ) = H (\vf ^{-k} \xi ^u |\xi ^u), \quad   H(\pi^{-1} (\CC_k) |\xi ^{cu} ) = H (\vf ^{-k} \xi ^{cu} |\xi ^{cu}).\]
Theorem \ref{TY} follows from proposition \ref{basicinequality3}. \end{proof}

\section{Invariance Principle}\label{TheoremIP}

\subsection{Basic setting for the IP}

Consider an  measurable  ergodic dynamical system \((X,\A,m, f),\) where $(X,\A, m) $ is a Lebesgue space (see section \ref{Rokhlin}),
$f$ a bimeasurable bijection of $X$ preserving  $m$. 
Let \( \Pi : \EE \to X \) be a measurable  fiber bundle over  $X$ with fibers \(\EE_x \)  compact Riemannian manifolds of bounded  dimensions and diameters. We also assume that the $C^2$ norms of the  metrics on the fibers $\EE _x$ are uniformly bounded.

Let \( F: \EE \to \EE \) be a measurable   bundle of $C^2$-diffeomorphisms   \( F_x : \EE _x \to \EE _{f(x)} . \) We assume that  \( \|D_\xi F_x\|, \;  \|D_{\xi} (F_{x})^{-1}\| \) and \( \| \Hess_\xi (F_x)\|\)  are uniformly bounded. Write
\[ F^n = 
\{ F^{(n)}_x, x \in X \}, \; {\textrm {where}} \; F^{(n)}_x := F_{f^{n-1} x } \circ \ldots \circ  F_x .\]
 
Let $M$ be an ergodic $F$-invariant probability  measure on $\EE$ such that \( \Pi _\ast M = m\) and write \( M = \int M_x \, dm (x) \) the disintegration of $M$ for $\Pi.$ Observe that by invariance, for $m$-a.e. $x,$ \begin{equation}\label{invariance} (F_x)_\ast  M_x \;= \; M_{fx} .\end{equation}

By the subadditive ergodic theorem, there are numbers \(\g ^+, \g ^-,\) with \( \g ^+ \ge \g ^-, \) such that,  at $M$-a.e. \((x,\xi), \xi \in \EE_x,\) 
\[ \g^+  \,:=\, \lim\limits _{n \to \infty } \frac{1}{n} \log \| D_\xi F^{(n)}_x  \|,  \quad  \g ^- :=\, -\lim\limits _{n \to \infty } \frac{1}{n} \log \| (D_\xi F^{(n)}_x )^{-1} \| .\] Call $\g^+, \g^- $ the associated exponents.

Let \((X,\A,m, f)\)  be as above. We say that a sub-$\s$-algebras \( \B\) is finer  than a sub-$\s$-algebra \( \CC \subset \A \) (noted \( \CC \subset \B\)) if for any set \( C\in \CC,\) there is a set \( B \in \B \) with \( m(B\De C) = 0.\) We say that the sub-$\s$-algebra \(\B \subset \A\) is non-increasing if \( f^{-1} \B \subset B,\) generating  if the $\s$-algebra generated by all \( \{f^n \B, n\in \Z\} \) is finer than  $\A.$

\begin{theorem} [\cite{AV10}]\label{IP} 
With the above notations\footnote{In particular, for the sake of the proof we give, we assume that  \( \|D_\xi F_x\|, \;  \|D_{\xi} (F_{x})^{-1}\| \) and \( \|\Hess_\xi (F_x)\|\)  are uniformly bounded. See subsection \ref{regularity}.}, assume there is a generating non-increasing sub-$\s$-algebra \(\B\), that the mappings \(x\mapsto \EE_x,  x \mapsto F_x \) are \(\B\)-measurable, and that \( \g ^-  \ge 0.\)
Then, the mapping \( x \mapsto M_x \) is $\B$-measurable. \end{theorem}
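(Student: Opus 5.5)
We follow the scheme of the proofs of Theorems \ref{F} and \ref{malicet}: we introduce an entropy measuring how the conditional measures are distorted along $\B$. By Jensen, this entropy vanishes exactly when $x\mapsto M_x$ is $\B$-measurable. A basic inequality then bounds it by a multiple of the contraction rate, and $\g^-\ge 0$ forces it to vanish.

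\textbf{Setup.} Let $\widetilde M_x := \E(M_\cdot \mid \B)(x)$ be the disintegration of $M$ with respect to the $\s$-algebra $\Pi^{-1}\B \otimes (\text{Borel of fibers})$. Concretely, $\widetilde M_x = \int M_{x'}\, dm_x^{\B}(x')$, where $m^\B_x$ is the conditional of $m$ on the atom of $\B$ through $x$; this uses that $x\mapsto \EE_x$ is $\B$-measurable, so all the $M_{x'}$ live on the same fiber. Since $F_x$ is $\B$-measurable and $f^{-1}\B\subset\B$, relation (\ref{invariance}) gives $(F_x)_\ast \widetilde M_x = \E(M_\cdot\mid f\B)(fx)$. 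The atoms of $f\B$ are coarser than those of $\B$, so $\widetilde M_{fx}$ disintegrates $(F_x)_\ast\widetilde M_x$ further. We define the entropy
\[ \kappa := \int \log \frac{dM_{fx}}{d(F_x)_\ast\widetilde M_x}(F_x\xi)\, dM(x,\xi),\]
that is, the average Kullback--Leibler information of $M_x$ relative to $\widetilde M_x$, or equivalently the conditional entropy $H(\A\mid\B)-H(\A\mid f\B)$ on the fibre coordinate. By the Jensen argument of Proposition \ref{invariance1}, or in the conditional-entropy form of Proposition \ref{invariance4}, we get $\kappa\ge 0$, with $\kappa=0$ iff $M_x=\widetilde M_x$ $m$-a.e. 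The generating hypothesis enters exactly here. Iterating $n$ times gives $n\kappa$ as the information of $M_x$ relative to $\E(M\mid f^{-n}\B)$. Letting $n\to-\infty$ along the decreasing filtration $f^{n}\B$, martingale convergence shows that $\kappa=0$ identifies $M_x$ with its conditional expectation on every $f^{n}\B$. Since $\bigvee_n f^n\B=\A$, this forces $M_x=\widetilde M_x$, so $M_x$ is $\B$-measurable.

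\textbf{Basic inequality.} The main step is $\kappa\le -(\dim)\,\g^-$, or at least $\kappa\le C\max(0,-\g^-)$. Once this holds, $\g^-\ge0$ yields $\kappa=0$ and we are done. To prove it we copy the proof of Proposition \ref{basicinequality1}. We write $g(x,\xi)$ for the Radon--Nikodym density above and truncate it to $g^N$. We approximate $g$ by ball ratios $\widetilde M_x(B(\xi,q))/M_x(B(\xi,q))$ via the density results (Proposition \ref{densities}). Along an orbit we choose radii $q_j$ shrinking like $\prod_{i<j}\|(D F_{f^ix})^{-1}\|^{-1}$, and use the uniform $C^2$ bounds so that $F_x(B(\xi,q))\supset B(F_x\xi, q/\|DF_x^{-1}\|)$ for small $q$. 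The Birkhoff sum then telescopes to $-\frac1n\log\big(M_{f^nx}(B(\cdot,q_n))/\text{bounded}\big)$. Lower bounds on ball measures at a positive-density set of times (again Proposition \ref{densities}) compare this with $-\frac{d}{n}\log q_n\to -d\,\g^-$. Replacing $F$ by $F^{(L)}$ and letting $L\to\infty$ sharpens the constant to the exponent itself, as in the end of the proof of Proposition \ref{basicinequality1}.

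\textbf{Main obstacle.} The hardest point is the telescoping step. The numerator involves $\widetilde M$ while the next denominator involves $M_{fx}$, so the ratios only telescope after we check that
\[(F_x)_\ast\widetilde M_x(B)\ \ge\ \widetilde M_{fx}(B)\]
holds in the averaged sense coming from the disintegration $\widetilde M_{fx}\ll(F_x)_\ast\widetilde M_x$. I would first try to handle this by writing $\kappa$ as a limit of conditional entropies of finer and finer finite partitions of the fibers, as in Proposition \ref{invariance4}, and estimating the number of partition elements met by $F^{(n)}_x$-images of small balls. That turns the telescoping into a Margulis--Ruelle type count, which avoids pointwise density comparisons.
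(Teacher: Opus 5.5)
\textbf{There is a genuine gap: the entropy you define is the wrong quantity, and this is exactly what breaks your telescoping.} Since $M_{fx}=(F_x)_\ast M_x$, your integrand equals $\frac{dM_x}{d\wt M_x}(\xi)$. So your $\kappa$ is $\int D(M_x\,\|\,\wt M_x)\,dm(x)$, the \emph{total} information of the fibre coordinate given $\B$. It is not the one-step increment ``$H(\cdot\mid\B)-H(\cdot\mid f\B)$'' you say it equals. By the chain rule and stationarity it equals $\lim_n n\kappa_1$, with $\kappa_1$ the one-step quantity below, so it only takes the values $0$ and $+\infty$. Note also that $\kappa=0$ trivially means $M_x=\wt M_x$, so for this $\kappa$ the generating hypothesis would play no role.

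The basic inequality you aim for is false for this $\kappa$. Take i.i.d.\ products in $\SL(2,\R)$ with $\g>0$ and non-atomic stationary measure $\nu$, let $\B$ be the future coordinates, and let $M_x=\delta_{E^u(x)}$, where $E^u$ is the Oseledets direction determined by the past. Then $\wt M_x=\nu\perp M_x$, so $\kappa=+\infty$, whereas $-d\min\{\g^-,0\}=2\g$. Dynamically, $dM_x/d\wt M_x$ is not multiplicative along orbits, which is why your ratios $\wt M_x(B)/M_x(B)$ do not telescope.

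Your proposed patch does not help. The hypothesis $f^{-1}\B\subset\B$ means $\B\subset f\B$, so $f\B$ is \emph{finer} than $\B$, not coarser. Consequently $\wt M_{fx}$ is the average of the measures $(F_{x'})_\ast\wt M_{x'}$ over the $x'$ with $fx'$ in the $\B$-atom of $fx$. Any absolute continuity therefore goes as $(F_x)_\ast\wt M_x\ll\wt M_{fx}$, not the reverse. An inequality $(F_x)_\ast\wt M_x(B)\ge\wt M_{fx}(B)$ for all small balls would force these two probability measures to coincide, i.e.\ it is essentially the conclusion itself. The partition-counting fallback is too vague to replace it.

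\textbf{The missing idea} is to measure the one-step distortion using only the $\B$-measurable family, i.e.\ the paper's $\ov M_{\ov x}=\wt M_x$ on the quotient $(\ov X,\ov f)$:
$\kappa_1:=\int\log\frac{d(F_x)_\ast\wt M_x}{d\wt M_{fx}}(F_x\xi)\,d\wt M_x(\xi)\,dm(x)$.
Jensen gives $\kappa_1\ge0$, with equality iff $(F_x)_\ast\wt M_x=\wt M_{fx}$ a.e. The ball ratios $\wt M_{fx}(F_xB(\xi,q))/\wt M_x(B(\xi,q))$ now telescope exactly, because the numerator at step $j$ and the denominator at step $j+1$ are the same measure. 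If $\kappa_1=0$, then $\wt M_x=((F^{(n)}_x)^{-1})_\ast\wt M_{f^nx}$ for all $n$. Martingale convergence along the \emph{increasing} filtration $f^n\B$, $n\to+\infty$, gives $M_x=\lim_n((F^{(n)}_x)^{-1})_\ast\wt M_{f^nx}=\wt M_x$. (For $n\to-\infty$ the filtration decreases to the tail, not to $\A$, so your direction was wrong.) This is the content of Propositions \ref{invariance2}, \ref{martingale} and \ref{basicinequality4}.

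Even with $\kappa_1$, ``copying Proposition \ref{basicinequality1}'' needs one more ingredient. There, $\|A\|\ge1$ makes the radii decrease automatically. Here $\|(DF)^{-1}\|$ can be $<1$, so forward-defined radii may leave the range where the Taylor inclusion and the density approximation hold. The paper defines the radii backwards from Pliss good times; a measurable initial radius with an $\e$-shifted rate would also work. The paper also disposes of $\g^->0$ separately, since the conditionals are then atomic.
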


\subsection{Proof of theorem \ref{IP}} 
By section \ref{Rokhlin}, given a $\s$-algebra $\B$ satisfying the hypotheses of theorem \ref{IP},  there is an essentially unique Lebesgue  space \(( \ov X, \ov \A , \ov m) \) and a projection $ \pi : X \to \ov X $ such that \( \pi ^{-1} \ov A = \B\) and \( \pi _\ast m = \ov m.\) Since \( \B \) is non-increasing, there is a $\ov m$-preserving mapping (non-invertible in general)  $\ov f$ such that \( \pi (fx) = \ov f (\pi x ) .\)

The measurability hypotheses ensure that there exist  a fiber bundle \( \ov \Pi: \ov \EE \to \ov X \) with fibers compact manifolds, a fibered projection \( \ov \pi : \EE  \to \ov \EE \) and a quotient mapping \( \ov F\) on \(\ov \EE\) so that \( \ov F \circ \ov \pi = \ov \pi  \circ  F \) and \( \ov  \Pi \circ  \ov F =  \ov  f \circ \ov  \Pi .\) The measure \( \ov M := \ov \pi_\ast M \) is a \( \ov F\)-invariant ergodic probability  measure on \( \ov \EE.\)  Let \( \ov M = \int \ov M_{\ov x }\, d\ov m (\ov x) \) be a disintegration of \( \ov M \) with respect to \(\ov  \Pi .\)

We work with the family of measures \( \ov M_{\ov x }, \) defined for \(\ov m \)-a.e. \(\ov x \in \ov X.\) We have to prove that, for $m$-a.e.$x \in X, \;   M_x = \ov M_{\pi (x)}.$  We define the entropy \(\kappa: \)
\begin{definition}\[ \kappa :=  \int \left( \int  \log \frac {d(\ov F_{\ov x})_\ast \ov M_{\ov x}}{d\ov M_{\ov f \ov x }} (\ov F_{\ov x } \ov \xi) \, d \ov M_{\ov x } \right) \, d\ov m_{\ov x }.\]\end{definition}

The first step is analogous to proposition \ref{invariance1}, with the same proof:

\begin{proposition}\label{invariance2} We have \( \kappa \ge 0,\)  with equality only if, for  \(  \ov m\)-a.e. \( \ov x \in \ov X, \) \[ (\ov F_{\ov x})_\ast \ov M_{\ov x} = \ov M_{\ov f \ov x} .\]\end{proposition}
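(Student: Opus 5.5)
The plan is to rewrite $\kappa$ as an average of Kullback--Leibler informations and then use the equality case of Jensen's inequality, as in proposition \ref{invariance1}.

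\textbf{Invariance of the quotient measure.} Set $\nu_{\ov x} := (\ov F_{\ov x})_\ast \ov M_{\ov x}$, a probability measure on $\ov\EE_{\ov f\ov x}$. The first step is to check that integrating $\nu_{\ov x}$ over the fibre of $\ov f$ gives $\ov M_{\ov f \ov x}$. More precisely, disintegrating $\ov m$ along $\ov f$ as $\ov m = \int \ov m_{\ov y}\, d\ov m(\ov y)$, with $\ov m_{\ov y}$ carried by $\ov f^{-1}(\ov y)$, the $\ov F$-invariance of $\ov M$ gives
\[ \ov M_{\ov y} = \int \nu_{\ov x}\, d\ov m_{\ov y}(\ov x). \]
So $\ov M_{\ov y}$ is an average of the $\nu_{\ov x}$. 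This is exactly the role stationarity plays in proposition \ref{invariance1}.

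\textbf{Rewriting $\kappa$.} Change variables $\eta = \ov F_{\ov x}\ov\xi$. Then
\[ \kappa = \int \int \log \frac{d\nu_{\ov x}}{d\ov M_{\ov f\ov x}}(\eta)\, d\nu_{\ov x}(\eta)\, d\ov m(\ov x). \]
The convention is that the inner integral is $+\infty$ when $\nu_{\ov x}$ is not absolutely continuous with respect to $\ov M_{\ov f\ov x}$. The absolute continuity itself follows from the displayed average formula, since each $\nu_{\ov x}$ sits under $\ov M_{\ov f\ov x}$ for $\ov m_{\ov f \ov x}$-a.e. $\ov x$.

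\textbf{Jensen.} Each inner integral is the relative entropy of one probability measure with respect to another. Apply Jensen's inequality to the convex function $-\log$:
\[ -\int \log\frac{d\ov M_{\ov f\ov x}}{d\nu_{\ov x}}\, d\nu_{\ov x} \ \ge\ -\log \int \frac{d\ov M_{\ov f\ov x}}{d\nu_{\ov x}}\, d\nu_{\ov x} \ \ge\ -\log 1 = 0. \]
Integrating over $\ov x$ gives $\kappa \ge 0$.

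\textbf{Equality case.} If $\kappa=0$, then each inner term vanishes for $\ov m$-a.e. $\ov x$. Strict convexity of $-\log$ then forces the density to be constant, $\nu_{\ov x}$-a.e., and the second inequality forces total mass $1$. Together these give $d\nu_{\ov x}/d\ov M_{\ov f\ov x} = 1$ $\nu_{\ov x}$-a.e., which is $(\ov F_{\ov x})_\ast \ov M_{\ov x} = \ov M_{\ov f\ov x}$.

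\textbf{Main obstacle.} The delicate point is measure-theoretic. The $\ov f$-disintegration must be handled carefully, because $\ov f$ is non-invertible, and the integrals must make sense. Using the average formula, the negative part of the logarithm is integrable, so $\kappa$ is well defined in $[0,+\infty]$.
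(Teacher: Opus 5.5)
Your argument is correct and is essentially the paper's: the paper just says the proof of proposition \ref{invariance1} carries over, i.e.\ each inner integral is the Kullback--Leibler information of $(\ov F_{\ov x})_\ast \ov M_{\ov x}$ relative to $\ov M_{\ov f \ov x}$, and the equality case of Jensen's inequality is applied fiber by fiber. One side remark is wrong, though harmless: absolute continuity of $(\ov F_{\ov x})_\ast \ov M_{\ov x}$ with respect to $\ov M_{\ov f\ov x}$ does \emph{not} follow from the average formula (Lebesgue measure on $[0,1]$ is the average of the Dirac masses $\delta_t$), and in the paper finiteness of $\kappa$ comes only from the basic inequality (compare corollary \ref{abs.cont.}). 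You never need that claim: the $+\infty$ convention covers the singular case, $\kappa=0$ forces almost every inner integral to be finite, and the negative part of $\log\frac{dP}{dQ}$ is $P$-integrable, with integral at most $1/e$, for any two probability measures $P \ll Q$, so the average formula plays no role in this proposition.
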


The second step is the observation that the family \( M_x, x\in X\) can be recovered from the family of measures \( \ov M_{\ov x }. \)

\begin{proposition}\label{martingale}  For \(m\)-a.e. $x$,  \( M_x = \lim\limits _{n\to +\infty} ((\ov F^{(n)}_{\ov  x})^{-1})_\ast \ov M_ {\ov {f}^n \pi x} .\) \end{proposition}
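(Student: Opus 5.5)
The plan is to read the right-hand side as a conditional expectation of the measurable family $x\mapsto M_x$ along a filtration of $\s$-algebras increasing to $\A$, and then apply the martingale convergence theorem. The precise index convention, that is, which of $f^{n}\B$ or $f^{-n}\B$ increases, I have not pinned down; this is flagged as the main obstacle below. I first fix a countable family $\{\phi_k\}$ of functions on $\EE$ whose restrictions to every fiber $\EE_x$ are continuous and separate the probability measures on $\EE_x$. Such a family exists by a measurable trivialization of the bundle, since the fibers are compact manifolds of bounded dimension and diameter. Weak-$\ast$ convergence of fiber measures is then tested by $\int \phi_k\,dM_x$, $k\in\N$. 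I also identify $\EE_x$ with $\ov\EE_{\pi x}$ and $F_x$ with $\ov F_{\pi x}$, which the $\B$-measurability of $x\mapsto \EE_x, F_x$ allows. With this identification, $\ov M_{\ov x}$ is just the conditional expectation $\E(M_\cdot\,|\,\B)$ read on the fiber: $\int\phi\, d\ov M_{\pi x}=\E\big(\int \phi\, dM_\cdot \,\big|\, \B\big)(x)$ for fiberwise-continuous, $\B$-measurably varying $\phi$.

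The key computation uses the invariance relation (\ref{invariance}) iterated $n$ times, $M_x=((F^{(n)}_x)^{-1})_\ast M_{f^nx}$. I would introduce the $\s$-algebra $\B_n$ generated by $\B$ transported along $f^n$ together with the maps $F_x,\dots,F_{f^{n-1}x}$. Since $F^{(n)}_x$ is $\B_n$-measurable, it can be pulled out of the conditional expectation. This gives
\[ \E\Big(\int \phi\, dM_\cdot \,\Big|\, \B_n\Big)(x) \;=\; \int \phi\circ (F^{(n)}_x)^{-1}\, d\ov M_{\ov f^n\pi x}\;=\;\int\phi \; d\big((\ov F^{(n)}_{\ov x})^{-1}\big)_\ast \ov M_{\ov f^n \pi x} . \]
The step I expect to be the main obstacle is the bookkeeping showing that $\B_n$ is monotone in $n$ and that $\bigvee_n \B_n$ is finer than $\A$. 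Monotonicity should follow from $f^{-1}\B\subset\B$. That the $\B_n$ together generate $\A$ should follow from the hypothesis that $\B$ is generating, since $\bigvee_n f^{\pm n}\B$ is then finer than $\A$. I would first try the convention in which $\B_n$ corresponds to $f^{n}\B$, viewed from the point $f^n x$, and check the direction of inclusion against the definition of $\ov f$ through $\pi\circ f=\ov f\circ\pi$. If the naive choice produces a decreasing family instead, the fix is to rewrite the expression in terms of $y=f^nx$ and use invariance of $m$ to transfer the statement.

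Once the filtration is increasing and generating, Doob's martingale convergence theorem gives, for each $k$ and $m$-a.e. $x$, that $\int\phi_k\,d\big((\ov F^{(n)}_{\ov x})^{-1}\big)_\ast\ov M_{\ov f^n\pi x}$ converges to $\E\big(\int\phi_k\,dM_\cdot\,|\,\A\big)(x)=\int\phi_k\,dM_x$. The integrands are bounded, since the $\phi_k$ are bounded. Intersecting the countably many full-measure sets and using that the $\phi_k$ are convergence-determining, together with compactness of the space of probability measures on $\EE_x$, yields the weak-$\ast$ convergence $\big((\ov F^{(n)}_{\ov x})^{-1}\big)_\ast\ov M_{\ov f^n\pi x}\to M_x$ for $m$-a.e. $x$, which is the statement of Proposition \ref{martingale}.
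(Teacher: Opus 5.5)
You follow the paper's route: martingale convergence of the conditional expectations of $x\mapsto M_x$, with the iterated relation (\ref{invariance}) used to pull $F^{(n)}$ out. But the step you flag as the main obstacle is a genuine gap, and no choice of convention closes it for the formula as displayed. Both $\ov F^{(n)}_{\ov x}$ and $\ov M_{\ov f^n\pi x}=\ov M_{\pi f^n x}$ depend only on $\pi x$. So every term on the right-hand side is $\B$-measurable, and so is any a.e.\ limit. Your key identity, with $\B_n$ increasing to $\A$, would therefore prove that $M_x$ is $\B$-measurable. That is the conclusion of Theorem \ref{IP}, obtained without using $\g^-\ge 0$, and it is false in general.

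Here is a counterexample. Take $F_x=x_0$ for an i.i.d.\ compactly supported $\SL(2,\R)$ cocycle with $\g>0$ and non-atomic stationary $\nu$, and let $\B=\s(x_0,x_1,\ldots)$. Let $M_x=\de_{E^u(x)}$, where the Oseledets unstable direction $E^u(x)$ is a function of $x_{-1},x_{-2},\ldots$ only; for this $M$ the fiber exponent is $\g^-=-2\g<0$. Then $\ov M_{\ov x}=\nu$, while $((\ov F^{(n)}_{\ov x})^{-1})_\ast\nu=(Z_n^{-1})_\ast\nu\to\de_{E^s(x)}\ne M_x$. Here $E^s(x)$ is the Oseledets stable direction, a function of $x_0,x_1,\ldots$.

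In terms of $\s$-algebras, the problem is this. To land on $\ov M_{\pi f^nx}$ after pulling out $F^{(n)}_x$, you must condition on $f^{-n}\B$, since $\E(G\circ f^n\,|\,f^{-n}\B)=\E(G\,|\,\B)\circ f^n$. But $f^{-1}\B\subset\B$ makes this family decrease, and $F_x$ need not be $f^{-n}\B$-measurable. With the increasing family $f^n\B$ you get $\E(M_{f^n\cdot}\,|\,f^n\B)=\E(M_\cdot\,|\,f^{2n}\B)\circ f^n$ instead of $\ov M_{\pi f^n\cdot}$. Substituting $y=f^nx$ and invoking invariance of $m$ does not repair this. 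The base point then moves with $n$, and invariance of $m$ transports distributions, not a pointwise limit. Under the hypotheses of Theorem \ref{IP} the displayed formula is true, but only a posteriori, once $M_x=\ov M_{\pi x}$ is known.

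What your bookkeeping does prove is the version read from the past, using $\B_n:=f^n\B$. This family is increasing since $\B\subset f\B$, and generating since $\B$ is. The map $x\mapsto F_{f^{-k}x}$ is $f^k\B$-measurable. Hence $M_x=(F^{(n)}_{f^{-n}x})_\ast M_{f^{-n}x}$ gives $\E(M_\cdot\,|\,f^n\B)(x)=(\ov F^{(n)}_{\pi f^{-n}x})_\ast\ov M_{\pi f^{-n}x}$. Doob's theorem, applied to your countable family $\{\phi_k\}$, then yields $M_x=\lim_n(\ov F^{(n)}_{\pi f^{-n}x})_\ast\ov M_{\pi f^{-n}x}$ for $m$-a.e.\ $x$.

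This is all the proof of Theorem \ref{IP} needs. When $\kappa=0$, Proposition \ref{invariance2} makes every term equal to $\ov M_{\ov f^n\pi f^{-n}x}=\ov M_{\pi x}$, so $M_x=\ov M_{\pi x}$. The paper's own proof has the same slip. In its martingale step it asserts $f^{-n}\B\supset\B$, against the non-increasing hypothesis. It then uses $f^{-1}\B\subset\B$ to identify $F^{(n)}_{\pi_nx}$ with $\ov F^{(n)}_{\ov x}$. So state and prove the past version rather than the displayed one.
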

\begin{proof} For $n\ge 0,$ let \(X_n\) be the space associated to \( f^{-n} \B \supset \B, \; \pi _n \) the associated projection. 
There are a  fiber bundle \(\Pi _n : \EE _n \to X_n \)  and a fibered projection \(\ov \pi _n : \EE \to \EE _n \). Since  the $\s$-algebra $\B $ is non-increasing and $ x \mapsto \EE_x $ is $\B$ measurable, the fibers of $\EE _n$ are identified with the fibers $\EE _{\pi _n(x)}, x \in X$ for all $n\ge 0.$
Since \(\B\) is generating, by the martingale theorem, for $m$-a.e. $x$, the disintegrations  \( M_x, x\in X\) are given by the weak*-limits of the disintegrations $M^n_{\pi _n (x) } $ associated to $\Pi _n.$ By invariance (\ref{invariance}), \[ M^n_{\pi _n x}  = ((F^{(n)}_{\pi _nx})^{-1})_\ast  M^0_{f^n( \pi _n x)}.\]
 With our notations, $\pi _0 (x) = \ov x, f^n (\pi _n x) = \ov{f^n x} = \ov f ^n \ov x $ and $M^0_{\ov x} = \ov M_{\ov x},$ so that 
\[ M_x \,=\, \lim\limits _{n \to \infty } ((F^{(n)}_{\pi _n x})^{-1})_\ast \ov M_{\ov f^n \ov x}.\]
Since $x \mapsto F_x $ is $\B$-measurable, $F^{(n)}_{\pi _n x} = F^{(n)}_{\pi _0 x} = \ov F^{(n)}_{\ov x}.$  \end{proof}

The third step is a   basic inequality:

\begin{proposition}\label{basicinequality4} 
Let $d$ be the maximal dimension of the fibers $\EE_x.$ Then, \[ \kappa \le - d \min \{ \g^-, 0\} .\]
 \end{proposition}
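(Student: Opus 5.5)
We follow the proof of Proposition \ref{basicinequality1}, now on the quotient system $(\ov \EE, \ov M, \ov F)$ and with $d$-dimensional fibers. Set
\[ g(\ov x,\ov \xi) := \frac{d\bigl((\ov F_{\ov x})^{-1}\bigr)_\ast \ov M_{\ov f \ov x}}{d\ov M_{\ov x}}(\ov \xi). \]
With this notation, and using the relation between $(\ov F_{\ov x})_\ast \ov M_{\ov x}$ and $\ov M_{\ov f\ov x}$, we get $\kappa = -\int \log g \, d\ov M$, where $-\log g$ plays the role it had in the $\SL(2,\R)$ case.

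\textbf{Step 1 (truncation and balls).} Truncate to $g^N := \sup\{g, e^{-N}\}$, so that $\kappa = \lim_N -\int \log g^N\, d\ov M$ by monotone convergence. Replace $g$ by the ratio of ball masses
\[ g_q(\ov x,\ov \xi) = \frac{\ov M_{\ov f\ov x}\bigl(\ov F_{\ov x} B(\ov\xi,q)\bigr)}{\ov M_{\ov x}\bigl(B(\ov\xi,q)\bigr)}. \]
The maximal function $g^\ast_\de$ controls the error: by the differentiation result Proposition \ref{densities}, applied fiberwise with uniform constants because the fiber metrics have bounded $C^2$ norms and bounded dimension, we get $\int \log (g^\ast_\de/g^N)\,d\ov M \le \e$.

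\textbf{Step 2 (telescoping along orbits).} Apply the ergodic theorem along the $\ov F$-orbit of a typical $(\ov x,\ov \xi)$, with radii $q_j$ chosen so that
\[ \ov F_{\ov f^j\ov x}\bigl(B(\ov F^{(j)}_{\ov x}\ov\xi,q_j)\bigr) \supset B(\ov F^{(j+1)}_{\ov x}\ov\xi,q_{j+1}). \]
Two choices are available. The crude choice is $q_{j+1}=q_j/\|D\ov F_{\ov f^j\ov x}^{-1}\|_\infty$, which requires the uniform bound on $\|D F_x^{-1}\|$; to reach the exponent we should use the radii given by $D\ov F^{(j)}$ along the orbit itself. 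Here the uniform Hessian bound gives a bounded-distortion estimate: on a ball of radius $q$, the image of the ball contains a ball of radius $(1-Cq)\,q/\|(D_{\ov\xi}\ov F)^{-1}\|$. The sum then telescopes as in Proposition \ref{basicinequality1}, giving
\[ -\int \log g^N\, d\ov M \le \liminf_n -\frac1n \log \frac{\ov M_{\ov f^n\ov x}\bigl(B(\ov F^{(n)}_{\ov x}\ov\xi,q_n)\bigr)}{\ov M_{\ov x}\bigl(B(\ov\xi,\de)\bigr)} + \e. \]
The limit is taken along times $n$ at which $\ov F^{(n)}(\ov x,\ov\xi)$ returns to a set $D$ of measure greater than $1-\e$. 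On $D$ the fiber measures satisfy the lower bound $\ov M(B(\eta,r)) \ge C r^d$ for small $r$; this is the $d$-dimensional analogue of Proposition \ref{densities}, valid since Lebesgue measure on a $d$-dimensional fiber gives a doubling comparison. Hence the right-hand side is at most $\lim -\frac{d}{n}\log q_n$.

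\textbf{Step 3 (identifying the rate).} If the $q_n$ decrease like $\|(D\ov F^{(n)})^{-1}\|^{-1}$, then $-\frac1n\log q_n \to -\g^-$, which gives $\kappa \le -d\,\g^-$ when $\g^-<0$. When $\g^-\ge 0$ we want $\kappa\le 0$. Here the radii no longer shrink exponentially, and the bound becomes $\kappa\le d\cdot\max(-\g^-,\e')$ for every $\e'$, obtained by taking $q_n \ge e^{-n\e'}$. Together the two cases give $\kappa\le -d\min\{\g^-,0\}$.

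\textbf{Main obstacle.} The hardest point is Step 2: the radii must follow the actual derivative cocycle, not the sup-norms, and they must stay small so that the distortion error remains under control. I would first try the iteration trick used at the end of Proposition \ref{basicinequality1}. Prove the crude bound $\kappa \le d\int \log\|D\ov F^{-1}\|\,d\ov M$ for $\ov F^L$, then use $\kappa(\ov F^L)=L\kappa$ and the subadditive ergodic theorem to pass to $\g^-$; the uniform Hessian bound makes the distortion error per block $o(1)$ once $\de$ is small relative to $L$.
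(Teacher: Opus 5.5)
Your outline (truncation $g^N$, ball ratios controlled by the maximal function $g^\ast_\de$, telescoping, the bound $\ov M_{\ov x}(B(\eta,r))\ge Cr^d$ on a large set, and passage to $\ov F^{L}$ to bring the rate close to $-L\g^-$) matches the paper's. However, it leaves open exactly the point the paper identifies as the new difficulty compared with Proposition \ref{basicinequality1} (see subsection \ref{regularity}). The comparison $-\log g^N\le -\log g_q+\log g^\ast_\de-\log g^N$ and your distortion estimate hold only for radii $q\le\min\{\de,\de'\}$, so the radii must stay small along the whole orbit segment. For $\SL(2,\R)$ this was automatic, since $\|x_j\|^2\ge 1$ makes the $q_j$ nonincreasing. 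Here, write $a_j:=\log\|(DF^{(L)})^{-1}\|$ evaluated at the $jL$-th iterate. These block quantities have mean at least $-L\g^-\ge 0$ but can be negative on individual blocks. So the forward radii $q_j=\de\,e^{-\sum_{\ell<j}(a_\ell+\e)}$ exceed $\de$ whenever that partial sum is negative, and then neither estimate can be used. Passing to $\ov F^{L}$ cures the rate problem caused by submultiplicativity, not this one, and your proposal offers nothing else for it.

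The paper closes this gap with the Pliss lemma (corollary \ref{Pliss2}). For a.e. point it selects times $n_k$ of upper density at least $\beta$ at which every tail sum satisfies (\ref{pliss2}). It then defines the radii backwards from $n_k$, $q(j,n_k)=e^{(\g^- -2\e)Ln_k+\frac{\e}{2}L(n_k-j)}\prod_{\ell=j}^{n_k-1}\|(DF^{(L)})^{-1}\|$, which yields (\ref{sizeofq3}) and (\ref{sizeofq4}), hence radii below $\de'$ for $j\ge J$. Since the $n_k$ have upper density at least $\beta$ while returns to the density set $A_\beta$ have density at least $1-\beta/2$, infinitely many $n_k$ are returns. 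A truncation would also repair your forward scheme when $\g^-\le 0$. Set $q_{j+1}=\min\{\de_0,\,e^{-\e}q_je^{-a_j}\}$ for a small $\de_0$; the containments survive, $-\log q_n=-\log\de_0+\max_{0\le j\le n}\sum_{\ell=j}^{n-1}(a_\ell+\e)$, and the ergodic theorem gives $-\frac1n\log q_n\to\int a\,d\ov M_i+\e$.

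Your Step 3 treatment of $\g^->0$ fails as written. You cannot prescribe $q_n\ge e^{-n\e'}$, because the containment forces $q_{j+1}\lesssim q_j/\|(DF)^{-1}\|$, i.e. shrinking on every block where some fiber direction is contracted. The paper handles this case separately: the fiber measures are then atomic with equal masses, so $\kappa=0$. Finally, $\ov M$ need not be ergodic for $\ov F^{L}$, so the identity $\kappa(\ov F^{L})=L\kappa$ must be combined with the finite ergodic decomposition $\ov M=\frac1I\sum_i\ov M_i$, as in the paper.
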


\begin{proof}
Assume first that $\g ^- >0.$ Then, in Pesin charts (see section \ref{pesin}), for $\ov m$-a.e. $\ov x$,  the mappings $F_x$ are expanding around 
$\ov M_{\ov x}$-almost every  point  $\xi \in \EE _x $ and thus $\ov M_{\ov x}$-almost every  point  $\xi \in \EE _x $ has positive measure. The measure $\ov M_{\ov x} $ is therefore discrete for $\ov m$-a.e. $\ov x.$ The individual masses are uniform on each $\EE _x$  and  constant by ergodicity. Therefore, there exists an integer $k \ge 1$ so that, for $\ov M$-a.e. $(\ov x,\xi), \; \ov M_{\ov x} (\{\xi\}) = 1/k .$  So, if $\g^- >0, $ we have $\kappa =0 = - d \min \{ \g^-, 0\} .$ 
 
Assume now that  $\g ^- \le 0. $
By the subadditive ergodic theorem, \[ -\g^- \;=\; \inf _n \frac {1}{n} \int \log \| (D_\xi F^{(n)}_{\ov x })^{-1} \|\, d\ov M (\ov x,\xi) \] 
  and given $\e >0,$  one can find $L,$ depending on $\e,$  such that  \[  \int \log \| (D_\xi F^{(L)}_{\ov x} )^{-1}\| \, d\ov M (\ov x,\xi) \le -L (\g ^- -\e/2 ) .\]
The probability measure $\ov M$ might not be ergodic for $F^{(L)}$. Let $\ov M = \frac{1}{I} \sum _{i \in I} \ov M_i$ be an ergodic decomposition of $\ov M$ for $F^{(L)}.$ 
Fix $\e >0$ and observe that,  for $i \in I,$  \( \int \log \| (D_\xi F^{(L)}_{\ov x} )^{-1}\| \, d\ov M_i (\ov x,\xi) \le  -L (\g^- -\e/2 )\) (by the ergodic theorem, the integral \( \int \log \| (D_\xi F^{(L)}_{\ov x} )^{-1}\| \, d\ov M_i (\ov x,\xi) \) does not depend on $i$ and  is at most   $-L(\g^- -\e/2)$ for all $i\in I$.)

For $i \in I$ and  for $\ov m$-a.e. $\ov x$, let  $\ov M_{i,\ov x}$ be a  disintegration of $\ov M _i.$  Write, for $\ov x\in \ov X, \xi \in \EE_{\ov x},i \in I$ 
\[ g_i(\ov x, \xi) \;:= \; \frac {d((\ov F^{(L)}_{\ov x})^{-1})_\ast \ov M_{i, \ov f^{L}\ov x} }{d\ov M_{i, \ov x}}(\xi), \quad \kappa _i \;:= \; - \int \log g_i (\ov x,\xi) \, dM_i (\ov x,\xi) .\]
For  $N$ large,  set \( g_i^N (\ov x,\xi) := \max \{g_i(\ov x, \xi), e^{-N}\} .\) The function $-\log g_i^N $ is $\ov M_i$-integrable for all $i \in I,$ $ -\log g_i \ge - \log g_i^N \)  and, by monotone limit,  \begin{equation}\label{infinite} L  \kappa \; = \; \frac{1}{I} \sum _{i \in I} \kappa _i\;=\;  \frac{1}{I} \sum _{i \in I} \lim\limits _{N \to \infty } - \int \log g_i^N (\ov x,\xi) \, d\ov M_i(\ov x,\xi).\end{equation}

 \begin{lemma}\label{telescope2}  Fix $N, \e, L$ and $i \in I$ as above and assume that  there exist $\beta >0$ and   $J \in \N$ such that, for $\ov M_i$-a.e. $(\ov x,\xi) \in \ov \EE,$ there exists a sequence  $n_k(\ov x,\xi), k>0,$ of  upper density at least $\beta,$ and for each $k$, a sequence   $q (j,n_k,\ov x,\xi) ,0 \le j\le n_k$ of measurable  positive functions  on $\ov \EE$   with the property  that  for all $k$, for $\ov M_i$-a.e.$(\ov x,\xi)$,
 \begin{eqnarray}\label{sizeofq3} & &  q (n_k, n_k, \ov x, \xi ) =  e^{(\g^- -2\e)L n_k} , \\ \label{sizeofq4} & & q (j, n_k ,\ov x, \xi ) \le  e^{(\g^- -2 \e)L  j} , \; {\textrm { for all $j\le n_k$ and }}\\
\label{balls2}&. & F^{(L)}_{\ov f^{Lj} \ov x} B(F^{(Lj)}_{\ov x} \xi, q(j,n_k,\ov x,\xi ))) 
\; \supset  B(F^{(L(j+1))}_{\ov x} \xi, q(j+1,n_k, \ov x,\xi )))\end{eqnarray}
for all $J\le j < n_i.$ 

Then, $ - \int \log g_i^N (\ov x,\xi) \, d\ov M_i(\ov x,\xi)\le dL(-\g^-  + 3\e).$ \end{lemma} 
\begin{proof}
Choose  $\de >0$ such that, setting 
\begin{eqnarray*}  g_{i,\de} (\ov x,\xi) &:= & \frac {\int _{B(\xi, \de)} g_i(\ov x,\eta) \, d\ov M_{i,\ov x} (\eta)}{\ov M_{i,\ov x} (B(\xi,\de))} =  \frac {\ov  M_{\i,ov f^{L}\ov x}  ( F^{(L)}_{\ov x} (B(\xi, \de)))}{\ov M_{i,\ov x}(B(\xi,\de))},\\
 g_{i,\de}^N (\ov x,\xi) &:= & \frac {\int _{B(\xi, \de)} g_{i,\de}^N(\ov x,\eta) \, d\ov M_{i,\ov x} (\eta)}{\ov M_{i,\ov x} (B(\xi,\de))} , \quad 
 g_{i,\de}^\ast := \sup _{\de ' \le \de } g_{i,\de '}^N,\end{eqnarray*}
we have $\displaystyle \int \log \frac {g_{i,\de}^\ast}{g_{i}^N} \, d\ov M_i  \le \e$ (see proposition \ref{densities}). Observe that, for $q\le \de ,$ \[-\log g_i^N \le -\log g_{i,q} +\log g_{i,q} -  \log g_{i,q}^N +\log g^\ast_{i,\de} - \log g_i^N  \; \le \; -\log g_{i,q} +\log g^\ast_{i,\de} - \log g_i^N .  \]

We may  thus  write, at $\ov M_i$-a.e.$(\ov x,\xi)$, for $J$ such that $e^{(\g^- -2\e)JL} <\de ,$
  \begin{eqnarray*} -\int \log g_i^N \, d \ov M_i  &\le & \liminf_{k \to \infty } -\frac{1}{n_k}\sum _{j=J}^{n_k-1} \log g_{i,q (j, n_k, \ov x, \xi ) }(\ov F^{jL} (\ov x,\xi)) \; +\; \e \\
     &\le & \liminf_{k \to \infty } -\frac{1}{n_k}\sum _{j=J}^{n_k-1}\log  \frac {\ov M_{i,\ov f^{L(j+1)} \ov x} (F^{(L)}_{f^{jL}\ov x} B(F^{(Lj)}_{\ov x} \xi  ,q(j,n_k, \ov x,\xi )))}{\ov M_{i,\ov f^{Lj} \ov x} B(F^{(Lj)}_{\ov x} \xi, q(j,n_k, \ov x,\xi )))} \; +\; \e \\
  &\le &  \liminf_{k \to \infty } -\frac{1}{n_k} \log \frac {\ov M_{i,\ov f^{Ln_k} \ov x} (B(F^{(Ln_k)}_{\ov x} \xi, q(n_k,n_k,\ov x,\xi )))} {\ov M_{i,\ov f^{LJ} \ov x} (B(F^{(LJ)}_{\ov x} \xi, q(J,n_k,\ov x,\xi ))} \; +\; \e  .
  \end{eqnarray*}
  We use the ergodic theorem  to write the first line and the above expression for $g_{i,q (j, n_k, \ov x, \xi ) }(\ov F^{jL} (\ov x,\xi)) $ for the second line.
  The last line follows by (\ref{balls2}) and a telescoping argument.
  
  To finish the estimate, on the one hand, 
  we  have   \( \ov M_{i,\ov f^{LJ} \ov x} (B(F^{(LJ)}_{\ov x} \xi, q(J,n_k,\ov x,\xi )) )\le 1.\)
   On the other hand, by Lebesgue density theorem (see section \ref{Besicovich}), there are  constants $C_\beta $ and $\g_\beta $  such that
 the  set $A_\beta $ of points in $\ov \EE$ such that $\ov M_{i, \ov x}( B(\xi, \g')) \ge C_\beta  (\g')^d $  for all $0< \g' \le \g_\beta  $ has $\ov M_i$-measure at least $1-\beta /2.$ Therefore, for $\ov M_i$-a.e.$(\ov x,\xi),$ there are infinitely many indices $k$ such that $e^{(\g^- -2\e) Ln_k (\ov x,\xi) } <  \g_\beta  $ and \( \ov F^{(Ln_k(\ov x,\xi) )} (\ov x,\xi) \in A_\beta .\) 
 It follows that 
 \[  \liminf _{k \to \infty } -\frac{1}{n_k} \log \ov M_{i,\ov f^{Ln_k} \ov x} (B(F^{(Ln_k)}_{\ov x} \xi, e^{(\g^- - 2\e)L n_k (\ov x,\xi) } ))  \le - dL ( \g ^- -2\e ) .\]
 Therefore, \( - \int \log g_i^N (\ov x,\xi) \, d\ov M_i (\ov x,\xi) \le - dL ( \g ^- -3\e ) .\)
\end{proof}
 
 Next, we find $\beta >0$, $J \in \N$ and, for $\ov M_i$-a.e.$(\ov x,\xi),$ a sequence $n_k(\ov x,\xi)$, with upper density at least $\beta$, of measurable  positive functions   $q_{\e} (j,n_k,\ov x,\xi) , 0\le j\le n_k ,$ on $\ov \EE$  satisfying (\ref{sizeofq3}), (\ref{sizeofq4}) and (\ref{balls2}).  We are  given $\e >0$, $L \in \N$ and  $i \in I$ such that \[ -\int \log \| (D_\xi F^{(L)}_{\ov x} )^{-1}\| \, d\ov M_i (\ov x,\xi) \ge  L (\g ^- -\e/2 ) .\] 
 
We  can consider the constant $K$ defined by 
 \[K:=  \max _{(\ov x, \eta)}  |\log \|(D_\eta F_{\ov x}^{(L)} )^{-1}  \||.\]  
We use  Pliss lemma (see corollary \ref{Pliss2}) to find,   for $\ov M_i$-a.e.$(\ov x,\xi),$ a sequence $n_k (\ov x,\xi) ,$ $ 1\le k, $  of upper density at least $L\e/ (2K +2L)) =:\beta $ such that, for any $0\le j \le n_k,$
\begin{equation}\label{pliss2} \sum _{\ell=j}^{n_k-1}- \log \| (D_{F_{\ov x}^{(\ell L)}\xi } F^{(L)}_{\ov f ^{\ell L} \ov x} )^{-1}\| \; \ge  (n_k-j)L(\g^- -  \frac{3\e}{2}).\end{equation}
 For $n_k$ as above, we   define the sequence $q(j,n_k,\ov x, \xi)$ as follows:
 \begin{itemize}
 \item $q( n_k,n_k,\ov x,\xi) = e^{(\g ^- -2\e)L n_k}$  and,
 \item for $0\le j < n_k,$ \[ q (j, n_k ,\ov x,\xi ) :=e^{(\g ^- -2\e) Ln_k+ \frac{\e}{2} L(n_k-j)}\Pi _{\ell=j}^{n_k-1}  \| (D_{\ov F^{(\ell L)}_{\ov x} \xi } F^{(L)}_{\ov f ^{\ell L} \ov x} )^{-1}\|  .\]
 \end{itemize}
  Property (\ref{sizeofq3}) is verified by definition and, by (\ref{pliss2}), we can write   \[ -\frac{1}{L}\log  q (j, n_k ,\ov x,\xi ) \ge -(\g ^- -2\e) n_k-  \frac{\e}{2} (n_k-j)+  (n_k-j)(\g^- -  \frac{3\e}{2})  =- j(\g ^- -2\e),\] which is property (\ref{sizeofq4}).
  
 We choose $J$ so that relation (\ref{balls2}) follows from Taylor theorem. Indeed, we have  
 \[   q(j+1,n_k, \ov x,\xi) =  e^{-L\e/2} \| (D_{\ov F^{(jL)}_{\ov x} \xi } F^{(L)}_{\ov f ^{jL} \ov x} )^{-1}\|^{-1} q(j,n_k, \ov x,\xi) .\]
 Since $\| \Hess_{\ov F^{(jL)}_{\ov x} \xi } F^{(L)}_{\ov f ^{jL} \ov x}(v,v)\| $ is uniformly bounded  for all  vectors $v$ of norm 1 in $ T_{\ov F^{(jL)}_{\ov x} \xi } \EE_{\ov f ^{jL} \ov x}, \)   all $\xi \in \EE _{\ov f ^{jL} \ov x} $ and all $\ov f ^{jL} \ov x$, there is $\de'$ such that, if $\xi, \eta$ are both in $\EE_{\ov f ^{jL} \ov x}$ and satisfy  \(d_{\EE_{ \ov f ^{jL} \ov x}} (\ov F^{(jL)}_{\ov x}\xi, \ov F^{(jL)}_{\ov x}\eta) \le \de',\) 
we have \[ \frac{ \| (D_{\ov F^{(jL)}_{\ov x} \eta} F^{(L)}_{\ov f ^{jL} \ov x} )^{-1}\|^{-1}}{ \| (D_{\ov F^{(jL)}_{\ov x} \xi } F^{(L)}_{\ov f ^{jL} \ov x} )^{-1}\|^{-1}} \; \ge \; e^{-L\e /2} .\]
and therefore the relation (\ref{balls2}),
\[(F^{(L)}_{\ov f^{Lj} \ov x} ) B(F^{(Lj)}_{\ov x} \xi, q(j,n_k, \ov x,\xi ))) 
\; \supset B(F^{(L(j+1))}_{\ov x} \xi, q(j+1,n_k,\ov x,\xi ))) , \] 
holds as soon as \(q(j,n_k, \ov x,\xi )  \le \de' .\) 
But \(q(j,n_k, \ov x,\xi )  \le e^{ j L(\g^- -2\e) } \)  by (\ref{sizeofq4}). We can choose $J $ so that \(q(j,n_k, \ov x,\xi )  \le \de' \) for $j >J.$

We have proven  that for all $\e, L= L(\e), i \in I(L, \e)$ and $N$,  $$ - \int \log g_i^N (\ov x,\xi) \, d\ov M _i(\ov x,\xi)\le dL(-\g ^-  + 3\e).$$  The proposition follows in  by letting $N \to \infty$, averaging in $i \in I$ (see (\ref{infinite})), then dividing by $L(\e)$ and finally letting $\e \to 0.$
  \end{proof}
  
  \begin{proof}({\it{Proof of theorem \ref{IP}}})
  Theorem \ref{IP} follows from propositions \ref{invariance2}, \ref{martingale} and \ref{basicinequality4}: assume $ \g^-\ge 0  .$ Then, by Proposition \ref{basicinequality4}, $\kappa = 0.$ By proposition \ref{invariance2}, this implies that,  for  \(  \ov m\)-a.e. \( \ov x \in \ov X, \) 
 \( (\ov F_{\ov x})_\ast \ov M_{\ov x} = \ov M_{\ov f \ov x} ,\)  i.e. for $m$-a.e.$x \in X,$  \( \ov M_{\pi x} =  ((\ov F_{\pi x})^{-1})_\ast \ov M_ {\ov {f} \pi  x} .\) 
Iterating and using  proposition \ref{martingale}, we find that \( M_x =  \ov M_{\pi x } .\) \end{proof}

\section{Remarks and complements to theorem \ref{IP}}\label{IPbis} 

\subsection {Non-ergodic case}  In the setting of the IP, if the probability  measure $M$ is not ergodic, the same conclusion holds if 
 \( \int \min \{ \g ^-, 0\} \, dM =  0 \).  Indeed, in that case, write  $M= \int M_t \, dn(t) $ for the  decomposition of $M$ into ergodic measures, $\g^-_t$ the corresponding exponents. Since  \( \int \min \{ \g ^-, 0\} \, dM =  0 \), we have $\g ^-_t \ge 0 $ for  $n$-a.e. $t$. By theorem \ref{IP}, the disintegrations of  $n$-a.e. $M_t$ associated to the projection on $X$ are $\B$ measurable. But since $m$ is ergodic, the disintegrations of $M$ are given by the integrals of the disintegrations of $M_t.$ The conclusion follows.

\subsection {Regularity} \label{regularity} In the setting of the IP, exponents make sense as soon as \( \log ^+ \|D_\xi F_x\|\)   and  \( \log^+  \|D_{\xi} (F_{x})^{-1}\| \) are $M$-integrable. In \cite{AV10}, indeed, there is no condition on   \( \| \Hess_\xi (F_x)\|\)  and proposition \ref{basicinequality4} is proven under the condition that  \( \|D_\xi F_x\|, \;  \|D_{\xi} (F_{x})^{-1}\| \) are uniformly bounded. For that, there is another ergodic theory  argument to the effect that relation (\ref{balls2}) can be arranged eventually on the trajectory of $\ov M$-a.e. $(\ov x, \xi)$ (see \cite[ lemma 3.10 and claim (16)]{AV10}, see also the appendix of \cite{BDZ22}).
 
Theorems \ref{F} and \ref{malicet} hold with  weaker conditions of regularity. The  proofs  are indeed simpler for propositions \ref{basicinequality1} and \ref{basicinequality2} because we have, a priori, $\|A^{-1}\| \ge 1 $ and  $\|(T^{-1})\|_\infty \ge 1 $ so that the analog to relation (\ref{sizeofq4}) is automatic, whereas here we  use Pliss lemma to obtain (\ref{pliss2}) and thus  (\ref{sizeofq4}).

Theorem \ref{TY} is  stated here with the hypothesis  that the diffeomorphism is $C^{1+\alpha}$. This hypothesis  is used here twice: to obtain a Markov partition  for the hyperbolic map $(Z,\psi)$ and for proposition \ref{basicinequality3}. The existence of Markov partitions for $C^1$  Smale systems follows from Hiraide (\cite{H85}) and Sun (\cite{S91}). Moreover, in our setting, the decomposition \( y \mapsto E^u_y \oplus E^c_y \oplus E_y^s \) is  {\it{dominated}}  and the partition $\zeta ^u$ is increasing. The Wang, Wang and Zhu version of proposition  \ref{basicinequality3} (see \cite{WWZ18}) applies and  therefore  theorem \ref{TY} is valid if $\vf$ is of class $C^1.$

There are few examples of applications of an IP without the hypothesis that the space $X$ is compact. See section \ref{distinct} for the Teichm\"uller flow.

\subsection {Perturbations}\label{perturbations} In the setting of the IP, assume that the space $X$ is a compact metric space, that $f$ is a homeomorphism of $X$ and that the bundle objects $\EE _x , F_x $ depend continuously on $x$. Consider a sequence \( \{M_k \}_{k \ge 0} \) of $F$-invariant probability  measures on $\EE$  with \( \pi _\ast M_k = m \) for all $k.$ Assume there is a generating decreasing sub-$\s$-algebra \(\B\), that the mappings \(x\mapsto \EE_x,  x \mapsto F_x \) are \(\B\)-measurable, that there is a $F$-invariant probability measure $M$ such that \( M_k \) weak* converge to $M$ and that \( \int \min \{ \g ^-, 0\} \, dM_k \to 0 \) as \( k \to  \infty .\) \\
\begin{claim} With these hypotheses, the disintegration mapping \( x \mapsto M_x \) is $\B$-measurable. \end{claim}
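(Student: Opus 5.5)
The plan is to pass the three ingredients of the proof of theorem \ref{IP} to the limit $k\to\infty$. For each $k$, let $\ov M_k := \ov \pi_\ast M_k$, with disintegrations $\ov M_{k,\ov x}$ over $\ov X$, and let $\kappa(M_k)$ be the corresponding entropy. Since $\pi_\ast M_k = m$ for all $k$, all these measures project to the same $\ov m$.

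The first step is to apply proposition \ref{basicinequality4} to each $M_k$, in the non-ergodic form of the previous subsection. Decompose $M_k$ into ergodic components; each component projects to $m$ because $m$ is ergodic. The entropy is affine on the fibred measures with fixed base, or at least convex, so it is bounded by the average of the entropies of the components. This gives
\[ \kappa(M_k) \;\le\; -d \int \min\{\g^-, 0\}\, dM_k \;\longrightarrow\; 0 .\]

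The second step, which I expect to be the main obstacle, is lower semicontinuity of $\kappa$ under weak$^\ast$ convergence with fixed base measure. Write $\kappa$ as the relative entropy (Kullback--Leibler information) of the measure $\ov M$ on $\ov\EE$ with respect to the measure $\ov F^{-1}$-lifted from $\ov M$. More precisely, use the joint measure $\int \ov M_{\ov x}\otimes\de_{\ov x}\, d\ov m$ against $\int ((\ov F_{\ov x})^{-1})_\ast \ov M_{\ov f\ov x}\, d\ov m$. Then apply the Donsker--Varadhan variational formula
\[ \kappa = \sup_{\vf} \Big( \int \vf\, d\ov M - \log \int e^{\vf}\, d\big(\ov F^{-1}\ov M\big) \Big),\]
with the supremum over bounded functions $\vf$ that are continuous in the fibre and measurable in $\ov x$. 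Since $\ov m$ is fixed and the fibre objects depend continuously on $x$, weak$^\ast$ convergence $M_k\to M$ implies convergence of each term for such $\vf$. The underlying fact is that, on the space of measures with fixed marginal $\ov m$, convergence against Carath\'eodory integrands follows from weak$^\ast$ convergence; this is the point where continuity of $\EE_x, F_x$ in $x$ enters, and it also requires that $\ov M_k\to\ov M$, which holds by pushing forward. Taking the supremum of limits then yields $\kappa(M)\le \liminf_k \kappa(M_k) = 0$.

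The third step concludes. From $\kappa(M)=0$, proposition \ref{invariance2} (which uses only invariance, not ergodicity) gives $(\ov F_{\ov x})_\ast\ov M_{\ov x} = \ov M_{\ov f\ov x}$ for $\ov m$-a.e. $\ov x$. Proposition \ref{martingale}, whose proof also does not use ergodicity, then gives $M_x = \lim_n ((\ov F^{(n)}_{\ov x})^{-1})_\ast \ov M_{\ov f^n\pi x} = \ov M_{\pi x}$, which is $\B$-measurable. The care needed in the second step is to justify that the Carath\'eodory-integrand test class suffices to compute $\kappa$. If this fails, the fallback is to reduce to finite partitions $\CC$ of the fibres with $\ov M$-null boundaries, where the partition entropies are continuous in $k$, and then take the supremum over $\CC$.
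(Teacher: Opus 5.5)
Your argument is correct and has the same structure as the paper's proof. You get $\kappa(M_k)\to 0$ from Proposition~\ref{basicinequality4}, via ergodic decomposition and convexity of $\kappa$. You then pass to the limit using lower semicontinuity of $\kappa$ along $M_k\to M$ with fixed base $m$, and conclude as in Theorem~\ref{IP} through Propositions~\ref{invariance2} and~\ref{martingale}. The only difference is how you get lower semicontinuity. You use the Donsker--Varadhan formula with test functions measurable in $\ov x$ and continuous in the fibre, whereas the paper uses Dobrushin's approximation of $\kappa$ by entropies of finer and finer finite partitions with negligible boundaries, which is exactly your fallback.
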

\begin{proof} We want to show that the  entropy $\kappa $ associated to $M$ vanishes. By proposition \ref{basicinequality4} the entropies   $\kappa_k  $ associated to $M_k$ converge to 0. Let $\CC _n , n\ge 1,$ be a family of finer and finer finite partitions of $\EE$ that generate the Borel $\s$-algebra. For each $n$ we can define  approximate entropies $\kappa_{k,n}$ by  
\[ \kappa _{k,n} :=  \int \left( \int  -\log \E\left[\frac {d((\ov F_{\ov x}))^{-1}_\ast \ov M^k_{\ov f \ov x}}{d\ov M^k_{ \ov x }} (\xi)\Big|\CC _n\right] \, d \ov M^k_{\ov x } \right) \, d\ov m_(\ov x ).\]
By a classical result of Dobrushin (\cite{dobrushin}), the entropies $\kappa _{k,n}, \kappa _{\infty ,n} $ are nondecreasing in $n$  and
\[ \kappa _k = \lim\limits_{n \to \infty} \kappa _{k,n}, \quad  \kappa  = \lim\limits_{n \to \infty} \kappa _{\infty , n},\] 
The remark follows by choosing the elements of the partitions $\CC _n$ with negligible boundaries for all $n$ so that $\kappa _{\infty, n} = \lim\limits _{k \to \infty } \kappa _{k, n}$ for all $n$.
\end{proof}

\subsection {Deformations}
Let \(\pi :\EE \to X, f, F= \{F_x, x\in X \}, m, M, \pi _\ast M = m \) be as in the setting of the Invariance Principle. Assume that there is a measurable transformation \( H:\EE \to \EE\)  of the form  \( H(x, \xi) = (x,H_x (\xi) ) \)  such  that  \( H_x^{-1} \) is uniformly H\"older  continuous:
\[ {\textrm {there exist}} \, C, \beta \; {\textrm{ such that, for}} \; x\in M,\;  \xi,\eta \in \EE_x, d_{\EE}(\xi, \eta) \le  C (d_{\EE }(H_x \xi, H_x \eta))^\beta .\]
We say that \( \wt F := H \circ F \circ H^{-1}\) is a {\bf{deformation}} of $F$ if there is a family \( \wt F_x, x\in M\) such that \( \wt F (x,\xi) = (fx, \wt F_x (\xi)).\)\\
\begin{proposition}\label{deformation} Assume there is a generating decreasing sub-$\s$-algebra \(\B\), that \(\wt F\) is a deformation of $F$  and  that the mappings \(x\mapsto \EE_x,  x \mapsto \wt F_x \) are \(\B\)-measurable. Assume that \( \int \min \{ \g ^-, 0\} \, dM =  0 \) where \( \g ^-\) is the  exponent associated to $(X,F,M)$. Then, the mapping \( x \mapsto \wt M_x  := (H_x)_\ast M_x\) is \(\B\)-measurable. \end{proposition}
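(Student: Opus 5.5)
The approach is to rerun the proof of theorem \ref{IP} for the deformed system $(\EE,\wt F,\wt M)$, where $\wt M := H_\ast M$. Since $H$ preserves fibers, $\wt M$ is $\wt F$-invariant, projects to $m$, and has disintegration $\wt M_x = (H_x)_\ast M_x$. The maps $x\mapsto \EE_x$ and $x\mapsto \wt F_x$ are $\B$-measurable by hypothesis, so we may pass to the quotient $(\ov X,\ov f)$ and build $\ov{\wt F}$, $\ov{\wt M}$ and the entropy $\wt\kappa$ exactly as before.

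Propositions \ref{invariance2} and \ref{martingale} use only measurability and invariance, never smoothness. They therefore apply verbatim: if $\wt\kappa=0$, then $x\mapsto \wt M_x$ is $\B$-measurable. As in the non-ergodic remark, we first reduce to ergodic components, on which $\g^-\ge 0$. The whole task is thus to show $\wt\kappa = 0$. The difficulty is that $\wt F_x$ need not be differentiable, so proposition \ref{basicinequality4} cannot be applied to $\wt F$ directly.

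The plan is to redo the telescoping argument of lemma \ref{telescope2}, with balls transported by $H$. Two observations drive this.

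First, entropy is invariant under fiberwise bimeasurable conjugacy: Radon–Nikodym derivatives are transported by $H_x$. Hence $\wt\kappa$ can be computed with the measures $\ov M_{\ov x}$ and the maps $F$, provided the conditioning is taken with respect to the coarser $\s$-algebra $\B$ applied to $\wt F$. Concretely, the density $\wt g$ is evaluated along orbits of $F$, but on the sets $H_x^{-1}\bigl(B(\wt\xi,q)\bigr)$ instead of round balls.

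Second, we compare these sets with round balls. Uniform Hölder continuity of $H_x^{-1}$ gives
\[ H_x^{-1}\bigl(B(H_x\xi, r)\bigr) \subset B(\xi, C r^\beta). \]
In the other direction, $H_x^{-1}(B(H_x\xi,r))$ is a neighborhood of $\xi$, although we have no quantitative lower bound on its size.

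With these two facts, the telescoping of proposition \ref{basicinequality1}/lemma \ref{telescope2} runs as follows. Choose radii $\wt q_j$ for the deformed balls along the $\wt F$-orbit of $H\xi$ so that the images are nested, as in (\ref{balls2}). The Pliss-lemma construction is done for $F$ with $\g^-\ge0$, which allows radii decaying at rate $e^{-\e Lj}$. The nesting for $\wt F$ then follows by conjugation, using that $F$ is nearly non-contracting on balls of radius $q$ and that the Hölder bound converts radii $q$ into $q^{\beta}$ or $q^{1/\beta}$. This changes exponential rates only by the factor $\beta$, which remains $O(\e)$.

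The final step bounds the measure of the last ball from below by Lebesgue density for $\wt M$ in the fiber metric. This gives at most polynomial decay in the radius, hence an exponential rate $O(\e/\beta)$. Letting $\e\to0$ yields $\wt\kappa\le 0$, hence $\wt\kappa=0$.

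The main obstacle is exactly the nesting step: we need $\wt F$-images of small balls to contain balls of comparable radius, despite having only one-sided Hölder control of $H$. First I will try to work entirely on the $F$-side, using the pulled-back sets $H_x^{-1}(B)$ and the inclusion $H_x^{-1}(B(H_x\xi,r))\subset B(\xi,Cr^\beta)$. In the density estimates, only upper bounds on sizes of the final sets and lower bounds on their measures are needed, and the one-sided Hölder condition should give exactly the needed direction. If that fails, the fallback is the perturbation claim of section \ref{perturbations}: apply Dobrushin approximation by partitions $H_x^{-1}$ of nice partitions, for which the conditional entropies of $M$ and $\wt M$ coincide.
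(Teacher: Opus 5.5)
There is a genuine gap, and it lies in the nesting step you flag yourself. Your differentiation basis consists of round balls on the $\wt F$-side, i.e.\ the sets $H_x^{-1}(B(\wt\xi,q))$ on the $F$-side. The nesting (\ref{balls2}) cannot be obtained for these sets, for two reasons.

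First, the direction is wrong. For $F^{(L)}$ of the $j$-th set to cover the $(j+1)$-th set, you need the $j$-th set $H_x^{-1}(B)$ to \emph{contain} a round ball of controlled radius. The hypothesis that $H_x^{-1}$ is Hölder only gives $H_x^{-1}(B(H_x\xi,r))\subset B(\xi,Cr^\beta)$, which is the useless direction here. Your first remedy, ``working entirely on the $F$-side with the pulled-back sets,'' uses the same sets and so does not change this.

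Second, even if $H_x$ were also $\beta'$-Hölder, converting radii by a power at every one of the $n_k$ steps compounds. Each step replaces $\wt q_j$ by roughly $\wt q_j^{\,1/(\beta\beta')}$. After $n$ steps the radius is doubly exponentially small, and the telescoped quantity $-\frac1n\log(\cdot)$ is unbounded. The rate is not changed ``only by the factor $\beta$''; a Hölder loss can be paid once, not at every step of a telescoping product.

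The fallback does not repair this either. The claim of section \ref{perturbations} concerns weak$^*$ limits of measures. Moreover, since $F$ and $H$ are not $\B$-measurable, there is no $\B$-relative entropy of $(F,M)$ with which $\wt\kappa$ could be identified via partitions $H_x^{-1}(\cdot)$.

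The missing idea is to choose the basis the other way round, which is what the paper does. On the $\wt F$-side, use the sets $H_x(B(\xi,q))$, i.e.\ round balls for $F$. Differentiation is unaffected, since this family is the image of a Besicovitch family under the fiber homeomorphism $H_x$.

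With this basis, the nesting is literally (\ref{balls2}) for the $C^2$ maps $F$, obtained from the Pliss lemma and Taylor's theorem exactly as in proposition \ref{basicinequality4}. No Hölder control enters the telescoping at all.

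The Hölder hypothesis is used once, at the final time $n_k$, and in the direction it actually provides. Because $H^{-1}$ is Hölder, $H(B(\xi',r))\supset B(H\xi',(r/C)^{1/\beta})$, which is a \emph{lower} bound on the size of the final set. Lebesgue density for $\wt M$ on round balls, applied along the recurrent times $n_k$, then gives $-\frac{1}{n_k}\log \wt M(\cdot)\lesssim -\frac{d}{\beta}L(\g^- -2\e)$. This is the paper's claim \ref{basicinequality5}, $\wt\kappa\le -\frac{d}{\beta}\int\min\{\g^-,0\}$, and hence $\wt\kappa=0$.

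The rest of your outline matches the paper: the reduction to $\wt\kappa=0$ via propositions \ref{invariance2} and \ref{martingale} applied to the $\B$-measurable $\wt F$, the reduction to ergodic components, and the final Lebesgue-density bound.
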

 \begin{proof} The proof is similar to the proof of theorem \ref{IP}.
 Observe that the \(\wt M_x\) are the disintegrations of a \(\wt F\)-invariant probability measure \(\wt M\). Consider the entropy \( \wt \kappa \). associated to the \( (\wt F, \wt M)\)  by the formula in definition \ref{Fentropy}. The basic inequality becomes 
 \begin{claim}\label{basicinequality5}\[ \wt \kappa \; \le -  \frac{d}{\beta}  \int \min \{ \g ^-, 0\} \, d\wt M .\]\end{claim}
\begin{proof}
By the usual argument, we can reduce to the ergodic case. We then  follow the proof of proposition \ref{basicinequality4}.   For the same reason, we can assume that the exponent is non-positive. Then, we have to estimate, for a suitable $L$\footnote{We write the argument in the case when $\wt M$ is $F^{(L)}$ ergodic. The general case is treated  as in  the proof of proposition \ref{basicinequality4}}
\[ L \wt \kappa = -\int \log \wt g(\ov x, \eta) \, d\wt M_{\ov x} (\eta) d \ov m (\ov x),\] where \[ \wt g(\ov x, \eta ) := \frac {d((\wt F^{(L)}_{\ov x})^{-1})_\ast \wt M_{\ov f^{L}\ov x} }{d\wt M_{\ov x}}(\eta) = \frac {d((\ov F^{(L)}_{\ov x})^{-1})_\ast \ov M_{\ov f^{L}\ov x} }{d\ov M_{\ov x}}(H_{\ov x}^{-1} \eta).\]
For all $N$, we approximate the integrable function $\wt g^N = \sup \{ \wt g, e^{-N} \} $ and we find, in the same way as in the proof of proposition \ref{basicinequality4},  a number $\beta >0$ and  a suitable sequence $n_k, k\in \N,$ of  upper density at least $\beta $ such that 
\[ -\int \log \wt g^N (\ov x, \eta) \, d\wt M (\ov x,  \eta) \le   \liminf _{k \to \infty } -\frac{1}{n_k} \log \ov M_{\ov f^{Ln_k} \ov x} (B(\ov F^{(Ln_k)}_{\ov x} (H_{\ov x}^{-1}\eta), e^{(\g^- - 2\e)L n_k } )) + \e\]
Observe that \[ \ov M_{\ov f^{Ln_k} \ov x} (B(\ov F^{(Ln_k)}_{\ov x} (H_{\ov x}^{-1}\eta), e^{(\g^- - 2\e)L n_k  } )) =  \wt M_{\ov f^{Ln_k} \ov x} (B(H_{\ov f^{Ln_k} \ov x}^{-1}  \wt F^{(Ln_k)}_{\ov x} (\eta), e^{(\g^- - 2\e)L n_k  } ))\] and  that, by our hypothesis,
\[ B(H_{\ov f^{Ln_k} \ov x}^{-1}  \wt F^{(Ln_k)}_{\ov x} (\eta), e^{(\g^- - 2\e)L n_k  } ) \supset B(\wt F^{(Ln_k)}_{\ov x} (\eta), C^{-1}e^{\frac{1}{\beta}(\g^- - 2\e)L n_k  } ).\]
Relation  (\ref{basicinequality5}) follows since we can find infinitely many $n_k$ such that 
\[ -\log \wt M_{\ov f^{Ln_k} \ov x} (B(\wt F^{(Ln_k)}_{\ov x} (\eta), C^{-1}e^{\frac{1}{\beta}(\g^- - 2\e)L n_k  } )) \le -\frac{d}{\beta}(\g^- - 3\e)L n_k .\]
 \end{proof}
 Proposition  \ref{deformation} follows. \end{proof}

\subsection  {Comments}\label{comments}

Theorem \ref{F} flollows from  theorem \ref{IP}: take  \( X= (\SL(2,\R))^{\otimes \Z},\) \( m= \mu ^{\otimes Z}, \EE = X \x \PP ^1(\R).\) Then, consider \( F(x,\xi ) = (fx, x_0 \xi), \; F^{-1} (x,\xi ) = ( f^{-1}x, (x_{-1})^{-1} \xi).\) If the exponent of the product of matrices is 0, for any ergodic $F$-invariant probability measure $M$ on $\EE,$ \( \g ^+ = \g^- =0.\) Applying theorem \ref{IP}  to  $F$ for the  $\s$-algebra $\B$ generated by the nonnegative coordinates of $X$ and to $F^{-1} $ for  the  $\s$-algebra $\B'$ generated by the negative coordinates of $X,$ we conclude that \( x \mapsto M_x \) is measurable with respect to both \(\B\) and \(\B'\). By independence of  \(\B\) and \(\B'\),  there is \( \nu \) on \(\PP^1\) such that \(M_x = \nu \) for $m$-a.e. $x$. By (\ref{invariance}), we have \( x_0 \nu = \nu \) for \(\mu\)-a.e. $x_0$.

Theorem \ref{IP} extends theorem \ref{F} in the different directions seen in section \ref{examples}.  The form of dependence in theorem \ref{IP} for linear cocycles was formulated in \cite{L86}, in order to get a common reason for theorem \ref{F} and  theorem \ref{kotani} (see \cite[ section IV]{L86}). See section \ref{random} for the applications of theorem \ref{IP} to compositions of independent  actions in real  dimensions higher than 1. Theorem \ref{TY} is  not  a formal  consequence of theorem \ref{IP}, see \cite{AVW2} for an application of  theorem \ref{IP} to compact extensions of hyperbolic systems.

 Another extension  of theorem \ref{F} has been initiated by  I.~Ya.Gol'dsheid (\cite{G22}), where he considers independent (and Markov) products of non identically distributed matrices. In particular, A. Gorodetski and V. Kleptsyn obtain a version of theorem \ref{F} and several  consequences, for example for non-stationary Schr\"odinger equations (see \cite{GK22}, \cite{GK25}). It is mentioned here 
 because their proof can be interpreted as a one-step IP and also because their work  is related to non-autonomous dynamics.
 
\section{Some examples}\label{examples2}
\subsection{Hyperbolic systems}
\subsubsection{Invariant foliations}

Recall the  definition  \ref{PH} of a partially hyperbolic $C^2$-diffeomorphism of a compact  manifold $X$. Applying the proposition \ref{stablemanif} to $f$ and $f^{-1}$, we obtain a partial stable (respectively unstable)  lamination $\W^\rho $ (resp. $\W^{\rho '}$ with a suitable $\rho'$), with local holonomies $h^\rho_{x,y} , y \in W^\rho _{loc} (x) $  (resp. $h^{\rho'}_{x,y} , y \in W^{\rho '} _{loc} (x) $).  Recall also that we call the system hyperbolic if it is partially hyperbolic with $\dim E^c =0$. Then, there is $\rho$ (resp.$\rho '$) such that 
 $\W^\rho $ (resp. $\W^{\rho '}$) is the stable $\W^s $ (resp. unstable $\W^u$) foliation. 
 
If the system $(X, f)$ is hyperbolic and $Q$ a Markov partition, then $\B := \vee _{n=0}^\infty f^{-n}Q $  is an increasing generating $\s$-algebra to which the hypotheses of theorem \ref{IP} may apply. In the case of partially hyperbolic systems, it is possible to construct  measurable partitions of $X$ with a {\it{premarkov property}} so  that the associated $\s$-algebra can play the role of $\B.$ Namely:
\begin{proposition}\label{premarkov} Let $(X,f) $ be a partially hyperbolic $C^2$-diffeomorphism. Given  $\rho <1,$ let $\W^\rho $ be the lamination obtained in proposition \ref{stablemanif}. 
Let $m$ be a $f$-invariant ergodic probability measure
and $\e >0$ small enough.  There exists a measurable partition $Q$ of $X$ such that 
\begin{enumerate}
\item for $m$-a.e.  $x\in X,$ the element $Q(x) $ containing $x$ is an open set of $W^\rho _{x, loc}  $ of  inner diameter at least $\e$, 
\item for $m$-a.e. $x \in X \; f(Q(x))  \subset Q(f x).$
\end{enumerate}
\end{proposition}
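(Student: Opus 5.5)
The plan is the classical construction of partitions subordinate to a contracting lamination, in the style of Ledrappier--Strelcyn and Yang. Here the lamination is $\W^\rho$, and since $f$ contracts its leaves the inclusion $f(Q(x))\subset Q(fx)$ is the natural direction. Throughout I use the local product structure given by proposition \ref{stablemanif}: the leaves $W^\rho_x$ are embedded discs tangent to $V$, they vary H\"older continuously, and $f$ contracts them uniformly. Concretely, $d(fy,fz)\le \rho' d(y,z)$ for $y,z$ in a local leaf, for some $\rho'<1$, because $\|D f|_{E^s}\|<\rho<1$ uniformly by compactness. The uniformity (no Pesin charts needed) should make the construction cleaner than in the nonuniform case.

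\emph{Step 1 (a transverse foliated box).} Fix a point $p$ in the support of $m$ and a small $r>0$. Consider a foliated chart $U$ around $p$ made of local leaves $W^\rho_{loc}(z)$ of size $\sim 10\e$ through points $z$ of a transversal. For $t\in(r/2,r)$ let $S_t:=\bigcup_{z\in B(p,t)\cap U} W^\rho_{loc}(z)$, where $B$ is the ball in the transversal direction. Define the partition $P_t := \{S_t, X\setminus S_t\}$, refined so that each $W^\rho$-plaque piece of $S_t$ is an atom, and put
\[ Q_t := \bigvee_{n\ge 0} f^{n} P_t . \]
Then $Q_t(fx)\supset f(Q_t(x))$ holds automatically: $f^{-1}Q_t = \bigvee_{n\ge -1} f^{n}P_t$ is finer than $Q_t$, which is exactly property (2).

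\emph{Step 2 (atoms are open plaques).} Points of $X\setminus S_t$ would not lie in $W^\rho$-plaques. To handle this, I will instead take the atoms of $P_t$ to be the $W^\rho$ plaques inside $S_t$ together with the \emph{single-leaf} decomposition outside. More precisely, Q will consist of plaques of the lamination restricted to $S_t$, pulled back along orbits, by defining $Q(x):=f^{n}(\text{plaque of } f^{-n}x)$ minus the boundary pieces. For $m$-a.e. $x$ the backward orbit enters $S_t$ infinitely often by ergodicity (if $m(S_t)>0$). Denote the first return backward by $n(x)$. The atom $Q(x)$ is then the connected component containing $x$ of $f^{n(x)}(W^\rho_{loc}(f^{-n(x)}x)\cap S_t)$ with all later images of $\partial S_t$ removed. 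Openness inside $W^\rho_x$ holds as long as $x$ is not accumulated by the sets $f^{k}(\partial S_t)$.

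\emph{Step 3 (main obstacle: boundary control, choice of $t$).} The hardest point is to show that for $m$-a.e. $x$ the distance from $f^{-k}x$ to $\partial S_t$, measured transversally, is not too small compared with the contraction. By the uniform contraction, it suffices that
\[ \sum_k m\big(\{y: d(y,\partial S_t)<\rho'^{k}\}\big)<\infty. \]
I plan to obtain this by the standard averaging trick. Let $\mu_0$ be the projection of $m$ to the transversal radius $t\mapsto d(p,\cdot)$. For Lebesgue-a.e. $t\in(r/2,r)$ one has $\mu_0([t-a,t+a])\le Ca$ for all $a$, via the maximal function (Besicovitch, section \ref{Besicovich}). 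Borel--Cantelli along the orbit then shows that only finitely many $k$ have $f^{-k}x$ within $\rho'^k$ of $\partial S_t$. This yields, for a.e. $x$, a positive distance from $x$ to all removed sets inside its leaf, so $Q(x)$ is open in $W^\rho_{x,loc}$ of positive inner radius.

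\emph{Step 4 (uniform inner diameter $\e$).} I expect some difficulty in getting inner diameter at least $\e$, uniformly rather than merely positive. My first attempt is to exploit that $f^{-1}$ expands $W^\rho$ leaves: the atom at $x$ contains $f^{k}$ of a full plaque of size $\sim10\e$ intersected with the complement of finitely many boundary pieces. Choosing $r$ small relative to $\e$ and using the finiteness from Step 3 should keep a ball of radius $\e$ in the leaf for a.e. $x$. If this fails, I would settle for a measurable positive radius, which suffices for the use of $Q$ as $\B$.
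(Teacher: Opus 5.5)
Your construction runs in the wrong time direction, and this is a genuine gap. The formula $Q_t=\bigvee_{n\ge 0} f^{n}P_t$ is the Ledrappier--Strelcyn one for an \emph{expanded} lamination; these are the increasing partitions of Proposition \ref{LS}. For $\W^\rho$, which $f$ contracts, its atoms $Q_t(x)=\bigcap_{n\ge 0} f^{n}\bigl(P_t(f^{-n}x)\bigr)$ degenerate. Whenever $f^{-n}x\in S_t$, the set $f^{n}(P_t(f^{-n}x))$ is the image under $f^{n}$ of a plaque of leafwise size about $10\e$, so it has leafwise diameter $O(\rho^{n})$. Since $S_t$ is a neighbourhood of $p\in\operatorname{supp} m$, recurrence and ergodicity give infinitely many such $n$ for $m$-a.e. $x$. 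Hence $Q_t(x)=\{x\}$ for $m$-a.e. $x$: you have built the partition into points.

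Your verification of (2) is also reversed. $f^{-1}Q_t\succeq Q_t$ means $f^{-1}(Q_t(fx))\subset Q_t(x)$, i.e. $Q_t(fx)\subset f(Q_t(x))$. It also gives $f^{-1}\B\supset\B$, the opposite of the non-increasing property Theorem \ref{IP} requires. Steps 2 and 4 inherit the problem. The sets $f^{n(x)}$ of a plaque, or ``$f^{k}$ of a full plaque'', are exponentially small, so no radius $\e$ can come from them. The first-return atoms of Step 2 also need not satisfy (2): when $fx\in S_t$ the atom of $fx$ is a whole plaque, and $f(Q(x))$ sticks out of it whenever $fx$ is near the plaque edge. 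Preventing this requires cutting $Q(x)$ along $f^{-1}(\partial S_t)$, i.e. using the \emph{future} of $x$.

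The paper gives no proof of its own; it refers to Sinai, to \cite[Propositions 4.4, 4.5]{AV10} and to \cite{LS82}. The construction there is the time reverse of yours:
\begin{itemize}
\item Cover $X$ by finitely many foliation boxes and let $P$ be the partition into their $W^\rho$-plaques. Every atom then lies in a local leaf, which your single box with $X\setminus S_t$ as an atom does not give.
\item Set $Q:=\bigvee_{n\ge 0} f^{-n}P$, so that $Q(x)=\bigcap_{n\ge 0} f^{-n}(P(f^{n}x))$. Then $fQ=\bigvee_{n\ge -1} f^{-n}P\succeq Q$, which is exactly $f(Q(x))\subset Q(fx)$.
\item Since $f^{-n}$ expands $\W^\rho$-leaves by at least $\rho^{-n}$, $Q(x)$ contains a leaf ball $B^\rho(x,r(x))$ with $r(x)>0$. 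This holds as soon as $f^{n}x$ never lies on a plaque edge and is at leafwise distance at least $\rho^{n}$ from it for all large $n$.
\end{itemize}
Borel--Cantelli along \emph{forward} orbits gives this for a.e.\ choice of sizes. However, what must be randomized is the leafwise extent of the plaques, or the radius of an ambient ball as in \cite{LS82}. The lateral boundary of $S_t$ consists of whole plaques and never cuts a leaf locally, so averaging over the transversal radius $t$, as in your Step 3, controls the wrong boundary.

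Finally, this construction yields a measurable radius $r(x)>0$ rather than a uniform $\e$. That is what these references provide, and it is all that is used later: atoms in local leaves for generation of $\B$, and recurrence to $\{r\ge r_0\}$ in Step 4 of the proof of Proposition \ref{holonomy}.
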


Let $Q$ be a  partition given by proposition \ref{premarkov}, $x \mapsto m^Q_x $ a family of conditional measures for $Q$. Then,  for $m$-a.e. $x$, $Q(x) $ is $m^Q_x$-essentially partitioned by the images $f Q(y), y \in X$ that contain some open subset of $Q(x).$ The $\s$-algebra $\B$ associated to $Q$ is thus non-increasing. Moreover, if $ f^{n} y \in Q(f^{n} x) $ for all  $n \in Z,$ then $d_{W^\rho} (x,y) = 0 $ and thus the $\s$-algebra $\B$ is generating for $f^.$ We can use this $\s$-algebra $\B$ in theorem \ref{IP}.

A more precise premarkov partition is constructed in \cite[ propositions 4.4 and 4.5]{AV10}, where similar  properties are  obtained for all $x \in X$ and not only for $m$-almost every $x$. 
Proposition \ref{premarkov} with $m$ the Lebesgue measure  goes back to Sinai (\cite{sinai}). See proposition \ref{LS}  for the general case of Pesin invariant manifolds and any invariant probability measure.

 For hyperbolic systems,  $\W^s $ and $\W^u$ form locally a system of coordinates. More precisely, 
 \begin{proposition}\label{Bowenproduct}(\cite{B75}) Let $(X,f) $ be a transitive  hyperbolic $C^2$-diffeomorphism. There is $\de >0 $ such that, if $d(x,y) < \de,$ then  $\W_{x,loc}^s  $ and $\W_{y, loc}^u$  intersect at exactly one point, and this point $[x,y]$ depends continuously on $x,y.$ \end{proposition}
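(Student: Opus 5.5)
The plan is to prove Proposition \ref{Bowenproduct} by the standard argument: local stable and unstable leaves are uniformly large embedded discs whose tangent spaces are uniformly transverse and complementary. Since $\dim E^c=0$, we have $T_xX=E^s_x\oplus E^u_x$. By Proposition \ref{stablemanif}, applied to $f$ and to $f^{-1}$, the local leaves $W^s_{x,loc}$ and $W^u_{y,loc}$ are embedded discs of a uniform size $r>0$, tangent to $E^s$ and $E^u$, and they depend continuously on the base point in the $C^1$ topology. By compactness of $X$ and continuity of the splitting, the angle between $E^s$ and $E^u$ is bounded below by some $\theta>0$.

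\textbf{Existence and continuity.} I would work in an exponential chart of fixed radius around $x$. In this chart, I identify $T_xX$ with $E^s_x\oplus E^u_x$ and write $W^s_{x,loc}$ as the graph of a $C^1$ map $\phi_x:E^s_x(r)\to E^u_x$ with small derivative. Choosing $r$ small enough makes the derivative small because $E^s$ is continuous. Similarly, for $y$ close to $x$, I write $W^u_{y,loc}$ as the graph of a $C^1$ map $\psi_y:E^u_x(r)\to E^s_x$ with small Lipschitz constant. An intersection point corresponds to a fixed point of the map
\[ (a,b)\mapsto (\psi_y(b),\phi_x(a)) \]
on $E^s_x(r)\times E^u_x(r)$. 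This map is a contraction, with constant the maximum of the two Lipschitz constants, which is less than $1$. It maps the box into itself as soon as $d(x,y)<\de$, because $\psi_y$ is then uniformly close to $\psi_x$, and $\psi_x(0)=0$. The contraction mapping theorem gives exactly one intersection point inside the chart. Moreover, since the fixed point of a uniform contraction depends continuously on parameters, and $\phi_x$, $\psi_y$ depend continuously on $(x,y)$ by the continuity of the laminations in Proposition \ref{stablemanif}, the point $[x,y]$ is continuous in $(x,y)$.

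\textbf{Uniqueness among all intersection points.} The main obstacle is to show there is exactly one intersection point, not just one inside the chart. The local leaves could a priori meet again far away inside the leaf. I would handle this by choosing the size $r$ of the local leaves small compared with the chart radius. Then every point of $W^s_{x,loc}\cup W^u_{y,loc}$ lies in the graph region of the chart, so all intersections are fixed points of the contraction above, and there is only one.

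\textbf{Role of the hypotheses.} Transitivity is not needed for the local statement. The $C^2$ regularity only enters through Proposition \ref{stablemanif}, which provides continuous laminations with $C^1$ leaves.
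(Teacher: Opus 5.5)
The paper gives no proof of this proposition; it only cites Bowen \cite{B75}. Your argument is the standard one behind that citation and is correct: the local stable and unstable discs are uniformly transverse, continuously varying Lipschitz graphs over complementary subspaces, so they meet in exactly one point by a contraction argument, and transitivity plays no role.

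Only routine bookkeeping remains:
\begin{itemize}
\item If both local leaves have size $r$, then $W^u_{y,loc}$ is a graph over a disc centred at the projection of $y$, not over all of $E^u_x(r)$. Take the box slightly smaller than the leaves, and closed so the contraction mapping theorem applies. Uniqueness over the whole leaves still follows from the Lipschitz-graph estimate alone.
\item For joint continuity in $(x,y)$, work in a fixed chart at a base point $x_0$ rather than in the chart at the moving point $x$.
\end{itemize}
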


For hyperbolic systems, the notion of $(s,u)$-product probability measure is too strong, since  there is only one 
$(s,u)$-product probability measure  (see definition \ref{suproduct} and the discussion there). We define the notion of {\it{ $(s,u)$-product structure.}}

\begin{definition}\label{almostsuproduct} Assume that $(X,f) $ is  hyperbolic. A measure $\nu$ is said to have a {\bf{$(s,u) $-product  structure}} if, in a common local chart for both foliations,
\begin{itemize} \item the local holonomies $h^s$ preserve the sets of measure 0 for the conditional $\nu ^u_y$ of the measure $\nu $ with respect to the partition in local  \( W^u \)-leaves  or, equivalently,
 \item the local holonomies $h^u$ preserve the sets of measure 0 for the conditional $\nu ^s_y $ of the measure $\nu $ with respect to the partition in local  \( W^s \)-leaves.
 \end{itemize}\end{definition}
 
 The equivalence of the two statements is a direct application of Fubini Theorem: in a local chart with two laminations, the measure $\nu$ has the same negligible sets as the product of both transversal  projections of any fiber measure. The measure of maximal entropy is a $(s,u)$-product measure and so, of course, has a  $(s,u)$-product structure, but there are many others: for any H\"older continuous function $F$ on $X$, the measure that realizes the maximum of $\mu \mapsto h_\mu (f) + \int F d\mu $ over invariant probability measures $\mu$, where $h_\mu (f)$ is the entropy of the transformation $f$ for the measure $\mu,$  has a $(s,u)$-product structure (see \cite{B75}). Any invariant probability measure absolutely continuous with respect to the Lebesgue measure also has a  $(s,u)$-product structure (\cite{A67}). More generally, an invariant measure is called SRB (for Ya. G. Sinai, D. Ruelle and R. Bowen) if the  conditionals on local unstable manifolds are absolutely continuous with respect to the Riemannian volume. Since the stable holonomies preserve the Riemannian volume class, a hyperbolic SRB measure has a   $(s,u)$-product structure as well.

\subsubsection{Holonomy invariance}
Let \((X, \A, f, m, \EE, F)\) be as in the setting of the IP with $f$ a $C^2$ partially hyperbolic diffeomorphism of the compact manifold $X$ and $F$ a $C^{1+\alpha } $ cocycle. Take $\rho <0$ with nontrivial $\W^\rho $ and  assume that the holonomies $h^\rho_{x,y} $ are defined for all $x \in X,$ all $  y \in W^\rho _x.$ We say that $F$ is {\bf{compatible}} with the holonomies \( h^\rho \) if, for  all $x \in X,$  all \( y\in W^\rho _x,\) there exists an application  \( \wt  h^\rho_{x,y}: \EE_x \to \EE_{y} \) such that 
\begin{itemize}
\item \( F_{y} \circ  \wt  h^\rho_{x,y} \; =\;  \wt  h^\rho_{f(x),f(y)} \circ F_x, \) 
\item there is $C, \beta\) such that for  \( d_{\EE_x}(\xi, \eta ) \le  \delta,\) \( d_{\EE _y}( \wt  h^\rho_{x,y}\xi, \wt  h^\rho_{x,y}\eta ) \le C (d_{\EE_x}(\xi, \eta ))^\beta,\) 
\item \((x,y, \xi) \mapsto  \wt h^\rho_{x,y} (\xi )\) is continuous and
\item we have  \(  \wt  h^\rho_{y,z} \circ  \wt  h^\rho_{x,y} \;= \;  \wt  h^\rho_{x,z} ,\quad  \wt  h^\rho_{x,x} \; =\; \mathbb {Id}. \)
\end{itemize}

These conditions ensure that there is a $F$-invariant  H\"older continuous foliation \( \wt \W^\rho \) of \(\EE\) and that the  leaves \( \wt W^\rho (x, \xi) \)  homeomorphically project  to \( W^\rho (x),\) for all \((x,\xi) \in \EE.\)
We have the following topological version of proposition \ref{deformation}:
\begin{proposition}\label{holonomy}(\cite[corollary 4.3]{AV10})
With the above notations, assume that $F$ is compatible with the holonomies \( h^\rho . \) Let $M$ be a $F$-invariant measure with disintegrations \( M_x, x\in X.\)  If \( \int \min \{ \g ^-, 0\} \, dM =  0,  \) then there is a \( \W^\rho \)-saturated subset $E$ of $X$ with full $m$-measure and a version of the disintegration $M_x$ 
such that, if \( x,y \in E\) and are in the same $\W^\rho $-leaf, \begin{equation}\label{stableinvariance} (\wt h^\rho _{x,y} )_\ast  (M_x)\; =\; M_y.\end{equation}
\end{proposition}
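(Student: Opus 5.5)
We reduce Proposition \ref{holonomy} to Theorem \ref{IP}, or rather to Proposition \ref{deformation}, by straightening the foliation $\wt \W^\rho$ along local leaves of $\W^\rho$. By the non-ergodic remark of section \ref{IPbis} we may assume $M$ ergodic with $\g^-\ge 0$; the general case follows by integrating over the ergodic decomposition, since each ergodic component then has $\g^-_t\ge 0$.

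\emph{Setup.} Let $Q$ be the premarkov partition of Proposition \ref{premarkov} for $\W^\rho$. For $m$-a.e. $x$, $Q(x)$ is an open subset of $W^\rho_{x,loc}$, $f(Q(x))\subset Q(fx)$, and the associated $\s$-algebra $\B$ is non-increasing and generating. In each element $Q(x)$ we choose, measurably, a base point $p(x)$, constant on $Q(x)$ and hence $\B$-measurable. We then define $H_x := \wt h^\rho_{x,p(x)}:\EE_x\to\EE_{p(x)}$ and set $\wt F := H\circ F\circ H^{-1}$. By the cocycle relation $\wt h^\rho_{y,z}\circ \wt h^\rho_{x,y} = \wt h^\rho_{x,z}$ and the compatibility $F_y\circ \wt h^\rho_{x,y} = \wt h^\rho_{fx,fy}\circ F_x$, we get
\[ \wt F_x = \wt h^\rho_{fx,p(fx)}\circ F_x \circ \wt h^\rho_{p(x),x} = \wt h^\rho_{f p(x),\,p(fx)}\circ F_{p(x)} . \]
Here $fp(x)$ and $p(fx)$ both lie in $Q(fx)$, because $f(Q(x))\subset Q(fx)$. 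The right-hand side depends only on $Q(x)$ and on $Q(fx)$, and $Q(fx)$ is determined by $Q(x)$, so $x\mapsto \wt F_x$ is $\B$-measurable. Likewise $\EE_x$ is identified with $\EE_{p(x)}$. The H\"older hypothesis in the definition of a deformation follows from the H\"older bound in the compatibility conditions applied to $\wt h^\rho_{p(x),x}=H_x^{-1}$; uniformity of the constants in $x$ comes from continuity and compactness.

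\emph{Conclusion on plaques.} Proposition \ref{deformation} now gives that $x\mapsto (H_x)_\ast M_x$ is $\B$-measurable, i.e. constant on $m$-a.e. element $Q(x)$. Hence $(\wt h^\rho_{x,y})_\ast M_x = M_y$ for $x,y$ in the same $Q$-element, outside an $m$-null set. We also use the $Q$-conditionals of $m$: the identity holds for $m^Q_x$-a.e. $y$, which is what allows us to choose a version of the disintegration on a full-measure subset of each plaque.

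\emph{Globalization.} To pass from plaques to whole leaves, we use the invariance (\ref{invariance}) together with the compatibility of $F$ with the holonomies. If $x,y$ lie in the same $\W^\rho$-leaf, then $f^n x$ and $f^n y$ are exponentially close within a local leaf. The version $M_{f^nx}$ can be transported back by $(F^{(n)})^{-1}$, and the defining relation commutes with this transport. The saturated set $E$ is then defined as the set of points whose forward orbit eventually enters, in the same plaque as the orbit of the companion point, the good set where the plaque invariance holds; on $E$ we redefine $M_x$ by pulling back. Proving that $E$ can be made $\W^\rho$-saturated with full measure is \textbf{the main obstacle}. The plaque-level statement only holds almost everywhere with respect to the conditionals $m^Q_x$, whereas saturation requires the identity for all points of a leaf. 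We would first try Avila--Viana's route of a premarkov partition valid for every $x$ (\cite[propositions 4.4 and 4.5]{AV10}), which makes the plaque statement hold everywhere after modifying $M_x$ on a null set, and then define $M_y := (\wt h^\rho_{x,y})_\ast M_x$ consistently along each leaf using the cocycle property.
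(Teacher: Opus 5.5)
Up to the application of Proposition \ref{deformation}, your argument is the paper's. You use the premarkov partition, a $\B$-measurable section $p$, and conjugation by $H_x=\wt h^\rho_{x,p(x)}$; your identity $\wt F_x=\wt h^\rho_{fp(x),p(fx)}\circ F_{p(x)}$ is exactly why the deformed cocycle is $\B$-measurable. You then conclude that there is a full-measure set $E_0$ such that $(\wt h^\rho_{y,z})_\ast M_y=M_z$ for \emph{all} $y,z\in E_0$ lying in a common plaque. The preliminary reduction to ergodic $M$ is unnecessary, since Proposition \ref{deformation} is already stated under $\int\min\{\g^-,0\}\,dM=0$.

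The globalization, however, is left open, and the ``main obstacle'' you name is not a real one. You never need the identity at every point of a leaf for the original version of $M_x$. You only need it on a full-measure set, and then you \emph{define} the version on the rest of the leaf. No everywhere-defined premarkov partition from \cite{AV10} is required. The paper closes the argument as follows.

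Let $E_1:=\bigcap_{n\ge 0}f^{-n}E_0$, with $E_0$ also taken inside the set where (\ref{invariance}) holds. Then $E_1$ has full measure because $m$ is $f$-invariant. Suppose $y,z\in E_1$ lie on the same $\W^\rho$-leaf. Their forward iterates have leaf distance tending to $0$, while the plaques of Proposition \ref{premarkov} have inner size at least $\e$. So for some $n$ the points $f^ny,f^nz$ lie in $E_0$ and in the same plaque, and hence $(\wt h^\rho_{f^ny,f^nz})_\ast M_{f^ny}=M_{f^nz}$. Combine this with (\ref{invariance}), the equivariance $F^{(n)}_z\circ\wt h^\rho_{y,z}=\wt h^\rho_{f^ny,f^nz}\circ F^{(n)}_y$, and injectivity of $F^{(n)}_z$. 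This gives $(\wt h^\rho_{y,z})_\ast M_y=M_z$ for all such $y,z$.

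Finally, take $E$ to be the $\W^\rho$-saturation of $E_1$, and set $M_x:=(\wt h^\rho_{y,x})_\ast M_y$ for any $y\in W^\rho(x)\cap E_1$. By the previous step and the cocycle property, this does not depend on the choice of $y$. It coincides with the old $M_x$ on $E_1$, so it is still a disintegration of $M$, since only a null set was changed. Again by the cocycle property, it satisfies (\ref{stableinvariance}) for all $x,y\in E$ on a common leaf.
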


\begin{proof}

{\it{Step 1.}} Let $P$ be a measurable partition with the  premarkov property of proposition \ref{premarkov} and let $\B$ be the  associated non-decreasing generating $\s$-algebra.
Choose a $\B$-measurable section \( p:M \to M \) such that for $m$-a.e. \( x \in X, \; p(x) \in \B (x).\) Using the holonomies \(\wt h ^\rho_{x,p(x)},\) we can identify the fibers so that  \(\EE _{p(x)} = \EE _x.\)

{\it {Step 2.}} Define, for \(x \in X, \;   H : \EE \to \EE \) by \( H(x,\xi) = (x,H_x(\xi )) := (x,\wt h^\rho_{x, p(x) } \xi) .\) Then, \( G := H^{-1} \circ F \circ H\) is a  deformation of \(F\) such that \( x\mapsto G_x \)  is $\B$-measurable. Then, by proposition \ref{deformation}, the mapping \( x \mapsto \wt M_x  = (H_x)_\ast M_x\) is \(\B\)-measurable.

{\it {Step 3.}} It follows that  there is a set of full measure $E_0$ such that if $y,z$ are in $E_0$ and in the same atom of $\B$, $p(y)=p(z) $ by definition, and $\wt M_y = \wt M_z $ by Step 2. Thus, for such $y,z,$
\[ M_y = (\wt h^\rho_{p(y),y})_\ast \wt M_y = (\wt h^\rho_{p(z),y})_\ast \wt M_z =(\wt h^\rho_{z,y})_\ast  (\wt h^\rho_{p(z),z})_\ast \wt M_z = (\wt h^\rho_{z,y})_\ast  M_z.\]

{\it {Step 4.}} Let \( E_1 := \cap _{n \ge 0 } f^{-n} E_0.\)  The set $E_1$ has full $m$-measure  and, if $y,z$ are in $E_1$ and in the same  \( \W^\rho \)-leaf, there is an $n \ge 0 $ such that  $f^ny,f^n z$ are in $E_0$ and in the same atom of $\B$. By Step 3, \( (\wt h^\rho _{f^n x,f^n y} )_\ast M_{f^n x} = M_{f^n y}.\) (\ref{stableinvariance}) follows on $E_1$ by (\ref{invariance}) and the equivariance of the \( \wt h^\rho .\)

{\it{Step 5.}} Set \( M_x := (\wt h ^\rho _{y,x})_\ast M_y.\) for any \( y \in W^\rho(x) \cap E_1  . \)  By Step 4, the family \(x\mapsto M_x\) is defined $m$-a.e. and is a family of disintegrations of $M$ for $\pi$ that satisfies (\ref{stableinvariance}).
\end{proof}

\subsubsection{Applications}\label{positivity}
Let \((X, \A, f, m, \EE, F)\) be as in the setting of the IP with $f$ a $C^2$ hyperbolic diffeomorphism of the compact manifold $X$ and $F$ a $C^{1+\alpha } $ cocycle.  Let $\W^s, \W^u $ be the stable and unstable foliations of $(X,f)$. They satisfy the product property of proposition \ref{Bowenproduct}. Assume also that $F$ is compatible  with the holonomies $h^s$ and $h^u.$

Let $M$ be an ergodic $F$-invariant measure  such that $\Pi_\ast M=m.$ Assume that both exponents $\g ^+$ and $\g^-$ are 0. Then, by proposition \ref{holonomy}, there are sets $Z^u \subset X$ and $Z^s \subset X$ of full measure such that the disintegrations $M_x, x \in X$ are 
 $m$-almost everywhere $\wt h^u$-equivariant  along  unstable leaves on $Z^u$ and $\wt h^s$-equivariant along stable manifolds on $Z^s.$ Assume moreover that $m$ has a  $(s,u)$-product structure (see definition \ref{almostsuproduct}). Then, using proposition \ref{Bowenproduct}, one sees that  there is a version of the disintegration $ M_x $ that is continuous in $x$ and holonomy-equivariant. This is the starting point for many applications. For example, we can now discuss the generalization of  Furstenberg theorem \ref{F} to general H\"older continuous cocycles over a hyperbolic system:
 \begin{theorem}[\cite{V08}] \label{holdercocycle} Let $(Y,f)$ be a topologically mixing $C^\infty $ hyperbolic system and $A: Y \to \SL(2,\R) $ a  $C^{1+\alpha} $  cocycle. Let $m $ be an $f$-invariant measure with a  $(s,u)$-product structure and $\chi \ge 0$ the $m$-a.e. value of \[ \lim\limits_{n \to \infty }\frac{1}{n} \log \| A(f^n y) \circ \ldots \circ A (y)\| .\]
 If $\chi = 0,$ 
 there exists a continuous $B : Y \to \SL (2, \R) \) such that \( (B (y))\circ A(y) \circ ( B( fy))^{-1} \) is either the identity  for all $y$, or a nilpotent matrix  for all $y$ or a rotation for all $y$. 
 \end{theorem}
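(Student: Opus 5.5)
We apply the discussion of section \ref{positivity} to the projective cocycle. Set $\EE := Y \x \PP^1(\R)$ and $F(y,\xi) := (fy, A(y)\xi)$. Since $A$ is $C^{1+\alpha}$ over a hyperbolic base and $\chi = 0$, the cocycle is fiber bunched: the matrices $A^{(n)}(y)$ grow subexponentially along $m$-typical orbits, and by Hölder continuity this is enough to build stable and unstable holonomies. Concretely,
\[ H^s_{x,y} := \lim_{n\to\infty} \big(A^{(n)}(y)\big)^{-1} A^{(n)}(x), \quad y \in W^s_x, \]
and $H^u_{x,y}$ is defined symmetrically with backward iterates. These are linear maps, so the induced projective maps $\wt h^s, \wt h^u$ are Lipschitz, continuous, and satisfy the compatibility conditions. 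Strictly, fiber bunching is a uniform condition and $\chi = 0$ is only an $m$-a.e. statement. The first thing I would try is to reduce to the fiber-bunched case, either by invoking Viana's argument that zero exponent at a measure with product structure forces it, or by assuming it where needed. This is one of the delicate points.

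Next, take any $F$-invariant $M$ projecting to $m$; one exists because $\PP^1$ is compact. The exponents of $F$ on the fibers are $\pm 2\chi = 0$, so $\g^+ = \g^- = 0$. Proposition \ref{holonomy}, applied to both $h^s$ and $h^u$, together with the product structure of $m$ and proposition \ref{Bowenproduct}, gives (as explained in section \ref{positivity}) a version $y \mapsto M_y$ that is continuous on $\operatorname{supp} m$ and equivariant under both holonomies. It also satisfies $A(y)_\ast M_y = M_{fy}$. Topological mixing is used here to pass to all of $Y$, taking $\operatorname{supp} m = Y$ or restricting to the support.

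The classification then follows from the structure of the stabiliser of a probability on $\PP^1$, exactly as in the discussion after theorem \ref{F}. There are three cases.

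\textbf{Case 1: $M_y$ has an atom of mass greater than $1/3$.} The set of such atoms has one or two points, and it varies continuously and invariantly. If it is a single point, we obtain a continuous invariant line field $L(y)$. Conjugating $L(y)$ to the first coordinate axis by a continuous $B$ makes $A$ upper triangular. The diagonal entry then has zero exponent, and since the base is hyperbolic and the diagonal is Hölder, a Livšic argument makes it cohomologous to $\pm1$. After a further conjugation the matrix becomes unipotent, $\begin{pmatrix}1&t\\0&1\end{pmatrix}$, which is the ``nilpotent'' case in the sense of $B A B^{-1} - \mathbb{Id}$ being nilpotent. If instead there are two atoms, we get two line fields and the cocycle is diagonal. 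Livšic again kills the diagonal, giving the identity.

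\textbf{Case 2: $M_y$ has no heavy atom.} Then the stabiliser of $M_y$ in $\PP\SL(2,\R)$ is compact. A continuous choice of the conformal barycentre gives a point of the hyperbolic plane, equivalently a continuous inner product $B(y)$, which $A$ maps isometrically. In these coordinates $A$ is a rotation for every $y$.

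The main obstacle is making the normal forms uniform for all $y$ rather than just a.e. This requires continuity of $M_y$ on all of $Y$, a continuous rather than merely measurable barycentre or line-field selection, and Livšic theory to remove the diagonal part. There is also the issue of the sign $\pm\mathbb{Id}$ and orientation of the line field, which may force a passage to a double cover. I would handle this with the Livšic theorem for Hölder cocycles over hyperbolic systems, using topological mixing.
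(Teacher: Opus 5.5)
There are two genuine gaps: the holonomies you use are not available globally, and the Livšic step fails in a way that cannot be repaired.

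\textbf{Holonomies and continuity.} The condition $\chi(m)=0$ for the single measure $m$ does not give fiber bunching. Bunching is a uniform pointwise domination of $\|A(y)\|\,\|A(y)^{-1}\|$ by the hyperbolicity of $f$. It fails, for instance, for $A=\mathrm{diag}(e^{K\phi},e^{-K\phi})$ with $\int\phi\,dm=0$ and $K$ large, even though $\chi=0$. No argument forces bunching from a vanishing exponent, and assuming it would prove a weaker theorem. The paper explicitly avoids bunching and uses Pesin theory in the fiber direction, whose exponent is $0$. This yields holonomies $\wt h^s,\wt h^u$ that are defined only $m$-a.e., with a tempered H\"older constant $C(y)$ satisfying $C(f^{\pm1}y)\le e^{\e}C(y)$, and that are continuous only on compact blocks $K_\ell$.

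Your limit $H^s_{x,y}=\lim_n (A^{(n)}(y))^{-1}A^{(n)}(x)$ is precisely this object. It converges as soon as $\|A^{(n)}(x)\|\,\|A^{(n)}(x)^{-1}\|\le C(x)e^{n\e}$ with $\e$ below the contraction exponent along $W^s$. However, its size depends on $x$, so it is neither uniformly Lipschitz nor globally continuous. Proposition \ref{holonomy} therefore does not apply as stated; the proofs of propositions \ref{deformation} and \ref{holonomy} must be redone with these localized holonomies. What comes out is a continuous, holonomy-equivariant version of $M_y$ only on the product sets $[K_\ell,K_\ell]$. It is not continuous on $\operatorname{supp} m$, and topological mixing of $f$ says nothing about $\operatorname{supp} m$ in any case. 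The paper never extends $M_y$ to $Y$. It reaches all of $Y$ through the relation $A(y)_\ast\nu=\nu$ at periodic points in $[K_\ell,K_\ell]$ together with a Liv\v{s}ic-type theorem \cite{AKL18}.

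\textbf{Normal forms.} Liv\v{s}ic's theorem needs the Birkhoff sums of $\log|a|$ to vanish on every periodic orbit, whereas $\chi(m)=0$ only gives $\int\log|a|\,dm=0$. Here is a concrete obstruction. Over the cat map of $\mathbb{T}^2$ with $m$ Lebesgue, let $A(y)=\mathrm{diag}(e^{\phi(y)},e^{-\phi(y)})$ with $\phi(y)=\cos(2\pi y_1)$. Then $\|A^{(n)}(y)\|=e^{|S_n\phi(y)|}$, so $\chi=0$. Moreover $M_y=\delta_{[e_1]}$ is continuous and holonomy invariant, which is exactly your single-atom case. Yet at the fixed point $0$, a continuous $B$ would have to conjugate $A(0)=\mathrm{diag}(e,e^{-1})$, whose trace is $e+e^{-1}>2$, to a matrix of trace of absolute value at most $2$ (identity, $\pm$unipotent, or rotation). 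That is impossible.

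So this example satisfies the hypotheses as written. From $\chi(m)=0$, the atomic cases can only be brought to upper-triangular form (one continuous invariant line field) or to diagonal form up to a possible swap of the two lines. Together with your barycenter argument in the diffuse case, which is fine once $M_y$ is continuous on all of $Y$, this weaker trichotomy is where your case analysis correctly stops, modulo the orientability and double-cover issue you raise. The same example shows that the final step of the paper's sketch cannot produce the identity or nilpotent alternatives as literally stated either.
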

 \begin{proof}We indicate the main steps of the proof, referring to \cite{V08} for details.   We consider the system $(Y,f, m),$ the bundle  $\EE := Y \times \PP^1$ and the H\"older continuous bundle of diffeomorphisms given by $\F_y ([\xi]) := [A(y) \xi ].\) 
 
 We have $ |D_\xi F_y | \; = \; \frac {\|\xi \|^2}{\|A\xi \|^2} $ and thus, for any $\F $-invariant measure $M$ that projects on $m$, the exponent $\g ^- = -2\chi = 0.$  We may assume $M$ ergodic. Let $M_y$ be a family of disintegrations of $M$ for $\Pi : \EE \to Y.$  By (\ref{invariance}), $A(y) _\ast M_y = M_{fy} $ and therefore the type of $M_y $ is $f$-invariant.  By ergodicity, this type is $m$-a.e. constant and either  a finite sum of equal Dirac measures or continuous.

 In order to apply proposition \ref{holonomy}, we have to find holonomies $\wt h^s, \wt h ^u$ compatible with $\F$. In other words, we have to lift the foliations $\W^s, \W^u $ to H\"older continuous stable and unstable foliations for $\F$ on $\EE .$ We cannot apply proposition \ref{stablemanif} because the system $(\EE,\F )$ is not partially hyperbolic: there is a  continuous decomposition with $\wt E^s ,    \wt    E^c =  \{0\} \otimes T\PP ^1, \wt E^u $ with uniform contraction/expansion  in $\wt \E^{s,u},$ but the expansion/contraction in the central direction is not uniform, and not, a priori, dominated by the other  contraction/expansion coefficients. Such a property holds if one assumes a {\it{bunching}} condition on the action of $DF$ on the bundle $E^c$. But in any case, the exponent in the central direction is 0 and we can use Pesin theory to find  $M$-a.e. stable and unstable foliations for $\F$ on $\EE .$ In terms of local holonomies, we find holonomies $\wt h^s, \wt h ^u$ compatible with $\F$, but 
  the H\"older continuity property becomes, for $m$-a.e. $y \in Y,$ and all $z \in W^\rho _{y, loc},$ 
there is $C(y), \beta$  such that for $  \xi, \eta  \in \PP ^1,$ we have \[ \; d(\xi, \eta ) \le  \delta \Longrightarrow d( \wt  h^\rho_{y,z}\xi, \wt  h^\rho_{y,z}\eta ) \le C(y) (d(\xi, \eta ))^\beta .\]  
 Moreover, the function  $C$ is positive $m$-a.e. and satisfies $ C(f^\pm y) \le e^\e C(y) .$
 The continuity of the mappings $(x,y) \mapsto \wt h^s_{x,y}, \wt h ^u_{x,y} $ are also localized: there are compact sets $K_\ell \subset Y$ such that $\lim\limits _{\ell \to \infty } m(K_\ell) =1,$ $C$ is positive on $K_\ell $ and $(x,y) \mapsto \wt h^s_{x,y}, \wt h ^u_{x,y} $ are continuous on $K_\ell.$
 The arguments of the proofs of propositions \ref{deformation} and \ref{holonomy} still work and we obtain that there are versions of the conditional measures $M_y$ on $\PP ^1$ that are holonomy equivariant  $m$-a.e. on stable manifolds and versions of the conditional measures $M_y$ on $\PP ^1$ that are holonomy equivariant  $m$-a.e. on unstable manifolds. Using the $(s,u)$-product structure of the measure $m$, we see that there is a version of $y \mapsto M_y$ that is continuous  and holonomy equivariant on $[K_\ell , K_\ell ]$ and therefore  constant on $\cup _\ell [K_\ell, K_\ell]$. Let $\nu $ be the  measure on $\PP ^1$ such that for $y \in \cup _\ell [K_\ell, K_\ell]$, $M_y = \nu.$ We have, for $y \in \cup _\ell [K_\ell, K_\ell],$ \begin{equation}\label{invariance4} A(y) _\ast \nu \; = \; \nu.\end{equation}
  \cite{V08} concludes by exploiting equation(\ref{invariance4}) at periodic points in $[K_\ell, K_\ell]$ and using e.g. \cite{AKL18}. \end{proof}
There are many refinements and variants
 (see e.g. \cite{BGMV}, \cite{V08}, \cite{AV10}, \cite{ASV}, \cite{AVW2} , \cite{CMY} and \cite{ADZ23}). Often, in a space of  highly  regular linear cocycles, one finds that the set of   cocycles
 in $\SL (2,\R)$, say, with positive exponent is generic in some open set, a sharp contrast with the genericity of 0 exponent in low regularity (see \cite{bochi}, \cite{bochiviana}).

 On the other hand, with a different approach, U. Bader and A. Furman (see \cite{BF25}) can prove a more general form of  theorem \ref{holdercocycle}, see our comment at the end of section \ref{distinct}. It might be possible to use \cite{BF25}'s formalism in some other  applications.

\subsection{Random diffeomorphisms}\label{random}
In this section, we consider a (compact) manifold  $Y$, a finite Riemannian volume $\la$ on $Y$  and a probability measure \( \mu \) on the group \( \DD ^2_\la (Y)\) of $C^2$-diffeomorphisms of $Y$ preserving $\la .$  We form the random product \[ g^{(n)} (y) \;:= \; g_{n-1} \circ \ldots \circ g_0(y) ,\]
where the \( g_i \in \DD^2_\la (Y) \) are i.i.d. with distribution \(\mu.\)
We say  that $\mu$ has finite first moments if we have \[ \int _Y \log \| D_y g\| \, d\mu<+ \infty \; {\textrm{ and }}  \int _Y \log \| (D_y g^{-1})^{-1} \| \, d\mu <+ \infty. \] The exponents \( \g ^\pm (y) \) are given, for \( (\mu ^{\otimes \Z } \times \la) \)-a.e.
 \( (x,y )  \in (\DD ^2_\la (Y))^{\otimes \Z } \times Y,\) by 
 \[ \g ^+(y) \,:=\, \lim\limits _{n \to \infty } \frac{1}{n} \log \| D_y g^{(n)} \|,  \quad  \g ^-(y) :=\, -\lim\limits _{n \to \infty } \frac{1}{n} \log \| (D_y g^{(n)} )^{-1} \| .\]
(It is easy to see that the limits exist \( (\mu ^{\otimes \Z } \times \la) \)-a.e.
 and are \( \mu ^{\otimes \Z } \)-a.e. constant). Observe that  \( \g^+(y) \ge \g^- (y)\) for \(\la\)-a.e. \(y\) and that, since the diffeomorphisms preserve $\la ,$ \( \int _Y \g ^- (y)\, d\la(y) \, \le \,0.\)

We assume that the action of $\mu$ is ergodic: there is no non-trivial measurable subset of $Y$ which is invariant under \(\mu\)-a.e. $g.$ Then, the exponents \( \g ^\pm (y) \) are  constant  $\g^\pm $ \(\la\)-a.e.. If $W$ is  a measurable linear subbundle $W \subset TY, W \not = \{0\}$ which is invariant by the action of $Dg$ for $\mu$-a.e. $g$, we can define in the same way the exponents $\g ^\pm _W$ restricted to $W.$

\begin{definition} Let $\mu$ be a probability measure on \( \DD _\la^1 (Y)\), with finite first moment.  Set \( \mu ^{(n)} \) for the distribution of \( g^{(n)}.\) 
We say that \(\mu\) is {\it{uniformly expanding}} if there exist  \( n_0\in \N_{\ge 0} ,  C>0 \) such that, for {\it { all }} \( y\in Y,\) {\it { all }} \( v\in T_yY, \) \[ \int _{\DD^1_\la (Y)}  \log \| D_y g (v) \| \, d\mu ^{(n_0)} (g)  > C.\] \end{definition}

{\bf{Fact}} (\cite{C06}, see \cite{CD25} for  a proof) {\it {A measure \(\mu\) on \( \DD ^1_\la \) with first moment and acting ergodically on $Y$ is uniformly expanding if, and only if, there exists no measurable subfield $W \subset TY, W \not = \{0\},$ which is invariant by the action of $Dg$ for $\mu$-a.e. $g$ and such that the exponent  $\g^+_W$ vanishes.}}

Uniform expansion is the key property to establish {\it{stiffness}} of the action of \(\mu\), namely that all stationary measures are invariant. This has been established by A. Brown and F. Rodriguez Hertz (\cite{BRH17}) when $Y$ is a real surface, S. Cantat and R. Dujardin (\cite{CD25})  for automorphisms groups of projective complex surfaces,  A. Brown, A. Eskin, S. Filip and F. Rodriguez Hertz (\cite{BEFRH}) for higher dimension real manifolds (in this case, one has to consider uniform expansion on all exterior products \( \bigwedge ^k TY\), see \cite{ES23}). The proofs use the factorization method of  A. Eskin and M. Mirzakhani (\cite{EM18}).

The IP is used to understand  the case when \(\mu\) is not uniformly expanding. We form \( X:=  (\DD ^2_\la (Y))^{\otimes \Z } \times Y,\; m= (\mu ^{\otimes \Z } \times \la),\) and \(f (\om,y)= (\s \om, g _0 y),\) where $\s$ is the shift transformation on \( (\DD ^2_\la (Y))^{\otimes \Z }.\) We form the  bundle \( \pi : \EE \to X,\) where \(\EE := \left((\DD ^2_\la (Y))^{\otimes \Z } \times \PP(W)\right), \) and  the  bundle $C^1$-diffeomorphism $F$ defined, for $\om =\{g_n\}_{n \ge 0} \in (\DD ^2_\la (Y))^{\otimes \Z } $ and  $[ v] \in \PP(W_y)\)  by 
\( F (\om, y, [v]) := [{D_y g_0 (v)}].\)

 \begin{proposition}\label{fiberinvariance}  Assume that $\mu $ has bounded support \footnote{We apply theorem \ref{IP} to the derivative action on $\PP (W)$. Given the form of the statement of theorem \ref{IP} we gave, we have to assume bounded support  on  \( \DD ^3_\la (Y)\) (see the discussion in  section \ref{regularity}).}  on  \( \DD ^3_\la (Y)\)  and that  \( \g^\pm_W = 0 \; \la\)-a.e.. Then any $F$- invariant probability  measure that projects on $m$ is of the form  \( M = (\mu ^{\otimes \Z } \times \La )\),  where   \(\La \) is a measure on \(\PP (W)\) with disintegration \( \La = \int _Y \La _y \, d\la (y) \) in such a way that, for \(\mu\)-a.e. \(g,\)  \begin{equation}\label{invariance3} (D_yg)_\ast \La _y \;=\; \La _{gy} .\end{equation} \end{proposition}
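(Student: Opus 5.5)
The plan is to imitate the argument of subsection \ref{comments}, where theorem \ref{F} was deduced from theorem \ref{IP}. We apply theorem \ref{IP} twice, once forward and once backward, to two $\s$-algebras that are independent conditionally on $Y$. Then we combine the two measurability statements.

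\emph{Setting up and computing the exponents.} The base is $X=(\DD^2_\la(Y))^{\otimes\Z}\times Y$ with $m=\mu^{\otimes\Z}\times\la$ and $f(\om,y)=(\s\om,g_0y)$. It is ergodic because the action of $\mu$ on $(Y,\la)$ is ergodic; this is the standard skew-product fact, a two-sided version of the Breiman/Kakutani result quoted in the proof of proposition \ref{basicinequality1}. The fibers are $\EE_{(\om,y)}=\PP(W_y)$ and $F_{(\om,y)}[v]=[D_yg_0v]$. Because $\mu$ has bounded support in $\DD^3_\la(Y)$, the quantities $\|D F\|$, $\|DF^{-1}\|$ and $\|\Hess F\|$ on the projective fibers are uniformly bounded, as theorem \ref{IP} requires.

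Next we compute the fiber exponents. The derivative of the projective action of a linear map $A$ at $[v]$ has norm between $(\|A\|\|A^{-1}\|)^{-2}$ and $(\|A\|\|A^{-1}\|)^{2}$. Hence the fiber exponents of any $F$-invariant $M$ are bounded in absolute value by $2(\g^+_W-\g^-_W)$. By hypothesis $\g^\pm_W=0$, so both fiber exponents vanish. By the non-ergodic remark in section \ref{IPbis}, we may then apply theorem \ref{IP} to each ergodic component, and the ergodicity of $M$ plays no role.

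\emph{Forward application.} Let $\B$ be generated by the coordinates $g_n$, $n\ge0$, together with $y$. Then $f^{-1}\B\subset\B$, since $\B$ sees $g_0$ and $y$, hence $g_0y$ and the shifted future. It is generating, and $x\mapsto\EE_x$ and $x\mapsto F_x$ are $\B$-measurable. Theorem \ref{IP} gives that $M_{(\om,y)}$ depends only on $(y,g_0,g_1,\dots)$.

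\emph{Backward application.} Apply theorem \ref{IP} to $F^{-1}$ over $f^{-1}(\om,y)=(\s^{-1}\om,g_{-1}^{-1}y)$. Take $\B'$ generated by the $g_n$, $n<0$, and $y$; it is non-increasing for $f^{-1}$. The fiber map $F^{-1}_{(\om,y)}=(D g_{-1})^{-1}$ is $\B'$-measurable, and its exponent $\g^-$ for $F^{-1}$ equals $-\g^+=0$. Thus $M_{(\om,y)}$ depends only on $(y,g_{-1},g_{-2},\dots)$.

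\emph{Combining.} Under $m$, conditionally on $y$, the past and the future are independent, because $y$ is independent of $\om$ and $\om$ is i.i.d. A function measurable with respect to both $\B$ and $\B'$ is therefore, up to null sets, a function of $y$ alone. Concretely, take the conditional expectation of $\int\phi\,dM_{(\om,y)}$ for $\phi$ in a countable dense set. This gives $M_{(\om,y)}=\La_y$ for $m$-a.e. point, and hence $M=\mu^{\otimes\Z}\times\La$ with $\La=\int\La_y\,d\la$. Equation (\ref{invariance}) then reads $(D_yg_0)_\ast\La_y=\La_{g_0y}$ for a.e. $(\om,y)$. Since $g_0$ is distributed according to $\mu$ independently of $y$, this is exactly (\ref{invariance3}) for $\mu$-a.e. $g$ and $\la$-a.e. $y$.

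\emph{Expected obstacle.} The delicate step is the conditional independence argument, carried out precisely. Strictly, $\B\cap\B'$ equals $\s(y)$ only modulo $m$, because the trajectory mixes $y$ with the $g_n$. One has to check it through the Lebesgue space quotients of section \ref{Rokhlin}, using the product structure $\mu^{\otimes\Z}\times\la$, which gives independence of the two coordinate families given $y$. A lesser point is verifying the uniform $C^2$ bounds on $\PP(W)$. If $W$ is only measurable, one trivializes $W$ measurably and works with the induced linear cocycle, for which the bounds hold on the support of $\mu$.
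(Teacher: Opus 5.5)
Your proposal is correct and is essentially the paper's proof: the paper also applies theorem \ref{IP} forward to $F$ with $\B=\s(y,g_n : n\ge 0)$ and backward to $F^{-1}$ with $\B'=\s(y,g_n : n<0)$, deduces that $M_{\om,y}$ depends only on $y$, and then reads (\ref{invariance3}) off (\ref{invariance}). Your extra details simply fill in steps the paper leaves implicit: the projective-derivative bound showing that both fiber exponents vanish, the appeal to the non-ergodic remark in place of assuming $M$ ergodic, and the explicit conditional-independence argument given $y$.
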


{\it {Proof:}} We may assume  $M$ ergodic. Observe that the exponents \( \G^\pm (\EE,F, m) =0 \) and thus  \( \G^- (\EE,F^{-1}, m)  = - \G^+ (\EE ,F, m)  = 0 .\)
 We may apply the Invariance Principle  on the one hand, to $F$ and the $\s$-algebra $\B $ generated by \( \{g_n, n\ge 0\}\)  and \( y,\) and on the  other hand  to $F^{-1}$ and the $\s$-algebra $\B '$ generated by \( \{g_n, n<  0\}\) and \(y.\) We obtain that the disintegrations \(M_{\om,y}\) depend only on \(y\). We set, for \(\la \)-a.e. \(y,\) \( \La_y := M_y.\)

 We have \( M =(\mu ^{\otimes \Z } \times \int _Y  \La_y \, d\la (y)  ).\) The relation (\ref{invariance}) 
yields that, for \(\mu \times \la \)-a.e. \((g, y),\)  \( (D_y g)_\ast \La _y \;=\; \La _{gy} . \qed \)

Remains to understand the property (\ref{invariance3}). This might be simplified by the cocycle reduction theorem.
\begin{theorem} [\cite{ACO97}]  Let \( \mu\) be a probability measure on \( \DD^2_\la (Y), W \subset TY\) a $\mu$-a.e. $Dg$-invariant field of subspaces. Then, there exists a measurable mapping \( Q : (Y \times \PP^{\dim W -1}) \to \PP W, \) with \( Q  (y, \xi ) = (y, Q_y (\xi)) \) and \( Q_y \) is a projective mapping from  \(\PP^{\dim W-1}\) to  \(\PP W_y\) such that any family \( \nu _y, y\in Y\) of measures on \(\PP^{\dim W-1}\) that satisfies \[ ([D_y g] \circ Q_y)_\ast \nu _y = (Q_{gy} )_\ast \nu_{gy} \quad {\textrm {for }}  (\mu \times \la ){\textrm{-a.e. }} (g,y) \] is \(\la\)-a.e. constant. \end{theorem}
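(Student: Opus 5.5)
The plan is to reduce the statement to the Zimmer-type cocycle reduction for measurable cocycles with values in $\mathrm{PGL}(k,\R)$, $k=\dim W$, over the ergodic action of $\mu$ on $(Y,\la)$; this is the route of \cite{ACO97}.

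\emph{Step 1: trivialize $\PP W$.} Choose a measurable frame of $W$, that is, a measurable family of linear isomorphisms $P_y:\R^k\to W_y$. The derivative action then becomes a measurable cocycle
\[ \alpha(g,y):=[P_{gy}^{-1}\, D_y g\, P_y]\in \mathrm{PGL}(k,\R) \]
over the skew product $(\om,y)\mapsto(\s\om,g_0y)$ on $\mu^{\otimes\Z}\times\la$. By Breiman's result, as used for proposition \ref{basicinequality1}, this skew product is ergodic because the $\mu$-action on $Y$ is ergodic. A family $\nu_y$ satisfying the relation in the statement is exactly an $\alpha$-equivariant measurable map $y\mapsto\nu_y$ into $\mathrm{Prob}(\PP^{k-1})$.

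\emph{Step 2: use Zimmer's algebraic hull.} Among the algebraic subgroups $H\subset \mathrm{PGL}(k,\R)$ such that $\alpha$ is measurably cohomologous to an $H$-valued cocycle, take a minimal one. This is possible by the descending chain condition on algebraic subgroups, and the minimal $H$ is unique up to conjugacy. Fix a measurable $C:Y\to\mathrm{PGL}(k,\R)$ such that $\beta(g,y)=C(gy)^{-1}\alpha(g,y)C(y)\in H$, and put $Q_y:=[P_yC(y)]$. In the new coordinates $\nu'_y:=(C(y)^{-1})_\ast\nu_y$, an equivariant family satisfies $\beta(g,y)_\ast\nu'_y=\nu'_{gy}$.

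\emph{Step 3: show equivariant families are constant.} This is the main point.
\begin{itemize}
\item The action of $\mathrm{PGL}(k,\R)$ on $\mathrm{Prob}(\PP^{k-1})$ is smooth, that is, its orbit space is countably separated. This is Zimmer's lemma, based on Furstenberg's description of stabilizers of measures on projective space.
\item By ergodicity, the map $y\mapsto\nu'_y$ therefore takes values in a single $H$-orbit, say $H\cdot\nu_0$, almost everywhere.
\item Smoothness lets us choose a measurable $h(y)\in H$ with $\nu'_y=h(y)_\ast\nu_0$. Then $h(gy)^{-1}\beta(g,y)h(y)$ lies in $\mathrm{Stab}_H(\nu_0)$, so $\beta$ is cohomologous to a cocycle with values in the stabilizer.
\item By Furstenberg's lemma, this stabilizer is either compact modulo a kernel or contained in the stabilizer of a proper projective subspace configuration. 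In both cases it sits inside an algebraic subgroup $H'$, namely the Zariski closure of the stabilizer.
\item Minimality of $H$ then forces $H'=H$. Hence $H$ fixes $\nu_0$, and $\nu'_y$ can be taken equal to $\nu_0$ for all $y$. This gives the $\la$-a.e. constancy in the statement.
\end{itemize}

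The hardest step is the last bullet. The difficulty is passing from ``cohomologous to a cocycle in $\mathrm{Stab}_H(\nu_0)$'' to ``$H$ itself stabilizes $\nu_0$'', because stabilizers need not be algebraic. If the direct argument fails, I would first use the structure of $\mathrm{Stab}(\nu_0)$: it is a compact extension of an algebraic group. I would then replace the algebraic hull by the smallest \emph{algebraic-by-compact} hull, redoing the minimality argument in that class. This is exactly the class of subgroups for which the orbit map is smooth, which is what makes the argument close up.
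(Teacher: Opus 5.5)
The survey gives no proof of this statement; it is quoted from \cite{ACO97}, so your argument has to stand on its own. Its architecture is sound:
\begin{itemize}
\item reduce to a minimal algebraic hull $H$;
\item use smoothness to write $\nu'_y=h(y)_\ast\nu_0$ with $h(y)\in H$;
\item reduce $\beta$ into $S:=\mathrm{Stab}_H(\nu_0)$.
\end{itemize}
The decisive last bullet, however, does not close as written. Minimality of $H$ only tells you that the Zariski closure of $S$ is $H$, and ``hence $H$ fixes $\nu_0$'' is a non sequitur, as you suspect.

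The missing ingredient is that over $\R$ the stabilizer is already Zariski closed. Zimmer's theorem on stabilizers (the ``compact extension of an algebraic group'' you allude to) gives a normal subgroup $L\triangleleft S$, the real points of an algebraic group, with $S/L$ compact. Then:
\begin{itemize}
\item $S$ lies in the algebraic group $N:=N_H(L)$;
\item its image in the real algebraic group $(N/L)(\R)$ is a compact subgroup, hence Zariski closed, because every compact subgroup of $\GL(m,\R)$ is real algebraic;
\item $S$ is the full preimage of that image, so $S$ is Zariski closed in $H$.
\end{itemize}
So $\beta$ reduces, via $h$, into the algebraic subgroup $S\subseteq H$. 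Minimality gives $S=H$, and therefore $\nu'_y=h(y)_\ast\nu_0=\nu_0$ for $\la$-a.e. $y$. Your fallback ``algebraic-by-compact hull'' is unnecessary, since over $\R$ the two classes coincide. Its proposed justification, that this is exactly the class with smooth orbit maps, is not a correct characterization anyway.

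There is a second, smaller issue in Steps 1--2: the transfer maps must be functions of $y$ alone. If ``measurably cohomologous'' is taken over the skew product $(\om,y)\mapsto(\s\om,g_0y)$, transfer maps may depend on $\om$, and the minimal group can be strictly smaller. For instance, when the exponents are nonzero, the Oseledets splitting already conjugates the cocycle, $\om$-dependently, into a block-diagonal group. Then ``fix $C:Y\to\mathrm{PGL}(k,\R)$'' is not available and $Q_y$ would not be a function of $y$.

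The fix is to define the hull among measurable $C:Y\to\mathrm{PGL}(k,\R)$ with $C(gy)^{-1}\alpha(g,y)C(y)\in H$ for $(\mu\times\la)$-a.e. $(g,y)$. With that definition:
\begin{itemize}
\item the descending chain condition still yields an $H$ minimal for inclusion, which is all the argument uses (uniqueness up to conjugacy is not needed);
\item the ergodicity invoked in Step 3 is exactly the hypothesis that a measurable $\phi$ on $Y$ with $\phi(gy)=\phi(y)$ for $(\mu\times\la)$-a.e. $(g,y)$ is a.e. constant; the skew product and Breiman's result are not needed.
\end{itemize}
You also need smoothness of the $H$-action on $\mathrm{Prob}(\PP^{k-1})$, not only of the $\mathrm{PGL}(k,\R)$-action; Zimmer's theorem gives this for any real algebraic $H$. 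Finally, the family $\nu_y$ in the statement is already written in the coordinates $Q_y$, i.e.\ it is your $\nu'_y$.
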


It follows that, if  (\ref{invariance3}) is satisfied, there is a  measure \( \nu \) on \( \PP^{\dim W-1} \) such that \( \nu _y = (Q_y)_\ast \nu \) for \(\la\)-a.e. $y$ and the projective mappings \( Q_{gy}^{-1} \circ [D_y g] \circ Q_y \) (\(\la\)-almost) all belong to the group of projective mappings  preserving \(\nu.\) By studying the possible subgroups of projective transformations preserving a probability  measure on \(\PP^{\dim W-1},\) one can dynamically characterize the non-uniformly expanding case (\cite{BRH17}, \cite{CD25}, \cite{BEFRH}).  One can also prove  (\cite{BM20}), see also  \cite{OP22}) that a generic (in some sense) measure \(\mu \) on \( \DD^2_\la (Y) \)  is uniformly expanding and gives rise to  positive exponents.
 This extends \cite{BGMV}, \cite{V08} and many other partial results.

\section{Dynamically coherent partially hyperbolic systems}\label{examples3}

Recall the  definition \ref{DC} of a dynamically coherent partially hyperbolic dynamical system. Recall that the H\"older continuous  lamination \(\ov \W^c \) with leaves the connected components of 
 \[ \ov W^c_x \; :=\;  W^{cs}_x \cap W^{cu}_x \]  integrates the direction $E^c_x$ and that we denote  $h^c$  the holonomy mappings between local  transversals to $\ov \W^c.$ 
 The mappings $h^c $ preserve the local $
 \W^{cs}, \W^{cu} $ leaves. For the sake of presentation, we assume in this section 
 that the holonomies  $h^s$ and $h^c$  (respectively $h^u$ and $h^c$) commute.  For such systems, we have the notion of partial $(s,c,u)$-product (definition \ref{scuproduct}, see theorem \ref{TY} for examples). We now  define the  weaker notion of   partial $(s,c,u)$-product structure.  \begin{definition}\label{scuproductstructure} Assume that $(Y,\vf) $ is partially hyperbolic and dynamically coherent. A measure $\nu$ is said to have a  {\bf{partial $(s,c,u) $-product  structure}} if
\begin{itemize} \item the conditionals $\nu ^{cs}_y $ of the measure $\nu $ with respect to the partition in local  \( W^{cs} \)-leaves have a  $(s,c)$-product structure  and
 \item the conditionals $\nu ^{cu}_y $ of the measure $\nu $ with respect to the partition in local  \( W^{cu} \)-leaves have a  $ (c,u)$-product structure.
 \end{itemize}\end{definition}
 Let $\ov \W^{su}$ be  a local foliation, locally transverse to $\ov \W^c.$  We define  $\ov \W^s$ and $\ov \W^u$, transverse foliations of each $\ov W^{su} $-leaf, as the traces of $\W^{cs} $ and $\W^{cu} $. The foliations  $\ov \W^s, \ov \W^u$ and $\ov \W^c $ form  a local system of coordinates.
If a measure has a   partial $(s,c,u)$-product structure, then the projected measures on the leaves of  $\ov \W^s$ and $\ov \W^u$ are related (by the local center holonomies) to the conditional measures on the original $ \W^s$ and $\W^u$.

 \begin{definition}  We say that the probability $\nu$ has   a   {\bf{$(s,c,u)$-product  structure}} if, locally, given a local foliation $\ov \W^{su}$ as above, all holonomies $h^\ast $, where $\ast = s,u,c, cs, cu,$ preserve the 0-sets of the conditional measures on the local leaves of $\ov \W^{\ast '}$, where $\ast' =cu, cs, us, u, s$ respectively.\footnote{With the convention that $\ov \W^{cu}:=\W^{cu}, \ov \W^{cs} := \W^{cs}.$} \end{definition}
  In other words, a measure $\nu$  has a   $(s,c,u)$-product structure  if it is locally equivalent to a product in the coordinates given by $\ov \W^c, \ov \W^s$ and $\ov \W^u$.
 We have
\begin{proposition}\label{product}  If the measure $ \nu$  has  a partial $(s,c,u)$-product structure and, locally,  the transverse measures to $\W^c$ has a $(s,u)$-product structure with respect to $\ov \W^s$ and $\ov \W^u$, then the measure $\nu $ has  a  $(s,c,u)$-product structure. \end{proposition}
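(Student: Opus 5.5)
The plan is to work in a small chart where the local foliations $\ov \W^c$, $\ov \W^s$ and $\ov \W^u$ give coordinates $(s,c,u)$. Here $\ov \W^s, \ov \W^u$ are the traces of $\W^{cs}, \W^{cu}$ on a fixed local transversal $\ov \W^{su}$-leaf $\Sigma$. We identify the chart with a product $S\times C\times U$ via the map $(s,c,u)\mapsto h^c_{\cdot}([s,u])$, where $[s,u]\in\Sigma$ is the local intersection of the $\ov W^s$-leaf through $s$ and the $\ov W^u$-leaf through $u$. In these coordinates, a $W^{cs}$-plaque is $\{u=\mathrm{const}\}$ and a $W^{cu}$-plaque is $\{s=\mathrm{const}\}$. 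By our standing commutation assumption, $W^s$-plaques inside a $W^{cs}$-plaque correspond (via $h^c$) to $\{c,u \ \mathrm{const}\}$, and similarly $W^u$-plaques to $\{s,c\ \mathrm{const}\}$. The goal is to show that $\nu$ is equivalent on the chart to a product $\alpha\otimes\beta\otimes\gamma$ of measures on $S,C,U$. Preservation of zero sets of conditionals under every holonomy $h^\ast$ then follows from Fubini, exactly as in the remark after definition \ref{almostsuproduct}.

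First I would use the hypothesis on the transverse measure. Let $\hat\nu$ be the projection of $\nu$ to $\Sigma\cong S\times U$ along $\ov\W^c$. This is the transverse measure, up to the holonomy identifications. By assumption it has an $(s,u)$-product structure, so $\hat\nu\sim\alpha\otimes\gamma$. Next I disintegrate $\nu$ along the center coordinate, $\nu=\int \nu^c_{(s,u)}\,d\hat\nu(s,u)$, with $\nu^c_{(s,u)}$ a measure on $C$. The problem reduces to showing that the measure class of $\nu^c_{(s,u)}$ is essentially independent of $(s,u)$ and equal to some class $[\beta]$. Then $\nu\sim\int\beta\,d(\alpha\otimes\gamma)=\alpha\otimes\beta\otimes\gamma$.

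To get this independence I would use the partial product structure twice. Fix $u$ and look at the $W^{cs}$-plaque $\{u\}\times S\times C$. Its conditional $\nu^{cs}$ has an $(s,c)$-product structure, so $\nu^{cs}_u\sim\alpha_u\otimes\beta_u$. Comparing with the disintegration above (Fubini on the plaque) shows that, for $\alpha$-a.e. $s$, the class of $\nu^c_{(s,u)}$ is that of $\beta_u$, independent of $s$. Symmetrically, the $(c,u)$-product structure of $\nu^{cu}_s$ gives that, for $\gamma$-a.e. $u$, the class of $\nu^c_{(s,u)}$ is that of some $\beta'_s$, independent of $u$. So the measurable map $(s,u)\mapsto[\nu^c_{(s,u)}]$ is, $\alpha\otimes\gamma$-a.e., both a function of $u$ alone and a function of $s$ alone. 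Here we use that $\hat\nu\sim\alpha\otimes\gamma$, so these "a.e." statements refer to the same null sets. A Fubini argument then shows it is a.e. constant: pick $u_0$ for which the $s$-statement holds for a.e. $s$, and conclude $[\nu^c_{(s,u)}]=[\beta'_s]=[\nu^c_{(s,u_0)}]=[\beta_{u_0}]$ for a.e. $(s,u)$. Set $\beta:=\beta_{u_0}$.

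The main obstacle is bookkeeping rather than an idea. The conditionals $\nu^{cs},\nu^{cu}$ live on $W^{cs}, W^{cu}$-plaques, while $\hat\nu$ lives on the artificial transversal $\Sigma$. So one must check that the $(s,c)$-product structure of $\nu^{cs}$ translates into a product structure in the $(s,c)$ coordinates given by $h^c$ and $\ov\W^s$. The statement after definition \ref{scuproductstructure} is meant to handle this: the projections on $\ov\W^s,\ov\W^u$ are related to conditionals on $\W^s,\W^u$ by center holonomies, which are homeomorphisms and so preserve null-set structure. The commutation of $h^s,h^c$ (resp. $h^u,h^c$) is what makes the coordinate map a homeomorphism with plaques sent to coordinate slices. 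Finally, because different charts overlap, the conclusion is local and no global compatibility of $\beta$ is needed.
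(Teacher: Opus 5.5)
Your proof is correct, at the same level of rigor as the paper on one point: the passage from the true laminations $\W^s,\W^u$ to the traces $\ov\W^s,\ov\W^u$. Both you and the paper delegate this to the standing commutation assumption and the remark after definition \ref{scuproductstructure}. The role of that assumption is to make a product structure with respect to $\W^s$ (resp. $\W^u$) coincide with a product structure in your $(s,c)$ (resp. $(c,u)$) slices. Strictly, the coordinate map is a homeomorphism regardless of commutation.

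The measure-theoretic core, however, is argued differently. The paper's proof is a one-line appeal to lemma \ref{quasiindependence}, with $X,Y,Z$ the $c$-, $s$- and $u$-coordinates. The partial product structure supplies the two conditional quasi-independences, and the transverse hypothesis supplies quasi-independence of $Y$ and $Z$. The lemma itself is proved by multiplying the a.e. limits of ratios such as $P(A_n\cap B_m)P(B_m\cap C_p)/(P(A_n\cap B_m\cap C_p)P(B_m))$ along refining finite partitions, then reading off positivity of the resulting densities.

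You instead disintegrate along the center and show that the class $[\nu^c_{(s,u)}]$ is a.e. a function of $u$ alone and a.e. a function of $s$ alone. Since $\hat\nu\sim\alpha\otimes\gamma$, it is then a.e. constant. In effect this is an alternative, conditional-measure proof of lemma \ref{quasiindependence}.

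What each route buys:
\begin{itemize}
\item Your route makes visible exactly where the $(s,u)$-product structure of the transverse measure is used: only in the step ``a.e. a function of $s$ alone and of $u$ alone $\Rightarrow$ a.e. constant''.
\item It also delivers the mutual equivalence $\nu\sim\alpha\otimes\beta\otimes\gamma$ directly, rather than through positivity of a limiting density ratio.
\item The paper's packaging gives an abstract statement about three random variables. It reuses this immediately for absolutely continuous invariant measures.
\end{itemize}

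One cosmetic slip. With $[s,u]$ defined as the intersection of the $\ov W^s$-leaf through $s$ and the $\ov W^u$-leaf through $u$, the slice $\{u=\mathrm{const}\}$ lies in an $\ov W^u$-plaque and so spans a $W^{cu}$-plaque, not a $W^{cs}$-plaque. You want $[s,u]:=\ov W^u(s)\cap\ov W^s(u)$, with $s,u$ on reference $\ov W^s$- and $\ov W^u$-plaques through a base point. The argument is symmetric in the two roles, so nothing else changes.
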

This follows from  lemma \ref{quasiindependence}.
Using the  absolute continuity of the foliations $ \W^s$ and $ \W^u$  (\cite{A67}),  lemma \ref{quasiindependence} also gives that   an  invariant probability measure absolutely continuous with respect to the Lebesgue measure has a   $(s,c,u)$-product structure as soon as the  conditionals on $\W^{cs} $ and $\W^{cu}$  themselves have a product  structure with respect to $h^s$ and $h^c$ (respectively $h^u $ and $h^c$).

  Examples of dynamically coherent partially hyperbolic systems may have  compact center leaves, have one-dimensional center leaves which are invariant (defined as {\it{ discretized geodesic flows}}), or have the property that the action on the central leaves is quasi-isometric (for instance, this is the case for non hyperbolic ergodic automorphisms of the torus). IP in these cases says that an invariant ergodic probability measure  such that the central exponents vanish has strong invariance property with respect to the center  foliation. 

  In the case of dynamically coherent partially hyperbolic systems with  compact center leaves,  the partition defined by the foliation $\ov \W^c $ is measurable and the associated  conditional measures  
 $\MMM _y^c $ are defined $m$-a.e. (\cite[Proposition 3.7]{AVW2}).
 In the general case of non-compact center leaves, the partition defined by the foliation $\ov \W^c $ may not be  measurable.
  We will still denote $\MMM_y^c $ the  family of local measures on the center  leaves, defined up to multiplication by a constant,  that are proportional to the local disintegrations (see section \ref{Rokhlin} and \cite[section 3.2]{AVW}).
 
   Theorem \ref{TY} shows that for compact extensions of  hyperbolic systems, ergodic invariant measures with 0 center Lyapunov exponents are  partial $(s,c,u)$-products. In other words, that the classes of measures $\MMM_y^c$ up to a constant factor are $h^u$-invariant for $m^{cu}$-a.e.$y$ and  are $h^s$-invariant for $m^{cs}$-a.e.$y$.
In this section, we survey some recent examples of applications of the IP to dynamically coherent partially hyperbolic  systems with some further features.

\subsection{Discretized Anosov  flows}

Let $g_t, t\in \R,$ be a one parameter group of smooth diffeomorphisms of a compact manifold $Y$.  It is called an Anosov flow if each $g_t$ is partially hyperbolic with the central direction being the one-dimensional tangent to the orbits of the flow. 

\begin{definition} Let $Y$ be a compact manifold, $g$ a smooth  diffeomorphism of $Y$. The system $(Y,g)$ is called a {\bf {discretized Anosov  flow}} if it is partially hyperbolic with $\dim E^c = 1,$ dynamically coherent and the center leaves $\ov W^c (y) $ are invariant by $g$. \end{definition}

The basic example of an Anosov flow   is the geodesic flow on a manifold of negative sectional curvature and the basic examples of a discretized Anosov flow are the time-one mapping of an Anosov flow and its small enough perturbations.

Recall that a partially hyperbolic system is called {\bf{accessible}} if for any two points \(y,z\) there exists a path \( x_0 =y, \ldots, x_j, \dots x_k= z \) such that for each \( j, 0\le j < k, \, x_j \) and \( x_{j+1} \) are in the same \( W^\ast \) leaf, with \( \ast = s, u. \)

 Let $(Y,g)$ be a discretized Anosov flow and assume that the system $(Y,g) $ is accessible and that $ g$ has no fixed point.  Non-compact leaves are homeomorphic to $\R$. Since $g$ has no fixed point, we can orientate each 
non compact leaf so that $y<g(y).$ This orientation is preserved by stable and unstable holonomies and is globally defined on non compact leaves. It extends by accessibility to the compact leaves as well. The space $Z$ of the segments $[y, g(y)) $ is a compact space. The mapping $\psi : Z\to Z  $ defined by $\psi ([y,g(y) ) ) = [g(y), g^2(y) )$ is an expansive  homeomorphism with expanding and contracting canonical coordinates. It follows from  \cite{R83} that the system $(Z,\psi ) $ is a Smale space in the sense of D. Ruelle \cite[chapter 7]{R04}. 

\begin{theorem}[\cite{AVW}] Let $(Y,g)$ be a discretized Anosov flow and assume that the system $(Y,g) $ is accessible and that g has no fixed point. Let $m$ be a  $g$-invariant ergodic measure and assume that the projection of $m$ to $Z$ has a $(s,u)$-product structure. Then
\begin{itemize}\item either $g$ is the time-one map of a \(m\)-preserving  flow,
 \item or there exist an integer $k$ and a subset $E \subset Y$ of full $m $-measure that cuts each \( \W^c \)-leaf in exactly $k$ orbits of $g$.
\end{itemize}  
\end{theorem}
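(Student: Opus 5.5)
The plan is to reduce to a statement about the center conditionals $\MMM^c_y$ and to distinguish two cases according to the center Lyapunov exponent $\g_c$ of $m$. The center is one-dimensional, so $\g^+_c=\g^-_c=\g_c$.

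\emph{Case $\g_c\neq 0$.} By Pesin theory along the invariant center leaves, the conditionals $\MMM^c_y$ are atomic. Take $\g_c<0$; otherwise replace $g$ by $g^{-1}$. Center curves of small length are then contracted at an exponential rate in forward time. As in the first step of the proof of proposition \ref{basicinequality4}, this forces $\MMM^c_y$ to be purely atomic. Since $\W^c$-leaves are $g$-invariant, each leaf is a union of $g$-orbits, and the atoms of $\MMM^c_y$ form finitely many such orbits. By ergodicity, and by the uniformity of atom masses used in the proof of proposition \ref{basicinequality4}, the number $k$ of orbits per leaf is $m$-a.e. constant. This gives the second alternative, with $E$ the saturated full-measure set of atoms.

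\emph{Case $\g_c=0$.} Here we apply the Invariance Principle in the form of proposition \ref{holonomy}. The bundle is the quotient $\Pi:Y\to Z$. Its fiber over $[y,g(y))$ is the center leaf, or more precisely the fundamental segment, which we identify with a circle $\R/\Z$ through the orientation given by accessibility and the absence of fixed points. The cocycle is the map induced by $g$. Stable and unstable holonomies of $Y$ act on center leaves and commute with $h^c$, so $F$ is compatible with $h^s$ and $h^u$. Since $\g^\pm=0$, proposition \ref{holonomy} applied to $g$ and to $g^{-1}$ gives versions of the disintegrations $m_z$ that are $h^s$- and $h^u$-equivariant on saturated full-measure sets.

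By hypothesis, the projection of $m$ to $Z$ has a $(s,u)$-product structure. We then argue as in the applications of subsection \ref{positivity}, using proposition \ref{Bowenproduct} for the Smale space $(Z,\psi)$. This yields a version of $z\mapsto m_z$ that is continuous and holonomy-invariant, at least on sets of the form $[K_\ell,K_\ell]$, and accessibility propagates this everywhere. Proposition \ref{product} then shows that $m$ has a $(s,c,u)$-product structure with center conditionals $\MMM^c_y$ invariant under $h^s$ and $h^u$.

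The remaining step, which I expect to be the main obstacle, is to pass from $su$-invariant center measures to a flow. If $\MMM^c_y$ has atoms, accessibility together with holonomy invariance shows that the atoms form finitely many $g$-orbits per leaf, and we are back in the second alternative. Otherwise $\MMM^c_y$ is continuous and, being a Radon measure on a leaf homeomorphic to $\R$ or a circle, has full support. The reason is that its support is closed, invariant under $g$ and under the holonomies, and hence by accessibility either empty or everything. We then define a flow on each leaf by parametrizing it with the $\MMM^c_y$-measure normalized so that $[y,g(y))$ has mass $1$. Concretely, $\phi_t(y)$ is the point $w\ge y$ with $\MMM^c_y([y,w))=t$.

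Invariance $g_\ast\MMM^c_y=\MMM^c_{gy}$ (up to the normalizing constant, fixed by the mass of a fundamental domain) gives $\phi_1=g$ and shows that $\phi_t$ commutes with $g$. Holonomy invariance of the measures makes $\phi_t$ continuous transversally, so $\phi_t$ is a genuine continuous flow. By construction it preserves the leafwise measures and hence $m$. The delicate points are the continuity of $y\mapsto\MMM^c_y$ on all of $Y$, which must be extracted from the $m$-a.e. statements via the product structure and accessibility, and the handling of compact leaves.
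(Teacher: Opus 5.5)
Your outline has the same architecture as the paper's proof: a dichotomy on $\gamma_c$ and then on atoms, $h^s$/$h^u$-invariance of the center measures when $\gamma_c=0$, the $(s,u)$-product structure on $Z$ combined with proposition~\ref{product}, and a parametrization of center leaves by the normalized center measure. The key invariance, however, comes from a different principle.

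\textbf{The invariance step.} The paper quotes theorem~\ref{TY}. There, Tahzibi--Yang compare $H(\cdot\,|\,\xi^u)$ with $H(\cdot\,|\,\xi^{cu})$ along Markov partitions of $Z$ and get equality from proposition~\ref{basicinequality3}, so no fibre cocycle is ever built. You instead run the Avila--Viana mechanism: proposition~\ref{holonomy}, then the argument of subsection~\ref{positivity}, on the circle bundle over $Z$ whose fibre over $[y,g(y))$ is that segment closed up by $g$.

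\begin{itemize}
\item What makes your bundle work is that $h^s$ and $h^u$ commute with $g$. That is why they descend to these circles and why $F$ is compatible with them; commuting with $h^c$ is not the relevant property.
\item The entropy route is lighter. It needs no uniform bounds on fibre maps or lifted holonomies, and it survives in low regularity (subsection~\ref{regularity}).
\item Your route has two technical costs. You must choose the fibre metrics with care, since the naive glued metric on $[y,g(y)]/(y\sim g(y))$ makes $F$ non-differentiable at the gluing point. You must also replace proposition~\ref{premarkov} by Markov partitions of the Smale space.
\item In return, $\gamma_c=0$ enters transparently through proposition~\ref{basicinequality4}. Proposition~\ref{Bowenproduct} then gives continuous, holonomy-equivariant versions, which is what any continuity statement about the flow would require.
\end{itemize}

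\textbf{The flow construction.} Your last step claims more than it proves, and skips what is actually needed.

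\begin{itemize}
\item Full support of $\MMM^c_y$ and continuity of $\phi_t$ are not established by your argument. The IP and the product structure give holonomy invariance only on a full-measure set. Moreover, $su$-paths leave $\mathrm{supp}\,m$, which need not be all of $Y$. So ``accessibility propagates this everywhere'' is unjustified.
\item The paper only concludes a measurable, a.e.\ defined flow, and for that full support is unnecessary. Take $\phi_t(y)$ to be the leftmost $w\ge y$ with $m_y([y,w))\ge t$.
\item What is needed, and what ``preserves the leafwise measures and hence $m$'' skips, is that the fundamental-domain normalization does not depend on the base point in the leaf. That is, you need $g_\ast\MMM^c_y=\MMM^c_y$ exactly, not merely up to a constant.
\item This exact equality does two jobs. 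It gives the group law $\phi_s\circ\phi_t=\phi_{s+t}$. It also makes $m$ locally an integral of leaf measures against a transverse measure that is invariant under sliding along center leaves. That is how leafwise invariance becomes invariance of $m$.
\item The equality follows from $g$-invariance alone. Suppose $g_\ast\MMM^c=c\,\MMM^c$ on leaves, and disintegrate $m$ on a thin embedded tube $\bigcup_{t\in T}[t,g^2t]$. Compute $m$ on $\bigcup_t[gt,g^2t]$ in two ways: directly, and as $g_\ast\bigl(m|_{\bigcup_t[t,gt]}\bigr)$. This gives $m=c\,m$ there, hence $c=1$.
\end{itemize}

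So this part uses no $su$-invariance. In either version, the IP buys the product structure and any hoped-for regularity of the flow, not the existence of a measurable flow.

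Finally, in the atomic subcase, $k$ is finite and constant because of ergodicity and the mass-one normalization of fundamental domains, as in the paper. Accessibility plays no role there.
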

\begin{proof}
 Let $m$ be an ergodic  $g$-invariant probability measure. The conditional  measures $\MMM_y $ of $m$ associated to $\ov \W^c$ are defined up to a multiplicative constant. We define, for $m$-a.e. \( y \in Y,\) the measure \( m _y \) on \(\ov W^c(y) \) by normalizing the conditional measure so that \(  m_y ( [y, g(y)) ) = 1.\) 

Consider the central exponent $\g$:
\[ \g   \;:=\;  \lim\limits _{n \to \infty } \frac{1}{n} \log \| D_y g^n v  \|  \; {\textrm { for }} \;  v\not = 0 \in E^c_y .\]

The case  \( \g \not= 0\) has been studied by Ruelle and Wilkinson (\cite{RW01}:  if $\g <0$, say, points in \([y,g(y) )\) are attracted to some  points and  the measures $m _y$ are discrete. By ergodicity,   the measures $m_y $  have to be  an average of a constant number $k$ of Dirac measures. The set $E$ of the second alternative is made of their supports.

We are also in the second alternative if $\g =0 $ and almost all $m_y $ have a discrete component.  By ergodicity, almost all $m_y $ are  an average of a constant number $k$ of Dirac measures.  So we may assume that $\g = 0$ and that, for $m$-a.e. $y, \; m_y $ is continuous. By theorem \ref{TY}, the mappings $y \mapsto m_y$ are $m$-a.e.  equivariant by the local holonomies $h^u$ and $h^s$. Since $m$ has a $(s,u)$-product structure on $Z$,  we see that $m$ has a $(s,c,u)$-product structure (recall proposition \ref{product}).

We can parametrize each universal cover  \( \wt W^c _y \) by \( t_y : \wt W^c_y \to \R, \) \( t_y (z) := m_y ([y,z))\)  (or \( t_y (z) := - m_y ([z,y)) ).\) 
 By uniqueness of the conditional measure, for $m$-a.e. $y,$ if \(z,w \in \wt W^c_y,\) then \(t_z(w) - t_y(w) = t_y (z)\).

 Observe  that, when we parametrize the  (universal cover of the) center leaf of  $y$ by \( t_y \)  for all $y$, the central holonomy  \(h ^c_{y,z} : \wt W^c_y  \to \wt W^c_z \) is the translation  \( R_t \) by \(t = t_y (z) \).  Using accessibility and the $(s,u)$-product structure on $Z$, we conclude that, for $m $-a.e. $y \in Y$, the measure \( ( t_y) _\ast m _y \) on $\R$ is invariant under the translation $R_t$, for \( ( t_y) _\ast m _y \)-a.e. $t.$ 
 The only continuous measures on $\R$ with this property are proportional to  the Lebesgue measure. With our normalization, for $m $-a.e. $y \in Y$,  the measure \( ( t_y) _\ast m _y \) is Lebesgue.  The one-parameter flow \( \psi _t, t\in \R ,\) defined by the translations along the parameter $t_y $ of \( W^c_y \) is $\la $-a.e. well-defined and preserves \(m.\) The transformation $g$ is indeed the time-one of a (measurable, a.e. defined) $m$-preserving one-parameter flow.
 \end{proof}
 
 See \cite{AVW}, \cite{AVW2}, section 9, for further regularity of the flow in the case when $(Y,g)$ is a volume preserving discretized  Anosov flow in dimension 3. Orientability of $\ov \W^c$ is also proven in \cite{AVW2}, section 9,  even in the case of fixed points for $g$.
 
\subsection{Quasi-isometric center leaves}\label{QIcenter}

\begin{definition} Let $(Y,f)$ be a dynamically coherent partially hyperbolic smooth system. We say that $(Y,f)$ has {\bf{quasi-isometric centrer leaves}} if there exists numbers $K_0, C_0,$ such that, if $x,y$ are on the same $ \ov \W ^c $ leaf, we have, for all $n\in Z,$
\begin{equation}\label{Quasi-iso} K_0^{-1} d^c (x,y) - C_0 \; \le \; d^c(f^n x, f^n y) \; \le \; K_0 d^c (x,y) + C_0, \end{equation}
where $d^c$ is the Riemannian distance on the $\ov W^c $ leaves. \end{definition}

Systems with compact center leaves and discretized Anosov flows have quasi-isometric center. leaves.  Let $m$ be an ergodic $f$-invariant probability measure.  We are interested in showing $h^u $ and $h^s $ invariance of the conditional measures $\MMM_y^c$.

We will consider the $m$-a.e. defined and constant exponents 
\[ \g^+_c   := \lim\limits _{n \to \infty } \frac{1}{n} \log \| D_y f^n |_{E^c_y} \| \quad  \g^-_c   := - \lim\limits _{n \to \infty } \frac{1}{n} \log \| (D_y f^n )^{-1}|_{E^c_y} \|.\] If the mapping $f$ has quasi-isometric center leaves, then $\g^\pm _c = 0.$ 
 The following result is obtained in a recent preprint  by S. Crovisier and M. Poletti.
\begin{theorem}[\cite{CP}]\label{autreIP} Let $(Y,f)$ be a dynamically coherent partially hyperbolic smooth system, with quasi-isometric center leaves. Let $m$ be an ergodic $f$-invariant probability measure on $Y$. Then, $m$ has a partial $(s,c,u)$-product structure.
\end{theorem}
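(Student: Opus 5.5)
The plan follows the scheme of the proof of theorem \ref{TY}: an entropy comparison between the unstable and the center-unstable conditionals, combined with a basic inequality in which the quasi-isometry \eqref{Quasi-iso} replaces the vanishing of exponents. We treat the center-unstable conditionals; the center-stable ones follow by applying the same argument to $f^{-1}$.

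The first step is to construct increasing measurable partitions. We use proposition \ref{premarkov}, applied with the lamination $\W^{cu}$ (in its version for Pesin laminations, proposition \ref{LS}), to obtain a partition $\xi^{cu}$ with the following properties: its elements are open subsets of local $W^{cu}$-leaves, saturated by local center plaques, and they satisfy $f^{-1}\xi^{cu}\ge \xi^{cu}$. We then set $\xi^u(y):=\xi^{cu}(y)\cap W^u_{loc}(y)$, which is again increasing. Since center leaves may be non-compact, there is no quotient Smale space $Z$. Instead we let $\CC_k$ be the partitions $f^{-k}\xi^{cu}$ restricted to $\xi^{cu}$, viewed transversally to the center plaques. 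The analog of proposition \ref{invariance4} gives
\[ H(f^{-k}\xi^{u}|\xi^{u}) \;\le\; H(f^{-k}\xi^{cu}|\xi^{cu}),\]
by Jensen's inequality applied to $t\mapsto -t\log t$. Equality for all $k$ forces the transverse projections of the $\nu^u_w$ to coincide with those of $\nu^{cu}_y$ along center plaques. Since $h^c$ and $h^u$ commute, this is exactly $h^u$-invariance of the classes $\MMM^c_y$ up to constants, that is, the $(c,u)$-product structure.

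The second step, which is the heart of the proof, is the reverse inequality $H(f^{-k}\xi^{cu}|\xi^{cu})\le H(f^{-k}\xi^u|\xi^u)$, i.e.\ the analog of proposition \ref{basicinequality3}. By proposition \ref{entropyrohlin}, both sides equal $k$ times an entropy $h(f,\xi^{\ast})$. The center-unstable entropy should split as the unstable entropy plus a contribution coming from center growth. We estimate that contribution following the telescoping/volume argument of proposition \ref{basicinequality4}. Here \eqref{Quasi-iso} says that $f^n$ moves center plaques of diameter $r$ to sets of center diameter at most $K_0 r+C_0$, uniformly in $n$. Hence the number of $\xi^{cu}$-atoms met by the image of a center plaque under $f^{-n}$ grows subexponentially, and the extra entropy per step is $0$.

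The main obstacle is making this last claim rigorous. Quasi-isometry controls only large-scale distances, with an additive constant $C_0$, so it does not control derivatives, and Pesin theory gives only $\g^\pm_c=0$, with no uniformity. I would first try to avoid local estimates entirely. The idea is to count, via a Bowen-ball covering in the center direction, atoms of $\bigvee_{j<n} f^{j}\xi^{cu}$ inside a single $\xi^u$-saturated center plaque. By \eqref{Quasi-iso} together with bounded geometry of center leaves, this count is at most polynomial in $n$. A Shannon–McMillan–Breiman argument for conditional entropies then yields equality of the two entropies. Equality in the Jensen step then gives the partial $(s,c,u)$-product structure.
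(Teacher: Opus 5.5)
Your first step does not hold up, and it is exactly the point the paper's proof is designed to get around.

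Propositions \ref{premarkov} and \ref{LS} give increasing partitions subordinate to laminations that are exponentially contracted under backward (or forward) iteration. The nesting $f^{-1}\xi(fy)\subset\xi(y)$ comes from that contraction. The center-unstable lamination is neutral along $E^c$, so neither result applies. For non-compact center leaves, the partition you want need not exist at all.

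Take $f=g_1$, the time-one map of a mixing Anosov flow, with $m$ the Liouville measure; the center leaves are flow lines, so \eqref{Quasi-iso} holds. Suppose $\xi^{cu}$ were increasing, with atoms bounded open pieces of $W^{cu}$-leaves. Then on each flow line $L$ the set $B_L$ of cut points would be a closed null set with bounded gaps and $g_1(B_L)\subset B_L$. The flow time $\phi(x)$ from $x$ to the next cut point would satisfy $\phi\circ f\le\phi$, hence be $f$-invariant and a.e.\ constant. This contradicts $\phi(g_sx)=\phi(x)-s$ together with the $g_s$-invariance of $m$.

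Without an increasing $\xi^{cu}$, there is nothing to apply Jensen's inequality to. Moreover, once center pieces are not small, you need unstable holonomy defined between whole center pieces. That is the content of Proposition \ref{CPproduct}, which you never invoke.

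The missing idea is the lift to the bundle $\wt Y=\{(y,w):w\in\ov W^c_y\}$ with $F(y,w)=(fy,fw)$.
Quasi-isometry is used precisely to produce the compact $F$-invariant set $\EE=\bigcap_n F^n\wt Y_t$, whose fibers have nonempty interior. The center conditionals $\MMM^c_y$, normalized on $\EE(y)$, give an $F$-invariant measure $M$. In this bundle, the overlapping center plaques of $Y$ become disjoint fibers. So $\xi^{cu}:=\pi^{-1}\zeta^u$ is automatically increasing, where $\zeta^u$ is a genuine increasing partition subordinate to $W^u$ on $Y$. The partition $\xi^u$ is then defined through the holonomies of Proposition \ref{CPproduct}.

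With this in place, your ``heart of the proof'' needs no counting. The center exponents vanish, so $\wt W^u$ is the Pesin unstable lamination of $(\EE,F,M)$. Proposition \ref{basicinequality3}, in Brown's $C^{1+\alpha}$ version, gives $H(F^{-1}\xi^u|\xi^u)=h_M(F)$. On the other side, $H(F^{-1}\xi^{cu}|\xi^{cu})\le h_M(F)$ by \eqref{KSentropy}, since $\xi^{cu}$ is increasing. Together with the Jensen inequality, this forces equality.

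Your counting sketch is in any case not well posed. For increasing $\xi^{cu}$ one has $\bigvee_{j<n}f^j\xi^{cu}=\xi^{cu}$. Also, atoms of a measurable partition have no uniform lower bound on size, so \eqref{Quasi-iso} cannot yield a polynomial bound.
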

\begin{proof} We  show that the conditionals $m^{cu}_y$ on local $\W^{cu}$ leaves have a   $(c,u)$-product  structure for $m$-a.e.$y$. The proof is the same for the $(s,c)$-product structure property of the conditionals $m_y^{cs}$. 

The first observation is a  result in \cite{CP}, that the problem makes sense: local center unstable leaves are nicely foliated by local unstable leaves and local center leaves. More precisely:
\begin{proposition}[\cite{CP} Theorem C] \label{CPproduct} Let $(Y,f)$ be a dynamically coherent partially hyperbolic $C^{1+\alpha}$ system, with quasi-isometric center leaves. Let $m$ be a $f$-invariant probability measure on $Y$. Then, there exists  a full measure set $Z \subset Y$ such that for $y,z \in Z$ with $W^{cu}_z= W^{cu}_y,$ the following holds:\\
For any $w \in \ov W^c_y ,$ the leaves $W^u_w, \ov W^c_z$ intersect at a unique point, denoted $h^u_{y,z} (w) .$ The map $h^u_{y,z} : \ov W^c_y \to \ov W^c_z $ is a homeomorphism. 
\end{proposition}

Let $\pi : \wt Y \to Y$ be the continuous bundle over $Y$ defined by \( \wt Y= \{ (y,w), w \in \ov W^c_y \}. \) We set $p: \wt Y \to Y $ for the projection $p((y,w)) = w.$  The fiber above $y$ is endowed with the metric $d_y$ coming  from the Riemannian metric on $ \ov W^c_y.$  Let $ F$ be the bundle diffeomorphism of $\wt Y$  defined by  $ F (y,w) := (fy,fw).$ We have $\pi \circ F = f \circ \pi, p\circ F = f \circ p.$
For $t>0$, write $\wt Y_t := \{ (y, w), w \in W^c_y; d(y,w) \le t \} .$

Assume  that $f$ has quasi-isometric center leaves with constants $K_0,C_0.$ Then, for $t > K_0 C_0,$
\begin{equation}\label{quasi} \wt Y_{K_0^{-1}  t -C_0 } \; \subset \;\cap _{n \in \Z}  F^n \wt Y_t  \; \subset \; \cup _{n \in \Z}  F^n \wt Y_t  \; \subset  \; \wt Y_{K_0 t +C_0} .\end{equation}
Set $ \EE _t := \cap _{n \in \Z} F^n \wt Y_t  .$  The set $\EE _t$ is closed and  $ F$-invariant. If the center leaves are compact, for $t$ large enough,  the set $\EE_t$ is the whole $\wt Y.$  Otherwise, by (\ref{quasi}), for $t > K_0 C_0,$ the set $\EE _t$ is compact and the intersection $\EE_t(y)$ with the  fiber at each $y$ has non-empty interior. Moreover, for $K_0 C_0 <t <t', \; \EE_t \subset \EE_{t'}.$

Recall that the conditional measures $\MMM_y^c$ on $\ov \W^c$ leaves are defined up to multiplication by a constant on the leaf. Such equivalence classes are invariant  by $f$. The union of the supports of the measures $\MMM_y^c$  is an invariant set  $Z'$ of full measure. For $y \in \pi (Z')$, we endow 
the fiber above $y$ with the  measure $M_y $ proportional to $ \MMM_y^c,$ normalized by $M_y ( \EE _{K_0 C_0 +t_0} ) = 1,$ for some $t_0 >0.$
The measure $M:= \int M_y \, dm (y) $ is $F$-invariant  and $M ( \EE _{K_0 C_0 +t_0} )  = 1.$ Observe that we can choose $t_0$ in such a way that the boundary of $ \EE _{K_0 C_0 +t_0} $ is negligible. By definition $\pi _\ast M = m.$ We also have
\begin{lemma}   With the above notations, $p_\ast M = m$. \end{lemma}
\begin{proof} For each $z \in Y,$ $$p^{-1} (z) = \{ (y,z), y \in \ov W^c_z, d(f^n y, f^n z) \le K_0 C_0 +  t_0, \, \forall n \in \Z\}.$$ Locally, the conditional measures $M_{z'}$ of  the measure $M$ on suitable $\pi ^{-1} (\ov W^c_{loc} (z')) $ containing $z$  are  proportional to $\int _{\ov W^c_{loc} (z')} dM_{y} (w)\,  d\MMM _{z'}(y).$ Therefore
\begin{eqnarray*} & & M_{z'}(p^{-1} B_{\ov W^c} (z, \e)) \,=\, C(z') \int _{\ov W^c_{loc} (z')} M_{y} (p^{-1} B_{\ov W^c} (z, \e) )\,  d\MMM _{z'}^c(y)\\ &=&  C(z')\int _{\{y: d(f^n y, f^n z) \le K_0 C_0 + t_0 \, \forall n \in \Z\} } \frac {\MMM_{y}^c (B_{\ov W^c} (z, \e) )}{\MMM _y^c (\EE _{K_0 C_0 + t_0} (y))} \,  d\MMM _{z'}^c(y).\end{eqnarray*}
Letting $\e \to 0,$ we obtain that the measure $p_\ast M_{z'} $ has the same negligible sets as the measure $\MMM _{z'}^c.$ Integrating in $z'$ shows that the measure $p_\ast M $ has the same negligible sets as the measure $m$. Since $p_\ast M$ is a $f$-invariant probability measure and $m$ is ergodic, we indeed have $p_\ast M = m.$
\end{proof}

Set $\EE := \EE _{K_0 C_0 +t_0} .$ We will consider the system $(\EE, F, M)$. 
\begin{lemma} $(\EE, M)$ is a  compact manifold with negligible boundary. The mapping $F: \EE \to \EE$ preserves the boundary, is $C^{1+\alpha}$ and
 partially hyperbolic with decomposition $\wt E^s \oplus \wt E^c \oplus  \wt E^u ,$ with
\[ D\pi  \wt E^s = E^s, \; \wt E^c := (E^c \otimes \{0\}) \oplus (\{0\} \otimes T\ov W^c), \; D\pi \wt E^u = E^u . \] \end{lemma}
\begin{proof} Only the regularity of $F$ is not immediate from the construction and proposition \ref{CPproduct}. But the decomposition is H\"older continuous and the derivative $DF$ is diagonal in this decomposition, with H\"older continuous components. \end{proof}

The proof of theorem \ref{autreIP} follows the scheme of the proof of theorem \ref{TY}: let $\zeta ^u $ be a measurable partition of $Y$ subordinated to the unstable foliation of $(Y,f)$ and increasing under $f$.   
By proposition \ref{basicinequality3}, the partition $\zeta ^u$ is such that $H (f^{-1}\zeta ^u| \zeta ^u) $ is the entropy of $f$.  By proposition \ref{perfect}, there is a finite entropy measurable partition $\CC$ such that $\zeta ^u = \vee _{n\ge 0} f^n \CC .$
We set, for $ (y,w)  \in \EE,$
 \[   \xi ^{cu} (y,w) := \pi ^{-1} (\zeta ^u (\pi (y)) \; {\textrm {and} } \; \xi ^{u} (y,w) := \xi^{cu} (y,w) \cap p^{-1} (W^u_{loc} (p (y)))  .\]
 Observe that $\xi  ^{cu} = \vee _{n\ge 0} F^n (\pi ^{-1}  \CC ) $ is an increasing measurable partition. 

 \begin{lemma} The partition $\xi ^u $ is an increasing  measurable partition  that refines $\xi ^{cu}$ and which is subordinated to the unstable foliation $\wt W ^u$. \end{lemma}
 \begin{proof} We first have to verify that  $\xi ^u (y,w) $ is well defined $M$-a.e.. More precisely, for $(y,w) \in \EE$  and $z \in \zeta ^u (y),$ we have to find a unique $w'\in \EE _{K_0 C_0 +t_0} (z) \subset \ov W^c_{z}$ such that $w' = p(y',w') \in W^u_{loc} (w) $. By proposition \ref{CPproduct}, we can take $w' := h^u_{y,z} (w) $ if $y$ and $z$ are in $ Z$ and also if this $h^u_{y,z} (w) $ belongs to $\EE _{K_0 C_0 +t_0}(z) .$ 
 
 The set of $(y,w) \in \EE$ such that $y \in Z$ has full $M$ measure and for $m$-a.e. $y \in Y,$ the set of $z \in \zeta ^u (y) $ that belong to $Z$ has full conditional $m^u_y$ measure.
 So $w'=  h^u_{y,z} (w) $ is defined for $M$-a.e. $(y,w) $ and $m^u_y$-a.e.$z \in \zeta ^u (y)$ and belongs to $W^u_{loc} (w).$ It follows that, for $M$-a.e. $(y,w) \in \EE,$  $\xi ^u (y,w)$
 is an open subset of $p^{-1} (W^u_{loc} (w)).$  This open subset is non-empty since for $M$-a.e. $(y,w) \in \EE,$ $w $ lies in the interior of $\EE_{K_0 C_0 +t_0} (y).$ Furthermore, since $pF=fp,$  $p^{-1} (W^u_{loc} (p(y,w)))$ is an open neighborhood of $(y,w) $ in $\wt W^u_{loc} (y,w).$
 
 It is clear that the partition $\xi ^u$ is increasing and refines $\xi ^{cu}.$ It remains to see that the partition $\xi ^u $ is a measurable partition. The partition $\xi ^{cu}$ is a measurable partition and, using local charts, on can define  measurable partitions $\zeta $ of $Y$ such that  locally $W^u _{loc} (w) = \zeta (w).$ Since the projection $p$ has compact fibers, the partitions $p^{-1} \zeta $ are measurable partitions on a piece of $\EE$ and $\xi ^u = \xi ^{cu} \vee p^{-1} \zeta $ is a measurable partition. \end{proof}
\begin{lemma}\label{parry} Let $\xi ^u $ be defined as above. There is a finite entropy partition  $\CC '$ such that $F^{-1} \xi ^u = \CC' \vee \xi ^u.$ \end{lemma}
\begin{proof} Observe that, since the partition $\xi ^u $ is subordinated to $\wt W^u,$ the entropy $H( F^{-1} \xi ^u | \xi ^u ) $ is the entropy of the  system $(\EE, F, M)$.
Indeed, this equality follows from the $C^{1+\alpha}$ version of proposition \ref{basicinequality3} (\cite{B22}). Then,  the lemma follows from  proposition \ref{perfect}. \end{proof}
\begin{proposition}\label{invariance5} We have $H(\CC '| \xi ^u ) \le H(\CC '| \xi ^{cu} ) $ with equality if and only if, for $M$-a.e. $(y,w) \in \EE$, the conditionals $M_{y,w}^{cu} $ are $(c,u)$-products. \end{proposition}
\begin{proof}The proof is the same as the proof of  proposition \ref{invariance4}. The only modification is that the partition $\CC'$ has finite entropy instead of being finite. This implies that, for $M$-a.e. $(y,w)$, the entropy of the partition $\CC'$ for the measure $M_{y,w}^{cu} $ is finite. The inequality still follows by summing over the atoms of $\CC'$. In the case of equality, 
we also have equality for individual atoms at $M$-a.e.$(y,w)$, since the series converges.
We  have $H(F^{-n} \xi ^u | \xi ^u )  =n H(F^{-1} \xi ^u | \xi ^u ) \le n  H(F^{-1} \xi ^{cu} | \xi ^{cu} ) = H(F^{-n} \xi ^{cu} | \xi ^{cu} ) $ for all $n >0$. By lemma \ref{parry}, $F^{-n} \xi ^u =\vee _{j=0}^{n -1} F^{-j} \CC' \vee \xi ^u.$ The partition  $\vee _{j=0}^{n -1} F^{-j} \CC'$ has finite entropy and in case of equality, we still have that, for $M$-a.e. $(y,w)$, the measures $M_{y,w}^u $ and $M_{y,w} ^{cu} $ coincide on $\vee _{j=0}^{n -1} F^{-j} \CC '.$ Since this holds for all $n\in \N,$ the $(c,u)$-product property follows if  we have equality  $H(F^{-1} \xi ^u | \xi ^u ) = H(F^{-1} \xi ^{cu} | \xi ^{cu} ) .$   
\end{proof}
Observe that  the  equality in proposition \ref{invariance5} holds because $H(F^{-1} \xi ^u | \xi ^u ) $ is the entropy  of the  system $(\EE, F, M)$ and $ H(F^{-1} \xi ^{cu} | \xi ^{cu} ) $ is at most the entropy since the partition $\xi ^{cu} $ is increasing (see (\ref{KSentropy})). So, by proposition \ref{invariance5}, the conditionals $M_{y,w}^{cu} $ are $(c,u)$-products.  In particular, the local unstable manifolds $\wt W^u_{loc}$ preserve the conditional of $M$ along the fibers $\pi ^{-1} \{y\}.$ In other words, the local holonomies $h^u_{y,z} $ preserve the measure $\MMM _y $ up to a constant, on a set containing $\{w \in W^c_y, d_{W^c} (y,w) < t_0  \} .$
Since this property holds for arbitrary large  $t_0$, this shows that  that the conditionals $m^{uc}_y$ on local $\W^{uc}$ leaves have a   $(c,u)$-product structure  for $m$-a.e. $y$.
\end{proof}

 \section{Miscellaneous}

 In most developments, the Invariance Principle is a tool that is used in conjunction with other ideas. The examples of sections \ref{examples2} and \ref{examples3} are somehow the most direct applications of the IP. In this section, we mention possible further extensions (see also \cite{MP25}).
 
 An important feature is remark \ref{perturbations}. It often implies that the set of objects controlled by an IP is stable by perturbations. Details are delicate and depend on the problem, but this stability is one extra feature of  the applications of  the IP.
  
 \subsection{Stochastic flows} A common source of random diffeomorphisms is the flow of random diffeomorphisms associated to a Stochastic (Partial) Differential Equation. In general, the reference space need not be locally compact, but is still a separable metric space, for instance, some Sobolev space of functions on a domain. See  for example \cite{BBPS} for the formulation and the applications of the Invariance Principle in that setting.
 
 The flow of random diffeomorphisms associated to a Stochastic Ordinary Differential Equation connects the theory of random action of  (pseudo-)groups of diffeomorphisms with the  properties of  diffusions on the leaves of a foliation (see \cite{DK07}). Indeed, the discrete  Markov process defined by the successive hits of  a transversal by the trajectory of the leafwise diffusion can be obtained as the Markov process  associated to a random diffeomorphism of the transversal. In many cases, it is possible to define a Lyapunov exponent for the random product and vanishing of this exponent is associated to the foliation admitting an invariant transversal measure.  
\subsection{$(s,u)$-product structure} If the system $(Y,f,m)$ is partially hyperbolic, it might be important to assume that  the invariant measure $m$ has a $(s,u)$-product structure. Then, the IP might provide a partial $(s,c,u)$ product structure (see e.g. section \ref{PHcompact}), and proposition \ref{product} would yield  a $(s,c,u)$-product structure. This is the case for measures of maximal entropy and equilibrium measures for a H\"older continuous potential (see e.g. \cite{RHRHTU}, \cite{UVY}, \cite{CP}), or a Lebesgue measure when it is invariant, or physical measures (see e.g.  \cite{VY13}, \cite{CP}), or else u-Gibbs measures (\cite{UVYY}).

\subsection{Distinct exponents}\label{distinct} Consider an independent product of matrices in $\SL(d,\R),$ $ d>2,$ with distribution $\mu$. By the subadditive ergodic theorem, there are exponents
\[ \la _1 \ge \la _2 \ge \ldots \ge \la_d , \; {\textrm { with }} \sum _j \la _j = 0 .\]
The analog of theorem \ref{F} gives conditions to ensure that $\la _1 > \la _d $ and all the discussion above concerns possible extensions and variants. It might be of interest to decide whether  exponents are all pairwise  distinct. In the independent case, the known criteria involve the support of the measure $\mu.$ Y. Guivarc'h and A. Raugi (\cite{GR89}) showed that this is the case as soon as there one matrix in the semi-group generated by the support of $\mu $ with moduli of eigenvalues all distinct. I. Gol'dshe\u{\i}d and G. Margulis (\cite{GM89}) showed that it suffices that the semi-group generated by the support of $\mu $ is Zariski dense in $\SL(d,\R).$ There isn't a generalization of the criteria    of \cite{GR89} and \cite{GM89} as neat as  theorem \ref{IP}, but it is still possible to obtain nice results, in particular:

The proof of Zorich-Kontsevich conjecture  by A. Avila and M. Viana (\cite{AV07}): on each transitive stratum of the Teichm\"uller flow, for the Masur-Veech invariant probability measure, the Zorich-Kontsevich cocycle has  exponents pairwise distinct. See e.g \cite{F12}  for background and consequences, in particular of $\la_1 > \la _2,$ which was proven earlier by other methods (see \cite{F02}).

 The analog of section \ref{positivity}  for  generic $C^\infty $ linear cocycles or for the derivative cocycle of a generic $C^\infty  $  diffeomorphism:  either all exponents are the same, or exponents are pairwise distinct and the Oseledets splitting is dominated. See e.g. \cite{bochiviana}  \cite{AB12}, \cite{RH13}, \cite{BPLV}, \cite{Mit22} and the references therein.

A more general result for cocycles is proven in a recent preprint of U. Bader and A. Furman. 
\begin{theorem}[\cite{BF25}]\label{BF} Let $(X,\A,m;f)$ be a measurable dynamical system, $\G$ a locally compact second countable group, $w :X \to \G$ a mapping such that the $\s$-algebras generated by $w\circ f^n, n\ge 0 $ and $w \circ f^n, n\le -1,$  are quasi-independent (see section \ref{quasi-independence}).
Let $G$ be a connected semi-simple real Lie group with finite center. 
Then, for any representation $\rho : \G \to G$ with a Zariski dense image and satisfying the suitable integrability condition, the cocycle $\rho \circ w : X \to G$ has a simple Oseledets spectrum. \end{theorem}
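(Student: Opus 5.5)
The plan is to reduce simplicity of the spectrum to the vanishing of a Furstenberg-type entropy on flag varieties, in the spirit of Theorem \ref{IP} and its Comments. Let $P = MAN$ be a minimal parabolic of $G$ and $G/P$ the full flag variety. For each simple root $\alpha$, consider the partial flag variety $G/P_\alpha$ associated with the maximal parabolic corresponding to the wall $\ker \alpha$. The spectrum is simple precisely when $\alpha(\la) > 0$ for every simple root $\alpha$, where $\la$ is the Lyapunov vector (the Cartan projection growth rate). The integrability condition is what makes $\la$ exist, through the subadditive ergodic theorem. So I will argue by contradiction: fix a simple root $\alpha$ with $\alpha(\la) = 0$.

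Next, I form the skew product $F(x,\xi) = (fx, \rho(w(x))\xi)$ on $X\times G/P_\alpha$ and take an ergodic $F$-invariant measure $M$ that projects to $m$. The fiber derivative of $G/P_\alpha$ at the attracting point contracts or expands only at rates given by the positive roots $\beta$ involving $\alpha$. I will choose $M$ supported on the Oseledets (Furstenberg boundary) section, which exists by the Oseledets theorem for $G$-cocycles. The fiber exponents of $M$ are then $-\beta(\la)$. Because $\alpha(\la)=0$, the fiber $G/P_\alpha$ contains a $P_\alpha/P$-type direction, a copy of $\PP^1$ for the $\SL(2)$ of $\alpha$, on which the exponents vanish. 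I would therefore rather work with the fibration $G/P \to G/P_\alpha$ with fiber $P_\alpha/P \simeq \PP^1$. On this $\PP^1$-bundle, $\g^+=\g^-=0$.

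I then apply the invariance principle twice, as in the Comments after Theorem \ref{IP}. The first application is forward, with $\B$ generated by $w\circ f^n$, $n\ge 0$. The second is backward, with $\B'$ generated by $w\circ f^n$, $n\le -1$. The conclusion is that the disintegrations $M_x$ on the $\PP^1$ fibers, or on $G/P$ relative to the section, are measurable with respect to both $\B$ and $\B'$. Under independence this forces them to be constant. Under mere quasi-independence, I would use the quasi-independence lemma (as in the proof of Proposition \ref{product}, via lemma \ref{quasiindependence}) to deduce that they are a.e. equal to a fixed measure class, and then a fixed measure $\nu$ after normalisation. The invariance relation (\ref{invariance}) then gives $\rho(w(x))_\ast \nu = \nu$ for a.e. $x$, and hence $\rho(\G)$ preserves $\nu$ (up to the section).

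The final step is to contradict Zariski density. Every $\rho(w(x))$ lies in the stabiliser of a probability measure on a proper flag-type variety, or, better, of the induced measure on $G/P_\alpha$ viewed through the Furstenberg lemma. By the standard Furstenberg argument, the stabiliser of a probability measure on $G/Q$, for $Q$ a proper parabolic, is either contained in a proper algebraic subgroup (when $\nu$ charges a proper subvariety) or is compact modulo such. Either way, its Zariski closure is proper, which contradicts Zariski density of $\rho(\G)$. I expect two main obstacles. The first is the technical one of running the invariance principle in this non-compact, measurable, quasi-independent setting with the right regularity: the fiber maps are projective, so the Hessian bounds hold only after integrability truncations (Section \ref{regularity}). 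The second, more substantial, is making the two-sided measurability argument work under quasi-independence rather than independence. There I would first try to show that both $\B$- and $\B'$-measurability imply measurability with respect to the trivial $\s$-algebra modulo equivalence of measures. This would use the absolute continuity of $m$ with respect to the product of its restrictions to $\B$ and $\B'$.
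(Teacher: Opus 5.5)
The paper does not prove this statement. It is quoted from \cite{BF25}, and the remark right after it says that applying Theorem~\ref{IP} to the action of $\rho\circ w(x)$ on the flag spaces ``might need a still heavier machinery than \cite{BF25}''. Your proposal has a genuine gap, and it is exactly at the double application of the invariance principle. The argument of Section~\ref{comments} works because the bundle is the product $X\times\PP^1(\R)$ and the fibre map depends only on the current letter. So the fibred system is measurable for the future $\s$-algebra $\B$ and, inverted, for the past $\B'$. In your setting no such bundle is available:
\begin{itemize}
\item If $\Theta=\{\alpha \text{ simple}: \alpha(\la)=0\}\neq\emptyset$, Oseledets gives no section of $X\times G/P$. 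It gives only the unstable and stable partial flags $\xi^u(x),\xi^s(x)\in G/P_\Theta$, the first measurable for the past, the second for the future.
\item The zero exponents live on the fibres $P_\Theta/P$ of $G/P\to G/P_\Theta$ over these flags. These fibres are flag varieties of the Levi, spheres of dimension $m_\alpha+m_{2\alpha}$ for a single root, and $\PP^1$ only in the split case.
\item On $X\times G/P$ itself you cannot have $\g^-\ge0$ and $\g^+\le0$ together when $\la\neq0$, since the transverse directions carry nonzero exponents $\pm\beta(\la)$. A measure with $\g^-\ge0$ lies over the repelling, future-measurable $\xi^s$, and only the forward IP applies. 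A measure with $\g^+\le0$ lies over the attracting, past-measurable $\xi^u$, and only the backward IP applies.
\item On the fibres over $\xi^u$ both IPs do apply, the forward one with the enlarged non-increasing generating $\s$-algebra $\B\vee\s(\xi^u)$. With quasi-independence they yield only $M_x=\nu_{\xi^u(x)}$, for a measurable family $\eta\mapsto\nu_\eta$ of measures on the fibres over $\eta\in G/P_\Theta$ with $\rho(w(x))_\ast\nu_{\xi^u(x)}=\nu_{\rho(w(x))\xi^u(x)}$.
\end{itemize}
That is an equivariance between different fibres, not a fixed $\nu$ with $\rho(w(x))_\ast\nu=\nu$. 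There is no fixed space for such a $\nu$ to live in, and ``up to the section'' hides the whole difficulty.

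Your final step therefore has nothing to act on, because Furstenberg's stabiliser lemma concerns a single measure preserved by a group. What is needed is a rigidity theorem for measurable equivariant maps from the boundary to spaces of probability measures on algebraic varieties; here the boundary is $G/P_\Theta$ with the law of $\xi^u$, or better the pair of past and future flags. Such a theorem must show that Zariski density forces $\eta\mapsto\nu_\eta$ to be essentially algebraic, i.e.\ $G$-equivariant. Then $\nu_\eta$ would be invariant under the stabiliser $P_\Theta$ of $\eta$, hence under the non-compact semisimple part of its Levi. That is impossible on $P_\Theta/P$ because $\alpha\in\Theta$. This is the content of Bader--Furman's theory of boundary pairs and algebraic representations of ergodic actions (or of a Guivarc'h--Raugi/Gol'dsheid--Margulis argument in the i.i.d.\ case), and it is the missing idea.

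The obstacle you single out as the main one is in fact easy. If $A$ agrees mod $0$ with a set of $\B$ and with a set of $\B'$, quasi-independence applied to $A$ and $X\setminus A$ gives $m(A)\,m(X\setminus A)=0$. So any map measurable for both is a.e.\ constant, and no measure-class step is needed. Finally, $\rho(w(x))_\ast\nu=\nu$ for a.e.\ $x$ only gives invariance under the closed group generated by the essential range of $\rho\circ w$. Zariski density must hold for that group, not merely for $\rho(\G)$.
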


The theorem holds with an asymptotic condition of quasi-independence (which amounts to say, in a different language, that the process $ \{\rho \circ w \circ f^n\}_{n \in \Z} $ is Weak Bernoulli). One can observe that, even in the case of $\SL (2,\R)$, the  result is slightly more comprehensive than theorem \ref{holdercocycle}. In higher rank, theorem \ref{IP} can be applied, in the setting of theorem \ref{BF},  to the action of $\rho \circ w (x)$ on the different flag spaces simultaneously, but, a priori, this might  need a still heavier machinery than \cite{BF25}.

\appendix
\section{Background results in ergodic theory}                

In this Appendix, we recall notions and properties from measure theory and ergodic theory that are used in the text. 
\subsection{Lebesgue space, measurable partition, disintegration}\label{Rokhlin}
A complete probability space $(X, \A, m)$ is called a Lebesgue space if there exist a measurable set $E$ with full measure and a countable family of measurable sets $\{C_j \}_{j \in \N} $ such that any point  $x \in E$ is the intersection of $E$ and the sets $C_j$ that contain $x$.  A Polish metric space endowed with a 
Borel probability measure and the completed Borel $\s$-algebra is a Lebesgue space.

Let $(X, \A, m)$ be  a Lebesgue space. A partition $\xi $ of $X$ is called a {\it{measurable partition}} if there exist a measurable set $F$ with full measure and a countable family of measurable sets $\{D_j \}_{j \in \N} $ such that for  any element $P \in \xi $, $P\cap F $  is the intersection of $F$ and the sets $D_j$ that contain $P$. Let $\xi $ be a measurable partition and denote $(\wh X, \wh \A ,\wh m) $ the quotient space. By definition, $(\wh X, \wh \A ,\wh m) $ is a Lebesgue space. The key property of measurable partitions is that they admit {\it{disintegrations}}.
\begin{theorem}(\cite{R49}) Let $\xi $ be a measurable partition of the Lebesgue space $(X, \A, m)$,   $(\wh X, \wh \A ,\wh m) $ the quotient space. There exists a family 
$P \mapsto m_\xi ^P $ of  probability measures on $X$ such that:
\begin{enumerate}\item for $\wh m$-a.e. $P \in \xi,$ $ m_\xi ^P (P) = 1 $,
\item for any $\A$ measurable non-negative function $\psi $ on $X$, the function $P \mapsto  \int_P \psi (x) \, dm_\xi ^P (x) $ is $\wh \A $ measurable and 
\item $\int _X \psi \, dm  = \int _{\wh X} \left( \int_P \psi (x) \, dm_\xi ^P (x)\right) \, d \wh m (P) .$
\end{enumerate}
The family $m_\xi ^P$ is $\wh m$-essentially unique and is called the disintegration of $m$ with respect to  the partition $\xi .$
\end{theorem}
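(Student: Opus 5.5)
This is Rokhlin's disintegration theorem. I would reduce to the case of a Lebesgue space isomorphic to an interval with Lebesgue measure plus atoms, and construct the conditional measures by martingale approximation. The key steps follow.

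First I reduce to a countable setting. Let $F$ and $\{D_j\}_{j\in\N}$ be as in the definition of a measurable partition. For $n\in\N$, let $\xi_n$ be the finite partition generated by $D_1,\dots,D_n$ (restricted to $F$). Then $\xi_n$ increases to $\xi$ in the sense that the $\s$-algebra $\wh\A$ of $\xi$-saturated measurable sets is, modulo $m$-null sets, generated by $\bigcup_n \s(\xi_n)$. Since $(X,\A,m)$ is Lebesgue, I also fix a countable algebra $\CC_0$ generating $\A$ mod $0$, obtained from the $C_j$ of the definition. For each $C\in\CC_0$, the martingale convergence theorem gives that
\[ \E[\mathds{1}_C \mid \s(\xi_n)](x) \;=\; \frac{m(C\cap\xi_n(x))}{m(\xi_n(x))} \]
converges $m$-a.e. to $\E[\mathds{1}_C\mid\wh\A](x)$. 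Off a null set $N$, which is a countable union over $C\in\CC_0$, this gives a function $x\mapsto p_x(C)$ that is constant on atoms of $\xi$ and finitely additive on $\CC_0$.

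Second, I extend $p_x$ to a probability measure. This is the step I expect to be the main obstacle: finite additivity on $\CC_0$ does not automatically give countable additivity. Here I use the Lebesgue property. Up to a null set, $(X,\A,m)$ is measurably isomorphic to a Borel subset of a compact metric space (for instance $\{0,1\}^\N$ via $x\mapsto(\mathds{1}_{C_j}(x))_j$) with a Borel measure. I therefore choose $\CC_0$ to be the algebra of cylinder sets. Cylinders are compact-open, so any finitely additive set function on them is automatically countably additive by a compactness argument, as in the Kolmogorov/Carathéodory extension. This yields a probability $m^{x}$ on the image space, hence on $X$ after discarding a null set, for $x\notin N$. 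I set $m_\xi^P := m^x$ for $x\in P$.

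Third, I verify properties (1)–(3). Property (2) holds for indicators of sets in $\CC_0$ by construction, and extends to all non-negative measurable $\psi$ by a monotone class argument. Property (3) for $\psi=\mathds{1}_C$ is $\int \E[\mathds{1}_C\mid\wh\A]\,dm = m(C)$, and it extends likewise by monotone class. For (1), I apply (3) to $\psi=\mathds{1}_{D_j}$. Since $D_j$ is $\xi$-saturated on $F$, $m^x(D_j)=\mathds{1}_{D_j}(x)$ a.e. Intersecting over $j$ gives $m^x(\xi(x)\cap F)=1$ for a.e. $x$, because $P\cap F=\bigcap_{D_j\ni P} D_j\cap F$ is a countable intersection.

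Finally, uniqueness. If $\{m'^P\}$ is another such family, then for each $C\in\CC_0$ both $P\mapsto m^P(C)$ and $P\mapsto m'^P(C)$ are versions of $\E[\mathds{1}_C\mid\wh\A]$, so they agree $\wh m$-a.e. Since $\CC_0$ is countable and generates $\A$, the two families agree $\wh m$-a.e. as measures, by the $\pi$–$\lambda$ theorem.
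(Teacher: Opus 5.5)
The paper does not prove this theorem; it quotes it from Rokhlin \cite{R49} as background. Your argument is the standard proof and is essentially correct. It is close in spirit to the classical one, in which, for $C$ in a countable basis, $m^P_\xi(C)$ is defined as the Radon--Nikodym density of $S\mapsto m(C\cap S)$ with respect to $\wh m$, and countable additivity comes from the structure of Lebesgue spaces. Your martingale limits along $\xi_n$ are explicit versions of these densities. Compactness of cylinders, together with a Borel image of $X$ in $\{0,1\}^\N$, plays the role of completeness with respect to a basis.

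Two steps should be written out.

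First, the real difficulty is not countable additivity, which is automatic on clopen cylinders as you say. It is hidden in the phrase ``hence on $X$ after discarding a null set''. You need a Borel set $B\subset\Phi(E)$ with $\Phi_\ast m(B)=1$ on which $\Phi^{-1}$ is measurable. Before pulling back, extend property (3) for the measures $\tilde m^x$ to all Borel subsets of $\{0,1\}^\N$ by monotone class, and apply it to $\{0,1\}^\N\setminus B$. This gives $\tilde m^x(B)=1$ for $x$ in a saturated set of full measure, and only then can you transport $\tilde m^x$ to $X$. This is where you need Lebesgue spaces in Rokhlin's sense (complete with respect to a basis, equivalently isomorphic mod $0$ to an interval with atoms). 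The countable separation in the paper's appendix is not enough on its own: the image of $X$ may then be non-measurable, and the $\tilde m^x$ may charge its complement.

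Second, your first step identifies $\wh\A$ with $\s(D_j, j\in\N)$ mod $0$. This is true for Lebesgue spaces but not free (it is a Souslin-type fact), and you do not need it. Your limits are conditional expectations with respect to $\s(D_j)\subset\wh\A$, which suffices for (2) and (3). Moreover, for any family satisfying (1)--(3) and any saturated measurable $S$, you have $\int_S m^P(C)\,d\wh m(P)=m(C\cap S)$, because $m^P(S)=\mathds{1}_S(P)$ by (1). Hence $P\mapsto m^P(C)$ is a version of $\E[\mathds{1}_C\mid\wh\A]$. Your uniqueness step needs exactly this computation for the competing family $m'^P$, so state it there.

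Finally, your construction uses that the $D_j$ are $\xi$-saturated, as in Rokhlin's definition; the paper's wording of that definition is looser.
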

In this paper, we met different kinds of measurable partitions. If we have  a measurable fibration $\pi : X \to \wh X,$ then the  partition in fibers $\pi ^{-1} \wh x, \wh x \in \wh X$ form a measurable partition if the quotient space is a Lebesgue space. 
In the case of a continuous  foliation $\W$ of a Polish space   with  compact center leaves,  the partition defined by the leaves  is measurable and the associated  conditional measures  
 $\MMM _{\wh x} $ are defined $\wh m$-a.e. (\cite[Proposition 3.7]{AVW2}). In the general case of non-compact  leaves, the partition defined by the leaves  may not be  measurable and we can only define local conditional measures on flow boxes for the foliation. It turns out that these conditional measures are proportional to one another on the overlap of different flow-boxes.  It follows that there is a family of Radon measures on almost all  universal covers of the leaves, defined up to multiplication by a constant, that are proportional to the local disintegrations (see \cite{AVW}). We will still denote $\MMM_{\wh x} $ this   family of Radon measures on the leaves,  up to multiplication by a constant, with the understanding that  they are proportional to the local disintegrations  on simply connected pieces and we call them conditional measures on the leaves of $\W .$

Observe that,  if $(X, \A, m)$ is   a Lebesgue space and $\B \subset \A$ is a sub-$\s$-algebra, there exists a measurable partition $\xi$ such that the disintegration $P \mapsto  \int \psi \, dm_\xi ^P $ gives the conditional expectation of the function $\psi $ with respect to $\B$, but, in general, the elements of $\xi $ are not the atoms of $\B$ and, in particular for foliations with non-compact leaves,  the elements of $\xi $ are non-trivial union of leaves  and the  disintegration is  different from the conditional measures of the previous paragraph.

\subsection{Entropy}
Let $(X,\A, m)$ be a Lebesgue space, $f$ an invertible bimeasurable transformation preserving $m$, $\xi$ a measurable partition. The partition $\xi $ is said to be increasing if the partition $f^{-1} \xi $ is finer than the partition $\xi .$ In that case, the entropy $ H(f ^{-1} \xi |\xi )$ is defined by the following (cf. formula (\ref{conditionalentropy}))
\begin{proposition}\label{entropyrohlin}[\cite{R67}] Assume there exists a countable partition $ \CC = \{C_\ell\}_{\ell  \in \N} $ with finite entropy such that $f^{-1} \xi = \CC \vee \xi.$ Then the quantity 
\[  H(f ^{-1} \xi |\xi ) := \int -\sum _{ \ell }  m_\xi ^x (C_{\ell})\log  m_\xi ^x (C_{\ell}) \, dm (x) ,\]
where $m_\xi ^x$ is a family of disintegrations of $m$ with respect to $\xi$, does not depend on the partition $\CC$ such that  $f^{-1} \xi = \CC \vee \xi.$ \end {proposition}
We set $H(f ^{-1} \xi |\xi ) : = +\infty $ if there is no finite entropy partition such that  $f^{-1} \xi = \CC \vee \xi.$

The entropy $h_m (f) $ is given by (see \cite {R67})
\begin{equation}\label{KSentropy}  h_m(f) \;:=  \;\sup _{\xi, f ^{-1} \xi 
\prec \xi } H(f ^{-1} \xi |\xi ). \end{equation}

Assume $h_m(f) < + \infty .$ An increasing measurable partition $\xi $ is called {\it{perfect}} if it is generating and $ H(f ^{-1} \xi |\xi)  = h_m(f).$ If $\CC = \{ C_j, j\in \N \} $ is a countable measurable partition with 
finite entropy that generates $\A$, then the partition $\xi := \vee _{j\ge 0} f^j \CC $ is perfect. Conversely
\begin{proposition}\label{perfect}(\cite[statement 10.11]{R67}, \cite[Theorem 8.9]{P69}) Let  $(X,\A, m,f)$ be a finite entropy dynamical system and $\xi $ a perfect increasing partition. Then, there exist a partition  $\CC = \{ C_j, j\in \N \}$ with finite entropy such that  $\xi = \vee _{j\ge 0} f^j \CC .$ \end{proposition}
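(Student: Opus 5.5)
The plan is to build the required partition in two pieces. The first piece encodes the one-step increment of $\xi$. The second encodes the remote past $\bigwedge_n f^n\xi$, which the increments alone cannot recover.

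\emph{Step 1: the increment.} Since $\xi$ is perfect, $H(f^{-1}\xi|\xi)=h_m(f)<+\infty$. By the convention following proposition \ref{entropyrohlin}, there is then a countable partition $\CC_0$ with finite entropy such that $f^{-1}\xi=\CC_0\vee\xi$. Applying $f$ and iterating gives, for every $n\ge 1$,
\[ \xi \;=\; f\CC_0\vee f^2\CC_0\vee\cdots\vee f^n\CC_0\vee f^n\xi .\]
Set $\eta:=\bigvee_{j\ge 1}f^j\CC_0$, which is coarser than $\xi$. The difficulty is that $\bigcap_n(\eta\vee f^n\xi)$ need not equal $\eta$, because the tail $\bigwedge_n f^n\xi$ may carry information that is not in $\eta$.

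\emph{Step 2, the main obstacle: identify the tail and show it is all that is missing.} I would prove two facts.
\begin{itemize}
\item[(a)] $\bigwedge_n f^n\xi$ is the Pinsker partition $\Pi(f)$. This is where perfectness, i.e.\ generating plus the entropy equality $H(f^{-1}\xi|\xi)=h_m(f)$, is used; it is the Rokhlin--Sinai argument.
\item[(b)] $\xi=\eta\vee\Pi(f)$ mod $0$.
\end{itemize}
For (b), the first thing I would try is an entropy computation. On one hand, $H(f\CC_0|f^2\CC_0\vee\cdots\vee f^n\CC_0\vee f^n\xi)=H(\CC_0|\xi)=h_m(f)$ for every $n$. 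On the other hand, letting $n\to\infty$, increasing martingale convergence together with the fact that conditioning on a zero-entropy invariant algebra does not lower conditional entropy rates should give $H(\CC_0|\eta\vee\Pi)=h_m(f)$ as well. One then needs the commutation of intersection and join,
\[ \bigcap_n\bigl(\eta\vee f^n\xi\bigr)\;=\;\eta\vee\bigwedge_n f^n\xi ,\]
which in general can fail. Here I would deduce it from the equality of the conditional entropies, since such an equality forces the conditional measures on the two sides to agree on the generating family $\{f^j\CC_0\}$. I expect this commutation to be the delicate point.

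\emph{Step 3: encode the Pinsker part.} The factor $(X,\Pi(f))$ has zero entropy. Choose a countable partition $\DD$, measurable with respect to $\Pi(f)$, with finite (even arbitrarily small) entropy, that generates $\Pi(f)$ under $f$; this is a Krieger/Rokhlin-type generator theorem for the zero-entropy factor. Because the entropy is zero, one-sided and two-sided generation agree: $\DD\prec\bigvee_{j\ge1}f^j\DD$. Hence $\bigvee_{j\ge0}f^j\DD=\Pi(f)$, and this algebra is contained in $\xi$ by (a).

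\emph{Step 4: conclusion.} Put $\CC:=f\CC_0\vee\DD$, which is countable with finite entropy. Each $f^j\CC$ for $j\ge0$ is coarser than $\xi$: indeed $f^{j+1}\CC_0\prec f^{j}\xi\prec\xi$ since $\xi$ is increasing, and $f^j\DD\prec\Pi(f)\prec\xi$. Conversely,
\[ \bigvee_{j\ge0}f^j\CC\;=\;\eta\vee\Pi(f)\;=\;\xi \]
by Step 2(b). Therefore $\xi=\bigvee_{j\ge0}f^j\CC$, as required.
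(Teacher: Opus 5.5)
There is a genuine gap at Step 2(b). The identity $\xi=\eta\vee\Pi(f)$ is false for an arbitrary $\CC_0$ with $f^{-1}\xi=\CC_0\vee\xi$, and Step 4 then produces a partition whose past is strictly coarser than $\xi$. The paper gives no proof of this proposition, only the citations to Rokhlin and Parry, so nothing there fills the gap.

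Here is a counterexample. Take the Bernoulli$(\tfrac12,\tfrac12)$ shift $(fx)_n=x_{n+1}$ on $\{0,1\}^{\Z}$. Let $\xi$ be the partition according to $(x_n)_{n\le 0}$; it is perfect with trivial tail, so $\Pi(f)$ and your $\DD$ are trivial. Let $\CC_0$ be the partition according to $x_1+x_0 \pmod 2$. Then $H(\CC_0)=\log 2$ and $\CC_0\vee\xi=f^{-1}\xi$. However, $\eta=\bigvee_{j\ge1}f^j\CC_0$ is the partition according to $(x_m+x_{m-1})_{m\le 0}$, and $x_0$ is independent of $\eta$. Indeed, for every $N$ we have $x_0=x_{-N-1}+\sum_{m=-N}^{0}(x_m+x_{m-1}) \pmod 2$, with $x_{-N-1}$ uniform and independent of those $N+1$ sums. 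Hence $\bigvee_{j\ge 0}f^j\CC=\eta\vee\Pi(f)=\eta\ne\xi$.

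Any $\CC$ with $\xi=\bigvee_{j\ge0}f^j\CC$ must be a two-sided generator, because $\bigvee_{j\in\Z}f^j\CC=\bigvee_{n}f^{-n}\xi$ is the whole $\sigma$-algebra. Your $\CC$ only generates $\bigvee_{j\in\Z}f^j\CC_0\vee\Pi(f)$. In the example this is the factor of sets invariant under the flip $x\mapsto\bar x$, $\bar x_n=1-x_n$, which has full entropy $\log 2$, so neither entropy nor the Pinsker algebra can detect the loss.

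Your entropy argument cannot see this. The equality you want is true, and easier than you suggest: $h=H(\CC_0|\xi)\le H(\CC_0|\eta\vee\Pi(f))\le H(\CC_0|\eta)=h(f,\CC_0)\le h$, and likewise for finite joins of the $f^{-k}\CC_0$, $k\ge0$. But the equality case only says that the future $\bigvee_{k\ge0}f^{-k}\CC_0$ is conditionally independent of $\xi$ given $\eta\vee\Pi(f)$. That forces $\xi\prec\eta\vee\Pi(f)$ only if $\bigvee_{j\in\Z}f^j\CC_0\vee\Pi(f)$ is everything, i.e.\ only if $\{f^j\CC_0\}$ really is a generating family. Nothing in Step 1 ensures this.

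The missing idea is to build a two-sided generator into $\CC$. Suppose $\gamma\prec\xi$ has finite entropy and $\bigvee_{j\in\Z}f^j\gamma$ is the whole $\sigma$-algebra. This is where a generator theorem for the whole system is needed, not just for the Pinsker factor, together with the fact that $\xi$ is generating to place the generator inside $\xi$. Set $\CC:=f\CC_0\vee\gamma$ and $\zeta:=\bigvee_{j\ge0}f^j\CC\prec\xi$. Since $\CC_0\prec f^{-1}\CC\prec f^{-1}\xi$, you get $f^{-1}\CC\vee\xi=f^{-1}\xi$. Hence $H(f^{-1}\CC|\xi)=h=h(f,\CC)=H(f^{-1}\CC|\zeta)$, and likewise for $\bigvee_{k=1}^nf^{-k}\CC$. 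So the future of $\CC$ is conditionally independent of $\xi$ given $\zeta$. Since $\zeta$ together with that future is the whole $\sigma$-algebra, every $A\in\xi$ satisfies $1_A=\E[1_A|\zeta]$, so $\xi=\zeta$. This route needs neither (a) nor your Step 3.

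Finally, any generator theorem needs aperiodicity, and so does the statement itself. For the identity map of $[0,1]$ with Lebesgue measure, the point partition is perfect but is not of the form $\bigvee_{j\ge0}f^j\CC$ with $H(\CC)<\infty$.
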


\subsection{Averaging in Besicovich spaces}\label{Besicovich}
\begin{definition} A metric space $X$ is called a Besicovich space if there exists a constant $C$ with the following property: given any function $x \mapsto r(x) >0 $ on $X$ such that 
$X  \subset  \cup _x B(x,r(x)),$ there is a subset $Y \subset X$ such that  $X  \subset  \cup _{y \in Y} B(y,r(y)),$ and no point $x \in X$ is covered by more than $C$ balls $B(y,r(y)), y \in Y.$
\end{definition}
Subsets of Euclidean spaces are Besicovich, as well as metric spaces that are bilipschitz equivalents to subsets of Euclidean spaces. A Borel probability measure $m$ on a Besicovich space has the Lebesgue density property and the averages on balls have the weak (1,1) property. For us, this means that we have the following
\begin{proposition}\label{densities} Let $X$ be a Besicovich space, $m,m'$ Borel probability measures on $X$, $g $ the $m$-a.e. defined Radon-Nikodym derivative $g= \frac{dm'}{dm}.$
For   $\de >0,$ set 
\[ g_\de ( x) \;: =\; \frac {\int _{B(x, \de)} g( x) \, dm}{m (B(x,\de))} =  \frac {m' (B(x, \de))}{m (B(x,\de))},\]
Then, for $m$-a.e. $x$, $\lim\limits _{\de \to 0} g_\de (x) = g(x).$ 
Moreover, if we set, for $N$ large, \[ g^N ( x) := \sup \{g(x), e^{-N}\}, \quad
g_\de^N ( x) \;:=\;  \frac {\int _{B (x, \de)} g^N( y) \, dm (y)}{m(B(x,\de))}  \;\; {\textrm {and}} \;\; 
 g_\de^\ast := \sup _{\de ' \le \de } g_{\de '}^N,\]
 the  functions $-\log g^N,  -\log g_\de^\ast $ are  $m$-integrable, we have \( -\log g \ge - \log g^N \ge -\log g _\de ^\ast \) and  
\[  \lim\limits_{\de \to 0} \int \log \frac {g_\de^\ast}{g^N} \, dm =0 .\]
\end{proposition}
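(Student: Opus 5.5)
The plan is the classical route to the Lebesgue differentiation theorem: first a weak $(1,1)$ maximal inequality from the Besicovich covering property, then a density argument, and finally dominated convergence for the logarithmic statements.

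\emph{Maximal inequality.} For $h\ge 0$ in $L^1(m)$ (or, more generally, for a finite positive measure $\nu$) set $\mathcal M_\de h(x):=\sup_{\de'\le\de}\frac{\int_{B(x,\de')}h\,dm}{m(B(x,\de'))}$. Fix $t>0$ and let $A_t$ be the set of points where $\mathcal M_\de h(x)>t$. For each $x\in A_t$ choose a radius $r(x)\le\de$ with average larger than $t$. Apply the Besicovich property to the subspace $A_t$. This gives a subfamily $\{B(y,r(y))\}_{y\in Y}$ that covers $A_t$ with multiplicity at most $C$. Then
\[ m(A_t)\le\sum_{y\in Y} m(B(y,r(y)))\le \frac1t\sum_{y\in Y}\int_{B(y,r(y))}h\,dm\le \frac{C}{t}\int h\,dm .\]
The same computation gives $m\{\mathcal M_\de\nu>t\}\le C\nu(X)/t$ for a measure $\nu$. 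Countability and measurability issues are handled by restricting to bounded subsets and to rational radii.

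\emph{Differentiation.} Write $m'=g\,m+m'_s$ with $m'_s\perp m$. For the absolutely continuous part, approximate $g$ in $L^1(m)$ by a bounded continuous $\phi$; averages of a continuous function converge everywhere. The maximal inequality applied to $|g-\phi|$, together with Chebyshev, shows
\[ m\{\limsup_{\de\to 0}|g_\de-g|>2t\}\le (C+1)\|g-\phi\|_1/t, \]
which is arbitrarily small. For the singular part, $m'_s$ is carried by an $m$-null set $S$. Choose a compact $K\subset S$ with $m'_s(S\setminus K)<\eta$. For $x\notin K$, small balls around $x$ miss $K$, so $\limsup_\de m'_s(B(x,\de))/m(B(x,\de))$ is controlled by the maximal function of $m'_s|_{S\setminus K}$. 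That set has $m$-measure at most $C\eta/t$. Hence $g_\de\to g$ $m$-a.e. The same argument applied to $g^N\in L^1(m)$ gives $g^N_\de\to g^N$ a.e. I expect this step, the control of the singular part, to be the main technical point.

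\emph{Integrability.} First, $g^N\ge e^{-N}$ and $g^N\ge g$, so $-\log g\ge-\log g^N$ and $-\log g^N\le N$. Also $\log g^N\le\log^+g\le g$, and $\int g\le1$. Next, $g^*_\de\ge e^{-N}$, so $-\log g^*_\de\le N$. Moreover $g^*_\de\ge\lim_{\de'\to0}g^N_{\de'}=g^N$ a.e., which gives $-\log g^N\ge-\log g^*_\de$. Finally, $g^*_\de\le\mathcal M_\de g^N$, and on a probability space
\[ \int\log^+\mathcal M_\de g^N\,dm=\int_0^\infty m\{\mathcal M_\de g^N>e^s\}\,ds\le \int_0^\infty \min\{1,Ce^{-s}\|g^N\|_1\}\,ds<\infty . \]
So $\log g^*_\de$ is integrable, uniformly for $\de\le1$.

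\emph{Limit.} The function $\log(g^*_\de/g^N)$ is nonnegative and nonincreasing as $\de\downarrow0$. It tends to $0$ a.e. because $g^N_{\de'}\to g^N$ a.e. It is dominated by $\log^+\mathcal M_1 g^N+N$, which is integrable by the previous step. Dominated convergence then yields $\int\log(g^*_\de/g^N)\,dm\to0$.
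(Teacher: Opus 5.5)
Your proposal is correct and follows the same classical route the paper relies on (the paper gives no proof of its own and cites Lessa's lemmas): a weak $(1,1)$ maximal inequality from the Besicovich covering, then Lebesgue differentiation, then $\log^+$-integrability of the maximal function via the layer-cake bound, and finally dominated convergence. Your extra handling of a singular part is harmless, since the paper's identity $g_\de = m'(B(x,\de))/m(B(x,\de))$ tacitly assumes $m'\ll m$. Two points need care in the covering step: the multiplicity bound $C$ must hold at every point of $X$, not only on $A_t$, and $K$ should be taken closed rather than compact when $m'_s$ is not known to be tight.
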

See e.g.  \cite{lessa}, lemmas 8, 9 and 10 for a proof of these classical facts.

In dimension 1, we can take all intervals as Besicovich sets,  so that proposition \ref{densities} holds with 
\begin{equation}\label{densitiesdim1}   g_\de^\ast (x) \; := \;\sup _{I \ni x, |I|\le \de }  \frac {\int _{I} g^N( y) \, dm (y)}{m(I))} . \end{equation}

 In particular,
 \begin{corollary}\label{density}Let $m$ be a probability measure on $\SSSS^1$. Define, for $x \in \SSSS^1,$
 \[ Q(x) := \sup_{I \ni x} \frac {|I|}{m(I)} .\] 
 Then, $\log ^+ Q\in L^1 (m).$ \end{corollary}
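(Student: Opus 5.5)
We prove Corollary \ref{density} by the one-dimensional case of Proposition \ref{densities}, i.e. formula (\ref{densitiesdim1}), applied with $m' =$ Lebesgue measure (normalized to a probability on $\SSSS^1$). Two reductions come first. If $m$ has an atom at $x$, then $Q(x)\le 2\pi/m(\{x\})$, and atoms carry summable mass, so the atoms contribute $\sum_x m(\{x\})\log^+(2\pi/m(\{x\}))<\infty$ once we know $\sum_x m(\{x\})\log(1/m(\{x\}))$ converges. We will sidestep this issue by not splitting $m$ at all and working directly with the maximal inequality. Second, intervals with $|I|\ge \de_0$ are harmless: for them $|I|/m(I)\le 2\pi/m(I)$, and since $m(I)\ge \inf_{|J|= \de_0} m(J)$ on a circle this is bounded unless $m$ vanishes on some arc. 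But $m$-a.e. point is not in such an arc's interior, and a compactness argument bounds $\sup_{|I|\ge\de_0,\, I\ni x}|I|/m(I)$ uniformly for $x$ in the support. So only short intervals matter.

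The key step is a weak-type estimate. Set $E_t:=\{x: Q(x)>t\}$. For each $x\in E_t$ choose an interval $I_x\ni x$ with $m(I_x)<|I_x|/t$. By the Besicovitch covering property of intervals (the definition in section \ref{Besicovich} with constant $C$, here $C\le 2$ or $3$), extract a subfamily covering $E_t$ with bounded overlap. Then
\[
m(E_t)\;\le\;\sum m(I_{x_i})\;<\;\frac1t\sum |I_{x_i}|\;\le\;\frac{C\cdot 2\pi}{t}.
\]
This is the $m$-side maximal inequality with the roles of $m$ and Lebesgue reversed, and it holds with no absolute continuity assumption.

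Finally, integrate the tail: $\int \log^+ Q\,dm=\int_0^\infty m(\log Q>s)\,ds=\int_0^\infty m(E_{e^s})\,ds\le \int_0^\infty \min\{1, 2\pi C e^{-s}\}\,ds<\infty$. This also makes the first reduction unnecessary. The main obstacle is the covering step: the intervals are not centered at $x$, so one must either use the one-dimensional Besicovitch lemma for arbitrary intervals containing the point (enlarge $I_x$ to the centered interval $B(x,|I_x|)$, which at most triples the length and only increases $m$), or use the Vitali $5r$-lemma on centered balls. In either case only a constant is lost, and the tail bound remains exponential in $s$, which is all that is needed.
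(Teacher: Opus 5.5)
Your skeleton matches what the paper's ``in particular'' rests on: a weak-type bound $m(\{Q>t\})\le C/t$ for the \emph{uncentered} interval maximal function, followed by the layer-cake integration, which you carry out correctly. The gap is in the covering step, which you rightly call the main obstacle: neither of the two mechanisms you offer for it works.

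\textbf{The enlargement route fails.} Enlarging $I_x$ to $B(x,|I_x|)$ and applying the centered Besicovitch lemma gives a bounded-overlap subfamily of the \emph{enlarged} intervals. You then need $m(B(x_i,|I_{x_i}|))\lesssim |I_{x_i}|/t$, and the original $I_{x_i}$ need not cover $E_t$. ``Only increases $m$'' is exactly the wrong direction, and since $m$ is arbitrary the increase is unbounded. Take $m$ to be normalized length on an arc $[0,a]$, $x=a-\e$ and $I_x=[x,x+\de]$ with $\de>\e$. Then $m(I_x)=\e/a$, but $m(B(x,\de))\ge\de/a$. In fact every centered ratio $2r/m(B(x,r))$ stays bounded near $a$, while $Q(a-\e)\ge\de a/\e$. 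So $Q$ cannot be reduced to its centered version at all.

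\textbf{The Vitali route fails for the same reason.} The $5r$-lemma requires the measure of the covered set, here $m$, to be doubling, i.e.\ $m(5I)\lesssim m(I)$. It only controls Lebesgue measure of level sets of $\sup_I m(I)/|I|$, which is the reverse problem.

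\textbf{The correct fix.} What you need is the genuinely one-dimensional covering fact behind (\ref{densitiesdim1}), which uses no centering.
Use open arcs; this does not change $Q$, and it makes $E_t$ open.
Cover a compact $K\subset E_t$ by finitely many $I_x$ and pass to a minimal subcover.
Then no point $p$ lies in three of the chosen arcs. To see this, lift the arcs through $p$ to intervals of $\R$ through a fixed lift of $p$. The lift reaching furthest left and the one reaching furthest right together contain all the others, which contradicts minimality.
Hence $m(K)\le\sum_i m(I_{x_i})\le t^{-1}\sum_i|I_{x_i}|\le 4\pi/t$. By inner regularity $m(E_t)\le 4\pi/t$, and your tail integral concludes.

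\textbf{The preliminary reductions.} Drop both; they are never used and rest on claims that fail. The first would need $\sum_x m(\{x\})\log(1/m(\{x\}))<\infty$, which can diverge. The second asserts a uniform bound on $\sup_{I\ni x,\,|I|\ge\de_0}|I|/m(I)$ over the support. In the example above this quantity is at least $\de_0 a/\e$ at $x=a-\e$, so it is unbounded.
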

%

\subsection{Quasi-independence}\label{quasi-independence}

 Two random variables $Y$ and $Z$ are said to be quasi-independent if the distribution of $(Y,Z)$ has the same negligible sets as the product of the distributions of $Y$ and $Z$\footnote{Quasi-independence of the past and the future of a stationary sequence is a qualitative property of independence which appears in ergodic theory; compare here with the argument in section  \ref{comments} and in the proof of proposition \ref{fiberinvariance}. }  . They are said to be quasi-independent  conditionally with respect to a third random variable $X$ if, for a.e. value  $x$, $Y$ and $Z$ are quasi-independent for the conditional distributions $P_x(dy) $ and $P_x(dz)$. 
\begin{lemma}\label{quasiindependence} Let $X,Y,Z$ be random variables defined on a Lebesgue space $(\Om ,P)$. Assume that $X$ and $Y$ are quasi-independent  conditionally with respect to $Z$, that $X$ and $Z$ are quasi-independent  conditionally with respect to $Y$ and that $Y$ and $Z$ are quasi-independent. Then, $X, Y$ and $Z$ are quasi-independent in the sense that the distribution of $(X,Y,Z)$ has the same negligible sets as the product of the distributions of $X, Y$ and $Z$. \end{lemma}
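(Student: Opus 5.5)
We have to show that if a set $N\subset \mathcal X\times\mathcal Y\times\mathcal Z$ is negligible for the product $P_X\otimes P_Y\otimes P_Z$, then it is negligible for the joint law $P_{XYZ}$, and conversely. We work throughout with the disintegrations of $P_{XYZ}$ with respect to $Y$, with respect to $Z$ and with respect to $(Y,Z)$. We use the three hypotheses in a chain, passing from the product measure to the joint law one variable at a time.

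\textbf{Step 1: the pair $(Y,Z)$.} By the third hypothesis, $P_{YZ}\sim P_Y\otimes P_Z$. Here $\sim$ means mutual absolute continuity.

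\textbf{Step 2: conditioning on $Y$.} Write $P_y$ for the conditional law of $(X,Z)$ given $Y=y$. The second hypothesis (quasi-independence of $X$ and $Z$ given $Y$) says that
\[ P_y \sim P_y^X\otimes P_y^Z \quad \text{for } P_Y\text{-a.e. } y. \]
Integrating in $y$ and using Fubini, we get
\[ P_{XYZ}\sim \int \delta_y\otimes P^X_y\otimes P^Z_y\,dP_Y(y). \]
Indeed, a set is null for one side if and only if a.e. $y$-section is null for the corresponding fibre measure, and equivalence of measures passes to such integrals.

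\textbf{Step 3: identifying the fibre classes.} Now $\int \delta_y\otimes P^Z_y\,dP_Y = P_{YZ}\sim P_Y\otimes P_Z$ by Step 1. Hence for a.e. $y$ we have $P^Z_y\sim P_Z$, and the measure from Step 2 is equivalent to
\[ \int \delta_y\otimes P^X_y\otimes P_Z\,dP_Y = P_{XY}\otimes P_Z. \]
Strictly speaking, the equivalence $P_{YZ}\sim P_Y\otimes P_Z$ only gives $P^Z_y\sim P_Z$ on a set of $y$ of full measure once densities are chosen measurably, so we disintegrate via the Radon–Nikodym derivative. It therefore remains to show $P_{XY}\sim P_X\otimes P_Y$.

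\textbf{Step 4: the pair $(X,Y)$, the main obstacle.} This step is the delicate one, because the first hypothesis only gives independence of $X$ and $Y$ conditionally on $Z$. Arguing as in Step 2 with the roles of $Y$ and $Z$ exchanged, the first hypothesis together with Step 1 gives
\[ P_{XYZ}\sim \int P^X_z\otimes P^Y_z\otimes\delta_z\,dP_Z \sim \int P^X_z\otimes P_Y\otimes \delta_z\,dP_Z = P_{XZ}\otimes P_Y. \]
Comparing this with the result of Step 3, we have $P_{XY}\otimes P_Z\sim P_{XZ}\otimes P_Y$. We project onto the $(X,Y)$ coordinates; projection preserves equivalence of measures. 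The left side projects to $P_{XY}$ and the right side projects to $P_X\otimes P_Y$. Hence $P_{XY}\sim P_X\otimes P_Y$.

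Substituting into Step 3 gives $P_{XYZ}\sim P_X\otimes P_Y\otimes P_Z$, which is the claim. The only technical care needed is the measurable selection of fibre densities in Steps 2–4, which Rokhlin's disintegration theorem provides on Lebesgue spaces.
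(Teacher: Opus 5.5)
Your proof is correct, and it takes a genuinely different route from the paper.

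The paper argues at the level of densities. It differentiates along refining finite partitions generating $X$, $Y$ and $Z$, and reads each hypothesis as the almost sure existence of a non-zero limit of a ratio of probabilities of atoms. It then multiplies the three ratios and regroups the product as a ratio associated with conditioning on $X$, times $P(A_n)P(B_m)P(C_p)/P(A_n\cap B_m\cap C_p)$. From this it deduces that the last ratio has a non-zero limit $P$-almost everywhere.

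You stay at the level of measure classes, apart from the single Radon--Nikodym derivative that gives $P^Z_y\sim P_Z$. The second and third hypotheses give $P_{XYZ}\sim P_{XY}\otimes P_Z$. The first and third give $P_{XYZ}\sim P_{XZ}\otimes P_Y$. Projecting to the $(X,Y)$-coordinates then gives $P_{XY}\sim P_X\otimes P_Y$.

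Your route has several advantages:
\begin{itemize}
\item It uses only Fubini and the stability of equivalence of measures under products and push-forwards, with no differentiation along partitions.
\item It obtains both inclusions of null sets at once. By contrast, a limit of $P(A_n)P(B_m)P(C_p)/P(A_n\cap B_m\cap C_p)$ that is non-zero $P$-almost everywhere directly encodes only $P_{XYZ}\ll P_X\otimes P_Y\otimes P_Z$, so the converse needs a symmetric argument.
\item It shows the role of each hypothesis: the first is used only, together with the third, to separate $X$ from $Y$.
\item It transfers verbatim to exact independence by replacing $\sim$ with $=$. The paper treats that case in a footnote, via Jensen's inequality on the logarithms of the densities.
\end{itemize}

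The paper's computation is closer in spirit to the ratio-of-measures estimates used throughout the survey. It also aims at an identity between densities, not only between measure classes.
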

\begin{proof} 
Let $\{\A_n\}_{n \ge 0}, \{\B_m\}_{m\ge 0}, \{\CC _p\}_{p\ge 0} $ be finer and finer sequences of finite partitions generating $X,Y$ and $Z$. Our hypothesis is that the following limits exist and are non-zero $P$-a.e. as $n,m$ and $p$ go to infinity:
\begin{equation*}   \lim\limits _{n,m,p \to \infty} \frac {P(A_n\cap B_m) P(C_p \cap B_m)}{P(A_n\cap B_m \cap C_p)P(B_m)} ,\;
  \lim\limits _{n,m,p \to \infty} \frac {P(A_n\cap C_p) P(C_p \cap B_m)}{P(A_n\cap B_m \cap C_p)P(C_p)}
\end{equation*}
and \( \displaystyle \lim\limits _{m,p \to \infty} \frac {P(B_m)P(C_p)} { P(B_m \cap C_p)}.\)
After product and simplifications, it reads as the following limit exist and is non-zero $P$-a.e. as $n,m$ and $p$ go to infinity:
\[\lim\limits _{n,m,p \to \infty} \frac {P(A_n\cup B_m) P(A_n \cap C_p)}{P(A_n\cup B_m \cup C_p)P(A_n)}\;  \frac  {P(B_m) P( C_p)P(A_n)}{P(A_n\cup B_m \cup C_p)}.\]
Both ratios converge toward densities, which are positive  $P$-a.e. since their product does not vanish $P$-a.e..\footnote{Observe that  the lemma with independence instead of quasi-independence is also true. The computation is the same, and now the product of the densities is 1. But by Jensen inequality, the integral of the logarithms of both  densities is non-positive. So both densities are 1 and the three variables are independent.} Thus, 
\( \displaystyle \lim\limits _{n,m,p \to \infty}  \frac  {P(B_m )P(C_p)P(A_n)}{P(A_n\cup B_m \cup C_p)} \)  is non-zero $P$-a.e..
\end{proof}

\subsection{Pliss Lemma}
We recall the following lemma, due to V. Pliss (\cite{P72})

\begin{lemma}\label{Pliss} Given $A \ge c_2 > c_1 > 0,$ let $\beta  := (c_2  -c_1)/(A - c_1).$ Then,
given any real numbers $a_1,\ldots  , a_N$  such that
\[ \sum _{j=1}^N a_j \ge  c_2 N {\textrm { and }} \; a_j \le  A  \; {\textrm { for every }}\;  1 \le j \le N, \]
there are $\ell \ge \beta N $ and $1 < n_1 < \ldots < n_\ell \le  N $ so that
\[ \sum _{j= n+1}^{n_i}
a_j \ge  c_1 (n_i - n) \; {\textrm {for every }} \; 0 \le n < n_i \; {\textrm{ and }} \; i = 1,\ldots,\ell. \]
\end{lemma}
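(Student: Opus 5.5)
The plan is the classical ``first time the partial sums stop dropping too low'' argument. Set $b_j := a_j - c_1$, so that $\sum_{j=1}^N b_j \ge (c_2-c_1)N$ and $b_j \le A - c_1$ for every $j$. Write $S_n := \sum_{j=1}^n b_j$ with $S_0 = 0$. The desired conclusion is that there are many indices $n_i$ such that $S_{n_i} - S_n \ge 0$ for all $0\le n<n_i$. In other words, the $n_i$ should be the \emph{record times} of the walk $S$: those $m\in\{1,\dots,N\}$ with $S_m \ge \max_{0\le n<m} S_n$. Any record time satisfies the required inequality by definition, so the whole task is to count the record times from below.

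To do this, I will look at the running maximum $R_m := \max_{0\le n\le m} S_n$. It is nondecreasing, $R_0 = 0$, and $R_N \ge S_N \ge (c_2-c_1)N$. The maximum $R$ can only increase at a step $m$ where $S_m > R_{m-1}$, and such an $m$ is a record time. At that step the increase is
\[ R_m - R_{m-1} = S_m - R_{m-1} \le S_m - S_{m-1} = b_m \le A - c_1 . \]
At every other step, $R_m = R_{m-1}$. If $\ell$ denotes the number of steps at which $R$ strictly increases, then
\[ (c_2-c_1)N \le R_N \le \ell\,(A-c_1), \]
which gives $\ell \ge \beta N$ with $\beta = (c_2-c_1)/(A-c_1)$. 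These increase times are pairwise distinct record times $n_1<\dots<n_\ell$ in $\{1,\dots,N\}$, and each of them satisfies $\sum_{j=n+1}^{n_i} a_j \ge c_1(n_i-n)$ for every $0\le n<n_i$.

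The only point that needs care is the bookkeeping in the bound on the jump of $R$. Rather than comparing with $R_{m-1}$ and the single last increment, it is tempting to compare with $S_{m-1}$ directly; the correct observation is $R_{m-1}\ge S_{m-1}$, which makes each jump at most $b_m\le A-c_1$. The boundary issue in the statement (writing $1<n_1$ rather than $1\le n_1$) is a cosmetic indexing convention. If it must be respected literally, one can either discard at most one index or note that it is immaterial for the later use in the proof of proposition~\ref{basicinequality4}. I expect no real obstacle beyond this. The hypothesis $A\ge c_2$ only guarantees $\beta\le 1$.
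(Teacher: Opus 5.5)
Your argument is correct and is the standard record-time proof given in \cite{ABV00}, Lemma 3.1, to which the paper defers: take the running maximum of $S_n=\sum_{j\le n}(a_j-c_1)$ and bound each increase by $A-c_1$. One small correction: the condition $1<n_1$ cannot be kept literally (for $a_j\equiv A=c_2$ one has $\beta=1$ but only $N-1$ admissible indices), so it must be read as $1\le n_1$, and discarding an index is not an option since that only gives $\ell\ge\beta N-1$.
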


See e.g. \cite{ABV00}, lemma 3.1,  for a proof. We will use a dynamical version of Pliss Lemma. Let $(X,\A, m;f)$ be an ergodic transformation of a Lebesgue space, $\vf \in L^\infty (X,m),$ and choose $\e >0.$ An integer $n$ is called $\e$-good for $x$ if for all $j, 0\le j < n,$ we have \[ \sum _{k = j+1}^n \vf (f^k x) \; \ge \; (n-j) \left( \int \vf \, dm -\e \right).\]
\begin{corollary}\label{Pliss2} Let $(X,\A, m;f)$ be an ergodic transformation of a Lebesgue space, $\vf \in L^\infty (X,m),$ and choose $\e >0.$ There exist $\beta >0$ such that, for $m$-a.e. $x \in X,$ the asymptotic upper density of the integers that are $\e$-good for $x$ is at least $\beta.$ \end{corollary}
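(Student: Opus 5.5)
The statement to prove is Corollary \ref{Pliss2}. I will derive it from Lemma \ref{Pliss} applied along finite orbit segments, with the Birkhoff ergodic theorem supplying the hypothesis of that lemma.

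Set $a := \int \vf \, dm$ and $A := \|\vf\|_\infty$. Add a constant to $\vf$ so that it is positive. Adding $c$ to $\vf$ shifts every partial sum by $c$ times the number of terms. It therefore does not change which integers are $\e$-good. This lets us assume $0 < a - \e < a - \e/2 \le a \le A$. Take $c_2 := a - \e/2$ and $c_1 := a - \e$, and put
\[ \beta_0 := \frac{\e/2}{A - a + \e}, \]
so that $\beta_0 = (c_2 - c_1)/(A - c_1)$. The claim is then that $\beta := \beta_0$, or for instance $\beta_0/2$ to leave some room, works.

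Fix an $m$-typical $x$. By the Birkhoff ergodic theorem,
\[ \frac{1}{N}\sum_{k=1}^N \vf(f^k x) \to a , \]
so for all large $N$ we have $\sum_{k=1}^N a_k \ge c_2 N$, where $a_k := \vf(f^k x)$. Since $m$-a.e. the bound $a_k \le A$ holds for every $k$, we may apply Lemma \ref{Pliss} to $a_1, \ldots, a_N$. It yields at least $\beta_0 N$ integers $n_i \le N$ such that
\[ \sum_{j=n+1}^{n_i} a_j \ge c_1 (n_i - n) \quad \text{for all } 0 \le n < n_i . \]
This is exactly the statement that $n_i$ is $\e$-good for $x$, with the index $j$ of the definition playing the role of $n$. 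The key point is that being $\e$-good depends only on $x$ and $n_i$, not on $N$. Hence, for every large $N$, the set $G(x)$ of $\e$-good integers satisfies
\[ \#\big(G(x) \cap [1,N]\big) \ge \beta_0 N . \]
So the lower density of $G(x)$, and a fortiori its upper density, is at least $\beta_0$.

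The only delicate points are bookkeeping. One is the index convention: the sums run from $j+1$ to $n$ in the definition, and this must match the conclusion of Lemma \ref{Pliss}. The other is the a.e. bound $\vf \le A$, which requires discarding a null set of orbits; this is harmless since only countably many iterates are involved. I expect no real obstacle. The main check is that $\beta_0$ is independent of $x$, which holds because it depends only on $a$, $A$ and $\e$.
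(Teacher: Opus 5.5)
Your proposal is correct and is essentially the paper's own argument. You shift $\vf$ by a large enough constant so that $\int\vf\,dm-\e>0$, use Birkhoff to check the hypotheses of Lemma \ref{Pliss} with $A=\|\vf\|_\infty$, $c_2=\int\vf\,dm-\e/2$, $c_1=\int\vf\,dm-\e$ for every large $N$, and observe that $\e$-goodness of each $n_i$ does not depend on $N$. Your constant $\beta_0=\frac{\e/2}{A-a+\e}$ is exactly $(c_2-c_1)/(A-c_1)$; the paper's displayed $\beta$ has $\e/2$ in the denominator, a harmless slip. Your argument also gives the stronger lower-density bound.
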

\begin{proof} By replacing $\vf$ by $\vf + Constant,$ we may assume that $\int \vf  \, dm -2\e >0.$ Then the ergodic theorem  shows that for $m$-a.e. $x$ and for $N$ large enough, the sequence $\vf (f^j x)$ satisfies the hypotheses of lemma \ref{Pliss} with $A = \|\vf \|_\infty, c_2 = \int \vf \, dm - \e /2,  c_1 = \int \vf \, dm - \e. $ So, for $N$ large enough, there are at least  $\beta N$ $\e$-good times for $x$, where $\beta = \frac{\e/2}{ \|\vf \|_\infty - \int \vf \, dm + \e /2} >0.$ \end{proof}

\subsection{Oseledets theorem}
In this subsection, we state the multiplicative ergodic theorem. There is the classical Oseledets theorem \ref{oseledets1} and its two sided version (theorem \ref{oseledets2}).

We consider a probability space \((X, m)\) and an ergodic  measure preserving  transformation \(T\) on \(X\).
\begin{theorem}[\cite{oseledets}]\label{oseledets1} For \( g \in \GL (d, \R) , \) set \( |g| = \log \| g\| ,\) where \(\| g\| \) is the  norm of \(g\) as an operator in the Euclidean space. Let \(g : X \to \GL(d, \R) \) be measurable  and such that  \( \int_{X } |g(x)^{\pm 1}| \, dm (x)  <  + \infty .\)
Set
\[ X_n(x ) := g (T^{n-1}x) \ldots g(x) .\]
There exists numbers \( \la _1 > \ldots > \la _\ell \), with multiplicities \( d_1, \ldots, d_\ell, \sum _{i=1 }^\ell d_i = d,\) and, for \( m \)-a.e. \( x \in X,\) a partial flag
\[ \F'(x) = \{0\}= U_{\ell +1} \subset U_{\ell }(x) \subset \ldots \subset U_{2}(x) \subset U_1 = \R^d ,\]
with \( \dim U_i (x) = \sum _{j\ge i} d_j ,\) such that  \( \sum _{i=1}^\ell d_i  \la _i = \int \log |\det g (x)| \, d m (x)\) and
\begin{equation}\label{growthrate} v \not = 0 \in U_i(x) \; \iff \; \lim\limits_{n\to +\infty} \frac{1}{n} \log \| X_n(x) v\| \le \la _i.\end{equation} \end{theorem}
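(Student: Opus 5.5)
The proof of the theorem has two parts, in the classical order. The existence of the exponents and of the filtration come from the subadditive ergodic theorem (already used for Theorem \ref{FK}) applied to exterior powers. The exponents and their multiplicities are then read off from the singular values of $X_n(x)$.

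\emph{Step 1: exponents.} For $1\le k\le d$, set $a_k^{(n)}(x):=\log\|\bigwedge^k X_n(x)\|$. This sequence is subadditive along $T$, and since $\|\bigwedge^k g\|\le\|g\|^k$, the integrability hypothesis gives $a_k^{(1)}\in L^1$ with a lower bound coming from $g^{-1}$. By Kingman's theorem and ergodicity, $\frac1n a_k^{(n)}\to\Lambda_k$ a.e., where $\Lambda_k$ is constant. Define $\chi_k:=\Lambda_k-\Lambda_{k-1}$ (with $\Lambda_0=0$); these are non-increasing. Group equal values to obtain $\la_1>\dots>\la_\ell$ with multiplicities $d_i$. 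For $k=d$ we have $\Lambda_d=\lim\frac1n\log|\det X_n|=\int\log|\det g|\,dm$ by Birkhoff, which gives the trace identity $\sum d_i\la_i=\int\log|\det g|\,dm$.

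\emph{Step 2: the filtration.} Write $X_n(x)^{*}X_n(x)$ with eigenvalues $\sigma_1(n)^2\ge\dots\ge\sigma_d(n)^2$. By Step 1, $\frac1n\log\sigma_k(n)\to\chi_k$. Let $U_i^{(n)}(x)$ be the span of the eigenvectors whose singular values satisfy $\frac1n\log\sigma_k(n)\le\la_i+\e$ (roughly, the indices $k> d_1+\dots+d_{i-1}$). The key claim is that $U_i^{(n)}(x)$ converges, exponentially fast, to a subspace $U_i(x)$. To prove it, compare $X_{n+1}=g(T^nx)X_n$: the angle between $U_i^{(n)}$ and $U_i^{(n+1)}$ is at most $C\|g(T^nx)\|\,\|g(T^nx)^{-1}\|\,e^{-n(\la_{i-1}-\la_i-2\e)}$. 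Because $\frac1n\log^+\|g(T^nx)^{\pm1}\|\to0$ a.e. (integrability plus Borel--Cantelli), this is summable, so the subspaces are Cauchy in the Grassmannian. This is Raghunathan's argument.

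\emph{Step 3: the characterization (\ref{growthrate}).} For $v\in U_i(x)$, split $v$ along the singular directions of $X_n$. Using the exponential closeness of $U_i^{(n)}$ to $U_i$, show that the components of $v$ along directions with exponent greater than $\la_i$ are of size $e^{-n(\la_{i-1}-\la_i-3\e)}$. Therefore $\|X_nv\|\le e^{n(\la_i+4\e)}$. Conversely, if $v\notin U_i$, then $v$ has a nonzero component transverse to $U_i=\lim U_i^{(n)}$, and this component is not killed: it forces $\|X_nv\|\ge c\,e^{n(\la_{i-1}-\e)}$ for some $c>0$. Hence $\limsup\frac1n\log\|X_nv\|>\la_i$. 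Together these give the equivalence, with $\dim U_i=\sum_{j\ge i}d_j$ by construction.

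The main obstacle is Step 2. The convergence of the slow singular subspaces needs a quantitative perturbation bound for singular subspaces (a Davis--Kahan type estimate) together with the subexponential control of $\|g(T^nx)^{\pm1}\|$. The lower bound in Step 3 also needs care to make the limit an actual limit rather than a limsup; I would obtain it by applying the same argument inside the quotient $U_{i-1}/U_i$.
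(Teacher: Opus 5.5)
The paper gives no proof of this statement. It is quoted in the appendix as background from Oseledets, so your proposal has to stand on its own.

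Steps 1 and 2 follow the standard Kingman/Raghunathan route and are sound. The one-step bound does not even need Davis--Kahan: $X_{n+1}$ maps different right-singular blocks to orthogonal subspaces, so a unit $w\in U^{(n)}_i$ has fast component at most $\|g(T^nx)\|\,\sigma_{r+1}(X_n)/\sigma_r(X_{n+1})$, where $r=d_1+\dots+d_{i-1}$.

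The lower bound in Step 3 is also correct. It actually gives $\liminf_n\frac1n\log\|X_nv\|\ge\lambda_{i-1}$ for $v\notin U_i$. So once an upper bound is available, a vector $v\in U_j\setminus U_{j+1}$ satisfies $\liminf\ge\lambda_j\ge\limsup$, and the limit exists. No quotient argument is needed. The real difficulty is on the other side.

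\emph{The gap: the upper bound in Step 3 for $i\ge 3$.} For unit $v\in U_i$, Step 2 only gives $\|v-P_{U^{(n)}_i}v\|\lesssim e^{-n(\lambda_{i-1}-\lambda_i-3\varepsilon)}$. That is a single rate for the component along \emph{all} faster singular blocks. The component along the $\lambda_1$-block is multiplied by $\sigma_1(X_n)\approx e^{n\lambda_1}$, so you only get $\|X_nv\|\lesssim e^{n(\lambda_1-\lambda_{i-1}+\lambda_i+4\varepsilon)}$. This exceeds $e^{n(\lambda_i+4\varepsilon)}$ as soon as $\lambda_1>\lambda_{i-1}$, so your ``therefore'' does not follow.

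Using also $v\in U_{k+1}\supset U_i$ for the intermediate $k$ does not repair this. For the $\lambda_j$-component it yields only the largest single gap $\max_{j\le k<i}(\lambda_k-\lambda_{k+1})$, not the full $\lambda_j-\lambda_i$. What you need is a block-wise estimate. Let $P^{(j)}_n$ be the projection onto the $\lambda_j$-block of right singular vectors of $X_n$. Then you need
\[ \|P^{(j)}_n v\|\le C(x)\,e^{-n(\lambda_j-\lambda_i-c\varepsilon)}\quad\text{for unit } v\in U_i(x),\ j<i. \]
This is the extra ingredient in Ruelle's version of Raghunathan's argument, built in three steps.

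\begin{enumerate}
\item Prove the one-step bounds block by block. For example, $\|P^{(l)}_mP_{F^{(\ge k)}_{m+1}}\|\lesssim\|g(T^mx)^{-1}\|\,e^{-m(\lambda_l-\lambda_k-2\varepsilon)}$ for $l<k$, where $F^{(\ge k)}_m$ is the slow subspace of $X_m$.
\item Show $\|P_{E^{(\le j)}_n}P_{F^{(\ge k)}_m}\|\le Ce^{-n(\lambda_j-\lambda_k-c\varepsilon)}$ uniformly in $m\ge n$, by induction on $k-j$ and telescoping in $m$. One term carries a summable multiplicative factor and is handled by a discrete Gronwall argument. The other terms pass through intermediate blocks $j<l<k$; they are controlled by the induction hypothesis together with $(\lambda_j-\lambda_l)+(\lambda_l-\lambda_k)=\lambda_j-\lambda_k$.
\item Let $m\to\infty$ to obtain the display. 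Then conclude with $\|X_nv\|\le\sum_{j<i}e^{n(\lambda_j+\varepsilon)}\|P^{(j)}_nv\|+e^{n(\lambda_i+\varepsilon)}$.
\end{enumerate}

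As written, your argument establishes the characterization (\ref{growthrate}) only for $i=2$.
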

The flag \(\F'(x)\)  in theorem \ref{oseledets1} is called the stable flag.

If the transformation \(T\) is invertible \(\ov m \)-a.e., we can apply theorem \ref{oseledets1} to the cocycle  \( (X,  m, T^{-1}; g^{-1}\circ T^{-1} ).\) We obtain a flag \(\F(x) \) that is called the unstable flag. The stable flag  and  the unstable flags at \( x\) are in general position and give rise to the Oseledets splitting:
\begin{theorem}[\cite{oseledets}]\label{oseledets2} Assume moreover that the transformation \(T\) is invertible \(m \)-a.e.. Then, there exists at \( m\)-a.e. \( x \in X\) a decomposition \[ \R^d \;=\; F_1(x) \oplus \ldots \oplus F_\ell (x) ,\]
with \( \dim F_i (x) = d_i \, m\)-a.e. such that
\begin{equation}\label{splitting} v \not = 0 \in F_i(x) \; \iff \; \lim\limits_{n\to \pm\infty} \frac{1}{n} \log \| X_n(x) v\| = \la _i,\end{equation}
where, for \(n<0, X_n(x ) := (g(T^{-n}x))^{-1} \ldots (g(T^{-1} x))^{-1}. \) Moreover, for all \( i, 1< i <\ell, \)
\begin{equation}\label{angle} \lim\limits_{n\to \pm\infty} \frac{1}{n} \log | \angle (\oplus _{j\le i} F_j (T^n x) , \oplus _{j> i} F_j (T^n x) )| =0. \end{equation}
\end{theorem}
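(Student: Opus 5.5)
This is the two-sided Oseledets theorem; the plan is to build the splitting from the two one-sided flags supplied by theorem \ref{oseledets1}.

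First apply theorem \ref{oseledets1} to $(X,m,T;g)$. This gives the stable flag $U_\ell(x)\subset\ldots\subset U_1=\R^d$ with exponents $\la_1>\ldots>\la_\ell$. Next apply it to the inverse cocycle $(X,m,T^{-1};g^{-1}\circ T^{-1})$. Its exponents are $-\la_\ell>\ldots>-\la_1$, with the same multiplicities, and this can be checked from the determinant identity and from $\|X_{-n}\|$ versus $\|X_n^{-1}\|$ via singular values. It gives the unstable flag $V_1(x)\subset\ldots\subset V_\ell(x)=\R^d$, with $\dim V_i=\sum_{j\le i}d_j$, characterized by
\[
v\neq 0\in V_i(x)\iff \lim_{n\to+\infty}\tfrac1n\log\|X_{-n}(x)v\|\le -\la_i .
\]
Both flags are equivariant: $g(x)U_i(x)=U_i(Tx)$ and $g(x)V_i(x)=V_i(Tx)$. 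I then set $F_i(x):=U_i(x)\cap V_i(x)$. For $v\in F_i(x)$, the definitions give $\limsup_{n\to+\infty}\frac1n\log\|X_nv\|\le\la_i$ and $\liminf_{n\to+\infty}\frac1n\log\|X_{-n}v\|\ge$ the rate $\la_i$ in the backward direction, i.e. $\frac1{-n}\log\|X_{-n}v\|\to\le$ the opposite bound. Matching upper and lower bounds then come from the complementary dimensions, once the flags are known to be transverse. So the key point is the transversality claim
\begin{equation*}
U_{i+1}(x)\cap V_i(x)=\{0\}\quad\text{for all } i,\ m\text{-a.e.}
\end{equation*}
Given this, a dimension count gives $\dim F_i=d_i$ and $\R^d=\bigoplus_i F_i$. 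Moreover, a vector in $F_i$ cannot lie in $U_{i+1}$, so by (\ref{growthrate}) its forward rate is exactly $\la_i$, and symmetrically for $n\to-\infty$. This yields (\ref{splitting}).

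The transversality claim is the main obstacle. I would argue by contradiction using Poincaré recurrence. Suppose $0\neq v\in U_{i+1}(x)\cap V_i(x)$ on a set of positive measure. Then $\|X_n v\|$ grows at rate at most $\la_{i+1}$ forward, and $\|X_{-n}v\|$ decays at rate at least $\la_i$ backward. Pass to the exterior power cocycle $\wedge^k g$ with $k=\dim V_i$: its top exponent is $\la_1d_1+\ldots+\la_id_i$, and it is realized on $\wedge^kV_i$. Because $v\in U_{i+1}$, the volume growth of $V_i(x)$ under forward iteration is strictly smaller than this top exponent. On the other hand, Kingman's theorem applied to $\log\|\wedge^k X_n\|$, together with the measurability and invariance of $x\mapsto V_i(x)$ and the fact that the backward limit for $V_i$ is uniform on the subspace, forces the forward volume growth of $V_i$ to equal the top exponent on a positive-measure set. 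The standard way to make this rigorous is the tempered-angle argument. Let $\theta(x)$ be the angle between $V_i(x)$ and $U_{i+1}(x)$. Along a recurrent orbit, $\frac1n\log\theta(T^nx)\to0$ by the usual lemma that $\frac1n\varphi(T^nx)\to0$ for measurable $\varphi$ with integrable increments. This is incompatible with exponential separation of rates, which would push the angle to $0$ exponentially fast.

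Finally, (\ref{angle}) follows from the same tempered-function lemma. Apply it to $\varphi(x)=\log|\angle(\oplus_{j\le i}F_j(x),\oplus_{j>i}F_j(x))|$. Its increments $\varphi(Tx)-\varphi(x)$ are bounded by $C(|g(x)|+|g(x)^{-1}|)$, which is integrable, so $\frac1n\varphi(T^nx)\to0$ as $n\to\pm\infty$.
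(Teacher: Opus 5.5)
Your construction matches what the paper indicates. The paper gives no proof of theorem \ref{oseledets2}; it only says that theorem \ref{oseledets1} applied to $(X,m,T^{-1};g^{-1}\circ T^{-1})$ yields an unstable flag ``in general position'' with the stable one, and then intersects the two. The definition $F_i=U_i\cap V_i$ is fine, and so are the dimension count and the derivation of (\ref{angle}) from the lemma on functions with integrable increments.

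\textbf{The gap.} The step you correctly single out as the crux, $U_{i+1}(x)\cap V_i(x)=\{0\}$, is not established by what you write.

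The tempered-angle argument cannot prove it, for two reasons. First, by equivariance the set where $U_{i+1}\cap V_i\neq\{0\}$ is $T$-invariant. Under your contradiction hypothesis it therefore has full measure, so $\theta=0$ and $\log\theta=-\infty$ there, and the lemma has nothing to act on. Second, even when $\theta>0$, separated rates do not force the angle between two invariant subspaces to $0$: the constant cocycle $\mathrm{diag}(2,1/2)$ keeps the axes orthogonal. Tempered angles coexisting with separated rates is exactly what (\ref{angle}) asserts. So that argument proves (\ref{angle}) once transversality is known, not transversality itself.

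Your volume sketch points the right way, but its decisive sentence is unsupported. Kingman's theorem for $\log\|\wedge^k X_n\|$ controls the maximal $k$-volume growth, not the growth of the particular subspace $V_i(x)$. Nothing you write transfers the backward information on $V_i$ to forward time.

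\textbf{The missing idea} is Birkhoff's theorem for the additive cocycle $D_i(x):=\log|\det(g(x)|_{V_i(x)})|$, with $k=\dim V_i$.
\begin{itemize}
\item It is integrable, since $|D_i(x)|\le k\,(|\log\|g(x)\||+|\log\|g(x)^{-1}\||)$.
\item By equivariance, $\log|\det(X_n(x)|_{V_i(x)})|=\sum_{j=0}^{n-1}D_i(T^jx)$ and $\log|\det(X_{-n}(x)|_{V_i(x)})|=-\sum_{j=1}^{n}D_i(T^{-j}x)$.
\item Applying the ergodic theorem to $T$ and to $T^{-1}$, the forward volume exponent of $V_i$ equals $\int D_i\,dm$, which is minus the backward one.
\item Hadamard's inequality, with a basis of $V_i(x)$ adapted to $V_1\subset\cdots\subset V_i$, bounds the backward exponent by $-\sum_{j\le i}d_j\la_j$. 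Hence $\int D_i\,dm\ge\sum_{j\le i}d_j\la_j$.
\item If $0\neq v\in V_i(x)\cap U_{i+1}(x)$, Hadamard with a basis adapted to the traces $V_i\cap U_m$ bounds the forward exponent by $\sum_{j\le i}d_j\la_j-(\la_i-\la_{i+1})$. This uses $\dim(V_i\cap U_m)\ge\dim V_i-\sum_{j<m}d_j$. The two bounds contradict each other.
\end{itemize}

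The same argument run on the invariant bundle $F_i$ shows that every nonzero $v\in F_i$ has rate exactly $\la_i$ in both time directions. You need this for your claim that ``its forward rate is exactly $\la_i$''. Read with $\le$, (\ref{growthrate}) only places the rate of a vector in $U_i\setminus U_{i+1}$ in $(\la_{i+1},\la_i]$.
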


\

\subsection{Pesin theory}\label{pesin}
In the setting of section \ref{TheoremIP}, for $M$ an ergodic $F$-invariant probability measure,  we can apply theorems \ref{oseledets1} and \ref{oseledets2} to the mapping $g: \EE \to \GL(d,\R) $ obtained  from $D_\xi  F_x$ by some measurable trivialization of the tangent bundles $T\EE_x.$ The flags and splittings that are obtained can be seen as measurable splittings and flags  of the  tangent bundles $T\EE _x.$ Following \cite{P78} (more precisely, following \cite{LQ95} for Pesin theory depending on  a parameter), for any $\e >0$ sufficiently small,  there is a $M$-a.e. positive function $C(x,\xi)$, satisfying $C(F^{\pm 1} (x,\xi)) \le e^{\e} C(x, \xi )$ and, for $n \ge 0$, local charts $\Phi _{n,x,\xi } $ such that the Osseledets splitting is dominated and uniform for  the action of $F_{f^nx}$  in the charts. Moreover,  the regularity of $DF_{f^nx} $ in the charts is controlled. Pesin theory then gives the construction of local unstable and stable manifolds which are as regular as in the uniform case on compact subsets $K_\ell$, with $C(x, \xi) \le \ell $ on $K_\ell$ and $M(\cup _\ell K_\ell ) = 1.$

\begin{proposition}[\cite{LS82}] \label{LS} For any unstable lamination $\W$ coming from Pesin theory, there exist generating, increasing measurable partitions $\zeta $ such that for $M$-a.e. $(x,\xi)$, $\zeta (x,\xi) $ is a neighborhood of $(x,\xi)$ in $W(x,\xi)$. \end{proposition}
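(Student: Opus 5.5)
The plan is to adapt the classical Ledrappier--Strelcyn construction to the parametrized setting. Fix $\e>0$ and the Pesin data from section \ref{pesin}: the tempered function $C(x,\xi)$ and the compact sets $K_\ell$ on which $C\le \ell$. On $K_\ell$ the local unstable manifolds $W_{loc}(x,\xi)$ have size at least some $r_\ell>0$ and depend continuously on the point. First I choose $\ell$ with $M(K_\ell)>0$. Then I pick a point $p$ of $K_\ell$ in the support of $M|_{K_\ell}$ and a small radius $r$, and form the set $S := \bigcup_{(x,\xi)\in K_\ell \cap B(p,r)} W_{loc}(x,\xi)$. The continuity of the lamination on $K_\ell$ makes $S$ a measurable set that is partitioned by the local plaques $W_{loc}(x,\xi)$. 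Call this partition $\zeta_S$; elements outside $S$ are points. Because $S$ is the image of a continuous family of embedded discs over a compact parameter set, $\zeta_S$ is measurable.

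Next I define $\zeta := \bigvee_{n\ge 0} F^n \zeta_S$. This partition is increasing by construction, since $F^{-1}\zeta = F^{-1}\zeta_S\vee\zeta$ is finer than $\zeta$. The atom $\zeta(x,\xi)$ is contained in $W(x,\xi)$ whenever the backward orbit of $(x,\xi)$ visits $S$, which happens $M$-a.e. by ergodicity. The set $\zeta(x,\xi)$ is the intersection of the sets $F^n(\zeta_S(F^{-n}(x,\xi)))$. These are pieces of unstable manifold that grow exponentially under $F^n$, because backward iterates contract along $W$ at rate $e^{-(\lambda-\e)n}$ up to the tempered constant. The only threat to the neighborhood property is that $F^{-n}(x,\xi)$ lies too close to the boundary of $S$, in the leafwise sense, at infinitely many $n$.

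This boundary control is the main obstacle. I would handle it with the standard Borel--Cantelli device. The radius $r$ is chosen so that $M(\{d(\cdot,\partial S)<\rho\}) \le C\rho$ for small $\rho$, which holds for Lebesgue-almost every $r$ by the usual averaging argument. Then $\sum_n M\{d(F^{-n}(\cdot),\partial S)< e^{-(\lambda-2\e)n}\}<\infty$. So for $M$-a.e. $(x,\xi)$, from some $n$ on the backward iterate is at distance at least $e^{-(\lambda-2\e)n}$ from $\partial S$. Pushing these leafwise neighborhoods forward by $F^n$, with expansion at least $e^{(\lambda-\e)n}/C(x,\xi)$, gives balls of uniform positive radius around $(x,\xi)$ in $W(x,\xi)$. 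The finitely many early $n$ each contribute an open leafwise neighborhood as well. Hence $\zeta(x,\xi)$ contains a neighborhood of $(x,\xi)$ in $W(x,\xi)$.

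Finally, for the generating property I would refine $\zeta$ by $\bigvee_{n\ge0}F^n\CC$, where $\CC$ is a finite partition with negligible boundaries and small diameter. Two points in the same atom of $\bigvee_{n\in\Z}F^{n}\zeta$ then have whole orbits staying close, and they lie in the same unstable plaque. The backward contraction along $W$ forces them to coincide. Adding $\CC$ keeps the partition increasing. The neighborhood property survives by the same Borel--Cantelli estimate applied to $\partial\CC$. The only changes from the fixed-map case are to carry the base coordinate $x$ through $F$ and to use the parametrized Pesin theory of \cite{LQ95}.
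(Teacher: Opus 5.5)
Your overall strategy is the Ledrappier--Strelcyn construction that the paper cites without proof. As written, however, it breaks at the definition of $\zeta_S$.

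If the elements of $\zeta_S$ outside $S$ are points, then $\zeta(z)=\bigcap_{n\ge0}F^n\zeta_S(F^{-n}z)$ is the singleton $\{z\}$ as soon as one backward iterate $F^{-n}z$, $n\ge0$, lies outside $S$. This already happens at $n=0$ for every $z\notin S$. Since $S$ is built from a small ball, $M(S)<1$, so by ergodicity almost every point has such iterates. Hence $\zeta$ is the point partition $M$-a.e., and the neighborhood property fails. The complement of $S$ must be a \emph{single} atom of $\zeta_S$.

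With that fix, your inclusion $\zeta(z)\subset W(z)$ after a backward visit to $S$ is correct. There is, however, a second case you never treat. At times $n$ with $F^{-n}z\notin S$, you must also show that leafwise-close points $F^{-n}z'$ are outside $S$. The same estimate handles it. Suppose $F^{-n}z'$ lies in a plaque $W_{loc}(y)\cap B(p,r)$ with $y\in K_\ell$, and $F^{-n}z$ is on the same leaf at tiny leafwise distance. Then $F^{-n}z\in W_{loc}(y)$, because the local manifolds over $K_\ell$ are much larger than $r$. So $F^{-n}z\notin S$ forces $d(F^{-n}z,p)\ge r$: the point $F^{-n}z$ is within that tiny distance of the sphere.

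This also shows how your boundary estimate should be stated: in terms of $d(\cdot,p)$, not $\partial S$. In general $S$ is a lamination over a transversal with empty interior, so its ambient boundary is all of $\overline S$. Then $M\{d(\cdot,\partial S)<\rho\}\ge M(S)$ for every $\rho$, and no bound $C\rho$ can hold.

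What one actually controls is $\theta:=(d(\cdot,p))_\ast M$. Put $a=e^{-(\lambda-2\e)}$. Then $\int\sum_n\theta\bigl((r-a^n,r+a^n)\bigr)\,dr\le\sum_n2a^n<\infty$, so for Lebesgue-a.e. $r$ the inner sum is finite. By $F$-invariance and Borel--Cantelli, $|d(F^{-n}z,p)-r|\ge a^n$ for all large $n$, $M$-a.e. This single estimate covers both the in-$S$ and the out-of-$S$ times.

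Finally, your refinement by $\bigvee_{n\ge0}F^n\CC$ is unnecessary. Suppose $z,z'$ lie in the same atom of $\bigvee_{n\ge0}F^{-n}\zeta$. At the infinitely many times $n$ with $F^nz\in S$, the points $F^nz$ and $F^nz'$ lie in one plaque of bounded size. Uniform backward contraction along plaques over $K_\ell$ then gives $d_W(z,z')\le C_\ell e^{-(\lambda-\e)n}\to0$, so $z=z'$. Your remark that in the bundle setting of section \ref{TheoremIP} the balls and Pesin sets must be taken fiberwise, as in \cite{LQ95}, is acceptable at the level of a sketch.
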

A version of the basic inequality is that for a $C^{1+\alpha}$-diffeomorphism, the unstable foliation $\W^u$ carries the whole entropy:
\begin{proposition}[\cite{LY85}, Proposition 5.1 and Corollary 7.2.2 for $\vf $ of class $C^2$,  A. Brown (\cite{B22}) for the $C^{1+\alpha }$ case]\label{basicinequality3}  Let $\vf$ be a $C^{1+\alpha}$ diffeomorphism  of a compact Riemannian manifold $Y$, $\nu$ an invariant ergodic probability measure, $\W^u $   the $\nu$-a.e. defined  Pesin unstable foliation. Let $\zeta ^u $ be an increasing partition made, up to 0 measure, of open  bounded subsets of unstable leaves. Then 
\[ H(\vf ^{-1} \zeta ^u | \zeta ^u) \;=  \; \sup _{\xi, \vf ^{-1} \xi 
\prec \xi } H(\vf ^{-1} \xi |\xi ).\] \end{proposition}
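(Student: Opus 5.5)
The inequality $\le$ is immediate from the definition (\ref{KSentropy}), since $\zeta^u$ is itself an increasing partition. So the whole content is the reverse inequality $h_\nu(\vf)\le H(\vf^{-1}\zeta^u|\zeta^u)$. I would follow the Ledrappier--Young strategy: compare $\zeta^u$ with an increasing partition built from a generating partition. The comparison partition must satisfy two things at once. It must carry the full entropy, and it must differ from $\zeta^u$ only by a finite amount of information.

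\textbf{Step 1 (a Ma\~n\'e-type partition).} I choose a countable partition $\mathcal P$ of $Y$ with the following properties:
\begin{itemize}
\item $H_\nu(\mathcal P)<\infty$;
\item all elements of $\mathcal P$ have diameter smaller than the scale at which the Pesin charts and local unstable manifolds are controlled;
\item $\nu$-a.e. point does not approach $\partial\mathcal P$ too fast along its orbit.
\end{itemize}
The third property means that $\sum_n \nu\{x : d(\vf^{-n}x,\partial\mathcal P)<e^{-\e n}\rho(x)\}<\infty$ for a suitable $\rho$ with $\log\rho\in L^1$. Borel--Cantelli then applies. By expansivity at small scale along Pesin charts, $\mathcal P$ is generating, so $h_\nu(\vf)=h_\nu(\vf,\mathcal P)$.

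\textbf{Step 2 (the past of $\mathcal P$ contains local unstable plaques).} Let $\mathcal P^-:=\bigvee_{n\ge 0}\vf^n\mathcal P$. Backward iterates contract unstable leaves exponentially, at rate close to the smallest positive exponent and uniformly on Pesin sets. Combining this with the Borel--Cantelli control of Step 1, I would show that for $\nu$-a.e. $x$ the atom $\mathcal P^-(x)$ contains a neighborhood of $x$ in $W^u(x)$ of size $r(x)>0$.

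\textbf{Step 3 (the comparison partition).} Set $\xi:=\zeta^u\vee\mathcal P^-$. This partition is increasing. Since $\xi$ refines $\mathcal P^-$, the standard computation gives
\[
H(\vf^{-1}\xi|\xi)\ \ge\ H(\mathcal P\,|\,\vf\mathcal P^-)\ =\ h_\nu(\vf,\mathcal P)\ =\ h_\nu(\vf).
\]
It then remains to show that $\zeta^u$ and $\xi$ have the same entropy increment. I would use the Rokhlin additivity of Proposition \ref{entropyrohlin} to write $H(\vf^{-n}\xi|\xi)=nH(\vf^{-1}\xi|\xi)$, and similarly for $\zeta^u$. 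The aim is the estimate
\[
\bigl|H(\vf^{-n}\xi|\xi)-H(\vf^{-n}\zeta^u|\zeta^u)\bigr|\ \le\ 2\,H(\xi|\zeta^u)\qquad\text{for all } n,
\]
which holds provided $H(\xi|\zeta^u)<\infty$. Dividing by $n$ and letting $n\to\infty$ then gives $H(\vf^{-1}\zeta^u|\zeta^u)\ge h_\nu(\vf)$.

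\textbf{Main obstacle.} I expect the finiteness of $H(\xi|\zeta^u)$ to be the hardest step. Each atom of $\zeta^u$ is a bounded open piece of unstable leaf, and by Step 2 $\mathcal P^-$ cuts it into pieces that contain unstable balls of radius $r(x)$. The number of such pieces is therefore controlled by $r(x)^{-\dim E^u}$, up to the size of the plaque.

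My first attempt would be to prove $\int\log^+ (1/r)\,d\nu<\infty$. Here $r$ is governed by the boundary-approach function of Step 1 and by the Pesin tempered constant $C(x)$, and both have integrable logarithms. From this, a volume or Besicovitch covering argument (Proposition \ref{densities}) on the unstable plaques would bound the conditional entropy.

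If this direct route fails, I would fall back on refining $\mathcal P$ by a finite partition adapted to $\zeta^u$. Alternatively, I would use the equivalent formulation via local entropy along unstable leaves, $\lim -\frac1n\log\nu^u_x(\vf^{-n}\zeta^u(\vf^n x))$, and compare it with the Brin--Katok local entropy on Pesin sets. In the $C^{1+\alpha}$ setting, the Pesin charts of \cite{B22} would replace the $C^2$ distortion estimates at this point.
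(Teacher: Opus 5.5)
\paragraph{The gap.} It is the first inequality of your Step 3. That $\xi=\zeta^u\vee\mathcal P^-$ refines $\mathcal P^-$ does not give $H(\vf^{-1}\xi|\xi)\ge H(\mathcal P|\vf\mathcal P^-)$, because refining an increasing partition can lower its entropy increment. For example, the partition into points refines $\mathcal P^-$, is invariant (hence increasing), and has increment $0$. Concretely, you only get $H(\vf^{-1}\xi|\xi)\ge H(\vf^{-1}\mathcal P|\zeta^u\vee\mathcal P^-)$. The extra conditioning on $\zeta^u$ can only make this \emph{smaller} than $H(\vf^{-1}\mathcal P|\mathcal P^-)=h_\nu(\vf,\mathcal P)$, so the monotonicity runs the wrong way.

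\paragraph{What is actually needed.} The right tool is an exact identity: if $\alpha\le\beta$ are increasing and $H(\beta|\alpha)<\infty$, then $H(\vf^{-1}\beta|\beta)=H(\vf^{-1}\alpha|\alpha)$. To see it, compute $H(\vf^{-1}\beta|\alpha)$ once through $\beta$ and once through $\vf^{-1}\alpha$.
\begin{itemize}
\item With $\alpha=\zeta^u$ this is your comparison, and it needs $H(\mathcal P^-|\zeta^u)<\infty$.
\item To reach $h_\nu(\vf,\mathcal P)$ you must also apply it with $\alpha=\mathcal P^-$. That needs the \emph{reverse} bound $H(\zeta^u|\mathcal P^-)<\infty$: the past of $\mathcal P$ must determine the local unstable plaque up to finite information.
\end{itemize}
As written, Step 3 assumes the proposition for the unstable-subordinate partition $\zeta^u\vee\mathcal P^-$ in order to deduce it for $\zeta^u$.

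\paragraph{Why Steps 1--2 do not close it.} They show that $\mathcal P^-(x)$ contains an unstable ball. That controls $H(\mathcal P^-|\zeta^u)$, which is the routine direction rather than the main obstacle. What is missing is the opposite containment: $\mathcal P^-(x)$ inside a local unstable plaque. When $\nu$ is hyperbolic, Pesin theory can give this if the atoms of $\mathcal P$ are taken below the Pesin scale, since backward-close points then lie on $W^u_{loc}(x)$. With both finiteness bounds in hand, your route then goes through.

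The proposition, however, allows zero exponents, and this paper uses it exactly in that case: the partitions $\xi^u$ inside $\xi^{cu}$ in the proofs of theorems \ref{TY} and \ref{autreIP}. There, points whose backward orbits stay in the same atoms of $\mathcal P$ may differ along neutral directions, and nothing in your argument rules this out. Showing that $\zeta^u$ still carries all of $h_\nu(\vf,\mathcal P)$ is the real content of \cite{LY85} and \cite{B22}. The paper gives no proof of its own, only these references.

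\paragraph{A smaller point.} For the same reason, Step 1's claim that a partition of small diameter is generating by expansivity fails when there are zero exponents. Replace it by choosing $\mathcal P$ with $h_\nu(\vf,\mathcal P)>h_\nu(\vf)-\e$.
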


See  S. Gan, Y. Tong and J. Yang  (\cite{GTY}) for this form of the  basic inequality if $\vf$ is only $C^1$  in the presence of dominated splitting and a dimension 1 center direction.

\end{document}